\definecolor{fxnote}{rgb}{0.8,0,0}
\newtheoremstyle{colorplain}%
{\topsep}   
{\topsep}   
{\itshape}  
{0pt}       
{} 
{.}         
{5pt plus 1pt minus 1pt} 
{\textbf{
	{\textbf{\thmname{#1} \thmnumber{#2}}}}\thmnote{ (#3)}}		
{}  
\newtheoremstyle{colordefinition}%
{\topsep}   
{\topsep}   
{}  
{0pt}       
{} 
{.}         
{5pt plus 1pt minus 1pt} 
{
	{\textbf{\thmname{#1} \thmnumber{#2}}}\thmnote{ (#3)}}
{}  
\newtheoremstyle{colorremark}%
{\topsep}   
{\topsep}   
{}  
{0pt}       
{\itshape} 
{.}         
{5pt plus 1pt minus 1pt} 
{
	{\thmname{#1} \thmnumber{#2}}\thmnote{ (#3)}}
{}
\theoremstyle{colorplain}			
\newtheorem{theorem}{Theorem}[section]
\newtheorem{lemma}[theorem]{Lemma}
\newtheorem{proposition}[theorem]{Proposition}
\newtheorem{corollary}[theorem]{Corollary}
\newtheorem{maintheorem}{Theorem}
\theoremstyle{colordefinition} 			
\newtheorem{definition}[theorem]{Definition}
\theoremstyle{colorremark}			
\newtheorem*{remark}{Remark}
\newtheorem*{example}{Example}
\newtheorem{conjecture}{Conjecture}
\renewcommand{\phi}{\varphi}
\renewcommand{\bar}{\overline}
\newcommand{\mysetminusD}{\hbox{\tikz{\draw[line width=0.6pt,line cap=round] (3pt,0) -- (0,6pt);}}}
\newcommand{\mysetminusS}{\hbox{\tikz{\draw[line width=0.45pt,line cap=round] (2pt,0) -- (0,4pt);}}}
\newcommand{\mysetminusSS}{\hbox{\tikz{\draw[line width=0.4pt,line cap=round] (1.5pt,0) -- (0,3pt);}}}
\newcommand{\Setminus}{\mathbin{\mathchoice{\mysetminusD}{\mysetminusD}{\mysetminusS}{\mysetminusSS}}}
\renewcommand{\setminus}{\Setminus}
\newcommand{\N}{\mathbb{N}}
\newcommand{\Z}{\mathbb{Z}}
\newcommand{\R}{\mathbb{R}}
\newcommand{\C}{\mathbb{C}}
\renewcommand{\H}{\mathbb{H}}
\renewcommand{\P}{\mathbb{P}}		
\newcommand{\CP}{\mathbb{CP}}
\newcommand{\de}{\partial}
\newcommand{\norm}[1]{\left\lVert#1\right\rVert}
\renewcommand{\O}{\mathcal{O}}		
\newcommand{\D}{\mathcal{D}}
\newcommand{\Hh}{\mathcal{H}}
\newcommand{\T}{\mathcal{T}}
\newcommand{\M}{\mathcal{M}}
\newcommand{\ML}{\mathcal{ML}}
\newcommand{\MF}{\mathcal{MF}}
\newcommand{\PT}{\mathcal{PT}}
\newcommand{\QT}{\mathcal{QT}}
\newcommand{\PM}{\mathcal{PM}}
\newcommand{\QM}{\mathcal{QM}}
\DeclareMathOperator{\id}{id}
\DeclareMathOperator{\Area}{Area}
\DeclareMathOperator{\Diff}{Diff}
\DeclareMathOperator{\Mod}{Mod}		
\DeclareMathOperator{\im}{Im}
\DeclareMathOperator{\re}{Re}
\DeclareMathOperator{\Len}{Len}
\DeclareMathOperator{\Gr}{Gr}
\DeclareMathOperator{\gr}{gr}
\DeclareMathOperator{\dev}{dev}
\DeclareMathOperator{\Infl}{Infl}
\DeclareMathOperator{\Sp}{Sp}
\DeclareMathOperator{\Met}{MMet}
\DeclareMathOperator{\met}{Met}
\DeclareMathOperator{\GL}{GL}
\DeclareMathOperator{\PSL}{PSL}
\DeclareMathOperator{\supp}{supp}
\DeclareMathOperator{\sys}{sys}
\DeclareMathOperator{\NPC}{NPC}
\title{A geometric boundary for the\\ moduli space of grafted surfaces}
\date{\vspace{-5ex}}
\author{Andrea Egidio Monti}
\begin{document}
\maketitle

\begin{abstract}	
Let $S$ be a closed orientable surface of genus at least two. We introduce a bordification of the moduli space $\mathcal{PT}(S)$ of complex projective structures, with a boundary consisting of projective classes of half-translation surfaces. Thurston established an equivalence between complex projective structures and hyperbolic surfaces grafted along a measured lamination, leading to a homeomorphism $\mathcal{PT}(S) \cong \mathcal{T}(S) \times \mathcal{ML}(S)$. Our bordification is geometric in the sense that convergence to points on the boundary corresponds to the geometric convergence of grafted surfaces to half-translation surfaces (up to rescaling). This result relies on recent work by Calderon and Farre on the orthogeodesic foliation construction. Finally, we introduce a change of perspective, viewing grafted surfaces as a deformation (which we term "inflation") of half-translation surfaces, consisting of inserting negatively curved regions.
\end{abstract}

\tableofcontents

\section*{Introduction}

Let $S$ be an orientable closed surface of genus $g$ at least 2.
Teichmüller space $\T(S)$ parametrizes all the complete hyperbolic metrics on $S$ up to isotopy, or equivalently all the marked Riemann surfaces diffeomorphic to $S$. Another family of well known geometric structures on $S$ is given by half-translation surfaces. They are surfaces obtained by taking a polygon in $\R^2$ and identifying its sides in pairs via the compositions of $\pm \id$ with translations.
A half-translation structure on $S$ is equivalent to the datum of a Riemann surface structure up to isotopy and a holomorphic quadratic differential on it, so the moduli space of such marked structure is described as $\QT(S)$ the bundle of quadratic differentials over $\T(S)$.

\paragraph{Grafting}

The main object of study in this article will be a third class of geometric structures that endow the surface $S$ with a piecewise hyperbolic and piecewise flat metric. Such surfaces are results of an operation called grafting.
In the simplest case grafting consists of the following: given a hyperbolic surface $X$, cutting it open along a simple closed geodesic and gluing in a flat cylinder in its place (see Figure \ref{fig:simple-grafting}). It can be then generalized to weighted multicurves, which are collections of disjoint simple closed geodesics, with a positive weight for each component, indicating the length of the cylinder to be inserted in place of each curve of such collection.

Grafting has been studied from various points of view and produced many results in the course of the years, since Thurston's seminal work until much more recent times.
The main result due to Thurston on grafting is its use to parametrize the moduli space of complex projective structures $\PT(S)$. 
Weighted multicurves sit inside a larger space $\ML(S)$ of measured laminations, that can be seen as a completion of the space of weighted multicurves.
As shown in Thurston's unpublished work and discussed in \cite{tanigawa97}, \cite{dumas09}, a complex projective structure induces a metric on the surface $S$, called the Thurston metric, that can be interpreted as the metric obtained by grafting a hyperbolic surface $X$ along a measured lamination $\lambda$, extending in this way the definition of grafting to measured laminations, and establishing a mapping class group equivariant homeomorphism
$$\Gr :  \T(S) \times \ML(S) \to \PT(S) .$$
For this reason we can think of $\PT(S)$ as parametrizing grafted surfaces and consider its points as surfaces with the grafted Thurston metric.

By looking at the underlying conformal structure of the grafted surfaces, that is by considering the composition $\gr = \pi \circ \Gr $ where $\pi: \PT(S) \to \T(S)$, fixing a lamination $\lambda$ induces \cite{SW02} a homeomorphism $\gr(\cdot, \lambda): \T(S) \to \T(S)$.
And on the other side, fixing a hyperbolic metric $X\in\T(S)$ induces \cite{DW08} a homeomorphism $\gr(X,\cdot ): \ML(S) \to \T(S)$, showing that grafting, like earthquakes, allows one to connect any two points of Teichmüller space with a so-called grafting ray $t \mapsto \gr(X,t\lambda)$.
Grafting rays have been studied from the point of view of their geometric properties.
It has been shown \cite{CDR10} that, under certain conditions, they fellow travel Teichmüller geodesics in $T(S)$. 

Since grafting is used to parametrize complex projective structures on a surface, it is then also intimately related to the theory of Kleinian groups.
Much more recently, it became apparent that this construction also plays a crucial role in higher Teichmüller theory, which makes it desirable to understand the moduli space of grafted surfaces from the geometric point of view. 
Indeed, in a recent joint work \cite{BHMM24}, grafting has proved to be useful to study the geometry of the moduli space of Hitchin representations. In that setting a notion of grafting along decorated multicurves is introduced as a deformation of Fuchsian representations into the Hitchin component.

This article relies on recent results \cite{CF21}, \cite{CF24} of Calderon and Farre.
In their substantial work they extend a construction by Thurston that was originally applicable only in the case of maximal laminations, that is for those laminations $\lambda$ whose complementary regions are ideal triangles.
In that case, given a hyperbolic surface $X$ and a maximal lamination $\lambda$, the horocycle foliations for the ideal triangles induce a measured foliation on the entire surface, transverse to $\lambda$.
In \cite{CF21} they introduce the orthogeodesic foliation $\O_\lambda(X)$, a generalization of the horocycle foliation that does not require $\lambda$ to be maximal and produces, in a mapping class group equivariant fashion, a measured foliation transverse to $\lambda$.
This allowed them to extend fundamental works like \cite{hubbardmasur79} and \cite{mirzakhani08} working only for maximal laminations, to the general case. In particular, they show that their construction of the orthogeodesic foliation induces a bijection $\O: \T(S) \times \ML(S) \to \QT(S)$ given by $\O(X,\lambda) = q(\O_\lambda(X), \lambda)$ where $q(\eta, \nu)$ indicates the quadratic differential associated to the pair $(\eta,\nu)$ of transverse measured foliations.

\paragraph{Main results}

The main goal of this work is to describe a bordification of the space $\PT(S)$ of grafted surfaces, where the boundary is described by a space of half-translation surfaces, that are for this scope considered up to isometry, that is up to the $S^1$ action given by rotations.
Our construction of such bordification follows the same idea of Thurston's compactification of $\T(S)$: we will define an ambient space $\Met(S)$ of marked metrics where both $\PT(S)$ and $\QT(S)/S^1$ embed, and then show the following.

\begin{maintheorem} \thlabel{mainth:bordification}
	Inside $\Met(S)$, $\PT(S)$ is asymptotic to the cone given by $\QT(S)/S^1$,
	or in other words, after projecting to the real projectification $\P\Met(S)$, we obtain 
	$$ \bar{\PT(S)} = \PT(S) \cup \P_\C\QT(S). $$
\end{maintheorem}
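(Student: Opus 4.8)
The plan is to follow the template of Thurston's compactification of Teichmüller space, with marked length spectra of metrics replacing hyperbolic length functions. I would take $\Met(S)$ to be the space of marked length functions — a CAT(0) metric $m$ on $S$ (piecewise hyperbolic–flat as in a Thurston metric, or flat as in some $|q|$) is recorded by $\gamma\mapsto\ell_m(\gamma)$ on free homotopy classes of essential closed curves, equivalently as a geodesic current — and $\P\Met(S)$ is its projectivization by global rescaling. Then $\Gr(X,\lambda)\mapsto\mathrm{Th}(X,\lambda)$ embeds $\PT(S)$ into $\Met(S)$ and $[q]\mapsto|q|$ embeds $\QT(S)/S^1$ into $\Met(S)$; since $|q|$ is insensitive to the $S^1$-phase and rescales under $q\mapsto tq$, its class in $\P\Met(S)$ depends only on the class in $\P_\C\QT(S)$. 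The preliminary tasks are: that both assignments are topological embeddings (injectivity from marked length spectrum rigidity in the relevant CAT(0) classes; continuity classical for $|q|$, and for $\mathrm{Th}$ a byproduct of the comparison below); that $\P_\C\QT(S)$ is a closed subset of $\P\Met(S)$ (this is essentially properness of the marked-length-spectrum embedding of half-translation surfaces, a flat-surface compactness statement); and that it is disjoint from the image of $\PT(S)$ (every Thurston metric has genuine negative curvature on its nonempty hyperbolic part, so is not flat, so cannot share a marked length spectrum with any $|q|$). The theorem then reduces to one asymptotic statement inside $\Met(S)$: writing $q(p):=\O(\Gr^{-1}(p))\in\QT(S)$ for the Calderon–Farre quadratic differential of $p\in\PT(S)$, the Thurston metrics are asymptotic to the cone of flat metrics, i.e.\ along every sequence with $q(p_n)\to\infty$ in $\QT(S)$, the classes $[\mathrm{Th}(p_n)]$ and $[\,|q(p_n)|\,]$ converge to a common limit in $\P\Met(S)$, or diverge, together.

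The core — and the step I expect to be the main obstacle — is the metric comparison of $\mathrm{Th}(X,\lambda)$ with $|\O(X,\lambda)|=|q(\O_\lambda(X),\lambda)|$, and here the ``inflation'' viewpoint is exactly the right language. One should recognize $\Gr(X,\lambda)$ as the half-translation surface carrying $\O(X,\lambda)$ into which negatively curved collars have been inserted along the support of $\lambda$, the two transverse directions of the flat structure being $\lambda$ (crossed when one traverses a grafting strip) and $\O_\lambda(X)$ (travelled along $\lambda$, its transverse measure recording precisely the hyperbolic arc lengths that define the orthogeodesic foliation). Deflating the inserted collars is length non-increasing, so $\ell_{|\O(X,\lambda)|}(\gamma)\le\ell_{\mathrm{Th}(X,\lambda)}(\gamma)$ for all $\gamma$; the reverse costs the re-insertion of the collars, and since the inserted hyperbolic material always has area exactly $-2\pi\chi(S)$ and — the decisive point — its hyperbolic arc lengths are already carried by $\O_\lambda(X)$, one should obtain
\[
\ell_{\mathrm{Th}(X,\lambda)}(\gamma)\;\le\;\ell_{|\O(X,\lambda)|}(\gamma)+C\bigl(1+\ell_{|\O(X,\lambda)|}(\gamma)\bigr)\,\varepsilon(X,\lambda),
\]
with $\varepsilon(X,\lambda)\to0$ as $\lambda$ grows. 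Making this error uniformly $o\bigl(\ell_{|\O(X,\lambda)|}(\gamma)\bigr)$ — in particular controlling it uniformly as $X$ is allowed to degenerate toward $\partial\T(S)$, where a naive estimate has blowing-up constants — is the hard part; I would hope the length-recording property of $\O_\lambda(X)$ forces the error to depend only on the topology of $S$, which would close the gap. Granting it, normalizing the length functionals yields that $[\mathrm{Th}(p_n)]$ and $[\,|q(p_n)|\,]$ are mutually asymptotic in $\P\Met(S)$ whenever $q(p_n)\to\infty$ in $\QT(S)$ — equivalently, $\PT(S)$ is asymptotic to the cone $\QT(S)/S^1$ inside $\Met(S)$.

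From the comparison, the two inclusions follow cleanly. If $p_n\to\infty$ in $\PT(S)$, then $q(p_n)=\O(\Gr^{-1}(p_n))\to\infty$ in $\QT(S)$ since $\O\circ\Gr^{-1}$ is a homeomorphism; so if $[\mathrm{Th}(p_n)]\to L\in\P\Met(S)$, the comparison gives $[\,|q(p_n)|\,]\to L$, and since $\P_\C\QT(S)$ is closed in $\P\Met(S)$ we get $L\in\P_\C\QT(S)$. Hence $\overline{\PT(S)}\subseteq\PT(S)\cup\P_\C\QT(S)$. Conversely, given $[q]\in\P_\C\QT(S)$, set $p_n:=\Gr\bigl(\O^{-1}(nq)\bigr)\in\PT(S)$; then $q(p_n)=nq$, so $q(p_n)\to\infty$ in $\QT(S)$ while $[\,|q(p_n)|\,]=[\,|q|\,]$ is constant, whence $p_n$ leaves every compactum of $\PT(S)$ and the comparison gives $[\mathrm{Th}(p_n)]\to[\,|q|\,]$. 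This proves $\overline{\PT(S)}=\PT(S)\cup\P_\C\QT(S)$.

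Finally, $\PT(S)$ is open in its closure: by the comparison, a sequence in $\PT(S)$ whose Thurston metrics stay in a compact subset of $\P\Met(S)\setminus\P_\C\QT(S)$ cannot leave every compactum of $\PT(S)$, so $\PT(S)\hookrightarrow\P\Met(S)$ is proper relative to the frontier $\P_\C\QT(S)$; with the two inclusions this identifies $\overline{\PT(S)}$ with the asserted bordification. Mapping class group equivariance is automatic, as $\Gr$, $\O$, the Thurston metric and $|q|$ are all natural.
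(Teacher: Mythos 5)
Your argument, as written, grants rather than proves the one estimate that carries all the content of the theorem. You write the comparison $\ell_{\mathrm{Th}(X,\lambda)}(\gamma)\le\ell_{|\O(X,\lambda)|}(\gamma)+C\bigl(1+\ell_{|\O(X,\lambda)|}(\gamma)\bigr)\varepsilon(X,\lambda)$ and then say you ``hope'' the error can be made uniform and conclude ``granting it''. That estimate is precisely \thref{mainth:deflation-control}, and its proof occupies Sections \ref{sec:orthogeodesic}--\ref{sec:deflation} of the paper: one needs (i) that the negatively curved (``inflated'') part of the area-normalized $\Gr(X,\lambda)$ is $D/k$-slim with $k^2=\ell_X(\lambda)$ (\thref{th:slim-inflation}, which itself requires a thick--thin analysis of $X$ because $X$ is \emph{not} assumed thick); (ii) a careful pull-back construction $\beta=\D^*\alpha$ of geodesic arcs together with the exact length decomposition $\Len(\beta)=\Len(\alpha)+\Len(\beta_{infl})$, which in the non-multicurve case is a limit argument and not a cut-and-paste identity; and (iii) the packing/area argument (\thref{th:inflated-part-bound}, \thref{th:hyp-trapezium-area}) showing that a \emph{distance-minimizing} transversal gains at most $C_\epsilon k^{-2}$ of length in the inflated region, because each crossing of the spine sweeps out a definite area inside a region of total area $O(k^{-2})$. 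None of these steps is routine, and your stated hope that the resulting constant ``depends only on the topology of $S$'' is false: the constant in \thref{mainth:deflation-control} necessarily depends on a lower bound $\epsilon$ for the injectivity radius of $\O(X,\lambda)$ (this is where the disjointness of the neighbourhoods $U_t$ in the packing argument comes from). For the density statement this is harmless, since one approaches a \emph{fixed} unit-area $q$ via $\O^{-1}(nq)$ and $\epsilon$ is then fixed, but your forward inclusion, which quantifies over arbitrary divergent sequences including those whose flat shadows degenerate, cannot be run with a purely topological constant.

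A second, structural discrepancy: the theorem is a statement about the paper's $\Met(S)$, whose topology is the Lipschitz topology (existence of marking-compatible $(1+\epsilon)$-bilipschitz homeomorphisms off small discs), not the marked-length-spectrum topology you substitute. Convergence of normalized length spectra is weaker; even Gromov--Hausdorff convergence has to be upgraded to Lipschitz convergence in the paper via \thref{th:nagano} and \thref{th:hausdorff-to-lipschitz}, using the uniform lower injectivity radius bound and an Arzel\`a--Ascoli argument to recover compatibility with markings. Relatedly, your claim that $\P_\C\QT(S)$ is closed in a projectivized space of length functions of CAT(0) metrics needs justification beyond ``flat-surface compactness'': in the space of geodesic currents, degenerating unit-area half-translation surfaces can converge projectively to mixed structures, so closedness only holds because you have restricted the ambient space to exactly the two families of metrics under consideration --- which is the paper's definition of $\Met(S)$, and in that setting the forward inclusion $\bar{\PT(S)}\subseteq\PT(S)\cup\P_\C\QT(S)$ is automatic and the entire theorem reduces to the single adherence claim proved via \thref{mainth:deflation-control}.
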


Such a bordification, being mapping class group equivariant, projects to the quotient and yields a bordification of the moduli space of (unmarked) grafted surfaces.

Other similar bordifications of $\PT(S)$ have been studied \cite{dumas06}, \cite{dumas07}. However, they differ from ours in the approach, as they are performed just by considering respectively the grafting and the Schwarzian parametrization of $\PT(S)$ and taking a bordification of the relative parameter spaces.
They result in a bordification that compactifies each fibre of $\PT(S) \to \T(S)$, that topologically correspond to the (real) projective compactification of $\R^{6g-6}$ into a disc. On the other side, in our bordification $\bar{\PT(S)}$ the closure of each fibre will be homeomorphic to the complex projective compactification of $\C^{3g-3}$.

However, the fundamental difference in our bordification is that 
points at infinity are geometric structures (half-translation surfaces) as well, and the convergence to such points is not the mere convergence in a parameter space, but real geometric convergence of the geometric structures.

The main argument of this article consists of showing a quantitative control on the convergence of a sequence of grated surfaces to a half-translation surface.
Such convergence happens in $\P\Met(S)$, meaning it is the convergence in $\Met(S)$ up to rescaling the metrics.
The topology we define on $\Met(S)$ is a topology  given by Lipschitz convergence compatibly with cone singularities.
We will use a lemma that shows that in our setting of surfaces with regular enough metrics, convergence  in the sense of Gromov-Hausdorff implies our notion of Lipschitz convergence.
So the key lemma to prove \thref{mainth:bordification} is the following.

\begin{maintheorem}	\thlabel{mainth:deflation-control}
	Let $\Gr(X,\lambda)$ be a grafted surface and $\O(X,\lambda)$ the associated half-translation surface. Consider them to be rescaled to have unit area.
	If the injectivity radius of $\O(X,\lambda)$ is at least $\epsilon$ and $\ell_X(\lambda) > 1$, then there exists a constant $C$ depending only on $\epsilon$ and $S$ such that 
	$$ d_{GH}(\Gr(X,\lambda), \O(X,\lambda)) \leq C \cdot \left(\ell_X(\lambda)\right)^{-1/2}. $$
\end{maintheorem}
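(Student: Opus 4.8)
The plan is to realise the half-translation surface as the grafted surface with its negatively curved part collapsed, via an explicit \emph{deflation} map, and then to control the Gromov--Hausdorff distance through the distortion of the associated correspondence. Recall that the Thurston metric on $\Gr(X,\lambda)$ is flat on a \emph{grafted part} $F$ and hyperbolic on the complementary regions of $\lambda$, the latter forming an isometric copy of $X\setminus\lambda$ of total area $2\pi|\chi(S)|$. The orthogeodesic foliation $\O_\lambda(X)$ restricts on each complementary region to a foliation by orthogeodesic arcs; collapsing each arc to a point turns $X\setminus\lambda$ into a finite union of graphs, and one checks, using the Calderon--Farre description of $\O_\lambda(X)$, that the resulting quotient of $\Gr(X,\lambda)$ is isometric to $\O(X,\lambda)=q(\O_\lambda(X),\lambda)$. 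This gives a surjection $\Phi\colon\Gr(X,\lambda)\to\O(X,\lambda)$ which is the identity on $F$, collapses $X\setminus\lambda$ along orthogeodesic leaves (so its fibres are orthogeodesic arcs), and is $1$-Lipschitz for the unnormalised metrics, since collapsing a foliated region never lengthens a path. As $\Area\Gr(X,\lambda)=2\pi|\chi(S)|+\Area(F)$ while $\Area\O(X,\lambda)=\Area(F)$ (the hyperbolic part collapses to a set of measure zero), and $\Area(F)\asymp_S\ell_X(\lambda)$, after rescaling both surfaces to unit area the same map is $\big(1+O(\ell_X(\lambda)^{-1})\big)$-Lipschitz; this is where $\ell_X(\lambda)>1$ enters, guaranteeing that the areas, and hence the rescaling factors, are comparable to $\ell_X(\lambda)$ and $\ell_X(\lambda)^{1/2}$.

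Since the Gromov--Hausdorff distance between two spaces is at most half the distortion of any correspondence between them, and for the correspondence $\{(x,\Phi x)\}$ this distortion equals $\sup_{x,x'}\big(d_{\Gr}(x,x')-d_{\O}(\Phi x,\Phi x')\big)$ up to the harmless factor above, the whole problem reduces to the single estimate
$$ d_{\Gr}(x,x')\ \le\ d_{\O}(\Phi x,\Phi x')+C(\epsilon,S)\,\ell_X(\lambda)^{-1/2}. $$
To prove it I would fix a near-geodesic $\gamma$ in the unit-area $\O(X,\lambda)$ from $\Phi x$ to $\Phi x'$ and lift it to $\Gr(X,\lambda)$: on the image of $F$ the lift is isometric, and each time $\gamma$ crosses the (measure-zero) image of a complementary region the lift is rerouted across the corresponding hyperbolic piece, at a cost of at most the width of that piece. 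Hence the lift is longer than $\gamma$ by at most (number of complementary regions met by $\gamma$) times (largest rescaled width of a complementary region), and joining $x,x'$ to the endpoints of the lift, which lie in their $\Phi$-fibres, costs at most twice that width more.

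The heart of the argument, and the step I expect to be hardest, is to bound these quantities --- the count by $C(\epsilon,S)$ and the width by $C(\epsilon,S)\,\ell_X(\lambda)^{-1/2}$ --- using only the injectivity-radius hypothesis. The mechanism I would exploit is that $\operatorname{inj}\O(X,\lambda)\ge\epsilon$ together with unit area forces $\operatorname{diam}\O(X,\lambda)\le D_0(\epsilon,S)$ and, crucially, forces the rescaled grafted part to be uniformly thick: a long thin complementary region would carry either a short core curve or a short orthogeodesic loop descending to a short essential curve of $\O(X,\lambda)$, contradicting the bound. Consequently the complementary regions of $\lambda$ in $X$ have unnormalised width $O_{\epsilon,S}(1)$, so rescaled width $O_{\epsilon,S}(\ell_X(\lambda)^{-1/2})$, and a geodesic of length $\le D_0$ can traverse only $O_{\epsilon,S}(1)$ of them. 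Making this crossing count precise is delicate exactly when $\lambda$ is not a multicurve, since then there are infinitely many complementary ideal-polygon regions; I would first treat weighted multicurves, where the complementary subsurfaces are finitely many and the count follows from a systole-versus-length estimate, and then pass to arbitrary $\lambda$ by density of multicurves in $\ML(S)$ and continuity of $X\mapsto\ell_X(\lambda)$, of $\operatorname{inj}\O(X,\lambda)$ and of $d_{GH}$, absorbing the small loss of injectivity radius into the constant. Assembling the two inequalities and taking the supremum over $x,x'$ gives $2\,d_{GH}\big(\Gr(X,\lambda),\O(X,\lambda)\big)\le C(\epsilon,S)\,\ell_X(\lambda)^{-1/2}$, as required.
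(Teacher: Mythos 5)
Your overall architecture coincides with the paper's: a deflation map $\D$ collapsing the negatively curved part along the orthogeodesic foliation, the observation that it is $1$-Lipschitz, the reduction to bounding the excess length of a lifted geodesic, and the slimness of the collapsed pieces (the paper's slim-inflation proposition uses the same thick--thin mechanism you sketch). The gap is in the step you yourself flag as the heart of the argument: bounding the excess length by $(\text{number of crossings})\times(\text{maximal width})$ with the count bounded by $C(\epsilon,S)$. That count is \emph{not} controlled by the injectivity radius, even for multicurves: a geodesic of bounded length in $\O(X,\lambda)$ transverse to the horizontal foliation can cross the collapsed image of the hyperbolic part roughly $D_0/h$ times, where $h$ is the smallest height of a flat cylinder, and $h$ can be arbitrarily small without violating $\operatorname{inj}\geq\epsilon$ (a short but very long cylinder has a long core curve). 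Your proposed rescue by approximating $\lambda$ with multicurves $\mu_n$ makes this worse, not better: when $\lambda$ has no closed leaves the approximating weights, hence the cylinder heights of $\O(X,\mu_n)$, tend to $0$, so the crossing count for $\mu_n$ blows up and no uniform constant survives the limit.

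The paper replaces the count-times-width bound by an area (packing) argument, which is the missing idea. For a minimizing geodesic $\alpha$ transverse to the horizontal foliation, the crossing points with the horizontal separatrix diagram admit pairwise disjoint horizontal segments of length $\epsilon$ centred at them; disjointness uses exactly $\operatorname{inj}\geq\epsilon$ together with the minimality of $\alpha$. The $\D$-preimage of each such segment is a region of the negatively curved part whose area is at least $\epsilon$ times the length $2h_t$ of the corresponding excursion of the lift (a hyperbolic quadrilateral-area estimate, comparing with a Euclidean trapezium). Summing over all, possibly infinitely many, crossings and comparing with the total area $2\pi|\chi(S)|\,\ell_X(\lambda)^{-1}$ of the negatively curved part gives $\sum_t 2h_t\leq C_\epsilon\,\ell_X(\lambda)^{-1}$, independently of the number of crossings; the finitely many singularities met by $\alpha$ and the two endpoint corrections each cost $O(\ell_X(\lambda)^{-1/2})$ by slimness, and these dominate the final bound. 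Without some such weighting of each crossing by its depth, your estimate does not close.
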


As mentioned above, our bordification $\bar{\PT(S)}$ brings together in a single moduli space hyperbolic, grafted and flat half-translation surfaces.
Hyperbolic structures are indeed naturally contained in $\PT(S)$ as the grafted surfaces obtained with trivial grafting along the zero lamination.
According to Thurston's work, properly grafted surfaces - the ones obtained with non-trivial grafting - are by definition deformations of hyperbolic ones.
We present a dual point of view, by giving a parametrization of half-translation surfaces together with properly grafted surfaces as a deformation of the first.
We call such deformation \emph{inflation}, taking inspiration for the name from the deflation map, introduced in \cite{CF21}, that goes vice versa from a grafted surface $\Gr(X,\lambda)$ to the half-translation surface $\O(X,\lambda)$.

\begin{maintheorem} \thlabel{mainth:inflation}
	Let us consider the bordification of $\PT(S)$ described above, and the projective bordification $\bar{\QT(S)} = \QT(S) \cup \P_\C\QT(S)$.
	By extending the map $\O^{-1}: \QT(S) \to \PT(S)$ to the boundary as the identity on $\P_\C\QT(S)$ we get a map
	$$\Infl: \bar{ \QT(S)} \to \bar{\PT(S)}$$
	that is a bijection between the domain and the complement of $\T(S)$ in $\bar{\PT(S)}$, and it is continuous on the boundary $\P_\C\QT(S)$.
\end{maintheorem}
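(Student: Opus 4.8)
The plan is to assemble the map $\Infl$ from pieces that are already essentially in place and then to check the single genuinely new assertion, namely continuity of $\Infl$ at boundary points. On the interior $\QT(S)$ the map is defined to be $\O^{-1}$, which is a homeomorphism onto $\PT(S) \setminus$ (its image... more precisely onto all of $\PT(S)$) by the Calderon--Farre bijection $\O$ quoted in the excerpt; composing with the diffeomorphism identifying $\PT(S)$ with $\T(S)\times\ML(S)$ we see that $\O^{-1}(\QT(S))$ is exactly the set of \emph{properly} grafted surfaces plus the hyperbolic locus --- but the hyperbolic locus $\T(S)\times\{0\}$ corresponds to the zero section of $\QT(S)$, which is also sent to itself; so a first bookkeeping step is to verify that $\O^{-1}(\QT(S))$ together with the boundary $\P_\C\QT(S)$ is precisely $\bar{\PT(S)}\setminus(\T(S)\setminus\{\text{zero section}\})$. (If the intended statement is that the complement of $\T(S)$ is the image, one must be a little careful about the zero section; I would state explicitly which points of $\T(S)$, if any, lie in the image, and adjust the phrasing of the bijectivity claim accordingly --- this is a cosmetic point but needs to be pinned down.) Granting the correct identification, bijectivity of $\Infl$ onto its target is immediate, since $\O^{-1}$ is a bijection on the interiors and the boundary map is the identity on a set disjoint from both images.

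The substantive part is continuity on $\P_\C\QT(S)$. Take a sequence $q_n \to q_\infty$ in $\bar{\QT(S)}$ with $q_\infty \in \P_\C\QT(S)$; I must show $\Infl(q_n) \to \Infl(q_\infty) = q_\infty$ in $\bar{\PT(S)}$. Since $q_n \to q_\infty$ with $q_\infty$ on the boundary, in the chart $\QT(S)\cong \T(S)\times\ML(S)$ we may write $q_n = \O(X_n, \lambda_n)$ and the convergence to the boundary forces (after the appropriate rescaling built into the projectification $\P_\C$) the ``mass'' of $\lambda_n$ to blow up --- concretely, one should extract from $q_n \to q_\infty$ that $\ell_{X_n}(\lambda_n) \to \infty$ while the projective class of $(X_n,\lambda_n)$, suitably normalized, approaches that of $q_\infty$. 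The point $\Infl(q_n) = \O^{-1}(q_n) = \Gr(X_n,\lambda_n)$, and what I want is that $\Gr(X_n,\lambda_n)$ converges in $\P\Met(S)$ to the same boundary point as $\O(X_n,\lambda_n)$ does. But this is exactly the content of the already-established \thref{mainth:bordification}: inside $\P\Met(S)$ the grafted locus $\PT(S)$ is asymptotic to the cone $\P_\C\QT(S)$, and \thref{mainth:deflation-control} gives the quantitative statement that $d_{GH}(\Gr(X,\lambda),\O(X,\lambda))\le C\,\ell_X(\lambda)^{-1/2}$ (under the injectivity-radius hypothesis, which holds near a fixed boundary half-translation surface of positive injectivity radius, and which persists for $n$ large by continuity). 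So once $q_n\to q_\infty$ on the boundary, the unit-area representatives of $\O(X_n,\lambda_n)$ converge geometrically to (a representative of) $q_\infty$, and the deflation estimate forces the unit-area representatives of $\Gr(X_n,\lambda_n)$ to converge to the same limit; passing to $\P\Met(S)$ this says exactly $\Infl(q_n)\to q_\infty$.

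Concretely I would organize the argument as follows. First, record the identification of $\O^{-1}(\QT(S))$ inside $\PT(S)$ and deduce set-theoretic bijectivity of $\Infl$. Second, unwind what convergence to a point of $\P_\C\QT(S)$ means in the $\T(S)\times\ML(S)$ coordinates: show it implies $\ell_{X_n}(\lambda_n)\to\infty$ and that the rescaled half-translation surfaces $\O(X_n,\lambda_n)$ converge (in the topology on $\P\Met(S)$, via the Gromov--Hausdorff-to-Lipschitz lemma quoted in the excerpt) to $q_\infty$; in particular their injectivity radii are eventually bounded below by some $\epsilon>0$ depending on $q_\infty$. Third, apply \thref{mainth:deflation-control} with this $\epsilon$ to get $d_{GH}(\Gr(X_n,\lambda_n),\O(X_n,\lambda_n))\to 0$ after unit-area rescaling, hence $\Gr(X_n,\lambda_n)\to q_\infty$ in $\P\Met(S)$, i.e. $\Infl(q_n)\to\Infl(q_\infty)$. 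The main obstacle I anticipate is the second step: one must check that the projective convergence $q_n\to q_\infty$ chosen via the $\P_\C$ structure on $\QT(S)$ is compatible with, and forces, the metric (Gromov--Hausdorff) convergence of the rescaled surfaces $\O(X_n,\lambda_n)$ to $q_\infty$ in $\P\Met(S)$ --- i.e. that the two a priori different notions of ``approaching the boundary'' agree. This should follow from the way $\P_\C\QT(S)$ sits inside $\P\Met(S)$ as set up for \thref{mainth:bordification} (the embedding of $\QT(S)/S^1$ into $\Met(S)$ is by the flat metric, so scaling the quadratic differential scales the metric, and the $\P_\C$ projectification is precisely the metric projectification restricted to this cone), but it requires carefully matching the two projectifications; once that compatibility is in hand, continuity is a direct consequence of the deflation estimate. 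No separate continuity statement at interior points is needed since $\O^{-1}$ is already continuous there.
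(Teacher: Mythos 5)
Your proposal is correct and follows essentially the same route as the paper: bijectivity is immediate from $\O^{-1}$ being a bijection onto $\PT(S)\setminus\T(S)$ plus the identity on the boundary, and continuity at a boundary point $[q]$ is obtained by noting that $|q_n|=\ell_{X_n}(\lambda_n)\to\infty$ with injectivity radius bounded below near $q$, then applying the quantitative deflation estimate (\thref{th:deflation-control}) and upgrading to marked Lipschitz convergence in $\P\Met(S)$ via \thref{th:hausdorff-to-lipschitz}. The only cosmetic difference is that the paper carries out the final upgrade explicitly by composing the deflation maps with bilipschitz identifications $\phi_n^{-1}\circ\D_n$ before invoking the Gromov--Hausdorff-to-Lipschitz lemma, a step you gesture at but leave slightly implicit.
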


Here is the geometric interpretation of inflation and its analogy with grafting.
For grafting, a pair $(X,\lambda)$ is provided. When $\lambda$ is the 0 lamination, the grafting returns the hyperbolic surface $X$. When $\lambda$ is non-zero, grafting consists in inserting flat structures along the leaves of $\lambda$.
For inflation, a quadratic differential $q\in\bar{\QT(S)}$ is provided. When it is on the boundary, so only its projective class is specified, inflation returns the unit area half-translation surface associated to $q$. When $q$ has a finite norm $|q|$, then inflation consists in inflating the diagram of singular leaves for the vertical foliation associated to $q$ into a fat graph endowed with negative curvature $-|q|$.
The bigger the norm $|q|$, the more negatively curved and thinner the inflated fat graph will be (see \thref{th:slim-inflation}), with the limit of the fat graph, for $|q|$ going to infinity, being the diagram of singular leaves itself, making the inflation trivial. Such collapsing phenomenon of a single "inflated" component was similarly observed also in \cite{xu19}.

Analogously to grafting rays, we can define in this way inflation rays $t \mapsto \Infl(e^t q)$. It follows from the definition of the map $\O$ that when composed with the projection $\PT(S) \to \T(S)$ they trace paths that generalize stretch lines \cite{thurston86}. As discussed in \cite[Section 15]{CF21} it is not yet clear whether the so-defined generalized stretch lines always describe Thurston geodesics in $\T(S)$, as it happens \cite{papadopoulos21}, \cite{disarlo22} for different cases where a certain degree of symmetry for the stretch lamination is present.

\paragraph{Further work and future directions}
In a further work in preparation we investigate another bordification, equivalent to the one described in the above-mentioned work of Dumas \cite{dumas06}, that arises instead from the natural mapping of $\T(S)\times\ML(S)$ into the space of geodesic currents, given just by taking the sum of the Liouville current and the measured lamination. 
The main idea is to use again geometric convergence, in the sense of Gromov-Hausdorff, of grafted surfaces to half-translation surfaces,
where this time the associated metrics are not the classical ones, but an $L^1$ variation of those.
From this it seems possible to deduce some of the results in \cite{theret07}, \cite{theret18} on the asymptotic behaviour of length functions along stretch lines, and extend them to the general case where no further hypothesis are required for the measured lamination.

The bordification $\bar{\PT(S)}$ we described has the advantage of being characterized by geometric convergence of surfaces with a metric, and hence offers a tool to compute in general limits of geometric invariants, such as length functions and volume entropy, for sequences of grafted surfaces converging to points on the boundary. 

Due to the geometric nature of all our construction, all the results we stated pass to the quotient by the mapping class group and are valid for the non-marked version of the moduli spaces considered.
It would be interesting to extend in this context our bordification to a compactification, meaning considering also Gromov-Hausdorff limits of sequences of degenerating surfaces, where the limit is not topologically a surface.

\textbf{Conjecture:} A pointed Gromov-Hausdorff limit of a sequence of area-normalized grafted surfaces is either a finitely grafted nodal hyperbolic surface, or the pointed Gromov-Hausdorff limit of a sequence of half-translation surfaces.

\paragraph{Outline of the paper}
In Section \ref{sec:preface} we introduce the classical objects in Teichmüller theory of hyperbolic and half-translation surfaces, and recall some standard results.
In Section \ref{sec:grafting} we introduce grafting and present Thurston's parametrization of the space of projective structures using grafting.
In Section \ref{sec:results} we give the definition of the space $\Met(S)$, encompassing both grafted and half-translation surfaces, and state \thref{mainth:bordification} in this setting.
Section \ref{sec:degrafting} is about the quantitative convergence in $\Met(S)$ for sequences of grafted surfaces where the measure of the lamination goes to zero. It is independent of the main result, but introduces in an easier setting the same ideas of the main argument of the article.
Section \ref{sec:orthogeodesic} contains the summary of the results that we import from \cite{CF21} and that are fundamental for our argument. We go through and readapt a construction of \cite{CF21} in order to define our own version of the deflation map $\D: \Gr(X,\lambda) \to \O(X,\lambda)$, laying the foundations for the main argument.
Section \ref{sec:deflation} contains the proofs of \thref{mainth:bordification} and \thref{mainth:deflation-control}, showing first that provided that $\ell_X(\lambda)$ is large, then the negatively curved part of $\Gr(X,\lambda)$ area-normalized is slim, and then that $\D$ is an $\epsilon$-isometry. 
Finally, in Section \ref{sec:inflation} we introduce inflation, try to provide a geometric intuition for it, and show its relation with stretch lines.

\paragraph{Acknowledgment}
The author is grateful to his supervisor U. Hamenstädt for the helpful conversations and guidance on this work.
The author is supported by the Max Planck Institute for Mathematics in Bonn.

\section{Preface} \label{sec:preface}

	We introduce here the classical objects and structures we will use in this article. All the statements of this section are well known facts in Teichmüller theory and can be found in introductory books such as \cite{imayoshi}, \cite{farbmarg}, \cite{martelli}.

	\subsection{Teichmüller space}
		Let us assume for the entire article that $S$ is a closed orientable surface of genus $g\geq 2$. It is known that $S$ admits many hyperbolic structures.
		Let us denote by $\T(S)$ the Teichmüller space of the surface $S$, the space where to each point corresponds an isotopy class of a complete hyperbolic metric on $S$. 
		We remind that thanks to the correspondence between hyperbolic structures and complex structures, $\T(S)$ can be thought of parametrizing either the hyperbolic structures on $S$ up to isotopy, or the Riemann surface structures up to isotopy.
		Equivalently, a point in $\T(S)$ can be also described as a pair $(X,[f])$ where $X$ is a hyperbolic surface and $[f]$ the homotopy class of $f: S \to X$ a diffeomorphism called \emph{marking}. Together they determine by pull-back a hyperbolic structure on $S$, unique up to isotopy.

		The mapping class group $\Mod(S) = \Diff^+(S)/\Diff^+_0(S)$ of the surface $S$ acts on $\T(S)$ by pre-composition with the marking, so the quotient $\M(S) = \T(S)/\Mod(S)$ is the moduli space of hyperbolic surfaces up to isometry (or equivalently the space of Riemann surfaces up to biholomorphism) and it is commonly referred to just as \emph{moduli space} of $S$.

		The moduli space is not compact as there exist sequences of hyperbolic surfaces with a closed geodesic whose length goes to zero.
		Given any $\epsilon>0$ the subset of $\M(S)$ of surfaces with no closed geodesic of length smaller than $\epsilon$ is compact, and its preimage $\T_\epsilon(S)$ in $\T(S)$ is called \emph{$\epsilon$-thick part} of Teichmüller space.

		\subsection{Half-translation surfaces and quadratic differentials}

		A half-translation structure on a surface $S$ is an atlas for $S\setminus F$, where $S\subset F$ is finite, made of charts to $\R^2$ such that transition maps are $\pm \id$ composed with a translation.
		Such a structure induces a flat singular metric on $S$, with finitely many singular points, each with cone angle $k\pi$ for some integer $k>2$.
		The analogue of Teichmüller space for half-translation structures will be denoted with $\QT(S)$.

		The space $\QT(S)$ is parametrized by holomorphic quadratic differentials. A holomorphic quadratic differential $q$ on a Riemann surface $X$ is an object that in holomorphic charts reads as $q(z)=\phi(z)dz^2$ with $\phi(z)$ holomorphic. The holomorphic quadratic differentials on $X$ form a vector space $Q(X)$ of complex dimension $3g-3$.
		If we consider charts of a half-translation surface structure from subsets of $S\setminus F$ to $\R^2 \cong \C$, these induce a complex structure on $S\setminus F$ that one can show extends to the whole surface $S$. Moreover, the quadratic differential expressed as $dz^2$ in such charts also extends to a holomorphic quadratic differential on the whole surface.
		Such correspondence identifies $\QT(S)$ with the vector bundle of quadratic differentials over Teichmüller space. We will make no distinction when talking about a half-translation surface and its associated quadratic differential $q$.

	\subsection{Measured laminations}
		Geodesic laminations are interesting objects that have been very useful in many ways in Teichmüller theory.
		A \emph{geodesic lamination} $\lambda$ on a hyperbolic surface $X$ is a compact subset foliated by geodesics.
		Some geodesic laminations admit a transverse measure, that is the datum of inducing of a measure on each transverse arc $\alpha$, such that the support is the entire intersection $\alpha \cap \lambda$, and such that it is invariant under homotopies of $\alpha$ preserving its transversality. We will indicate with $i(\lambda, \alpha)$ the total mass of the transverse measure induced on the arc $\alpha$.
		The laminations together with such a transverse measure are called \emph{measured laminations}, and the geodesic lamination underlying a measured lamination is called its \emph{support}.

		Measured laminations can have both closed and infinite leaves. When a measured lamination has only closed leaves, its support is a \emph{multicurve}, that is a finite union of simple closed geodesics, pairwise non-homotopic. In this case, the transverse measure is determined by a positive weight for each component, and the measured lamination is called \emph{weighted multicurve}.
		
		The measure of the intersection of laminations with suitable sets of curves gives charts that induce a piecewise linear structure on the space $\ML(S)$ of measured laminations, and, in particular, a topology that makes it homeomorphic to $\R^{6g-6}$.
		According to this topology the set of weighted multicurves is dense in $\ML(S)$.
		In general when we consider a measured lamination we would assume that it is non-empty; although in some occasion, we will need to consider also the zero lamination. For this reason we will specify with the notation $\ML_0(S)$ when we mean to include the zero lamination.
		
		Geodesic laminations are topological objects, as upon changing the underlying hyperbolic metric on $S$, it is always possible to straighten them to be geodesic according to the new metric and the topology induced on the space $\ML(S)$ does not change.
		
		There is a continuous function $\ell: \T(S)\times \ML(S) \to \R_{\geq 0}$ called \emph{length function}, and that we indicate as $\ell_X(\lambda)$ with the following properties. For $\lambda$ a unit mass closed geodesic, $\ell_X(\lambda)$ is the length of the geodesic representative of $\lambda$ according to the hyperbolic metric $X$; it is linear in $\lambda$; and additive for disjoint laminations.

		\paragraph*{Train tracks}
		Train tracks are very convenient combinatorial object used to study measured laminations. An extensive study of these can be found in \cite{penner91}.
		A train track $\tau$ on $S$ is an embedded graph on $S$, where the edges are called \emph{branches} and the vertices \emph{switches}, and at all switches all the adjacent branches are parallel to a single direction and there is at least one incoming branch for each of the two orientations.
		For every lamination $\lambda$ there is a train track $\tau$ that \emph{carries} it, meaning that $\lambda$ can be homotoped onto it (see geometric train track in Section \ref{sec:construction}). The transverse measure of $\lambda$ induces a transverse measure on the branches, that is simply encoded by a \emph{weight system}, that is the assignation of a non-negative weight for each branch, such that the switch condition is satisfied. The switch condition requires that at every switch the sums of the weights of the branches on each of the sides are equal.
		Vice versa, a train track $\tau$ with a weight system always encodes a measured lamination carried by $\tau$. If all the weights are positive, we say that $\tau$ carries snugly $\lambda$.

	\subsection{Measured foliations}
		To a half-translation surface structure $q \in \QT(S)$  is associated a pair of singular foliations on $S$, $(\re(q), \im(q))$, called respectively \emph{vertical} and \emph{horizontal foliation}, as they are defined as the two measured foliations whose leaves are respectively vertical and horizontal in the charts given by $q$. The transverse measure is the one given by the Euclidean metric in the horizontal and vertical directions respectively.

		There is a canonical identification (see \cite{levitt83}) between the space of measured laminations $\ML(S)$ and the space $\MF(S)$ of measured foliations on $S$ up to homotopy and Whitehead moves (operations that do not change essentially the lamination, except for merging or separating two singularities connected by a leaf).

		The association of the pair of vertical and horizontal foliations $(\re(q), \im(q))$ to the half-translation surface $q$ induces an embedding $\QT(S) \hookrightarrow \MF(S) \times \MF(S)$. The image of the embedding is given by all the pairs of foliations that can be realized as transverse foliations (meaning with same singular points, and transverse everywhere else). Let us denote by $q(\eta,\lambda)$ the quadratic differential associated to the pair of measured foliations $(\eta,\lambda)$. 
		Given $\lambda \in \MF(S)$, we denote by $\MF(\lambda)$ the space of measured foliations transverse to $\lambda$.
		It is known, from the work of Hubbard and Masur \cite{hubbardmasur79}, that $\MF(\lambda)$ is homeomorphic to $\T(S)$.

\section{Grafting and projective structures} \label{sec:grafting}

	In this section we will introduce grafting first as a geometric operation to produce a surface with piecewise hyperbolic and piecewise flat metric. Then we will introduce complex projective structures on surfaces, and show how grafting can be described as a deformation of such structures. We briefly present Thurston's result on the parametrization of the moduli space of complex projective structure using grafting, and show the equivalence between the projective and geometric points of view. We finally introduce a space of metrics on $S$ where the space of grafted surfaces embeds.
	
	\subsection{Geometric grafting}

		Let us first see the easiest example, also called \emph{simple grafting} where the inserted flat part is only one cylinder. Later on we will extend the definition to a more general setting.
		Let $X$ be a hyperbolic surface, $\gamma$ a simple closed geodesic in it and $a> 0$ a real number. We will perform a grafting of $X$ along $\gamma$ with weight $a$.
		For easiness of notation, here as in the rest of the paper, we will use the same symbol to indicate both the parametrization and the image of curves and arcs.

		\begin{definition}
			Let us consider a cylinder $\gamma \times [0,a]$ endowed with the Euclidean product metric, so that it has circumference of length $\ell_X(\gamma)$ and height $a$.
			Let us cut $X$ open along $\gamma$, obtaining a hyperbolic surface $X'$ with two geodesic boundary components $\gamma_1, \gamma_2$, each of length $\ell_X(\gamma)$, with parametrizations compatible with the cut,
			meaning that for every $t$, the points $\gamma_1(t), \gamma_2(t)$ both correspond to $\gamma(t)$ before the cut.
			Then glue the euclidean cylinder to $X'$ along the boundary with the isometries $\phi_1: \gamma \times \{0\} \to \gamma_1$, and $\phi_2: \gamma \times \{a\} \to \gamma_2$. The result is called \emph{grafted surface} $\Gr_{a\gamma}(X)$.
		\end{definition}

		\begin{figure}[ht]
			\includesvg[width=0.8\textwidth]{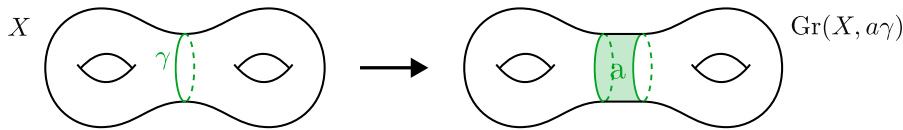}
			\centering
			\vspace{0.2cm}
			\begin{minipage}{0.8\textwidth}
				\caption{Simple grafting along $\gamma$ with weight $a$.}
				\label{fig:simple-grafting}
			\end{minipage}
		\end{figure}

		Note that the obtained grafted surface has a well-defined differentiable structure: the gluing is performed along the geodesic boundary of $X'$ and the geodesic boundary of the cylinder.
		Since the gluing is performed via isometries the metric of the grafted surface is Riemannian with $C^1$ regularity, but not $C^2$, as we notice that the curvature is discontinuous, as it jumps from locally constant $-1$ in the hyperbolic part to locally constant 0 in the flat cylinder.

		There is a natural homotopy equivalence from $\Gr_{a\gamma}(X)$ to $X$ that consists in collapsing the grafted cylinder to the curve $\gamma$. This homotopy equivalence allows us, given a marking $f: S \to X$ to induce one on $\Gr_{a\gamma}(X)$, so grafting can be also seen as an operation on marked structures.
		
		Moreover, we observe that the grafting datum is only topological, as the curve $\gamma$ can be seen just as a free homotopy class of simple closed curves on $S$. In order to perform the grafting on $X$ we just need to push forward $\gamma$ to $X$ via the marking $f$, take there the closed geodesic homotopic to the push forward of $\gamma$, and perform the grafting operation along it.
		So grafting is well-defined on the pairs $(X,a\gamma)$ where $X\in\T(S)$ and $a\gamma$ is a weighted free homotopy class of simple closed curves on $S$.
		
		Grafting easily extends to weighted multicurves.
		So given $X \in \T(S)$ and $\mu = a_1 \gamma_1 + \dots + a_k \gamma_k$, the grafted surface $\Gr_\mu(X)$ is obtained by inserting one cylinder of height $a_i$ for each component $\gamma_i$ of $\mu$.

		We call \emph{hyperbolic pieces} the connected components of the complement of the cylinders in $\Gr_{a\gamma}(X)$. They are the interior of hyperbolic surfaces with geodesic boundary.

	\subsection{Complex projective structures}

		Let us consider the Riemann sphere $\CP^1$. The projective automorphisms of $\CP^1$ are the maps induced by the tautological linear action of $\GL_2\C$ on $\C^2$, that on $\CP^1$ acts with kernel $\C^* \id$. So the automorphisms group is $\PSL_2\C = \GL_2\C / \C^*\id$, and its elements are also known as \emph{Möbius transformations}.

		\begin{definition}
			A complex projective structure on $S$ is a maximal atlas for $S$ of charts to $\CP^1$, where the transition maps are restrictions of Möbius transformations.
		\end{definition}
		We will often omit the word \emph{complex} and with \emph{projective structures} we always mean the complex projective ones. A map between surfaces with projective structures is called \emph{projective} if in charts it is read as the restriction of a Möbius transformation. 

		The definition of projective structures is a particular case (with $X=\CP^1$ and $G=\PSL_2\C$) of the more general concept of $(G,X)$-structure on a smooth $n$-dimensional manifold, where $X$ is a real analytic manifold of dimension $n$, and $G$ is a Lie group acting faithfully and analytically on $X$. The theory of $(G,X)$-structures is developed in \cite[Chapter I.1]{notesonnotes}.
		We will state here some properties of projective structures, that are commonly known for hyperbolic structures (which are also $(\PSL_2\R, \H^2)$-structures), and true in general for any $(G,X)$-structure.

		Given a projective structure $Z$, we recall that the universal cover $\tilde S$ of $S$ also inherits naturally from $Z$ a projective structure just by pre-composition of the atlas for $S$ with the covering map. We call $\tilde Z$ the surface $\tilde S$ with such an inherited projective structure.

		\begin{proposition}
			For every projective structure $Z$, there exists a homomorphism $\rho_Z: \pi_1(S) \to \PSL_2\C$ called \emph{holonomy representation}, and a $\rho_Z$-equivariant projective map $\dev_Z: \tilde Z \to \CP^1$ called \emph{developing map}.
			The pair $(\rho_Z, \dev_Z)$ is unique, up to the action of Möbius transformations. This action of $\phi\in\PSL_2\C$ is given by conjugation on $\rho_Z$ and post-composition on $\dev_Z$ with the lift $\tilde \phi \in \Diff_0^+(\tilde S)$.
		\end{proposition}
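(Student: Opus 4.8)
The plan is to build the developing map by analytic continuation of projective charts, using throughout the single fact that a Möbius transformation is determined by its restriction to any nonempty open subset of $\CP^1$. Fix a basepoint $\tilde{x}_0 \in \tilde{S}$ contained in the domain $U_0$ of a projective chart $\phi_0$, which after shrinking we may take connected and simply connected, and declare $\dev_Z = \phi_0$ on $U_0$. Given any $\tilde{x}\in\tilde{S}$ and a path $\alpha$ from $\tilde{x}_0$ to $\tilde{x}$, a Lebesgue-number argument applied to the pullback of the atlas along $\alpha$ produces a finite chain $U_0, U_1, \dots, U_n$ of chart domains covering $\alpha$ with $\tilde{x}\in U_n$, where we may arrange each consecutive overlap to be connected and to meet $\alpha$. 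Inductively, having extended $\dev_Z$ over $U_0\cup\cdots\cup U_{i-1}$ so that on $U_{i-1}$ it is a Möbius transformation composed with $\phi_{i-1}$, let $g_i\in\PSL_2\C$ be the unique Möbius transformation agreeing with the transition function $\phi_{i-1}\circ\phi_i^{-1}$ on $U_{i-1}\cap U_i$, and set $\dev_Z = g_i\circ\phi_i$ on $U_i$; this agrees with the previous definition on the (connected) overlap by rigidity. This assigns a candidate value $\dev_Z(\tilde{x})$.

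Next I would check this value is independent of all choices. Two chains subordinate to the same path give the same germ at $\tilde{x}$ by passing to a common refinement and applying rigidity at each overlap; and if $\alpha$ and $\alpha'$ are homotopic rel endpoints, subdividing a homotopy into a grid of small squares, each mapped into a single chart domain, shows the resulting germ at $\tilde{x}$ is unchanged as one sweeps from $\alpha$ to $\alpha'$. Since $\tilde{S}$ is simply connected, all paths from $\tilde{x}_0$ to $\tilde{x}$ are homotopic rel endpoints, so $\dev_Z\colon\tilde{Z}\to\CP^1$ is well defined, and it is projective because in a neighbourhood of every point it equals a fixed Möbius transformation composed with a chart (in particular it is a local homeomorphism).

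For the holonomy, fix $\gamma\in\pi_1(S)$ acting as a deck transformation of $\tilde{S}$. Both $\dev_Z$ and $\dev_Z\circ\gamma$ are projective maps $\tilde{Z}\to\CP^1$, so in a neighbourhood of any point they differ by post-composition with a Möbius transformation; by connectedness of $\tilde{S}$ and rigidity this is a single global element $\rho_Z(\gamma)\in\PSL_2\C$, giving $\dev_Z\circ\gamma = \rho_Z(\gamma)\circ\dev_Z$. Applying this to a product $\gamma\delta$ and cancelling $\dev_Z$ (legitimate once more by rigidity, since $\dev_Z$ has open image) yields $\rho_Z(\gamma\delta)=\rho_Z(\gamma)\rho_Z(\delta)$, so $\rho_Z$ is a homomorphism. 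Uniqueness is again rigidity: if $(\rho,\dev)$ and $(\rho',\dev')$ both satisfy the conclusion, then locally $\dev'$ is a Möbius transformation composed with $\dev$, hence $\dev' = h\circ\dev$ for one global $h\in\PSL_2\C$, and comparing $h\circ\dev\circ\gamma = \dev'\circ\gamma = \rho'(\gamma)\circ h\circ\dev$ with $h\circ\rho(\gamma)\circ\dev$ forces $\rho'(\gamma)=h\rho(\gamma)h^{-1}$ for all $\gamma$; conversely replacing $(\rho_Z,\dev_Z)$ by $(h\rho_Z h^{-1},\, h\circ\dev_Z)$ visibly preserves both equivariance relations, which is exactly the claimed $\PSL_2\C$-action. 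The one genuinely delicate point — and the step I would write out most carefully — is the invariance under homotopy of the continued germ; everything else is bookkeeping organised around the rigidity of Möbius transformations. This is the standard development–holonomy construction for $(G,X)$-structures, carried out for instance in \cite[Chapter I.1]{notesonnotes}.
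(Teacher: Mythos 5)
Your argument is correct and is exactly the standard development--holonomy construction for $(G,X)$-structures; the paper itself gives no proof of this proposition, deferring to the general theory in \cite[Chapter I.1]{notesonnotes}, which proceeds by the same analytic continuation of charts plus rigidity of Möbius transformations that you use. One small slip in the bookkeeping: in the inductive step the element $g_i$ should be chosen to agree on the overlap with $g_{i-1}\circ\phi_{i-1}\circ\phi_i^{-1}$ (the previously chosen Möbius factor composed with the transition map), not with the bare transition map $\phi_{i-1}\circ\phi_i^{-1}$, otherwise $g_i\circ\phi_i$ matches $\phi_{i-1}$ rather than the already-defined $\dev_Z$; with that correction the continuation, the homotopy-invariance argument, the definition of $\rho_Z$, and the uniqueness up to the $\PSL_2\C$-action all go through as you wrote them.
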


		\begin{definition}
			The moduli space $\PT(S)$ of marked projective structures on $S$, similarly to the case of marked hyperbolic structures, can equivalently be defined as:
			\begin{enumerate}
				\item the set of projective structures on $S$ up to isotopy
				\item the set of pairs of developing map and holonomy, up to isotopy
				$$\PT(S) = \{ (\dev, \rho) \;|\; \rho: \pi_1(S) \to \PSL_2\C, \, \dev: \tilde S \to \CP^1 \text{ $\rho$-equiv. } \} / \Diff_0^+(S) $$
				\item the set of marked projective surfaces
				$$ \PT(S) = \left\{ (Z,[f]) \;|\; Z \text{ projective surface}, \; f:S \to Z \text{ marking} \right\}/ \sim $$
				where $(Z,[f]) \sim (Z',[f'])$ if there exists a projective diffeomorphism $\phi: Z \to Z'$ such that $\phi \circ f $ is homotopic to $f'$.
			\end{enumerate}
		\end{definition}

		The last one also allows us to endow $\PT(S)$ with a topology. Indeed, the set of all possible developing maps from $\tilde S$ to $\CP^1$ has naturally the compact open topology, or in other words, the topology given by the uniform convergence on compact subsets. This is an infinite dimensional space, but we can take the quotient by the action of the infinite dimensional Lie group $\Diff_0^+(S)$ and so endow the space $\PT(S)$ with the quotient topology.

		\begin{remark}
			A complex projective structure induces also a complex structure, as the Möbius transformations are holomorphic. Thus, we can define a forgetful map $\pi: \PT(S) \to \T(S)$ that remembers only the complex structure.
		\end{remark}

		There is a parametrization of $\PT(S)$, called Schwarzian parametrization (see \cite{dumas07}), that gives a homeomorphism $\PT(S) \to \QT(S)$ compatible with forgetful maps from the spaces to $\T(S)$. This will imply for example that $\PT(S)$ is finite dimensional. Although, we will not use this parametrization, but rather a more geometrical one using grafting and introduced by Thurston, which also have some additional desirable properties, as for instance it is canonical.

		\begin{remark}
			We observe that $\T(S)$ embeds naturally into $\PT(S)$. Indeed,
			the hyperbolic plane $\H^2$ can be modelled as the upper half plane $\Hh$ of $\C\subset \CP^1$, with isometry group given by the real Möbius transformations $\PSL_2\R < \PSL_2\C$. So the developing map and holonomy of a hyperbolic structure are also developing map and holonomy for a projective structure.
		\end{remark}

	\subsection{Projective grafting}
		As we pointed out in the previous section, Teichmüller space is naturally embedded in $\PT(S)$, meaning that every hyperbolic surface induces a projective structure too. We will see now how grafting can be seen also as a deformation of such projective structures, and see how Thurston used it to parametrize the space $\PT(S)$.

		For this scope, let us introduce the following model for cylinders with a projective structure.  
		Consider, for $a<2\pi$, $V_a$ the sector of the complex plane defined as 
		$$ V_a = \{ r e^{i\alpha} \,\mid\, r>0,\, \alpha\in (0,a) \}. $$
		It has a natural projective structure given by the inclusion in $\C \subset \CP^1$. The quotient $V_{a,l}$ of $V_a$ by the $\Z$-action generated by the Möbius transformation $z \mapsto e^l z$, with $l\in\R^+$, is a cylinder endowed with the quotient projective structure. 
		For $a>2\pi$ we can construct $V_{a,l}$ as a concatenation, that is by gluing along their boundaries $n$ copies of projective cylinders $V_{a/n, l}$ for $n$ large enough so that $a/n < 2\pi$.
		Now we can define projective grafting totally analogously to how we defined the geometric one.

		\begin{definition}
			
			Given $X\in\T(S)$ and $\mu = \sum_i a_i\gamma_i$ a weighted multicurve, we cut $X$ along the geodesic representative of $\mu$ and for every term $a_i\gamma_i$ we insert a projective cylinder $V_{a_i, \ell_X(\gamma_i)}$ in place of the closed geodesic $\gamma_i$.
			It is easy to verify, by comparing the developing maps, that the projective structure on the hyperbolic pieces and the cylinders match nicely and form a projective structure on the whole surface obtained. 
		\end{definition}

		This operation defines a grafting map $\Gr: \T(S) \times \mathscr{M}(S) \to \PT(S)$ where $\mathscr{M}(S)$ is the set of weighted multicurves. This leads to Thurston parametrization theorem of $\PT(S)$ using grafting.
		
		\begin{theorem}[Thurston]	\thlabel{th:thurston-param}
			The projective grafting defined above extends to a mapping class group equivariant homeomorphism
			$$ \Gr: \T(S) \times \ML_0(S) \to \PT(S). $$
		\end{theorem}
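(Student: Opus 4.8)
The plan is to extend the projective grafting map from weighted multicurves to all measured laminations by continuity and density, and then to establish that the extension is a homeomorphism by exhibiting an inverse via the Thurston metric. First I would recall that weighted multicurves $\mathscr{M}(S)$ are dense in $\ML_0(S)$, and that the grafting map $\Gr$ is already defined and continuous on $\T(S)\times\mathscr{M}(S)$. To extend it, I would fix $X\in\T(S)$ and a sequence of weighted multicurves $\mu_n\to\lambda$ in $\ML(S)$, and show that the developing maps $\dev_{\Gr_{\mu_n}(X)}$ converge uniformly on compact subsets of $\tilde S$, so that the limit defines a well-defined projective structure $\Gr(X,\lambda)$ independent of the approximating sequence. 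The geometric picture here is that grafting along $\mu_n$ replaces geodesic arcs by flat strips whose widths are governed by the transverse measure, and as $\mu_n\to\lambda$ these strips accumulate along the leaves of $\lambda$; the key convergence input is the stability of developing maps under this limiting procedure (this is exactly the content of Thurston's construction as presented in \cite{tanigawa97}, \cite{dumas09}).

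Next I would address the inverse map. Given a projective structure $Z\in\PT(S)$, Thurston's construction produces from the developing map a locally convex "pleated" picture in $\H^3$ (the boundary of the hyperbolic convex hull of the complement of the developing image, or equivalently the nearest-point retraction), which canonically yields a hyperbolic surface $X\in\T(S)$ together with a measured bending lamination $\lambda\in\ML_0(S)$; equivalently one passes through the Thurston metric on $Z$, whose "flat part" records $\lambda$ and whose "hyperbolic part" records $X$. I would verify that $(Z\mapsto(X,\lambda))$ is continuous, and that the two assignments are mutually inverse: starting from $(X,\mu)$ with $\mu$ a weighted multicurve, grafting inserts flat cylinders along $\mu$ and the Thurston metric recovers precisely $(X,\mu)$, and this identity then propagates to all of $\ML_0(S)$ by density and continuity of both maps. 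Mapping class group equivariance is immediate since every step — cutting along geodesics, inserting the model cylinders $V_{a,l}$, extracting the bending data — is natural with respect to marked diffeomorphisms.

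The main obstacle will be the convergence/continuity argument underpinning the extension: showing that $\Gr(X,\mu_n)$ converges in $\PT(S)$ (i.e.\ that the developing maps converge uniformly on compacta after the $\Diff_0^+(S)$-normalization) as $\mu_n\to\lambda$, and that the limit is continuous in both variables jointly, rather than merely separately. This requires uniform control on how the inserted flat regions sit inside $\CP^1$ as the approximating multicurves degenerate onto an infinite lamination — in particular controlling the cylinders $V_{a_i,\ell_X(\gamma_i)}$ when the $\gamma_i$ become short or numerous — and showing no "escaping to the boundary" or collapse occurs. The bijectivity, by contrast, reduces to a fairly clean local computation on the model cylinders: one checks directly that the Thurston metric on $V_{a,l}$ (inside a grafted surface) has a flat annular core of height exactly $a$ bounded by two geodesics, so the degrafting operation is literally inverse to grafting on multicurves, and the general case follows formally. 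I would also remark that injectivity of the extension can be seen more softly from the fact that $\Gr(X,\cdot)$ restricted to each fibre over $\T(S)$ is open and the source and target have the same (finite) dimension $6g-6$, but the clean statement really comes from the explicit inverse.
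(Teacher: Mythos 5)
The paper does not prove this statement: \thref{th:thurston-param} is imported as Thurston's theorem, with the construction only sketched afterwards (the collapsing map $\kappa$) and the reader referred to \cite{tanigawa97} and \cite{dumas09} for the actual argument. So your proposal can only be measured against that standard argument, and there it has real gaps beyond the one you flag yourself. The most serious is your description of the inverse map: for a general projective structure the developing map is neither injective nor has image a proper subset of $\CP^1$ (typically it is surjective and wildly non-injective), so "the boundary of the hyperbolic convex hull of the complement of the developing image" and the global nearest-point retraction simply do not exist. Thurston's construction instead works locally and equivariantly on $\tilde Z$ with the family of maximal round discs immersed in $Z$ (each maximal disc bounds a half-space in $\H^3$, and the pieces of convex hull boundary are assembled along the canonical stratification); this is where the hyperbolic surface $X$ and the bending/grafting lamination $\lambda$ come from, and it is the technical heart of the proof, not an "equivalent" rephrasing of passing through the Thurston metric.

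Two further steps would fail as written. First, bijectivity does not "follow formally by density": a density argument gives $F\circ\Gr=\id$ on $\T(S)\times\ML_0(S)$ once both maps are continuous, but $\Gr\circ F=\id$ on all of $\PT(S)$ (equivalently surjectivity of $\Gr$) requires either a direct verification that grafting the retrieved datum $(X,\lambda)$ reproduces $Z$ for every $Z$, or properness of $\Gr$ combined with invariance of domain and connectedness of $\PT(S)$; you cannot get it from knowing the inverse identity only on the multicurve locus. Second, the "softer" injectivity remark is invalid: an open map between manifolds of equal dimension need not be injective. Finally, the extension of $\Gr$ itself is left as an acknowledged obstacle, but in Thurston's treatment $\Gr(X,\lambda)$ is defined directly for arbitrary $\lambda$ (via the bending cocycle / locally convex pleated plane, whose boundary data give the developing map), and continuity in $(X,\lambda)$ is then proved; defining it as a limit of $\Gr(X,\mu_n)$ and proving well-definedness of the limit is essentially the same work, so as it stands your plan postpones rather than supplies the key estimate.
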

		
		This means that grafting completely parametrizes the space of projective structures. And vice versa, for every grafting datum $(X,\lambda) \in \T(S) \times \ML_0(S)$ there is a projective structure $\Gr(X,\lambda)$ associated to it. The goal of the next section is to view $\Gr(X,\lambda)$ as geometric object, and try to understand its geometry also in the case when $\lambda$ is not a weighted multicurve.

	\subsection{Thurston metric}		\label{sec:thurston-param}

		In this section we explain why grafting and projective grafting are essentially the same. Thurston showed that any projective structure $\Gr(X,\lambda)$ on $S$ induces a metric that, in the case of a structure obtained by projective grafting along a multicurve, coincides with the geometrically grafted metric $\Gr_\lambda(X)$ of the section above.
		The definition of such a metric given by Thurston, valid in the general case, is an adaptation to the projective case of the Kobayashi metric.

		\begin{definition}[Thurston metric]
			Given $Z\in \PT(S)$, the Thurston metric associated to it is a norm on the tangent bundle of $S$ defined as follows. For every $p\in S$ and $v \in T_pS$
			$$ \norm{v}_{Th} = \inf_{j} \norm{ j^*v }_{hyp} $$
			where the infimum is taken over all projective immersions $j: \Delta \to Z$ from the complex unit disc $\Delta$, containing $p$ in their image, and where $\norm{\cdot}_{hyp}$ is the hyperbolic metric on $\Delta$.
		\end{definition}
		
		We recall that the complex unit disc $\Delta = \{z\in \C | z\bar z <1\} $ can be both seen as the Poincaré disc model for the hyperbolic plane $\H^2$ and as $\Delta \subset \C \subset \CP^1$ inside the Riemann sphere, which gives it a canonical projective structure.

		A discussion on the Thurston metric, in a more general setting, can be found in \cite{kulkarni94}, where in particular they show the following.

		\begin{theorem}[{\cite{kulkarni94}}] \thlabel{th:kulkarni}
			Let $Z$ be a surface with a complex projective structure. The associated Thurston metric is a complete Riemannian metric $g_Z$ of class $C^{1,1} $ (i.e. with Lipschitz derivatives), compatible with $Z$, i.e. it is conformal to $\pi(Z)$.
			Moreover, the Lipschitz constant of its derivatives is bounded locally in $\PT(S)$.
		\end{theorem}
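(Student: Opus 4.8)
The plan is to transfer the statement, via the developing map, into a statement about osculating round discs in $\CP^1$, and to locate the difficulty in the regularity of the resulting metric across the locus where the osculating disc degenerates.

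First I would localise and identify the metric. Since $\dev_Z$ is a local diffeomorphism, it is enough to work near a point $p\in S$: lift to $\tilde p\in\tilde S$, put $x=\dev_Z(\tilde p)$, and observe that a projective immersion $j\colon\Delta\to Z$ with $p$ in its image is exactly an embedded round disc $B\ni x$ on which a single-valued section $s$ of $\dev_Z$ with $s(x)=\tilde p$ is defined — call such $B$ \emph{admissible}. Writing the Poincar\'e metric of an admissible disc in a conformal chart as $g_B=e^{2u_B}\,|dz|^2$, the definition of the Thurston norm reads $g_Z=e^{2u}\,|dz|^2$ with $u=\inf_B u_B$, so $g_Z$ is conformal to $\pi(Z)$ as soon as it is a genuine metric. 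It is: $u$ is bounded above by any single $u_B$; by strict monotonicity of the Poincar\'e density under inclusion of discs the infimum is never lowered by enlarging a disc, so a normal-families argument realises it by an admissible disc $B_p$ maximal for inclusion; and since the holonomy of a projective structure on a closed surface of genus $\geq2$ is non-elementary, no admissible disc can grow without bound, whence $e^{u_{B_p}(x)}$ is bounded below, uniformly on compacta of $\PT(S)$. Completeness is then automatic from the compactness of $S$. Finally, following Kulkarni--Pinkall, the set of $p$ where the \emph{osculating} maximal disc $B_p$ is not unique is a measure-zero, lamination-type ``core''; off it $p\mapsto B_p$ is smooth (indeed real-analytic) into the $3$-dimensional space of round discs, so $g_Z$ is smooth away from the core.

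Two facts are then essentially free. Each admissible $B$ gives a smooth function $u_B\geq u$ touching $u$ from above exactly where $B=B_p$, and near $x$ the $C^2$-norm of $u_B$ is controlled by the Poincar\'e density itself, hence at the osculating disc by $\sup_S e^{2u_Z}$, a quantity continuous on $\PT(S)$. Thus $u$ carries a smooth upper barrier of locally bounded $C^2$-norm at every point, i.e. $u$ is semiconcave with a locally uniform bound; this already yields $u\in C^{1,\alpha}_{\mathrm{loc}}$ for all $\alpha<1$ and shows the curvature $-e^{-2u}\Delta u$ of $g_Z$ is $\geq-1$ in the barrier sense (one also has $\leq0$). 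But a bounded curvature only gives $W^{2,p}$ for finite $p$, so semiconcavity and the curvature bound by themselves do \emph{not} give $C^{1,1}$: the improvement has to come from the minimum structure.

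The crux is therefore the behaviour of $g_Z$ across the core. There the Kulkarni--Pinkall stratification — equivalently the support-plane structure of the boundary of the convex hull of $\CP^1\setminus\dev_Z(\tilde S)$ in $\H^3$ — exhibits $g_Z$ locally as a minimum $u(z)=\min_\gamma f(z,\gamma)$ of a \emph{smooth, finite-dimensional} family of admissible disc potentials $f(z,\gamma)=u_{B_\gamma}(z)$, with $\gamma$ ranging over a compact parameter interval fixed by the local geometry. Where the minimum is attained at an interior critical point $\gamma^\ast(z)$ it is smooth with $\partial_z u=\partial_z f(z,\gamma^\ast)$ by the envelope identity $\partial_\gamma f=0$; where it sits at a fixed endpoint $\gamma_0$ it is again smooth with $\partial_z u=\partial_z f(z,\gamma_0)$; and along the interface $\gamma^\ast(z)\to\gamma_0$ the two first derivatives agree. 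Hence $u$ is $C^1$, and being piecewise smooth with matching first derivatives it is $C^{1,1}$, the Lipschitz constant of $du$ being bounded in terms of the $C^2$-norms of $f$ and $\gamma^\ast$, which depend continuously on the point in $\PT(S)$. (For grafted surfaces one sees this directly: on the model wedge $V_a$ the Thurston metric is the flat metric $|dz|^2/|z|^2$ on an inner sub-wedge and the Poincar\'e metric of a boundary half-plane near each bounding ray, and at the two interfaces — the bounding circles of the inserted cylinder — the second derivative of the conformal factor jumps by exactly the curvature jump $1$, so $g_Z$ is $C^{1,1}$ but not $C^2$.) I expect the genuine obstacle to be making the ``minimum of a smooth finite-dimensional family'' picture rigorous in full generality — setting up the canonical stratification, or the convex-hull description, and controlling how the competing disc families meet along the core — everything downstream being a direct envelope computation.
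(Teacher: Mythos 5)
The paper does not prove this statement — it imports it wholesale from Kulkarni--Pinkall — so the only meaningful comparison is with their argument, and your skeleton is indeed theirs: realise the conformal factor as $u=\inf_B u_B$ over round discs in $\CP^1$ admitting a section of the developing map, rule out degeneration via non-elementary holonomy and cocompactness, and control the regularity of the envelope through the canonical stratification by osculating discs. Your wedge computation is also the correct local model for the grafted case.

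As a proof, however, the proposal has a genuine gap exactly where you place it, plus two intermediate steps that are wrong as stated. First, semiconcavity (an infimum of smooth functions with locally uniformly bounded $C^2$-norms) does \emph{not} yield $C^{1,\alpha}_{\mathrm{loc}}$, nor even $C^1$: $-|x|=\min(x,-x)$ is such an infimum. Differentiability of $u$ at a point already requires the superdifferential there to be a singleton, i.e.\ either uniqueness of the density-minimising disc or agreement of the first-order jets of all minimisers; contrary to your description of a ``core of non-uniqueness'', Kulkarni--Pinkall prove the osculating disc is unique at \emph{every} point, and that uniqueness is one of the main lemmas, not a freebie. Similarly, upgrading a barrier-sense curvature bound to $\Delta u\in L^\infty$ before invoking elliptic regularity needs the distributional statement, which again only comes out of the stratification. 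Second, the envelope argument in your last paragraph assumes that locally $u$ is a minimum over a \emph{compact parameter interval} with the minimiser either interior-critical or pinned at an endpoint, so that the surface splits into finitely many smooth pieces with matching first derivatives along interfaces. For a structure grafted along a lamination with Cantor transversals the hyperbolic strata accumulate on themselves and no such finite piecewise decomposition exists; the $C^{1,1}$ bound has to come from a uniform two-sided Hessian estimate on the disc potentials combined with Lipschitz dependence of $p\mapsto B_p$, which is precisely the content of the convex-hull/support-plane analysis you defer. Finally, the addendum the paper actually uses later — local uniformity of the Lipschitz constant over $\PT(S)$ — is only asserted; it requires checking that the lower density bound, the Hessian bounds, and the modulus of $p\mapsto B_p$ all vary continuously with the developing map in the compact-open topology.
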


		Our goal is now to understand better the geometry of projective surfaces with the Thurston metric. For this scope we need to extrapolate some results from the proof of \thref{th:thurston-param}.
		The idea of Thurston's proof  goes in the opposite direction of the grafting map, showing a construction for retrieving, given a projective structure $Z$, the grafting datum $(X,\lambda)$ and proving that this depends continuously on $Z$.
		At the core of the construction is producing a map $\kappa: Z \to X$ that will play the role of the generalization of the collapsing map we mentioned in the previous section.
		A more detailed discussion of Thurston's argument can be found in \cite{dumas09} and \cite{tanigawa97}.
		Here we summarize in the following statement only some particular geometric information that emerges from Thurston's argument.

		\begin{theorem}[Thurston]
			Given a projective structure $\Gr(X,\lambda) \in\PT(S)$, there exists a homotopy equivalence $\kappa: \Gr(X,\lambda) \to X$ such that
			\begin{enumerate}[i.]
				\item $\kappa^{-1}(\lambda)$ is a lamination on $\Gr(X,\lambda)$, geodesic with respect to the Thurston metric
				\item $\kappa$ maps each geodesic in $\kappa^{-1}(\lambda)$ isometrically to a leaf of $\lambda$
				\item for every connected component $A$ of $X\setminus \lambda$, $\kappa$ restricts to an isometry between $\kappa^{-1}(A)$ and $A$. 
			\end{enumerate}
			Moreover, in the case when $\lambda$ is a weighted multicurve, the surface $\Gr(X,\lambda)$ equipped with the Thurston metric coincides with the geometrically grafted surface $\Gr_\lambda(X)$ obtained from $X$ inserting Euclidean cylinders along $\lambda$, and the map $\kappa: \Gr(X,\lambda) \to X$ is the map collapsing the cylinders.
		\end{theorem}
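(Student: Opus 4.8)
The plan is to build the map $\kappa$ out of the construction underlying Thurston's parametrization theorem \thref{th:thurston-param}, namely the Kulkarni--Pinkall canonical stratification of a projective surface (see \cite{kulkarni94}, with expositions in \cite{dumas09} and \cite{tanigawa97}), and then to read off the three properties, handling the multicurve case by an explicit computation on the model cylinders. For $Z=\Gr(X,\lambda)$ with developing map $\dev\colon\tilde Z\to\CP^1$, assign to each $p\in\tilde Z$ its \emph{maximal ball} $B_p$: the largest round disc $B\subset\CP^1$ whose inclusion lifts through $\dev$ to a projective embedding $B\hookrightarrow\tilde Z$ sending the center to $p$. By definition of the Thurston metric, $\norm{v}_{Th}$ at $p$ equals the hyperbolic norm of $v$ inside $B_p$, and Kulkarni--Pinkall show that $B_p$ is unique, depends continuously on $p$, and that the pattern in which the $B_p$ osculate one another stratifies $\tilde Z$ equivariantly, hence also $Z$. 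Viewing $\CP^1$ as the ideal boundary of $\H^3$, the circle $\partial B_p$ bounds a totally geodesic plane, and these planes fit together into a $\rho_Z$-equivariant, locally convex \emph{bent plane} $\beta\colon\tilde Z\to\H^3$ whose intrinsic metric is that of $\H^2$ and whose bending measured lamination is, by \thref{th:thurston-param}, exactly $\lambda$. Taking $X$ to be the quotient of the bent plane by $\rho_Z(\pi_1 S)$ with its induced hyperbolic path metric, and $\kappa$ the induced collapse $Z\to X$, gives the desired homotopy equivalence.

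With this in hand, property (iii) is the easy one and I would do it first: over a lift $\tilde A$ of a complementary region $A$ of $\lambda$ the bent plane is a single totally geodesic piece, so $B_p$ is \emph{constant}, equal to one disc $B\cong\Hh$, as $p$ ranges over $\tilde A$; consequently on $\kappa^{-1}(A)$ the Thurston metric is literally the $\dev$-pullback of the hyperbolic metric of $B$, and there $\kappa$ is the projective identification of $\kappa^{-1}(A)$ with $A$, which for these metrics is an isometry onto $A$ equipped with its hyperbolic metric. For (i) and (ii) one analyses the bending locus: $\kappa^{-1}(\lambda)$ is the union of the strata at which $\partial B_p$ is tangent to the boundary of the developed image in at least two points — the \emph{grafted} part of $Z$ — where the extremal disc is a half-plane that can be slid parallel to a fixed geodesic; this forces the Thurston metric to be flat on $\kappa^{-1}(\lambda)$ and $\kappa^{-1}(\lambda)$ to be a geodesic lamination for it, while $\kappa$ collapses each such stratum orthogonally to the sliding direction onto a leaf of $\lambda$, hence isometrically along leaves and contracting the transverse Lebesgue width to the transverse measure of $\lambda$. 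This identification of the Thurston metric and of the collapse on the grafted locus is the technical heart — it is precisely Thurston's unpublished argument together with \cite{kulkarni94} — so in the write-up I will reproduce only what (i) and (ii) actually need and cite the remainder.

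Finally I would spell out the multicurve case $\lambda=\mu=\sum_i a_i\gamma_i$ by direct computation. There $\Gr(X,\mu)$ is assembled from the hyperbolic pieces of $X\setminus\mu$ and the projective cylinders $V_{a_i,\ell_X(\gamma_i)}$, so it suffices to compute the Thurston metric of the model sector $V_a$. Since $z\mapsto cz$ (for $c>0$) is a projective automorphism of $V_a$, the metric is scale-invariant; in the coordinate $\zeta=\log z$ on the horizontal strip $\{0<\im\zeta<a\}$ it is therefore invariant under horizontal translation, and a short check shows that the maximal round disc through a point of the strip is a Euclidean half-plane bounded by a line parallel to the strip, whence $\norm{\cdot}_{Th}=|d\zeta|$, i.e. the flat metric. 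Descending to the quotient, $V_{a_i,\ell_X(\gamma_i)}$ carries the flat metric of a Euclidean cylinder of height $a_i$ and circumference $\ell_X(\gamma_i)$; gluing these to the unaltered hyperbolic pieces along their geodesic boundaries reproduces the geometrically grafted surface $\Gr_\mu(X)$, and the map $\kappa$ above becomes exactly the collapse of each cylinder onto its core circle $\gamma_i$, re-confirming (i)--(iii). The one point requiring care here is that a maximal disc based near a cylinder--hyperbolic junction cannot ``leak'' across the gluing and beat the flat metric; this follows from the explicit developing picture ($\Hh$ and $V_{a_i}$ glued along a common boundary ray) and is again part of the Kulkarni--Pinkall description. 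I expect this leaking issue, and in the general case the orthogonality and length-matching of the collapse over the grafted locus, to be the main obstacles; the rest is formal or a citation.
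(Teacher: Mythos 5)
The paper does not actually prove this theorem --- it states it as a summary of ``geometric information that emerges from Thurston's argument'' and defers entirely to \cite{kulkarni94}, \cite{dumas09}, \cite{tanigawa97} --- and your proposal is a correct reconstruction of exactly that argument (Kulkarni--Pinkall maximal balls, the locally convex pleated plane in $\H^3$, constancy of the maximal ball over each complementary region versus the one-parameter family of half-planes over the grafted locus, plus the explicit sector computation in the multicurve case), citing the same technical core the paper cites. Two cosmetic slips to fix in a write-up: the canonical maximal ball $B_p$ is the unique maximal ball whose \emph{core} contains $p$ (a round disc has no distinguished center, so ``largest disc sending the center to $p$'' is not the right definition), and in the $\zeta=\log z$ coordinate the extremal disc through a point of the strip is a width-$\pi$ horizontal substrip (the image of a half-plane through the origin in the $z$-chart), not a half-plane --- though its hyperbolic metric along the center line is still $|d\zeta|$, so your conclusion stands.
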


		For this reason from now on, we will unify the notation, and with $\Gr(X,\lambda)$ we will mean a surface equipped with both the projective structure and the Thurston metric. 
		In the general case, when $\lambda$ is not a weighted multicurve, the behaviour of the map $\kappa$, which we will still call \emph{collapsing map}, is more complicated. See for example \ref{sec:cantor} for a better understanding of the local behaviour.
		Nevertheless, the following result holds true.

		\begin{corollary}	\thlabel{th:kappa-lip}
			The collapsing map $\kappa: \Gr(X,\lambda) \to X$ is 1-Lipschitz.
		\end{corollary}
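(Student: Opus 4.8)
The plan is to deduce the corollary from the stronger assertion that $\kappa$ does not increase the length of any rectifiable path. Granting this, for $p,q\in\Gr(X,\lambda)$ one picks a length-minimising path $\eta$ from $p$ to $q$ (such a path exists because $S$ is compact and, by \thref{th:kulkarni}, the Thurston metric is a genuine complete Riemannian metric) and obtains $d_X(\kappa(p),\kappa(q))\le\Len_X(\kappa\circ\eta)\le\Len_{Th}(\eta)=d_{Th}(p,q)$.

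To bound $\Len_X(\kappa\circ\eta)$ I would use the partition of $\Gr(X,\lambda)$ into the closed flat locus $\Phi:=\kappa^{-1}(\lambda)$ and the open hyperbolic pieces $\kappa^{-1}(A)$, where $A$ runs over the connected components of $X\setminus\lambda$. Since $\Phi$ is closed and its complement is the disjoint union of the open sets $\kappa^{-1}(A)$, one can write $\eta$ as a countable concatenation of subpaths, each contained in $\Phi$ or in a single $\overline{\kappa^{-1}(A)}$; by additivity of length it suffices to check that $\kappa$ is length-non-increasing on each of them. On a subpath inside some $\overline{\kappa^{-1}(A)}$ this is immediate, since there $\kappa$ is an isometry onto $\overline{A}$ by part (iii) of the preceding theorem. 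So the only real content is a subpath lying inside $\Phi$.

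On $\Phi$ the Thurston metric is Euclidean, $\Phi$ is foliated by the preimages of the leaves of $\lambda$ -- each mapped isometrically onto a leaf of $\lambda$ by part (ii) -- and $\kappa(\Phi)\subseteq\lambda$ is one-dimensional. Hence at an interior point $p$ of $\Phi$ at which $\kappa$ is differentiable, $d\kappa_p$ restricts to a linear isometry on the line tangent to the leaf through $p$ and has a one-dimensional kernel transverse to that line. The geometric fact I would single out -- visibly true in the weighted multicurve model, where $\Phi$ is a disjoint union of flat product cylinders $\gamma\times[0,a]$ collapsed onto their cores, and true in general because the flat region inserted along $\lambda$ is, locally, the leafwise direction times the transverse-measure direction with the product Euclidean metric (cf.\ the local analysis in Section~\ref{sec:cantor}) -- is that this kernel is \emph{orthogonal} to the leaf direction. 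Granting it, for $v=v^\parallel+v^\perp\in T_p\Phi$ written as the orthogonal sum of its leafwise and transverse components, $\norm{d\kappa_p v}_X=\norm{d\kappa_p v^\parallel}_X=\norm{v^\parallel}\le\norm{v}$, so $d\kappa_p$ has operator norm at most $1$; integrating along the subpath -- using that $\kappa$ is locally Lipschitz, hence differentiable almost everywhere, together with a routine absolute-continuity argument to discard the null set of parameters where $\kappa$ or $\eta$ fails to be differentiable -- yields the claimed estimate.

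The step I expect to be the main obstacle is precisely this last one in the general case: making rigorous that the collapsing direction of $\kappa$ along $\Phi$ is orthogonal to $\lambda$ and that $\kappa$ is regular enough to integrate, since $\Phi$ can be intricate when $\lambda$ is not a weighted multicurve. A way to bypass the fine structure of $\Phi$ altogether is a continuity argument: take weighted multicurves $\lambda_n\to\lambda$ in $\ML(S)$; by \thref{th:thurston-param} the structures $\Gr(X,\lambda_n)$ converge to $\Gr(X,\lambda)$ in $\PT(S)$, by \thref{th:kulkarni} their Thurston metrics converge with locally uniform $C^{1,1}$ bounds, and in the multicurve case $\kappa_n$ is the explicit cylinder-collapsing map, which is manifestly $1$-Lipschitz; one then checks that $\kappa_n\to\kappa$ uniformly (after the natural identifications coming from Thurston's construction) and passes the bound to the limit. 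The obstacle is then shifted to the convergence of the collapsing maps, which is however part of the continuity inherent in Thurston's parametrisation.
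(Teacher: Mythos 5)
Your proposal is correct, and the argument you actually close with --- verify the bound in the weighted multicurve case, where $\kappa$ is piecewise a local isometry or a cylinder-collapse, and then pass to general $\lambda$ by continuity of the Thurston metric and of $\kappa$ in the grafting datum --- is exactly the paper's proof. The direct differential argument about the kernel of $d\kappa$ being orthogonal to the leaf direction is a reasonable alternative sketch, but as you note it is delicate on the flat locus for general $\lambda$, and the paper does not attempt it.
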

		\begin{proof}
			It is 1-Lipschitz in the case when $\lambda$ is a weighted multicurve as it is piecewise a local isometry or the collapse map along a cylinder. The Thurston metric of $\Gr(X,\lambda)$ depends continuously on the grafting datum, and so does the collapsing map $\kappa$, so by continuity $\kappa$ is 1-Lipschitz for any $\lambda$ measured lamination.
		\end{proof}

\subsection{Continuity of grafting}

	According to the definition, each point in $\PT(S)$ is an equivalence class of projective structures, that is it determines a projective structure, and together with it the associated Thurston metric, both only up to the action of elements of $\Diff_0^+(S)$.
	In order to overcome this ambiguity we have the following.
	
	\begin{lemma}
		The projection to the quotient by the action of $\Diff_0^+(S)$ from the space of all projective structures on $S$ to $\PT(S)$ admits a continuous section.
	\end{lemma}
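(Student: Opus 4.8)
The plan is to build the section explicitly using the grafting parametrization from \thref{th:thurston-param}, which identifies $\PT(S)$ with $\T(S) \times \ML_0(S)$ in a mapping-class-group-equivariant homeomorphism. The key observation is that on $\T(S)$ itself one has a standard continuous section: for a point $\sigma \in \T(S)$, one can choose a hyperbolic metric $g_\sigma$ on the \emph{fixed} smooth surface $S$ representing the isotopy class, depending continuously on $\sigma$, for instance via a continuous family of harmonic or Fenchel–Nielsen-adapted diffeomorphisms. (This is the classical fact that the Teichmüller space, defined as a quotient of the space of metrics by $\Diff_0^+(S)$, is the quotient of a principal-bundle-like object admitting a global continuous — indeed smooth — section; it can be extracted from the slice theorem for the $\Diff_0^+(S)$-action on $\met(S)$, or from the Earle–Eells fibration.) First I would fix such a section $s_{\T}: \T(S) \to \met(S)$ of hyperbolic metrics.

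Next I would promote this to projective structures. Given $(X,\lambda) \in \T(S)\times\ML_0(S)$, realize $X$ as the hyperbolic metric $s_\T(X)$ on $S$, straighten $\lambda$ to the $s_\T(X)$-geodesic lamination it determines (this straightening depends continuously on the pair, since the geometric realization of a measured lamination on a hyperbolic surface varies continuously), and then perform the projective grafting construction of Section~\ref{sec:grafting} directly on the marked surface $S$: cut along the geodesic realization of $\lambda$ and insert the projective cylinders $V_{a_i,\ell_X(\gamma_i)}$, with the obvious continuous extension to general $\lambda$ as in Thurston's theorem. This produces a genuine projective structure $Z_{X,\lambda}$ on $S$ itself (not merely an isotopy class), hence a genuine developing map $\dev_{X,\lambda}: \tilde S \to \CP^1$ with its holonomy, and by construction its class in $\PT(S)$ is $\Gr(X,\lambda)$. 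The assignment $(X,\lambda) \mapsto Z_{X,\lambda}$ is the candidate section $\sigma: \PT(S) \to \{\text{projective structures on }S\}$ after precomposing with the homeomorphism $\Gr^{-1}$.

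It then remains to check continuity, i.e. that the developing maps $\dev_{X_n,\lambda_n}$ converge in the compact-open topology whenever $(X_n,\lambda_n) \to (X,\lambda)$. This reduces to two ingredients: the continuous dependence of the hyperbolic developing maps $\dev_{s_\T(X_n)}$ on $X_n$ (immediate from continuity of $s_\T$ together with continuous dependence of the Riemann-uniformization / developing data on the metric), and the continuous dependence of the grafting insertion on $\lambda_n$. The latter is exactly the mechanism already present in the proof of \thref{th:thurston-param}: the developing map of a grafted surface is assembled from the developing map of $X$ together with snake-like insertions of the model sectors $V_{a,l}$ along lifts of leaves of $\lambda$, and since $V_{a,l}$ depends continuously (even real-analytically) on $(a,l)$, and the combinatorial/metric data of the straightened lamination depends continuously on $\lambda_n$ inside a fixed train-track chart, the assembled developing maps converge uniformly on compacta. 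I would phrase this by invoking the continuity already used to extend $\Gr$ to all of $\ML_0(S)$ in \thref{th:thurston-param}, lifting it once to the level of honest developing maps rather than their $\Diff_0^+(S)$-classes.

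The main obstacle is the very first step: producing the continuous section $s_\T$ of hyperbolic metrics over $\T(S)$ on the fixed smooth surface. Although this is classical, it is not entirely formal — it rests on a slice theorem (or on Earle–Eells) ensuring the $\Diff_0^+(S)$-action on the relevant space of metrics is nice enough to admit a global continuous section, since $\T(S)$ is contractible. Everything downstream (straightening laminations, inserting projective cylinders, checking compact-open convergence) is then a continuity bookkeeping exercise that parallels, verbatim, the continuity statement already built into Thurston's parametrization theorem.
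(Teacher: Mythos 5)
There is a genuine gap, and it is not where you locate it. The construction of a section $s_\T$ of hyperbolic metrics over $\T(S)$ is indeed classical and unproblematic (it can even be done by the same harmonic-map normalization the paper uses). The real problem is the passage from the grafting construction to ``a genuine projective structure $Z_{X,\lambda}$ on $S$ itself''. Cutting the surface along the geodesic realization of $\lambda$ and inserting the projective cylinders $V_{a_i,\ell_X(\gamma_i)}$ produces a \emph{new} projective surface together with a marking that is canonical only up to homotopy (via the collapsing map); it does not by itself produce an atlas, or a developing map $\tilde S \to \CP^1$, on the \emph{fixed} smooth surface $S$. To get that you must choose an actual homeomorphism from $S$ to the grafted surface in the prescribed homotopy class, and you must choose it continuously in $(X,\lambda)$ --- including across the limiting procedure that extends grafting from multicurves to general laminations. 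Making such a choice continuously is precisely a continuous choice of representative modulo $\Diff_0^+(S)$, i.e.\ it is the content of the lemma you are trying to prove. For the same reason, the final step ``invoke the continuity already used to extend $\Gr$ in \thref{th:thurston-param}, lifted to honest developing maps'' is not bookkeeping: Thurston's theorem as stated is a statement about the quotient $\PT(S)$, and upgrading it to uniform convergence of developing maps on compacta of $\tilde S$ presupposes exactly the continuous selection of representatives that is at stake. So as written the argument is circular at its crucial point (or, at best, defers the substance to an unproved strengthening of \thref{th:thurston-param} at the level of developing maps).

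By contrast, the paper's proof avoids constructing anything: starting from an \emph{arbitrary} representative $Z$, it normalizes it by pulling back along the harmonic map from a fixed hyperbolic surface $X_0$ to $\pi(Z)$; uniqueness of the harmonic map in its homotopy class makes the representative canonical, and continuity follows from continuity of $\pi$ and of the harmonic map in the target metric. If you want to salvage your approach, you would need to replace the cut-and-paste description by a construction that outputs an honest $\rho$-equivariant developing map on a fixed model of $\tilde S$ (for instance via the locally convex pleated-plane description of projective grafting) and prove its continuous dependence on $(X,\lambda)$ directly; that is a substantially longer argument than the one-paragraph normalization trick the paper uses.
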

	\begin{proof}
		For every $Z\in\PT(S)$ we need to pick a representative, and make sure the choice depends continuously on $Z$.
		We will follow the same idea of \cite{wolf89}, where harmonic maps are used to relate any hyperbolic surface with a fixed one, in order to identify $\T(S)$ with a precise set of Riemannian metrics on a fixed surface. 

		More precisely, let us fix a hyperbolic structure $X\in\T(S)$ on $S$.
		Then given a projective structure $Z$ on $S$ we choose its representative $\hat Z$ with respect the action of $\Diff_0^+(S)$ such that the identity map on $S$ is harmonic between $X$ and $\pi(\hat Z) \in \T(S)$, where here $\pi(\hat Z)$ is meant as the hyperbolic metric conformal to the complex structure induced by $Z$. This can be achieved by simply choosing $\hat Z$
		to be the pull-back of $Z$ via the harmonic map between $X$ and $\pi(Z)$.

		One can check that this section $Z \mapsto \hat Z$ is continuous with respect to the topology on $\PT(S)$ as the forgetful map $\pi: \PT(S) \to \T(S)$ is continuos and the harmonic map depends continuously on the target metric $\pi(Z)$.
		
	\end{proof}
	
	From now on we will imply the choice of a fixed section and by a point $Z \in \PT(S)$ we mean a precise projective structure on the surface $S$.
	The associated Thurston metric $g_Z$ can be then considered as a metric on the very same surface $S$. For this reason by $Z$ we will mean the surface $S$ equipped with the Thurston metric $g_Z$.
	In this way it will make sense for example to consider points $x,y$ on $S$ and compare their distances $d_{Z_n}(x,y)$ with respect to the metrics induced by different structures $Z$.
	
	We now have the following result, essentially a consequence of the regularity result in \thref{th:kulkarni}, and that can be found in \cite[Section 4.3]{dumas09}. 

	\begin{theorem}	\thlabel{th:grafting-continuity}
		The Thurston metric $g_Z$ on $S$ depends continuously on the projective structure $Z\in\PT(S)$, with respect to the $C^0$ topology on the space of $C^1$ Riemannian metrics on $S$.
	\end{theorem}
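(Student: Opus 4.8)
The plan is to reduce the claim to a pointwise estimate on the Thurston norm and then invoke the regularity of \thref{th:kulkarni}. With the continuous section fixed above, convergence $Z_n\to Z$ in $\PT(S)$ means exactly that the distinguished developing maps converge, $\dev_{Z_n}\to\dev_Z$, uniformly on compact subsets of $\tilde S$; since developing maps are holomorphic local biholomorphisms, this upgrades automatically to $C^\infty$ convergence on compacta, and also $\pi(Z_n)\to\pi(Z)$ in $\T(S)$. By \thref{th:kulkarni}, over a fixed compact neighbourhood $K\ni Z$ the metrics $g_{Z'}$ are $C^{1,1}$ with uniformly bounded norm, hence form an equicontinuous family on the compact surface $S$; it is therefore enough to prove the pointwise convergence $\norm{v}_{g_{Z_n}}(p)\to\norm{v}_{g_Z}(p)$ for every $p\in\tilde S$ and $v\in T_p\tilde S$. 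I also record that, being conformal to $\pi(Z')$ with $\pi(K)$ compact in $\T(S)$, the metrics $g_{Z'}$, $Z'\in K$, are uniformly two-sided comparable to a fixed background metric; in particular $\norm{v}_{g_{Z_n}}(p)$ stays bounded above and bounded away from $0$.

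\emph{Upper bound: $\limsup_n\norm{v}_{g_{Z_n}}(p)\le\norm{v}_{g_Z}(p)$.} I would fix $\epsilon>0$ and a projective immersion $j\colon\Delta\to Z$ through $p$ with $\norm{j^*v}_{hyp}\le\norm{v}_{g_Z}(p)+\epsilon$. Lifting $j$ to $\tilde j\colon\Delta\to\tilde S$ and using that a holomorphic map to $\CP^1$ with vanishing Schwarzian derivative is the restriction of a single Möbius transformation, one has $\dev_Z\circ\tilde j=M|_\Delta$ with $M\in\PSL_2\C$. Since $\dev_Z$ is injective on the compact set $\tilde j(\{|z|\le r'\})$ for $r'<1$, for $n$ large $\dev_{Z_n}$ admits an inverse branch there; transplanting, $\tilde j_n:=(\dev_{Z_n})^{-1}\circ M$ defines a projective immersion of $\{|z|<r'\}$ into $\tilde S$, hence by projection one $j_n$ of $Z_n$ through $p$, with $\tilde j_n\to\tilde j$ in $C^1$. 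Feeding $j_n$ (reparametrised to $\Delta$ by a projective change of coordinates) into the Thurston infimum for $Z_n$ gives $\norm{v}_{g_{Z_n}}(p)\le\norm{j_n^*v}_{hyp}$; letting $n\to\infty$, then $r'\to1$, then $\epsilon\to0$ yields the bound.

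\emph{Lower bound: $\liminf_n\norm{v}_{g_{Z_n}}(p)\ge\norm{v}_{g_Z}(p)$.} For each $n$ I would pick a projective immersion $j_n\colon\Delta\to Z_n$ through $p$ with $\norm{j_n^*v}_{hyp}\le\norm{v}_{g_{Z_n}}(p)+\epsilon$ and, after postcomposing with an automorphism of $\Delta$, arrange that the lift $\tilde j_n$ sends $0$ to $p$; set $M_n:=\dev_{Z_n}\circ\tilde j_n\in\PSL_2\C$, so $M_n(0)=\dev_{Z_n}(p)\to\dev_Z(p)$. Using the $C^1$-convergence of the $\dev_{Z_n}$ near $p$ together with the two-sided bound on $\norm{v}_{g_{Z_n}}(p)$, a routine argument confines the $M_n$ to a compact subset of $\PSL_2\C$; pass to a subsequence with $M_n\to M$. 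Each $\tilde j_n$ is $1$-Lipschitz from the hyperbolic disc $\Delta$ to $(\tilde S,g_{Z_n})$, so, $g_{Z_n}$ being complete and uniformly comparable to a fixed metric, the image $\tilde j_n(\{|z|\le r\})$ lies in a fixed compact $L\subset\tilde S$ independent of $n$. On $L$ the maps $\dev_{Z_n}$ have, for $n$ large, a uniform local injectivity scale (again by $C^1$-convergence and \thref{th:kulkarni}), so the continuations of $M_n$ through $\dev_{Z_n}$, namely the $\tilde j_n$, converge uniformly on $\{|z|\le r\}$ to a limit $\tilde j\colon\Delta\to\tilde S$ with $\tilde j(0)=p$ and $\dev_Z\circ\tilde j=M|_\Delta$; thus $\tilde j$ descends to a projective immersion $j$ of $Z$ through $p$. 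Hence $\norm{v}_{g_Z}(p)\le\norm{j^*v}_{hyp}=\lim_n\norm{j_n^*v}_{hyp}\le\liminf_n\norm{v}_{g_{Z_n}}(p)+\epsilon$ along the subsequence; a standard subsequence argument, together with the upper bound, then promotes this to $\norm{v}_{g_{Z_n}}(p)\to\norm{v}_{g_Z}(p)$.

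I expect the lower bound to be the main obstacle. Its real content is that none of the ``developing room'' available to the surfaces $Z_n$ escapes to infinity inside the non-compact cover $\tilde S$ as $n\to\infty$, so that a nearly optimal immersed disc for $Z_n$ genuinely limits onto one for $Z$; this is precisely where completeness and the \emph{local} boundedness of the $C^{1,1}$-norm from \thref{th:kulkarni} are used, the latter to supply a uniform scale on which all of the $\dev_{Z_n}$ are embeddings over the relevant compact part of $\tilde S$. The upper bound, by contrast, is soft and needs only the $C^1$-convergence of developing maps near a single point. A self-contained proof of the statement along these lines appears in \cite[Section~4.3]{dumas09}.
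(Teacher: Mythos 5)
The paper does not actually prove this statement: it is presented as a consequence of the regularity result of Kulkarni--Pinkall (\thref{th:kulkarni}) and is attributed to \cite[Section 4.3]{dumas09}, which is also where your own write-up ultimately points. So there is nothing in the paper to compare against line by line; what you have done is reconstruct the standard argument (upper bound by transplanting a nearly optimal disc through the convergent developing maps, lower bound by extracting a limit of nearly optimal discs for $Z_n$), and the overall structure is sound. The reduction to pointwise convergence via the locally uniform $C^{1,1}$ bound, and the use of the fixed continuous section to get locally uniform convergence of developing maps, are both legitimate.

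The one place where your argument is genuinely thin is the precompactness of $\{M_n\}$ in $\PSL_2\C$ in the lower bound. Bounds on the $1$-jet of $M_n$ at $0$ (value in a compact set, derivative bounded above and away from $0$) do \emph{not} confine a sequence of M\"obius transformations to a compact subset of $\PSL_2\C$: the family $M_t(z)=z/(1-tz)$ has $M_t(0)=0$ and $M_t'(0)=1$ for all $t$ but degenerates as $t\to\infty$ (its poles enter $\Delta$ and accumulate at $0$, and $M_t\to 0$ locally uniformly on $\C\setminus\{0\}$). So the ``routine argument'' you invoke does not exist at the level of the $1$-jet. The conclusion is nevertheless true, but the proof must use the maps $\tilde j_n$ themselves: if $M_n$ degenerates with source $a$, then either $a\neq 0$, in which case $M_n\to\mathrm{const}$ uniformly near $0$ and $M_n'(0)\to 0$, contradicting the derivative lower bound; or $a=0$, in which case $M_n\to\mathrm{const}$ on every annulus $\{\delta\le|z|\le 1/2\}$, the Arzel\`a--Ascoli limit $\tilde j$ of the uniformly Lipschitz maps $\tilde j_n$ is constant there (as $\dev_Z$ is a local homeomorphism), hence constant on $\{|z|\le 1/2\}$ by holomorphicity, and then Cauchy estimates in a fixed chart force $d\tilde j_n(0)\to 0$, again a contradiction. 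You should either supply this argument or weaken the claim to ``after passing to a subsequence, either $M_n$ converges in $\PSL_2\C$ or the above contradiction occurs.'' Two smaller points: in the upper bound the transplanted disc passes through $p_n=(\dev_{Z_n})^{-1}(\dev_Z(p))\to p$ rather than through $p$ itself, so the competitor must be evaluated at the preimage $z_n\to 0$ of $p$ (harmless, but worth saying); and in the lower bound you want to \emph{pre}compose $j_n$ with an automorphism of $\Delta$, not postcompose.
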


\section{Moduli space of grafted surfaces}		\label{sec:results}

	In this section we present the main result of the article and so define a bordification $\bar{\PT(S)}$ of the space of marked grafted surfaces, where the boundary is given by marked half-translation surfaces. What we mean by bordification of a space is a larger space that contains the former as a dense subspace equipped with the subspace topology. When a bordification is compact, it is a compactification. Our bordification $\bar{\PT(S)}$ will not be compact, not even after taking the quotient by the action of the mapping class group.
	
	The construction we perform in this article is guided by the following conjecture we vaguely state.
	The idea takes inspiration from the construction of the Thurston's compactification of Teichmüller space.

	\begin{conjecture}
		Let $\NPC(S)$ be the moduli space of non-positively curved marked surfaces,
		where elements are pairs $(X,[f])$ where $f:S \to X$ is a marking and $X$ is a surface equipped with a metric with adequate hypothesis, and the pairs are considered up to isometries compatible with the markings.
		Then the natural inclusion of $\PT(S)$ in $\NPC(S)$ is a proper embedding. 
		By taking the projectivisation, $\PT(S)$ embeds also in $\P\NPC(S)$ and the closure of its image here is
		$$ \bar{\PT(S)} = \PT(S) \cup \P_\C\QT(S)$$
		where $\P_\C\QT(S)$ represents the class of marked metrics coming from half-translation surfaces (see details below).
	\end{conjecture}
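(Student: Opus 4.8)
The plan is to realise the conjecture in its precise form \thref{mainth:bordification}: take for $\NPC(S)$ the space $\Met(S)$ of marked metrics of controlled regularity introduced below, equipped with the topology of Lipschitz convergence compatible with cone singularities. The approach mirrors Thurston's compactification of $\T(S)$. First I would set up the two embeddings. $\PT(S)$ embeds via the Thurston metric: continuity is \thref{th:grafting-continuity}, and injectivity holds because the Thurston metric determines the grafting datum (clear for weighted multicurves, where the flat cylinders and their heights are visible in the metric, and in general through \thref{th:thurston-param}). And $\QT(S)/S^1$ embeds via the flat cone metric: since the flat metric of a half-translation surface depends only on $|q|$, the map $\QT(S)\to\Met(S)$ factors through an embedding $\QT(S)/S^1\hookrightarrow\Met(S)$ whose image is a metric cone (the residual $\R_{>0}$ rescales the metric), so its image in $\P\Met(S)$ is $\QT(S)/\C^\ast=\P_\C\QT(S)$. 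It then remains to prove the two inclusions $\P_\C\QT(S)\subseteq\overline{\PT(S)}$ and $\overline{\PT(S)}\subseteq\PT(S)\cup\P_\C\QT(S)$ in $\P\Met(S)$; properness of the embedding will fall out of the same analysis. Throughout I would use that a sequence of marked metrics on $S$ whose systole tends to $0$ leaves every compact subset of $\Met(S)$ and hence has no limit there.

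For $\P_\C\QT(S)\subseteq\overline{\PT(S)}$: fix a unit-area half-translation surface $q_0$ (defined up to $S^1$), of injectivity radius $\epsilon_0>0$, and put $(X_n,\lambda_n):=\O^{-1}(n\,q_0)$ for $n\in\N$. Then $\O(X_n,\lambda_n)=n\,q_0$ has area $n$ and its unit-area normalization is exactly $q_0$, of injectivity radius $\epsilon_0$ for every $n$; and since $\Area(\O(X,\lambda))=i(\O_\lambda(X),\lambda)=\ell_X(\lambda)$ (first for weighted multicurves, then by continuity), we get $\ell_{X_n}(\lambda_n)=n\to\infty$. Hence for $n$ large \thref{mainth:deflation-control} gives $d_{GH}\big(\widetilde{\Gr(X_n,\lambda_n)},q_0\big)\le C(\epsilon_0,S)\,n^{-1/2}\to 0$, the tilde denoting unit-area rescaling. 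Invoking the comparison lemma that upgrades Gromov--Hausdorff convergence to $\Met(S)$-convergence in this regular setting — the $\widetilde{\Gr(X_n,\lambda_n)}$ have controlled geometry away from a slim negatively curved part, by \thref{th:kulkarni} and the slimness estimate — we conclude $[\Gr(X_n,\lambda_n)]\to[q_0]$ in $\P\Met(S)$.

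For the reverse inclusion (and properness): let $Z_n=\Gr(X_n,\lambda_n)$ with $[Z_n]$ convergent in $\P\Met(S)$, and let $\widetilde Z_n\to\mu_\infty$ in $\Met(S)$ be its unit-area rescaling, so that $\sys(\widetilde Z_n)\to\sys(\mu_\infty)>0$. Split on $\ell_{X_n}(\lambda_n)$. If it stays bounded, then $\Area(Z_n)=2\pi|\chi(S)|+\ell_{X_n}(\lambda_n)$ is bounded above and below; if moreover $(X_n,\lambda_n)$ ranges in a compact subset of $\T(S)\times\ML_0(S)$, then up to a subsequence $Z_n\to\Gr(X_\infty,\lambda_\infty)$ in $\PT(S)$ by \thref{th:thurston-param} and \thref{th:grafting-continuity}, so $[\mu_\infty]\in\PT(S)$; and $(X_n,\lambda_n)$ cannot escape every compact set while $\ell_{X_n}(\lambda_n)$ stays bounded (bounded subsets of $\ML_0(S)\cong\R^{6g-6}$ being precompact) without $X_n$ leaving the thick part of $\T(S)$, which forces $\sys(\widetilde Z_n)\to0$, a contradiction. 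If instead $\limsup_n\ell_{X_n}(\lambda_n)=\infty$, pass to a subsequence with $\ell_{X_n}(\lambda_n)\to\infty$ and put $q_n:=\widetilde{\O(X_n,\lambda_n)}$. By slimness of the negatively curved part of $\Gr(X_n,\lambda_n)$, a short curve of $q_n$ would give a short curve of $\widetilde Z_n$; as $\sys(\widetilde Z_n)\not\to0$, the injectivity radii of the $q_n$ are bounded below by some $\epsilon>0$ for large $n$. Then $\widetilde Z_n$ and $q_n$ are Gromov--Hausdorff close by \thref{mainth:deflation-control}; by Mumford-type compactness the unit-area half-translation surfaces of injectivity radius $\ge\epsilon$ form a compact set modulo $\Mod(S)$, and if a drifting sequence of mapping classes were needed then $q_n$, hence $\widetilde Z_n$, would leave every compact of $\Met(S)$; so up to a subsequence $q_n\to q_\infty$ in $\QT(S)/S^1$ with $q_\infty$ non-degenerate, whence $\widetilde Z_n\to q_\infty$ in $\Met(S)$ and $[\mu_\infty]=[q_\infty]\in\P_\C\QT(S)$. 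The same dichotomy shows that any sequence leaving every compact of $\PT(S)$ leaves every compact of $\Met(S)$ (either $\sys\to0$, or $\ell_{X_n}(\lambda_n)\to\infty$ and then $\Area(Z_n)\to\infty$), so the embedding is proper.

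The main obstacle is \thref{mainth:deflation-control} itself — the quantitative $O(\ell_X(\lambda)^{-1/2})$ Gromov--Hausdorff estimate between a grafted surface and its orthogeodesic half-translation surface — together with the structural input it rests on: that, area-normalized and for $\ell_X(\lambda)$ large, the negatively curved (``inflated'') part of $\Gr(X,\lambda)$ is geometrically slim and the deflation map $\D\colon\Gr(X,\lambda)\to\O(X,\lambda)$ is an $\epsilon$-isometry. Proving these requires a careful reworking of the Calderon--Farre orthogeodesic-foliation construction in order to build $\D$ and bound its distortion. Two further technical points demand care: upgrading Gromov--Hausdorff to $\Met(S)$-convergence when the approximating metrics have curvature unbounded below on a collapsing region — so that Cheeger--Gromov-type arguments do not apply directly and one must exploit the precise ``Lipschitz compatible with cone singularities'' topology — and the thick--thin bookkeeping, using proper discontinuity of the $\Mod(S)$-action, needed to pin down genuine limits in $\Met(S)$ and $\QT(S)/S^1$ rather than limits up to a drifting sequence of markings.
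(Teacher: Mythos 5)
Your proposal is correct and follows essentially the same route as the paper: like the paper, you sidestep the general $\NPC(S)$ by specializing to $\Met(S)$ with the Lipschitz topology, establish the two embeddings, obtain density of $\PT(S)$ via the sequence $\O^{-1}(nq_0)$ together with the quantitative deflation estimate and the Gromov--Hausdorff-to-Lipschitz upgrade, and handle properness by the same dichotomy on $\ell_{X_n}(\lambda_n)$ versus the systole. The only divergence is cosmetic: your ``reverse inclusion'' paragraph does work that is automatic in the paper's setting (since $\P\Met(S)$ is by definition exactly $\PT(S)\cup\P_\C\QT(S)$), though it would be the genuinely hard part if $\NPC(S)$ were taken in full generality, which is precisely why the paper leaves the general statement as a conjecture.
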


	We believe that the conjecture above holds true for a rather general definition of $\NPC(S)$ with a suitable topology.
	However, since the geometric structures we are interested in studying in this article have a very explicitly description, we avoid introducing non-positive curved metrics in a wider generality and only consider a moduli space for grafted and half-translation surfaces.

	\begin{definition}
		Let us define $\Met(S)$ the space of marked surfaces coming either from rescaling a grafted surface or from a half-translation surface
		$$ \Met(S) = \{ (X,[f]) \; \mid \; X \text{ as above, } f:S\to X \text{ marking} \}/\sim $$
		where, as usual, we say $(X,[f]) \sim (X', [f'])$ if there is an isometry $\phi:X \to X'$ compatible with the markings, that is $[\phi \circ f]=[f']$ as homotopy classes.
	\end{definition}

	Notice that we allow grafted surfaces to be rescaled, so that the whole space $\Met(S)$ supports a $\R^+$ action by rescaling the metrics.
	Let us now define a topology on the space $\Met(S)$.

	\begin{definition}
		Given a marked surface $(X,[f]) \in \Met(S)$, we define a collection of its neighbourhoods $V(\epsilon, U)$ for $\epsilon >0$ and $U \subset X$ open not containing the singularities of $X$, if $X$ has any.
		We define that $(X',[f']) \in \Met(S)$ is in $V(\epsilon, U)$ if and only if there exists a homeomorphism $\phi: X \to X'$ such that the restriction $\phi|_U$ is bilipschitz with constant less than $(1+\epsilon)$; and $\phi$ is compatible with the markings, i.e. $[\phi\circ f] = [f']$ as homotopy classes.
		The collection of all such neighbourhoods generates a topology on $\Met(S)$ that we call \emph{Lipschitz topology}.
	\end{definition}

	As usual, as a space of marked geometric structures, we can describe $\Met(S)$ as the quotient by the action of $\Diff_0^+(S)$ via pull-back on the space of metrics on $S$ coming from rescaling the Thurston metric of a projective structure or from a holomorphic quadratic differential. It is easy to see that the Lipschitz topology on $\Met(S)$ comes from the topology on the space of metrics on $S$ given by uniform convergence on compact sets in the complement of a finite set of singularities.
	
	\begin{remark}
		As a consequence, we have that the area functional $\Area: \Met(S) \to \R^+$ is continuous, because the integral of the volume form converges under uniform convergence of the metric on compact sets, as the metric is bounded near the singularities as they are of cone type, with finite bounded cone angle.
	\end{remark}
	
	\begin{proposition}  \thlabel{th:proper-action}
		The action of $\Mod(S)$ on $\Met(S)$ by pre-composition on the marking is continuous and properly discontinuous.
	\end{proposition}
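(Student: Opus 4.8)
The plan is to reduce both assertions to the corresponding classical facts about the action of $\Mod(S)$ on $\T(S)$.

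\emph{Continuity.} As $\Mod(S)$ is discrete, it suffices to show that each $g\in\Mod(S)$ acts by a homeomorphism of $\Met(S)$, and for this that $g$ (and hence $g^{-1}$) carries basic neighbourhoods to basic neighbourhoods. Fix $(X,[f])\in\Met(S)$ and a basic neighbourhood $V(\epsilon,U)$, and write the action of $g$ as precomposition of the marking with a fixed representative $\psi\in\Diff^+(S)$, so that $g\cdot(X,[f])=(X,[f\circ\psi])$ has the same underlying $X$, the same $U$, and the same singular set. If $(X',[f'])\in V(\epsilon,U)$ is witnessed by a homeomorphism $\phi\colon X\to X'$ that is $(1+\epsilon)$-bilipschitz on $U$ and satisfies $[\phi\circ f]=[f']$, then $[\phi\circ(f\circ\psi)]=[(\phi\circ f)\circ\psi]=[f'\circ\psi]$, so the same $\phi$ witnesses $g\cdot(X',[f'])\in V(\epsilon,U)$, now read as a neighbourhood of $g\cdot(X,[f])$. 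Thus $g$ maps basic neighbourhoods into basic neighbourhoods; the same applies to $g^{-1}$, so $g$ is a homeomorphism and the action is continuous.

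\emph{Proper discontinuity.} I would construct a $\Mod(S)$-equivariant continuous map $u\colon\Met(S)\to\T(S)$ sending a marked metric to its underlying marked conformal structure (equivalently, to the unique hyperbolic metric in its conformal class). This is well defined: a rescaled Thurston metric is conformal to $\pi(Z)\in\T(S)$ by \thref{th:kulkarni}, and a half-translation metric is by construction flat with cone points, hence conformal to a point of $\T(S)$. Equivariance is immediate, since precomposing a marking with a diffeomorphism commutes with passing to the conformal class. Continuity follows from the identification, recorded just after the definition of the Lipschitz topology, of convergence in $\Met(S)$ with $C^0$-convergence of the associated metrics on compact subsets of the complement of the finite singular set, together with the standard fact that the conformal class of a metric depends continuously on the metric in that topology. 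Granting this, since $\Mod(S)$ acts properly discontinuously on $\T(S)$, for every compact $K\subseteq\Met(S)$ the image $u(K)$ is compact, so $\{g\in\Mod(S)\colon g\cdot u(K)\cap u(K)\neq\emptyset\}$ is finite; by equivariance it contains $\{g\in\Mod(S)\colon g\cdot K\cap K\neq\emptyset\}$, which is therefore finite. Hence the action on $\Met(S)$ is properly discontinuous.

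\emph{Main obstacle, and an alternative.} The point requiring care is the continuity of $u$ at half-translation surfaces, where the Lipschitz topology imposes no control near the cone points: one must verify that this does not affect the conformal class, which rests on the boundedness of the metrics near the cones with uniformly bounded cone angle — exactly as invoked in the Remark preceding the statement — and on the conformal removability of isolated singular points. A route sidestepping this global analysis is to first observe that the locus of rescaled grafted surfaces and the locus of half-translation surfaces are each closed, hence clopen, in $\Met(S)$: the hyperbolic pieces of a grafted surface always carry a fixed total area $2\pi|\chi(S)|$ of curvature $-1$, which by continuity of the area functional cannot be absorbed into a flat $C^0$-limit. One then argues on each clopen piece separately, where the Lipschitz topology agrees with the usual topology of $\PT(S)\times\R^+$, respectively $\QT(S)/S^1$, and the action reduces through \thref{th:thurston-param} to the classical properly discontinuous actions of $\Mod(S)$ on $\T(S)\times\ML_0(S)$, respectively $\QT(S)$.
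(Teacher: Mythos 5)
Your continuity argument is fine and is essentially the paper's own observation that the basic neighbourhoods $V(\epsilon,U)$ depend only on the underlying metric surface, so the $\Mod(S)$-action preserves the Lipschitz topology. For proper discontinuity, however, your main route differs from the paper (which adapts the Farb--Margalit argument directly on $\Met(S)$: a grafted or half-translation surface has only finitely many closed geodesics of length at most $L$, and lengths vary in a controlled way inside a neighbourhood $V(\epsilon,U)$), and it has a genuine gap. The formal reduction through an equivariant map $u:\Met(S)\to\T(S)$ is sound, but everything hinges on the continuity of $u$ in the Lipschitz topology, which you do not prove and which is not a ``standard fact'': the topology gives no control on the metric inside the small discs around the cone points, and at points of the half-translation locus the claim amounts to asserting that the underlying conformal structures of strongly grafted surfaces converging in $\Met(S)$ to a flat surface $q$ converge in $\T(S)$ to the Riemann surface underlying $q$. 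That is a statement of the same order of difficulty as the quantitative results of Sections \ref{sec:orthogeodesic}--\ref{sec:deflation}, and certainly not easier than the proposition you are proving, so as written the proper-discontinuity half rests on an unproven key lemma.

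The proposed fallback is moreover false. The locus of rescaled grafted metrics is \emph{not} closed in $\Met(S)$: identifying $\P\Met(S)$ with the unit-area slice, \thref{th:bordification} says precisely that every unit-area half-translation surface is a Lipschitz limit of unit-area (rescaled) grafted surfaces, so the two loci are not clopen. Your heuristic ignores the rescaling: after area normalization the negatively curved part has curvature $-k^2$ and area $2\pi|\chi(S)|k^{-2}$ with $k^2 = 2\pi|\chi(S)|+\ell_X(\lambda)$; only the total curvature $2\pi|\chi(S)|$ is scale-invariant, and the negatively curved region becomes slim and of vanishing area, so it is absorbed into a flat $C^0$-limit away from the (newly created) cone points --- this is exactly the content of \thref{mainth:deflation-control}. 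Continuity of the area functional gives no obstruction, since the area of the curved part tends to $0$. Finally, the assertion that on each locus the Lipschitz topology ``agrees with the usual topology of $\PT(S)\times\R^+$, respectively $\QT(S)/S^1$'' is the content of the embedding propositions proved \emph{after} this one; in the paper the embedding of $\PT(S)$ is itself deduced using \thref{th:proper-action}, so invoking it here would also be circular. The paper's length-function argument avoids both the analytic continuity issue and the circularity.
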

	\begin{proof}
		Let us notice that from the definition, the Lipschitz topology on $\Met(S)$ is invariant under the action of $\Mod(S)$ that is then continuos.
		The proof of the proper discontinuity of the action of $\Mod(S)$ on Teichmüller space in \cite[Section 12.3.3]{farbmarg} adapts easily to our slightly broader context. The key observations to extend the argument are the following.
		The length of every closed geodesic in $\Gr(X,\lambda)$ is longer then the corresponding curve on $X$.
		In particular, the property that only finitely many curves have length bounded by any constant $L$ they show for hyperbolic surfaces, extends to grafted metrics, and then also their rescaled copies.
		The same property also holds for any half-translation surface as all the closed geodesics of length bounded by $L$ are concatenations of saddle connections of length at most $L$, which are in finite number.
		Moreover, since on $\Met(S)$ we have the Lipschitz topology, we have a similar control on how much length functions can change in a neighbourhood $V(\epsilon,U)$ of a metric.
	\end{proof}

	\begin{definition}
		We can then define the quotient space $\met(S) = \Met(S)/\Mod(S)$. It is the space of (unmarked) rescaled grafted surfaces  and half-translation surfaces, all considered up to isometry.
	\end{definition}

	\begin{proposition}
		The map $\PT(S) \to \Met(S)$ given by the Thurston metric is a $\Mod(S)$-equivariant proper embedding.
	\end{proposition}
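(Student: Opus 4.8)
The plan is to verify the three defining properties of a proper embedding—injectivity, continuity together with continuity of a local inverse, and properness—using the structural results on the Thurston metric collected above. First I would address \emph{well-definedness and injectivity}. The map sends $Z \in \PT(S)$ to the marked surface $(S, g_Z)$ with the marking coming from the fixed section $Z \mapsto \hat Z$. By \thref{th:kulkarni} this metric is genuinely a $C^{1,1}$ Riemannian metric with no cone singularities, so it lands in $\Met(S)$. For injectivity, suppose $g_{Z}$ and $g_{Z'}$ are isometric via a map compatible with the markings. The Thurston metric determines the underlying conformal structure (it is conformal to $\pi(Z)$ by \thref{th:kulkarni}), hence the isometry is conformal, hence it is the identity in $\T(S)$ after adjusting by the marking; then one recovers the grafting datum $(X,\lambda)$ from $g_Z$ via Thurston's collapsing map $\kappa$ (the Thurston-geodesic lamination $\kappa^{-1}(\lambda)$ and the hyperbolic pieces are intrinsic to the metric), and Thurston's parametrization \thref{th:thurston-param} gives $Z = Z'$. $\Mod(S)$-equivariance is immediate from the construction, since pre-composition with a mapping class acts compatibly on both the projective structure and its Thurston metric.

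Next I would handle \emph{continuity and the homeomorphism onto the image}. Continuity of $\PT(S) \to \Met(S)$ is exactly \thref{th:grafting-continuity} together with the description, recorded just above, that the Lipschitz topology on $\Met(S)$ is induced by $C^0$-convergence of metrics on compact sets away from singularities: $C^0$-convergence of $C^1$ metrics yields bilipschitz homeomorphisms with constants tending to $1$ on any fixed compact set (here all of $S$, since $g_Z$ has no singularities), so $Z_n \to Z$ in $\PT(S)$ forces $(S,g_{Z_n})$ into every neighbourhood $V(\epsilon, U)$ of $(S,g_Z)$. For the inverse, I would argue that the assignment $(S, g_Z) \mapsto Z$ is continuous on the image: a sequence of grafted metrics converging in $\Met(S)$ in particular has controlled length functions (as noted in the proof of \thref{th:proper-action}), pinning down the conformal structures $\pi(Z_n)$ in a compact part of $\T(S)$ in the non-degenerate directions, and the recovered grafting data $(X_n,\lambda_n)$ converge; invoking \thref{th:thurston-param}, the $Z_n$ converge. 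Alternatively, and more cleanly, one can use that $\PT(S) \cong \T(S)\times\ML_0(S)$ is a manifold, the map is a continuous injection, and $\Met(S)$ is Hausdorff with the Lipschitz topology, then combine this with properness (next paragraph) via invariance of domain / the fact that a proper continuous injection into a locally compact Hausdorff space is a closed embedding.

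Finally, \emph{properness}. I would show that the preimage of a compact set $K \subset \Met(S)$ is compact in $\PT(S)$. A compact $K$ has uniformly bounded area and, by the argument in \thref{th:proper-action}, uniformly bounded length functions together with a uniform lower bound on the systole-type quantities controlling the $\Mod(S)$-action, which translates—via the collapsing map $\kappa$ and \thref{th:kappa-lip} (so $\ell_X(\lambda) \le \ell_{Z}(\kappa^{-1}\lambda) \le$ bounded, and $X$ has bounded geometry)—into the grafting data $(X,\lambda) = \Gr^{-1}(Z)$ lying in a compact subset of $\T(S) \times \ML_0(S)$. Since $\Gr$ is a homeomorphism (\thref{th:thurston-param}), the preimage of $K$ is compact. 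The main obstacle I expect is the precise control giving properness: one must rule out sequences $Z_n$ with either $\ell_{X_n}(\lambda_n) \to \infty$ or $X_n$ leaving every thick part while $(S,g_{Z_n})$ stays in a compact part of $\Met(S)$. The first is controlled because a long grafting lamination forces large area (or, after area-normalization, forces the metric to degenerate in the Lipschitz topology—this is essentially the content of \thref{mainth:deflation-control}); the second is controlled because a short geodesic on $X_n$ produces an even shorter geodesic on $Z_n$ by \thref{th:kappa-lip}, contradicting the Lipschitz-bounded length functions on $K$. Assembling these estimates carefully is the technical heart of the proof.
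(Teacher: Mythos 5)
Your overall strategy matches the paper's (recover the grafting datum from the metric for injectivity; prove properness by ruling out the two degenerations $\ell_{X_n}(\lambda_n)\to\infty$ and systole of $X_n\to 0$), but one step in your properness argument is backwards and would fail as written. You claim that ``a short geodesic on $X_n$ produces an even shorter geodesic on $Z_n$ by \thref{th:kappa-lip}.'' The collapsing map $\kappa:\Gr(X,\lambda)\to X$ being $1$-Lipschitz gives the opposite inequality: every closed geodesic of $Z_n$ pushes forward to a curve on $X_n$ of at most the same length, so $\ell_{Z_n}(\gamma)\ge\ell_{X_n}(\gamma)$ for every free homotopy class. A pinching curve on $X_n$ therefore need not be short on $Z_n$ at all (grafting can lengthen it arbitrarily), so you cannot contradict a bound on length functions this way. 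The paper instead uses the Lipschitz bound in the correct direction: $d_{Z_n}(x,y)\ge d_{X_n}(\kappa(x),\kappa(y))$, so $\operatorname{diam}(Z_n)\ge\operatorname{diam}(X_n)\to\infty$ when the systole of $X_n$ tends to $0$ (with $\ell_{X_n}(\lambda_n)$ bounded), and unbounded diameter already forces the sequence to leave every compact subset of $\Met(S)$. The fix is easy, but the step as you wrote it is wrong.

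Two smaller points. First, the paper does not prove properness of $\PT(S)\to\Met(S)$ directly: it uses that both $\Mod(S)$-actions are proper (\thref{th:proper-action}) to reduce to properness of the induced map $\PM(S)\to\met(S)$, and only there does the dichotomy ``$\ell_X(\lambda)\to\infty$ or systole $\to 0$'' exhaust all diverging sequences (in the marked space a sequence can diverge by large mapping classes while its grafting invariants stay bounded). Your sketch gestures at this via ``systole-type quantities controlling the $\Mod(S)$-action,'' but you should make the reduction explicit rather than rely on local compactness of $\Met(S)$, which is not established. Second, for injectivity the detour through the conformal structure is unnecessary (and insufficient on its own, since $\pi:\PT(S)\to\T(S)$ has large fibres); what does the work is exactly what you say afterwards — the hyperbolic pieces and the lamination are intrinsic to the metric — but you should also note, as the paper does, how the \emph{transverse measure} on $\lambda$ is recovered (from the volume measure of each irreducible component, using that the transverse measure on an irreducible component is unique up to scale): recovering the support alone does not determine the grafting datum.
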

	\begin{proof}
		We now show that this is an injective map, as it is possible to recover the grafting data from its Thurston metric.
		Indeed, the interior of the hyperbolic part is given by the set of points with a neighbourhood locally isometric to $\H^2$. The metric on such pieces and the relative position of such pieces is enough to determine the hyperbolic structure $X$ (see \cite{CF21}).
		On the other side, the complement of the hyperbolic part will be a geodesic lamination $\lambda$, possibly with foliated cylinders in place of isolated leaves (see Section \ref{sec:construction}).
		The transverse measure is uniquely determined by the area (in the sense of volume measure as in the remark in the next subsection)
		of each irreducible component of $\lambda$. Indeed, the transverse measure supported by each irreducible component is unique up to rescaling.

		The inclusion is $\Mod(S)$-equivariant and the action is proper on both space, by \thref{th:proper-action} and because $\PT(S) \cong \T(S) \times \ML_0(S)$ is $\Mod(S)$-equivariant and the action is proper on $\T(S)$.
		So in order to show that the immersion $\PT(S) \to \Met(S)$ is en embedding, it is enough to check properness of the induced map at the quotient.
		$ \PT(S)/\Mod(S) = \PM(S) \to \met(S)$.

		We observe that the subset of $\PM(S)$ described by pairs $(X,\lambda)$ where $X$ is $\epsilon$-thick and $\ell_X(\lambda)$ is bounded is compact, as it is a bundle over the $\epsilon$-thick part of moduli space, which is compact, and where every fibre is also compact. So any diverging sequence of grafting data in $\PM(S)$ has either $\ell_X(\lambda)$ going to infinity, or $\ell_X(\lambda)$ is bounded and the metric $X$ has systole going to 0.
		In both cases the metric of the grafted surface exits every compact set.
		Indeed, in the first case clearly the area of the surface $\Gr(X,\lambda)$ is going to infinity and in the second one the diameter is going to infinity, which both imply divergence in $\Met(S)$.
		So the inclusion $\PT(S) \to \Met(S)$ is a proper embedding. The $\Mod(S)$-equivariance follows from the definitions of the actions.
	\end{proof}

	Let us consider now flat surfaces coming from half-translation surface structures.
	A quadratic differential $q \in\QT(S)$ determines a half-translation surface, which is a marked surface with a flat singular metric, or in other words we have a map $\QT(S) \to \Met(S)$.
	However, this is not injective as the pair of metric surface with marking associated to $q$ is unique only up to the $S^1$ action by multiplying $q$ with unit complex numbers, corresponding to a rotation of the half-translation structure.
	Indeed, the metric is enough to determine the period coordinates of $q$, up to the rotation factor.

	\begin{proposition}
		The natural map $\QT(S)/S^1 \to \Met(S)$ is a $\Mod(S)$-equivariant closed embedding.
	\end{proposition}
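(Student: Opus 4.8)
The plan is to check, in order, that the map $j\colon \QT(S)/S^1 \to \Met(S)$, $[q]\mapsto |q|$, is well defined and $\Mod(S)$-equivariant, injective, continuous, has closed image, and is a homeomorphism onto that image. Well-definedness and equivariance are immediate: in a natural chart the flat metric reads $|q|=|\phi|\,|dz|^2$, so it does not change when $q$ is replaced by $e^{i\theta}q$, while $\psi^*|q|=|\psi^*q|$ for every $\psi\in\Diff^+(S)$ and the $S^1$- and $\Mod(S)$-actions commute. For injectivity I would recover $q$ up to rotation from its flat metric: the cone points of $|q|$ (those of cone angle at least $3\pi$) are exactly the zeros of $q$, and on their complement the flat metric has linear holonomy in $\{\pm\id\}$ precisely because the cone angles are multiples of $\pi$; hence its developing map to $\C$ is determined up to post-composition with $O(2)\ltimes\R^2$, and pulling back $dz^2$ recovers $q$ up to the factor $e^{2i\theta}$, i.e.\ up to the $S^1$-action.

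Next I would establish continuity, i.e.\ that $q_n\to q$ in $\QT(S)$ implies $|q_n|\to|q|$ in the Lipschitz topology --- the half-translation counterpart of \thref{th:grafting-continuity}. Working in half-translation coordinates, one triangulates $(S,|q|)$ by saddle connections and maps each triangle affinely onto the corresponding triangle of $(S,|q_n|)$; since $q_n\to q$, these affine maps tend to the identity, hence are $(1+\epsilon_n)$-bilipschitz away from the vertices with $\epsilon_n\to 0$, which is exactly convergence in $\Met(S)$. The one delicate point is that a higher-order zero of $q$ may split into simple zeros of $q_n$; but the resulting extra cone points accumulate at the original vertex, so for any open $U$ avoiding the zeros of $q$ together with a fixed neighbourhood of them they eventually lie outside $U$ and do not affect the estimate.

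To show that the image is closed I would first note that it is exactly the set of half-translation metrics inside $\Met(S)$, and that this set is disjoint from the image of $\PT(S)$: by \thref{th:kulkarni} a grafted Thurston metric (rescaled or not) is a genuine $C^{1,1}$ Riemannian metric, hence has no cone points, whereas a half-translation metric on a surface of genus at least two always carries a cone point of angle at least $3\pi$ by Gauss--Bonnet. So it is enough to rule out $|q_n|\to g$ in $\Met(S)$ with $g$ a grafted Thurston metric. Since such a $g$ has no singularities, convergence supplies, for $\epsilon$ small and $n$ large, a \emph{global} $(1+\epsilon)$-bilipschitz homeomorphism $\phi_n\colon(S,g)\to(S,|q_n|)$. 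Fixing a cone point $p_n$ of $|q_n|$ of angle $\theta_n\ge 3\pi$ and putting $x_n=\phi_n^{-1}(p_n)$, I would push a small geodesic circle $\partial B_g(x_n,r)$ forward under $\phi_n$; it has $g$-length $2\pi r\big(1+O(r^2)\big)$ uniformly in $x_n$ because $g$ has bounded curvature, while its image is a degree-one loop around $p_n$ lying outside $B_{|q_n|}(p_n,r/(1+\epsilon))$, so radial projection onto that cone circle (a $1$-Lipschitz map) bounds its $|q_n|$-length below by $\theta_n\, r/(1+\epsilon)$. Comparing with the bilipschitz upper bound and letting $r\to 0$ gives $\theta_n\le 2\pi(1+\epsilon)^2$, contradicting $\theta_n\ge 3\pi$ once $\epsilon$ is small. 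Hence the complement of the image is open and the image is closed.

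Finally I would upgrade continuity and injectivity to a homeomorphism onto the image by a subsequence argument. Suppose $|q_n|\to|q_\infty|$ in $\Met(S)$ with all terms in the image. The Riemann surfaces $X_n$ underlying the $|q_n|$ converge to $X_\infty$ in $\T(S)$, since the conformal class is continuous on $\Met(S)$ --- a $(1+\epsilon)$-bilipschitz map is $(1+\epsilon)^2$-quasiconformal away from the finitely many cone points --- and $\|q_n\|=\Area(|q_n|)\to\Area(|q_\infty|)=\|q_\infty\|>0$ by continuity of the area functional. Trivialising the bundle $\QT(S)\to\T(S)$ near $X_\infty$, the $q_n$ become vectors of bounded norm, so along a subsequence $q_{n_k}\to q'\in\QT(S)$ over $X_\infty$ with $\|q'\|=\|q_\infty\|>0$; by the continuity already proved $|q_{n_k}|\to|q'|$, so $|q'|=|q_\infty|$ and hence $[q']=[q_\infty]$ by injectivity. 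Since every subsequence of $([q_n])$ has such a further subsequence and $\QT(S)/S^1$ is metrizable, $[q_n]\to[q_\infty]$, so $j$ is an embedding; together with the closedness of its image this makes $j$ a closed embedding, and the $\Mod(S)$-equivariance was noted at the start. The main obstacles I anticipate are the continuity of $j$ into the Lipschitz topology and the continuity of the conformal-class map used above: both come down to controlling the metrics in shrinking neighbourhoods of the cone singularities --- where the splitting of zeros, and the removability of punctures for quasiconformal maps, must be treated with care.
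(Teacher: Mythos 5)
Your proposal is correct and follows the same overall skeleton as the paper (continuity, injectivity, continuity of the inverse, closedness of the image), but it implements each step quite differently and in far more detail --- the paper's own proof is a four-line sketch. For injectivity the paper appeals to period coordinates being determined by the metric up to rotation, whereas you reconstruct $q$ from the developing map of the flat metric; both work (just note that isometries compatible with the markings are automatically orientation-preserving, so the holonomy ambiguity is $SO(2)\ltimes\R^2$ and you only pick up the $e^{2i\theta}$ factor, not complex conjugation). For continuity of the inverse the paper simply asserts that period coordinates depend continuously on the metric in the Lipschitz topology; you instead run a compactness argument in the bundle $\QT(S)\to\T(S)$ using continuity of the conformal class and of the area functional plus the already-established continuity and injectivity of $j$. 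This is a genuinely different route; it trades the paper's unproved assertion for your own (the continuity of the conformal class via quasiconformality off the cone points), which is of comparable difficulty --- the shrinking discs around the singularities must be shown to be conformally negligible, e.g.\ because the annuli surrounding them have moduli tending to infinity. For closedness the paper says only that uniform convergence of metrics preserves cone angles; your quantitative argument comparing the length of $\phi_n(\partial B_g(x_n,r))$ with the circumference $\theta_n\rho$ of a cone circle is a clean way to make that precise, together with the observation that every half-translation metric in genus $\ge 2$ has a cone point of angle $\ge 3\pi$ while a Thurston metric is $C^{1,1}$ Riemannian. One small repair there: ``radial projection'' onto the cone circle is only well defined if no other singularities lie in the annulus, which you cannot guarantee when zeros of $q_n$ collide; replace it by the closest-point projection onto the (convex, since all cone angles are $\ge 2\pi$ and $r$ is below half the uniformly bounded systole) ball $\bar B_{|q_n|}(p_n,r/(1+\epsilon))$, which is $1$-Lipschitz and still forces a degree-one loop to have length at least $\theta_n r/(1+\epsilon)$. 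With that adjustment I see no gap.
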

	\begin{proof}
		The map $\QT(S)/S^1 \to \Met(S)$ is continuous as the singular flat metric depends continuously on $q$, and is also injective as we noticed in the previous remark.
		Its inverse is also continuous, as the period coordinates depend continuously on the metric with respect to the Lipschitz topology, so the map is an embedding.

		It is closed, because its image is closed.
		We can indeed observe that its complement, the space given by rescaled grafted metrics, is open in $\Met(S)$.
		Indeed, any sequence of half-translation surfaces cannot converge in $\Met(S)$ to a surface with no cone singularities, as one can observe that uniform convergence of the metric preserves cone angles.
	\end{proof}

	\begin{corollary}
		By mapping class group equivariance, we obtain that also the maps $\PM(S) \to \met(S)$ and $\QT(S)/S^1 \to \met(S)$
		are embeddings.
	\end{corollary}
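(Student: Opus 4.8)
The plan is to derive the corollary purely formally from the two preceding propositions together with \thref{th:proper-action}: we are handed $\Mod(S)$-equivariant embeddings $\PT(S)\hookrightarrow\Met(S)$ and $\QT(S)/S^1\hookrightarrow\Met(S)$, and we want to descend each to an embedding of the corresponding $\Mod(S)$-orbit space into $\met(S)=\Met(S)/\Mod(S)$. First I would record the elementary fact that the orbit projection $p\colon\Met(S)\to\met(S)$ is an \emph{open} map: for open $U\subseteq\Met(S)$ one has $p^{-1}(p(U))=\bigcup_{g\in\Mod(S)}g\cdot U$, which is open, so $p(U)$ is open. From this it follows that whenever $A\subseteq\Met(S)$ is $\Mod(S)$-invariant, the restriction $p|_A\colon A\to p(A)$ is still open and surjective, hence a quotient map; and since the equivalence relation it induces on $A$ is exactly that of the restricted $\Mod(S)$-action, the canonical continuous bijection $A/\Mod(S)\to p(A)$ is a homeomorphism when $p(A)$ is given the subspace topology from $\met(S)$. (Only the fact that $\Mod(S)$ acts by homeomorphisms is used for this; \thref{th:proper-action} serves to guarantee that $\met(S)$ is well-behaved, in particular Hausdorff, so that ``embedding'' is a meaningful notion.)

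Next I would feed in $A=\iota(\PT(S))$, the image of the embedding $\iota\colon\PT(S)\hookrightarrow\Met(S)$. Equivariance makes $A$ invariant, and $\iota$ is an equivariant homeomorphism onto $A$, so it induces a homeomorphism $\PM(S)=\PT(S)/\Mod(S)\to A/\Mod(S)$; composing with the homeomorphism $A/\Mod(S)\to p(A)$ from the previous step exhibits $\PM(S)\to\met(S)$ as a homeomorphism $\PM(S)\to p(A)$ followed by the inclusion $p(A)\hookrightarrow\met(S)$, hence as an embedding. I would then run the same argument with $A$ the image of $\QT(S)/S^1$ in $\Met(S)$, which by the previous proposition is $\Mod(S)$-invariant \emph{and} closed: this shows the induced map on orbit spaces $(\QT(S)/S^1)/\Mod(S)\to\met(S)$ is an embedding, and since $p$ is a quotient map it carries the closed saturated set $A$ to a closed set, so this embedding is closed.

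I do not anticipate a real obstacle, as the argument is formal; the only care needed is bookkeeping. First, in ``$\QT(S)/S^1\to\met(S)$'' one must read the induced map on $\Mod(S)$-orbit spaces, since $\Mod(S)$ still acts non-trivially on $\QT(S)/S^1$ — the $S^1$- and $\Mod(S)$-actions commute, so this double quotient is well defined. Second, one should not try to shortcut via ``proper continuous injection $\Rightarrow$ embedding'': although $\PM(S)\to\met(S)$ was in fact shown to be proper inside the proof of the previous proposition, this implication would require $\met(S)$ to be locally compact (or at least a Hausdorff $k$-space), which has not been established — so it is really the openness of the orbit projection that makes the descent work. Finally, I would check that the subspace topology inherited by each image from $\met(S)$ is exactly the Lipschitz topology, which is immediate since $\met(S)$ carries the quotient topology from $\Met(S)$.
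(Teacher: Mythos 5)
Your argument is correct and is exactly the standard descent the paper has in mind: the corollary is stated without proof as an immediate consequence of equivariance, and your openness-of-the-orbit-projection argument supplies the routine details (including the right reading of $\QT(S)/S^1\to\met(S)$ as the induced map on $\Mod(S)$-orbit spaces). No gaps.
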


	\subsection{Area normalization}

	We defined the space $\Met(S)$ so that it supports a scaling action by $\R^+$.
	So we can take its projectification as the quotient by this action $\P\Met(S) = \Met(S)/\R^+$.
	Since the area is continuous on $\Met(S)$, we can identify $\P\Met(S)$ with the subspace of marked surfaces with unit area. 
	More precisely, we have the following.

	\begin{proposition}	\thlabel{th:grafted-area}
		The area of a grafted surface $\Gr(X,\lambda)$ is $2\pi |\chi(S)| + \ell_X(\lambda) $.
	\end{proposition}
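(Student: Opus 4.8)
The plan is to decompose the grafted surface $\Gr(X,\lambda)$ into its hyperbolic part and its flat part and compute the area of each piece separately. For a weighted multicurve $\mu = \sum_i a_i \gamma_i$ the decomposition is literal: $\Gr_\mu(X)$ consists of the hyperbolic pieces (the components of $X$ cut along the geodesic representative of $\mu$) glued to the Euclidean cylinders $\gamma_i \times [0, a_i]$. The total hyperbolic area is unchanged by cutting, so it equals $\Area(X) = 2\pi|\chi(S)|$ by Gauss--Bonnet. The area of the $i$-th cylinder is (circumference)$\times$(height) $= \ell_X(\gamma_i)\cdot a_i$, and summing over $i$ gives $\sum_i a_i \ell_X(\gamma_i) = \ell_X(\mu)$ by linearity and additivity of the length function. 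Hence $\Area(\Gr_\mu(X)) = 2\pi|\chi(S)| + \ell_X(\mu)$.

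The next step is to pass from weighted multicurves to general measured laminations $\lambda \in \ML(S)$ by a density-and-continuity argument. Weighted multicurves are dense in $\ML(S)$, the grafting map $\Gr : \T(S)\times\ML_0(S)\to\PT(S)$ is continuous (Thurston, \thref{th:thurston-param}), the Thurston metric depends continuously on the projective structure in the $C^0$ topology on $C^1$ metrics (\thref{th:grafting-continuity}), and the area functional is continuous on $\Met(S)$ (by the remark following the definition of the Lipschitz topology). The length function $\ell : \T(S)\times\ML(S)\to\R_{\geq 0}$ is also continuous. Therefore both sides of the claimed identity are continuous functions of $(X,\lambda)$ that agree on the dense subset of weighted multicurves, so they agree everywhere. (Alternatively, one can invoke the decomposition of $\Gr(X,\lambda)$ into hyperbolic pieces and foliated cylinders guaranteed by Thurston's structure theorem together with \thref{th:kappa-lip}: the collapsing map $\kappa$ restricts to an isometry on the complement of $\kappa^{-1}(\lambda)$, so the "hyperbolic" contribution is again $\Area(X)=2\pi|\chi(S)|$, and the flat part foliated over $\lambda$ contributes $\ell_X(\lambda)$; but the density argument is cleaner and avoids making the measure-theoretic bookkeeping near the lamination precise.)

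The main obstacle is making sure the decomposition "area $=$ hyperbolic area $+$ flat area" is legitimate in the general case, where $\lambda$ is not a multicurve and the flat locus $\kappa^{-1}(\lambda)$ may be a measure-zero lamination rather than a union of honest cylinders of positive area. This is exactly why I would route the general case through continuity from the multicurve case rather than attempting a direct integration: the delicate point becomes a one-line appeal to density plus the already-cited continuity statements, and all the genuine geometric content lives in the elementary multicurve computation, where Gauss--Bonnet applies cleanly to each hyperbolic piece with geodesic boundary (the boundary geodesics contribute no geodesic-curvature term) and the cylinder areas are products of side lengths.
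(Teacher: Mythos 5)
Your argument is correct and is essentially the paper's proof: compute the area in the weighted-multicurve case as hyperbolic area ($2\pi|\chi(S)|$ by Gauss--Bonnet) plus the sum of cylinder areas $\sum_i a_i\ell_X(\gamma_i)=\ell_X(\mu)$, then extend to general laminations by density of multicurves together with continuity of grafting, of the area functional, and of the length function. You simply spell out the continuity ingredients in more detail than the paper does.
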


	\begin{proof}
		The area is continuous so it is enough to show that the statement holds for surfaces $\Gr(X,\mu)$ where $\mu = \sum_i a_i \gamma_i$ is a weighted multicurve.
		
		For this case it is immediate as the area of the hyperbolic part is the area of the hyperbolic surface $X$, which is $2\pi |\chi(S)|$ by Gauss-Bonnet theorem, while the flat part is made of a finite union of cylinders whose area is given exactly by the product of their height and circumference $a_i \ell_X(\gamma_i)$, which by linearity add up to $\ell_X(\mu)$.
	\end{proof}

	\begin{remark}
		While we can compute the area of the hyperbolic part as integral of the volume form, because it is open subset, the same does not hold for the grafted region. Indeed, in the generic case of a lamination with no closed leaves, the grafted region has empty interior, although the integral of the volume form on the hyperbolic part is strictly less than the integral on the whole grafted surface.
		A working notion of area is given instead by the volume measure induced by the volume form on the grafted surface. The mass of the grafted region with respect to such a measure is $\ell_X(\lambda)$, as it is the difference between the total area and the one of the hyperbolic part.
	\end{remark}

	Let us consider now each grafted surface $\Gr(X,\lambda)$ to be rescaled to have unit area,
	meaning that its Thurston metric is rescaled by a multiplicative factor $k^{-2}$ such that 
	$$ k^2 = 2\pi |\chi(S)| + \ell_X(\lambda) .$$
	This means that instead of a hyperbolic part, $\Gr(X,\lambda)$ has a part with constant negative curvature $-k^2$, of area 
	$$\frac{2\pi |\chi(S)|}{2\pi |\chi(S)| + \ell_X(\lambda)} .$$

	We notice that for $\ell_X(\lambda)$ going to infinity, and so $k^2$ too, the negatively curved part of $\Gr(X,\lambda)$  will have curvature going to $-\infty$ and area going to 0. Since the total area is fixed to be one by the renormalization, this means that, on the other side, the area of the flat grafted part, for $\ell_X(\lambda)$ going to infinity, will tend to 1.
	This suggests that the larger the grafting parameter $\ell_X(\lambda)$, the more the area-normalized grafted surface will look flat.

	\begin{proposition}
		The composition $\PT(S) \to \Met(S) \to \P\Met(S)$ is also an embedding. 
	\end{proposition}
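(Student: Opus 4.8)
The plan is to reduce the statement to a normalized version of the embedding $\PT(S)\to\Met(S)$ and then recover the normalization factor from the intrinsic geometry. Since $\Area\colon\Met(S)\to\R^+$ is continuous and restricts to a homeomorphism from each $\R^+$-orbit onto $\R^+$ (rescaling the metric rescales the area bijectively), the quotient map $\Met(S)\to\P\Met(S)$ restricts to a homeomorphism from the subspace of unit-area marked surfaces onto $\P\Met(S)$, with inverse given by rescaling each metric to unit area (continuous because $\Area$ is). Thus, writing $\iota\colon\PT(S)\to\Met(S)$ for the Thurston-metric embedding established above and $\iota'\colon\PT(S)\to\Met(S)$ for the map obtained from $\iota$ by further rescaling each Thurston metric to have unit area, the composition in the statement equals $\iota'$ followed by this homeomorphism; hence it suffices to prove that $\iota'$ is an embedding onto its image.

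Injectivity of $\iota'$: a genuine (un-rescaled) grafted surface $\Gr(X,\lambda)$ has a nonempty negatively-curved locus, namely the union of the interiors of its hyperbolic pieces, on which the sectional curvature is identically $-1$, and rescaling the metric by $t>0$ turns this value into $-t^{-2}$. Consequently, if $\iota'(Z)$ and $\iota'(Z')$ are isometric through a marking-compatible isometry, then comparing the (constant) curvature on their negatively-curved loci---an honest isometry invariant---forces the two normalization factors to coincide, so $\Gr(X,\lambda)$ and $\Gr(X',\lambda')$ are themselves marking-compatibly isometric, and $Z=Z'$ by injectivity of $\iota$. The same comparison shows that the $\R^+$-orbit of $\iota(Z)$ meets the image of $\iota$ only at $\iota(Z)$, so injectivity survives projectivization. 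Continuity of $\iota'$ is immediate from \thref{th:grafting-continuity}, continuity of $\Area$, and continuity of the scaling action.

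It remains to prove continuity of the inverse on the image, which is the heart of the matter. Given a point $\bar g$ in the image of $\iota'$, write $\bar g=t^{-2}g_Z$ with $Z=\Gr(X,\lambda)$; by \thref{th:grafted-area} one has $t^{2}=2\pi|\chi(S)|+\ell_X(\lambda)$, and the negatively-curved locus of $\bar g$---the open set of points having a neighbourhood of constant curvature---has total area $2\pi|\chi(S)|\cdot t^{-2}$. Hence $t$ is read off from $\bar g$ as $t=\bigl(2\pi|\chi(S)|/A(\bar g)\bigr)^{1/2}$, where $A(\bar g)$ is the area of that locus. Granting that $A(\cdot)$ depends continuously on $\bar g$ in the Lipschitz topology along sequences in the image of $\iota'$, the map $\bar g\mapsto t$ is continuous, so $\bar g\mapsto t^{2}\bar g=g_Z$ is a continuous map into the image of $\iota$, and composing with the (already continuous) inverse of $\iota$ produces a continuous inverse for $\iota'$; transporting through the homeomorphism of the first paragraph finishes the proof.

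The main obstacle is precisely the granted step: that $A(\bar g)$---equivalently the curvature value $-t^{-2}$ carried by the negatively-curved part---varies continuously along convergent sequences in the image of $\iota'$, which is not automatic since $C^{0}$-convergence of Riemannian metrics does not control a second-order invariant like curvature. I would resolve this by exploiting the rigid local structure of rescaled grafted metrics: near an interior point of a hyperbolic piece the metric is literally a rescaled hyperbolic metric, so a comparison with embedded metric balls of controlled radius, together with the fact that along a convergent sequence the grafting laminations and hence the splitting into negatively-curved and flat parts vary continuously (the same input used to recover $(X,\lambda)$ from $g_Z$ in the proof that $\iota$ is an embedding), pins down the limit of the normalization factors. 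Equivalently, one can avoid curvature entirely: recover the scaled hyperbolic structure sitting inside $\bar g$ by the very argument already used for $g_Z$, and then extract $t$ from its area, which makes continuity of $\bar g\mapsto t$ a direct corollary of the continuity established there.
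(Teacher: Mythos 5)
Your proposal is correct and takes essentially the same route as the paper: identify $\P\Met(S)$ with the unit-area slice, note the composition is the $k^{-2}$-rescaled Thurston metric, and recover the normalization factor from the constant curvature of the negatively curved locus (you phrase it via the area of that locus, which you correctly note is equivalent), observing as the paper does that pointwise curvature is not $C^{0}$-stable but the constant curvature of an open region is. Your treatment of injectivity and of the continuity of the curvature reading is in fact somewhat more explicit than the paper's.
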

	\begin{proof}

		Thinking of $\P\Met(S)$ as the subspace of unit-area surfaces, the map is given by rescaling the Thurston metric of $\Gr(X,\lambda)$ by $k^{-2}$ as above. The map is clearly continuous and injective.
		The inverse is also continuous, as by looking at an area-normalized grafted surface, the rescaling factor $k^{-2}$ is the inverse of the constant curvature of its negatively curved part, which also varies continuously with respect to the Lipschitz topology.
		The continuous dependency is not true for the curvature of a single point (that is not even granted is well-defined) but it is for the constant curvature of an open subset, as in our case.
	\end{proof}

		On the other side, by rescaling a quadratic differential $q$, the area of the associated half-translation surface is rescaled accordingly.
		In particular, the embedding $\QT(S)/S^1 \to \Met(S)$ if equivariant with respect to the rescaling action by $\R^+$, so it descends to an embedding at the quotient, meaning from $\QT(S)/S^1/\R^+ = \QT(S)/\C^* = \P_\C\QT(S)$ to $\P\Met(S)$.
		Such an embedding is also closed as the complement is open as it is the projection to $\P\Met(S)$ of $\PT(S) \subset \Met(S)$, which is open, as complement of the image $\QT(S)/S^1$ which we showed being closed.

	\begin{proposition}
		The map $\QT(S)/\C^* \to \P\Met(S)$ is a $\Mod(S)$-equivariant closed embedding. 
	\end{proposition}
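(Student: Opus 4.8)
The plan is to deduce this from the already-established embedding $\QT(S)/S^1 \hookrightarrow \Met(S)$ by passing to the respective scaling quotients. First I would recall that scaling a quadratic differential $q$ by $t^2$, with $t \in \R^+$, scales the associated singular flat metric by $t^2$ as well; hence the embedding $\QT(S)/S^1 \to \Met(S)$ intertwines the two $\R^+$-actions. Since the $\R^+$-action on $\Met(S)$ is free (any rescaling changes the area) and the quotient projections $\QT(S)/S^1 \to \QT(S)/S^1/\R^+ = \QT(S)/\C^*$ and $\Met(S) \to \P\Met(S)$ are open maps, the equivariant embedding descends to a well-defined continuous map $\QT(S)/\C^* \to \P\Met(S)$.

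Next I would verify that the descended map is an embedding. For injectivity at the quotient level: if two classes $[q], [q'] \in \QT(S)/\C^*$ have the same image, then their unit-area flat metrics are isometric via a marking-compatible homeomorphism; as already observed, a half-translation metric determines its period coordinates up to rotation, so $q$ and $q'$ agree after rescaling and an element of $S^1$, that is $[q] = [q']$ in $\QT(S)/\C^*$. For the homeomorphism-onto-image property, I would use that $\QT(S)/S^1 \to \Met(S)$ is open onto its image together with the openness of the two vertical quotient maps; a short diagram chase then shows the descended map is open onto its image, hence an embedding.

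For closedness, the image of $\QT(S)/\C^*$ in $\P\Met(S)$ is precisely the projection of $\QT(S)/S^1 \subset \Met(S)$. This subset is $\R^+$-invariant, hence saturated for the $\R^+$-action, so its preimage under $\Met(S) \to \P\Met(S)$ is itself; since $\QT(S)/S^1$ is closed in $\Met(S)$, its image is closed in $\P\Met(S)$. Equivalently, the complement of the image is the projection of the open, $\R^+$-invariant set $\PT(S) \subset \Met(S)$, and is therefore open. The $\Mod(S)$-equivariance is inherited from that of all the maps involved, since the $\Mod(S)$-action commutes with rescaling. The only step that requires genuine care is the bookkeeping of saturated sets and open quotient maps when descending the embedding property; everything else follows immediately from the propositions already established.
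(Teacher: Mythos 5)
Your proposal is correct and follows essentially the same route as the paper: descend the $\R^+$-equivariant closed embedding $\QT(S)/S^1 \hookrightarrow \Met(S)$ to the scaling quotients, and obtain closedness from the fact that the image is a saturated closed set (equivalently, that its complement is the projection of the open set $\PT(S)$). Your additional bookkeeping on open quotient maps and saturation only makes explicit what the paper leaves implicit.
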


	Our main result is then the following.

	\begin{theorem}		\thlabel{th:bordification}
		By considering $\PT(S)$ and $\P_\C\QT(S)$ as subspaces of $\P\Met(S)$, we have that $\PT(S)$ is dense in $\P\Met(S)$ and in particular that $\P_\C\QT(S)$ is its boundary according to the Lipschitz topology. We shall write then
		$$ 	\P\Met(S) = \bar{\PT(S)} = \PT(S) \cup \P_\C\QT(S) $$
	\end{theorem}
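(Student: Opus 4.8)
\section*{Proof plan for \thref{th:bordification}}

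The plan is to reduce the whole statement to the single inclusion $\P_\C\QT(S) \subseteq \overline{\PT(S)}$. By the definition of $\Met(S)$ together with the embedding results established above, $\P\Met(S) = \PT(S) \sqcup \P_\C\QT(S)$ as sets, with $\PT(S)$ open and $\P_\C\QT(S)$ closed; hence $\partial \PT(S) \subseteq \P_\C\QT(S)$ automatically. If we then show that every point of $\P_\C\QT(S)$ lies in $\overline{\PT(S)}$, we get $\overline{\PT(S)} = \P\Met(S)$ (density of $\PT(S)$) and, since $\PT(S)$ is open, $\partial \PT(S) = \overline{\PT(S)} \setminus \PT(S) = \P_\C\QT(S)$. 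So it is enough to produce, for each $[q]\in\P_\C\QT(S)$, a sequence in $\PT(S)$ converging to $[q]$ in the Lipschitz topology.

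Fix such a $[q]$ and let $\hat q$ be its unit-area representative, a fixed closed half-translation surface with some injectivity radius $\epsilon_0>0$. For a diverging sequence $t_n \to +\infty$ of positive reals I would set $q_n = t_n q$, so that $[q_n]=[q]$ for all $n$, and let $(X_n,\lambda_n) = \O^{-1}(q_n) \in \T(S)\times\ML(S)$, using that $\O$ is a bijection. Here I would invoke the identity $\Area\big(\O(X,\lambda)\big) = \ell_X(\lambda)$ coming from the orthogeodesic foliation construction (the transverse measure of $\O_\lambda(X)$ restricted to $\lambda$ is arc length, so $\Area \O(X,\lambda) = i(\O_\lambda(X),\lambda) = \ell_X(\lambda)$; recalled in Section \ref{sec:orthogeodesic}). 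This gives $\ell_{X_n}(\lambda_n) = \Area(t_n q) = t_n\,\Area(q) \to +\infty$. The point of rescaling $q$ rather than changing it is that the unit-area normalization of $\O(X_n,\lambda_n) = t_n q$ is $\hat q$ for \emph{every} $n$; in particular its injectivity radius is the fixed number $\epsilon_0$, independent of $n$.

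Now I would apply \thref{mainth:deflation-control} with $\epsilon=\epsilon_0$: for $n$ large enough that $\ell_{X_n}(\lambda_n)>1$, there is a constant $C=C(\epsilon_0,S)$ with
$$ d_{GH}\big(\widehat{\Gr(X_n,\lambda_n)},\,\hat q\big) \;\le\; C\cdot\big(\ell_{X_n}(\lambda_n)\big)^{-1/2} \;=\; C\,(t_n\Area q)^{-1/2} \;\longrightarrow\; 0, $$
where $\widehat{\,\cdot\,}$ denotes the unit-area rescaling, and where I have used that $\widehat{\O(X_n,\lambda_n)}=\hat q$. The comparison realizing this estimate is the deflation map $\D\colon\Gr(X_n,\lambda_n)\to\O(X_n,\lambda_n)$, which is compatible with markings, so the displayed convergence is a convergence of \emph{marked} surfaces. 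Invoking the lemma (established below) that for closed surfaces with cone singularities and locally uniform geometry — as here, the cone angles and the negative curvature of the hyperbolic part being controlled along the sequence — Gromov–Hausdorff convergence implies convergence in the Lipschitz topology of $\Met(S)$, one concludes $\widehat{\Gr(X_n,\lambda_n)}\to\hat q$ in $\P\Met(S)$. Since $\widehat{\Gr(X_n,\lambda_n)}\in\PT(S)$ and $\hat q=[q]\in\P_\C\QT(S)$, this proves $[q]\in\overline{\PT(S)}$, and the theorem follows.

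The analytic weight is entirely outsourced to \thref{mainth:deflation-control} (Section \ref{sec:deflation}) and the Gromov–Hausdorff-to-Lipschitz comparison lemma. Within the present argument the step I expect to require the most care is promoting the a priori \emph{unmarked} Gromov–Hausdorff estimate of \thref{mainth:deflation-control} to a marking-compatible statement — this is exactly what lets it feed into the Lipschitz topology on $\Met(S)$ — and it is available only because the $\epsilon$-isometry furnished by \thref{mainth:deflation-control} is the deflation map $\D$ itself, which is built to respect markings. The bookkeeping that the unit-area normalization is constant along the ray $t_nq$ (so a single constant $C$ suffices), and the area identity $\Area\O(X,\lambda)=\ell_X(\lambda)$ guaranteeing $\ell_{X_n}(\lambda_n)\to\infty$, are comparatively routine.
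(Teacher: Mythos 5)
Your proposal is correct and follows essentially the same route as the paper: take the ray $t_nq$ (the paper uses $nq$), pull back through the bijection $\O^{-1}$ so that $\ell_{X_n}(\lambda_n)=|t_nq|\to\infty$ while the unit-area normalization stays equal to $\hat q$, apply \thref{mainth:deflation-control}, and upgrade the marking-compatible $\epsilon$-isometries given by the deflation maps to Lipschitz convergence via \thref{th:hausdorff-to-lipschitz}. Your write-up is in fact more explicit than the paper's proof about the reduction to $\P_\C\QT(S)\subseteq\bar{\PT(S)}$ and the Gromov--Hausdorff-to-Lipschitz step, which the paper leaves implicit (and spells out only later, in the proof of \thref{th:inflation}).
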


	This means that every marked unit-area half-translation surface is adherent to $\PT(S)$.
	In particular, there is a sequence of marked area-normalized grafted surfaces converging to it with respect to the Lipschitz topology.
	Moreover, by $\Mod(S)$-invariance the same holds at the quotient.
	
	\begin{corollary}
		By considering $\PM(S)$ and $\P_\C\QM(S)$ as subspaces of $\P\met(S)$, we have that $\PM(S)$ is dense and in particular that $\P_\C\QM(S)$ is its boundary according to the Lipschitz topology.
	\end{corollary}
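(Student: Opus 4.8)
The plan is to deduce the statement from \thref{th:bordification} purely by pushing everything forward along the quotient projection $p:\P\Met(S)\to\P\met(S)$. Recall that $p$ is the quotient map by the $\Mod(S)$-action, which by \thref{th:proper-action} is continuous and properly discontinuous; in particular $p$ is a continuous \emph{open} surjection. Moreover, all the embeddings assembled in the previous subsections are $\Mod(S)$-equivariant, so that $\PM(S)=p(\PT(S))$ and $\P_\C\QM(S)=p(\P_\C\QT(S))$. The decomposition $\P\Met(S)=\PT(S)\sqcup\P_\C\QT(S)$ furnished by \thref{th:bordification} is a \emph{disjoint} union (an area-normalized grafted surface has a negatively curved open set, while a half-translation surface has cone singularities, so the two classes are disjoint), and it is $\Mod(S)$-invariant; hence it descends to a disjoint union $\P\met(S)=\PM(S)\sqcup\P_\C\QM(S)$.

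First I would check density. Since $p$ is continuous and surjective and $\PT(S)$ is dense in $\P\Met(S)$ by \thref{th:bordification}, one gets $\P\met(S)=p\bigl(\overline{\PT(S)}\bigr)\subseteq\overline{p(\PT(S))}=\overline{\PM(S)}\subseteq\P\met(S)$, so $\overline{\PM(S)}=\P\met(S)$, i.e. $\PM(S)$ is dense.

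Next, the boundary computation. Because $\PT(S)$ is open in $\P\Met(S)$ (established when showing that $\QT(S)/\C^*\to\P\Met(S)$ is a closed embedding) and is $\Mod(S)$-invariant, its image $\PM(S)=p(\PT(S))$ is open in $\P\met(S)$, as $p$ is an open map. Consequently $\partial\PM(S)=\overline{\PM(S)}\setminus\PM(S)$, and by the density just proved together with the descended disjoint decomposition this equals $\P\met(S)\setminus\PM(S)=\P_\C\QM(S)$. Equivalently, $\P\met(S)=\overline{\PM(S)}=\PM(S)\cup\P_\C\QM(S)$, as claimed; the same argument, reading $\P_\C\QM(S)$ as the image of the closed subset $\P_\C\QT(S)$ under $p$, also records that $\P_\C\QM(S)$ is closed in $\P\met(S)$.

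I expect the only point requiring a little care—rather than a genuine obstacle—is verifying that $p$ is an open map and that the subsets $\PT(S)$, $\P_\C\QT(S)$ are $\Mod(S)$-saturated, so that images and preimages under $p$ behave as stated; both follow directly from the $\Mod(S)$-equivariance of all maps involved and from \thref{th:proper-action}. No new geometric input beyond \thref{th:bordification} is needed.
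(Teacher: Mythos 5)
Your argument is correct and is exactly the route the paper takes: the paper simply remarks that the statement "holds at the quotient by $\Mod(S)$-invariance," and you have filled in the standard point-set details (openness of the quotient map, saturation of the two subsets, and the push-forward of density and of the disjoint decomposition). No issues.
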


\subsection{Gromov-Hausdorff distance}

	The space $\met(S)$ can be endowed with the Gromov-Hausdorff distance, defined in general for compact metric spaces.
	We report here its definition.
	
	\begin{definition}[Gromov-Hausdorff distance]
		Given two compact metric spaces $(X, d_X), (Y, d_Y)$ their Gromov-Hausdorff distance $d_{GH}(X,Y)$ is defined as
		$$ d_{GH}(X,Y) = \inf \{ d_H(X,Y) \,\mid\, d \text{ distance on } X\cup Y, \; d|_X = d_X,\; d|_Y = d_Y \} $$
		where $d_H$ is the Hausdorff distance induced by the distance $d$ on $X\cup Y$.
	\end{definition}

	A more practical approach to Gromov-Hausdorff distance, that we will use, is given by the following lemma, by using $\epsilon$-isometries. It follows for example from the discussion on $\epsilon$-dense subsets in \cite[Section 11.1.1]{petersen06}. A complete proof can also be found in the lecture notes \cite[Theorem 6.14]{tuzhilin20}.

	\begin{definition}
		An $\epsilon$-isometry is a map $f:X\to Y$ between metric spaces that is an isometry up to an additive error, or more precisely such that
		\begin{enumerate}[i.]
			\item for all $x_1, x_2 \in X$,
			$d_X(x_1, x_2) - \epsilon \leq d_Y(f(x_1), f(x_2)) \leq d_X(x_1, x_2) + \epsilon$
			\item for every $y\in Y$ there is $x\in X$ with $d_Y(f(x), y) \leq \epsilon$
		\end{enumerate}
	\end{definition}

	\begin{lemma}[Criterion for Gromov-Hausdorff convergence]		\thlabel{th:GH-criterion}
		If there is an $\epsilon$-isometry between the spaces $X,Y$, then $d_{GH}(X,Y) \leq 2\epsilon$. 
	\end{lemma}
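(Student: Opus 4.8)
The plan is to realize $X$ and $Y$ as subspaces of one metric space in which they are $2\epsilon$-close in Hausdorff distance, and then quote the definition of $d_{GH}$. Let $f\colon X\to Y$ be the given $\epsilon$-isometry. On the disjoint union $X\sqcup Y$ I would retain the original metrics on each factor and, for $x\in X$ and $y\in Y$, set
$$ d(x,y)=d(y,x)=\inf_{x'\in X}\bigl(d_X(x,x')+\epsilon+d_Y(f(x'),y)\bigr). $$
The additive $\epsilon$ forces $d(x,y)\geq\epsilon>0$, so $d$ separates points of $X\sqcup Y$; symmetry and the fact that $d$ restricts to $d_X$ on $X$ and to $d_Y$ on $Y$ are built into the definition.

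The substantive step is the triangle inequality for the mixed triples. I would first dispatch the easy inequalities, in which the bound reduces to the triangle inequality in $X$ or in $Y$ together with the trivial estimate obtained by not moving the base point $x'$ in the infimum. The delicate cases are a triple with two points in one factor and one in the other: for $x,x'\in X$ and $y\in Y$ the inequality $d_X(x,x')\leq d(x,y)+d(y,x')$, and for $y,y'\in Y$ and $x\in X$ the inequality $d_Y(y,y')\leq d(y,x)+d(x,y')$, together with their companions $d(x,y)\leq d(x,x')+d(x',y)$ and $d(y,x)\leq d(y,y')+d(y',x)$. In each of these, after merging the two infima into a single infimum over auxiliary points $a,b$ and applying the triangle inequality in the ambient factor, the estimate follows from property (i) of an $\epsilon$-isometry in the form $|d_X(a,b)-d_Y(f(a),f(b))|\leq\epsilon$; the two copies of the additive constant contribute $2\epsilon$, which absorbs this $\epsilon$ of distortion with room to spare. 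This certifies that $(X\sqcup Y,d)$ is a metric space extending $d_X$ and $d_Y$.

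Finally I would bound the Hausdorff distance of $X$ and $Y$ inside this space. For $x\in X$, taking $x'=x$ gives $d(x,Y)\leq d(x,f(x))\leq\epsilon$; for $y\in Y$, property (ii) provides $x\in X$ with $d_Y(f(x),y)\leq\epsilon$, and then $d(y,X)\leq d(y,x)\leq\epsilon+d_Y(f(x),y)\leq 2\epsilon$. Hence $d_H(X,Y)\leq 2\epsilon$ in $(X\sqcup Y,d)$, and the definition of the Gromov–Hausdorff distance yields $d_{GH}(X,Y)\leq 2\epsilon$; the degenerate case $\epsilon=0$ follows by running the argument with an arbitrary $\epsilon'>0$. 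The only real obstacle is bookkeeping — keeping track of which triangle inequality uses property (i) and which uses only the $+\epsilon$ term — and this can alternatively be sidestepped by phrasing everything through the correspondence $R=\{(x,y):d_Y(f(x),y)\leq\epsilon\}$, whose distortion is easily seen to be at most $3\epsilon$; but the disjoint-union construction is the one directly matching the definition of $d_{GH}$ stated above.
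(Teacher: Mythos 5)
Your proof is correct: the cross-distance $d(x,y)=\inf_{x'\in X}\bigl(d_X(x,x')+\epsilon+d_Y(f(x'),y)\bigr)$ does define a metric on $X\sqcup Y$ extending $d_X$ and $d_Y$ (the mixed triangle inequalities absorb the $\epsilon$ of distortion from property (i) into the two additive constants, exactly as you indicate), and the Hausdorff estimate $d_H(X,Y)\leq 2\epsilon$ then follows from $d(x,f(x))\leq\epsilon$ and property (ii). The paper does not prove this lemma itself but refers to \cite[Section 11.1.1]{petersen06} and \cite[Theorem 6.14]{tuzhilin20}; your disjoint-union gluing is the standard argument given in such sources and matches the paper's definition of $d_{GH}$ directly, so nothing further is needed.
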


	The Lipschitz topology is finer than the Gromov-Hausdorff and in general they do not coincide. However, in our case, thanks to the regularity of the metrics we are considering, under certain conditions we can upgrade the Gromov-Hausdorff convergence to Lipschitz convergence. The following is essentially a rewriting of Theorem 1.8 and Claim 4.3 in \cite{nagano02} in our more restricted setting.
	For this translation we just need to observe that their assumption on the lower bound of what they call CAT$_0$-radius is implied by a lower bound on injectivity radius and the surfaces being locally CAT(0).

	\begin{theorem}[\cite{nagano02}]		\thlabel{th:nagano}
		Let $(X_n)_n \subset \met(S)$ be a sequence of surfaces with uniformly lower bounded injectivity radius, and let $X\in\met(S)$ be
		such that $d_{GH}(X_n, X) = \epsilon_n$ with $\epsilon_n$ going to zero for $n$ going to infinity.
		Let $\{x_i\}_{i=1}^{k}$ be the set of cone singularities of $X$.
		Then for every $\epsilon> 0$, for $n$ large enough there is a $(1+\epsilon)$-bilipschitz homeomorphism $\phi_n: V_n \to U_n$ where $V_n \subset X_n$ and $U_n \subset X$ are such that
		\begin{itemize}
			\item $X\setminus U_n$ is a union of $k$ discs, each contained in a ball of radius $\epsilon$ centred in a singularity $x_i$; 
			\item $X_n \setminus V_n$ is the union of $k$ discs in $X_n$, each contained in a ball of radius $\epsilon$.
		\end{itemize}
	\end{theorem}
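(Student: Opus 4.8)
The plan is to derive \thref{th:nagano} directly from the general results of Nagano \cite{nagano02}, by checking that sequences in $\met(S)$ satisfying our hypotheses fall into the class of spaces to which those results apply. The only genuine work is to reformulate the injectivity radius assumption in the language of \cite{nagano02}, where the corresponding hypothesis is a uniform lower bound on the \emph{$\mathrm{CAT}_0$-radius}, i.e.\ on the largest radius at which metric balls are $\mathrm{CAT}(0)$.

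First I would record that every $X \in \met(S)$ is a compact geodesic space of a very restricted type: a locally $\mathrm{CAT}(0)$ surface with at most finitely many conical singularities. If $X$ arises from a rescaled grafted surface, then by \thref{th:kulkarni} its metric is $C^{1,1}$ and conformal, with Gaussian curvature (defined almost everywhere) equal to a non-positive constant on the negatively curved part and to $0$ on the flat grafted part; a $C^{1,1}$ Riemannian surface with curvature bounded above by $0$ is locally $\mathrm{CAT}(0)$, and it has no singular points. If $X$ arises from a half-translation surface, it is flat away from finitely many cone points, each of angle $k\pi$ with integer $k > 2$, hence of angle at least $3\pi > 2\pi$; since a Euclidean cone of angle $\ge 2\pi$ is $\mathrm{CAT}(0)$, such an $X$ is locally $\mathrm{CAT}(0)$, with the cone points as its only singularities. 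This is precisely the setting of \cite{nagano02}.

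Next I would pass from the injectivity radius bound to a $\mathrm{CAT}_0$-radius bound. The point is that, for a space of the type above, a metric ball $B(p,r)$ is geodesically convex and $\mathrm{CAT}(0)$ whenever $r$ is small relative to the injectivity radius at $p$: such a ball is then isometric either to a ball in a $C^{1,1}$ non-positively curved surface or to a ball in a single flat cone of angle $\ge 2\pi$, and in both cases convexity and the $\mathrm{CAT}(0)$ property hold. Hence a uniform bound $\mathrm{inj}(X_n) \ge \epsilon$ produces a uniform lower bound of order $\epsilon$ on the $\mathrm{CAT}_0$-radius of the $X_n$, and likewise for the limit $X$ (using $\epsilon_n \to 0$, which prevents short closed geodesics from appearing in the limit). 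With this hypothesis verified, Theorem 1.8 of \cite{nagano02} produces, for $n$ large, a $(1+\epsilon)$-bilipschitz homeomorphism between $X_n$ and $X$ off a controlled exceptional set, and Claim 4.3 of \cite{nagano02}, specialized to surfaces whose only singularities are isolated cone points, identifies this exceptional set on the target side with a union of topological discs, each contained in an $\epsilon$-ball around a cone point $x_i$, and on the source side with a matching union of small discs. Restricting the homeomorphism to the complements $V_n \subset X_n$ and $U_n \subset X$ of these discs yields the statement.

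The main obstacle is not a geometric idea but a careful reconciliation of conventions with \cite{nagano02}: one must verify that the $\mathrm{CAT}_0$-radius used there is genuinely bounded below in terms of our injectivity radius — this is exactly where local non-positive curvature and the cone-angle inequality $\ge 2\pi$ enter, since at a cone point of angle less than $2\pi$ an arbitrarily small ball already fails to be $\mathrm{CAT}(0)$ — and that the exceptional set produced by the general theorem, once restricted to surfaces with finitely many conical points, really is a disjoint union of small discs centred at the $x_i$, with no other exceptional behaviour. Both points are routine given the geometric picture above, but they are where the restricted setting of our statement has to be matched against the generality of \cite{nagano02}.
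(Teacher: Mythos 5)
Your proposal is correct and follows exactly the route the paper takes: the paper also presents this statement as a direct translation of Theorem 1.8 and Claim 4.3 of \cite{nagano02}, with the only substantive check being that a lower bound on injectivity radius together with the surfaces being locally $\mathrm{CAT}(0)$ implies Nagano's lower bound on the $\mathrm{CAT}_0$-radius. You simply spell out in more detail why elements of $\met(S)$ are locally $\mathrm{CAT}(0)$ (non-positive curvature of the $C^{1,1}$ grafted metrics, cone angles at least $3\pi$ for half-translation surfaces), which the paper leaves implicit.
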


	We can then use this result to show that Gromov-Hausdorff convergence under certain additional conditions can be promoted to Lipschitz convergence. This is also done in a marking preserving fashion.

	\begin{lemma}		\thlabel{th:hausdorff-to-lipschitz}
		Consider the sequence of marked surfaces $(X_n, [f_n]) \in \Met(S)$ and $(X, [f])\in\Met(S)$. Let $\psi_n: X_n \to X$ be homotopy equivalences that are $\epsilon_n$-isometries with $\epsilon_n \to 0$ for $n\to \infty$ and compatible with the markings i.e. $[\psi_n \circ f_n] = [g]$. Then the sequence $(X_n, [f_n])$ converges to $(X, [f])\in\Met(S)$.
	\end{lemma}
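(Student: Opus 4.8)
The plan is to pass from the $\epsilon_n$-isometries to Gromov--Hausdorff convergence of the underlying unmarked surfaces, feed this into \thref{th:nagano} to get bilipschitz homeomorphisms on large subsets, extend these to homeomorphisms of the whole surface, and finally check that the resulting maps respect the markings. Recall that $(X_n,[f_n])\to(X,[f])$ in the Lipschitz topology means precisely that for every $\epsilon>0$ and every open $U\subset X$ whose closure is a compact subset of $X$ disjoint from the cone points $\{x_i\}_{i=1}^{k}$ of $X$, one has $(X_n,[f_n])\in V(\epsilon,U)$ for all large $n$; that is, there is a homeomorphism $\phi_n\colon X\to X_n$ which is $(1+\epsilon)$-bilipschitz on $U$ and satisfies $[\phi_n\circ f]=[f_n]$. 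Fix such an $\epsilon$ and $U$.

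By \thref{th:GH-criterion} the $\epsilon_n$-isometry $\psi_n$ witnesses $d_{GH}(X_n,X)\le 2\epsilon_n\to 0$; in particular $\operatorname{diam}(X_n)$ is bounded, and since a sequence in $\met(S)$ that Gromov--Hausdorff-converges to a fixed compact surface cannot develop a thin part absent from the limit, the $X_n$ have uniformly lower bounded injectivity radius --- equivalently, lower bounded $\mathrm{CAT}(0)$-radius, using that grafted and half-translation surfaces are locally $\mathrm{CAT}(0)$. Fix also a constant $\rho_0>0$ such that any self-map of $X$ moving every point a distance less than $\rho_0$ is homotopic to $\id_X$ (such $\rho_0$ exists since $X$ is a compact ANR). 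Apply \thref{th:nagano} to the underlying unmarked surfaces with a parameter $\delta$ chosen smaller than $\epsilon$, small enough that the $\delta$-balls about the $x_i$ are disjoint from $\bar U$, and small enough for the homotopy estimate below. For all large $n$ this yields a $(1+\delta)$-bilipschitz homeomorphism $g_n\colon V_n\to U_n$ with $V_n\subset X_n$, $U_n\subset X$, with $X\setminus U_n\subset\bigcup_i B(x_i,\delta)$ (so that $\bar U\subset U_n$) and with $X_n\setminus V_n$ a disjoint union of $k$ metrically small discs. As cone angles are preserved under Lipschitz convergence, for large $n$ the cone points of $X_n$, if any, lie in $X_n\setminus V_n$, so $g_n$ is a genuine bilipschitz map between smooth parts.

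Next, extend $g_n^{-1}\colon U_n\to V_n$ to a homeomorphism $\phi_n\colon X\to X_n$. Since $X\setminus U_n$ and $X_n\setminus V_n$ are disjoint unions of $k$ closed discs and $g_n$ carries $\partial U_n$ homeomorphically onto $\partial V_n$, match the discs in pairs and extend across each pair by any boundary-matching homeomorphism of discs (coning off, say). By the pasting lemma $\phi_n$ is a homeomorphism, and on $U\subset U_n$ it agrees with $g_n^{-1}$, hence is $(1+\delta)$-bilipschitz and a fortiori $(1+\epsilon)$-bilipschitz.

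It remains to check that $[\phi_n\circ f]=[f_n]$, and this is the delicate point. Let $\psi_n'$ be a homotopy inverse of $\psi_n$; from $[\psi_n\circ f_n]=[f]$ we obtain $[f_n]=[\psi_n'\circ f]$, so, since $X$ and $X_n$ are homeomorphic to the aspherical surface $S$, the equality $[\phi_n\circ f]=[f_n]$ is equivalent to $\phi_n\simeq\psi_n'$, hence to $\psi_n\circ\phi_n\simeq\id_X$. Here one uses that the homeomorphism in \thref{th:nagano} may be taken uniformly $o(1)$-close to the given approximation $\psi_n$ (this is how $g_n$ is built in \cite{nagano02}, essentially by straightening the $\epsilon_n$-isometry): then $\psi_n\circ g_n^{-1}$ is uniformly $o(1)$-close on $U_n$ to the inclusion $U_n\hookrightarrow X$, and, because the complementary discs are metrically small and $\psi_n$ increases diameters by at most $\epsilon_n$, the global map $\psi_n\circ\phi_n$ moves every point of $X$ by at most $O(\delta)+o(1)$, which is $<\rho_0$ for $\delta$ small and $n$ large; hence $\psi_n\circ\phi_n\simeq\id_X$. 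Therefore $(X_n,[f_n])\in V(\epsilon,U)$ for all large $n$, and since $\epsilon$ and $U$ were arbitrary, $(X_n,[f_n])\to(X,[f])$ in $\Met(S)$. I expect the main difficulty to be exactly this homotopy bookkeeping --- pinning down which mapping class $\phi_n$ realizes; an alternative to invoking the construction in \cite{nagano02} is to observe that $\phi_n$ is a near-isometry of $X$ away from the cone discs, hence $o(1)$-close to one of the finitely many isometries of $X$, and to rule out the nontrivial ones using proper discontinuity of the $\Mod(S)$-action (\thref{th:proper-action}).
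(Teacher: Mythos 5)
Your overall strategy coincides with the paper's: pass to Gromov--Hausdorff convergence via \thref{th:GH-criterion}, invoke \thref{th:nagano} to get bilipschitz maps off small discs, cone off over the discs, and then fight with the markings. Two steps, however, have genuine gaps. First, the uniform lower bound on the injectivity radius of $X_n$ (needed even to apply \thref{th:nagano}) is not a consequence of Gromov--Hausdorff proximity alone, as your phrase ``cannot develop a thin part absent from the limit'' suggests; the paper obtains it directly from the hypothesis that $\psi_n$ is simultaneously a homotopy equivalence and an $\epsilon_n$-isometry, which forces $\sys(X_n)\geq \sys(X)-2\epsilon_n$ since injectivity radius equals half the systole for these non-positively curved surfaces. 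You should make this systole comparison explicit rather than appeal to a compactness heuristic.

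Second, and more seriously, the marking step. Your Route A rests on the assertion that the bilipschitz homeomorphism of \thref{th:nagano} ``may be taken uniformly $o(1)$-close to the given approximation $\psi_n$''; this is not part of the statement being quoted, and the lemma cannot be closed by citing an unstated feature of the construction in \cite{nagano02}. Your Route B is closer to what actually works, but the proposed fix is wrong: one neither can nor needs to ``rule out'' the nontrivial isometries of $X$. The paper's resolution is to consider $\psi_n\circ\phi_n^{-1}\colon X\to X$, extract by Arzel\`a--Ascoli a subsequence converging uniformly on compacta of $X$ minus its cone points to a (possibly nontrivial) isometry $h$ of $X$, and then identify the homotopy class of $\psi_n\circ\phi_n^{-1}$ with that of $h$ by checking that both maps send a finite generating set of simple closed curves to homotopic curves (using $\delta$-closeness below the injectivity radius, plus the injection of the mapping class group into $\mathrm{Out}(\pi_1(S))$). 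The punchline is that the ambiguity by $h$ is harmless because $(X,[h^{-1}\circ g])=(X,[g])$ as points of $\Met(S)$ --- an isometry of the target does not change the marked isometry class --- and a final ``every subsequence has a further subsequence converging to the same point'' argument upgrades this to convergence of the full sequence. Replacing your closeness-to-$\psi_n$ claim and the proper-discontinuity appeal with this argument would complete the proof.
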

	\begin{proof}
		Let us first show that the injectivity radius of $X_n$ is uniformly lower bounded.
		In a non-positively curved surface $Y$
		the injectivity radius is half of the systole $\sys(Y)$, that is the length of the shortest essential closed curve. And since $\psi_n$ is a homotopy equivalence, and an $\epsilon_n$-isometry between $X_n$ and $X$, then 
		$$ \sys(X_n) \geq \sys(X) - 2\epsilon_n $$
		Since $\epsilon_n$ goes to 0, for $n$ large enough we have a positive lower bound for the injectivity radius of $X_n$.

		By \thref{th:GH-criterion} the sequence of $\epsilon_n$-isometries implies that $d_{GH}(X_n, X) < 2\epsilon_n$.
		Then we can apply \thref{th:nagano} and obtain for every $n$ large enough a $(1+\delta_n)$-bilipschitz homeomorphism $\phi_n: V_n \to U_n$ with $\delta_n$ going to zero, and where $V_n \subset X_n$ and $U_n \subset X$ are such that
		\begin{itemize}
			\item $X\setminus U_n$ is a union of $k$ discs, each contained in a ball of radius $\delta_n$ centred in one of the singularity $x_i$; 
			\item $X_n \setminus V_n$ is the union of $k$ discs in $X_n$, each contained in a ball of radius $\delta_n$.
		\end{itemize}

		We can then patch the homeomorphisms $\phi_n$ with homeomorphisms between the discs in the complement of $U_n$ and $V_n$ and extend in this way $\phi_n$ to homeomorphisms from $X_n$ to $X$, implying then convergence in $\met(S)$.

		We now show that the $\phi_n: X \to X_n$ constructed are also compatible with the markings, and so we have convergence of the marked surfaces in $\Met(S)$.
		Consider the compositions $\psi_n \circ \phi_n^{-1}: X \to X$. Every non-singular point of $X$ has a neighbourhood $U$ that is eventually contained in $U_n$. Then 
		for $n$ large enough the restriction of $\psi_n \circ \phi_n^{-1}$ to $U$ is a composition of a $(1+\delta_n)$-bilipschitz and an $\epsilon_n$-isometric maps.
		By Arzelà-Ascoli there is a subsequence converging uniformly on compact sets of $X\setminus F$ to an isometry $h: X \to X$, where $F$ is the set of cone singularities of $X$. 

		Let us show that for $n$ big enough $\psi_n \circ \phi_n^{-1}$ is in the same homotopy class as $h$. 
		Fix a finite set of simple closed curves $\gamma_i$ in $X$ disjoint from $F'$, and generating $\pi_1(X)$.
		For $n$ large enough $U_n$ contains all such $\gamma_i$. Let us choose $\delta$ smaller than the injectivity radius of $X$. For uniform convergence of $\psi_n\circ \phi_n^{-1}$, for $n$ large enough $\psi_n \circ \phi_n^{-1}|_{U_n}$ is $\delta$-close to $h|_{U_n}$.
		Then it is easy to see that $(\psi_n \circ \phi_n^{-1})_* \gamma_i$ must be homotopic to $h_* \gamma_i$. But if $\psi_n \circ \phi_n^{-1}$ and $h$ induce the same map on the fundamental groups, then they are homotopic (as the mapping class group of the surface $S$ is maps invectively to the group of outer automorphisms of $\pi_1(S)$).
		And then $[\phi_n ]= [h^{-1} \circ  \psi_n]$, implying that 
		$[\phi_n \circ f_n] = [h^{-1} \circ  \psi_n \circ f_n] = [h^{-1} \circ g]$,
		and so
		$[ f_n] = [\phi^{-1} \circ (h^{-1} \circ g)]$.

		By running this same argument for any subsequence, we have that for every subsequence there exists $h$ a self isometry of $X$ such that, up to passing to a sub-subsequence, $(X_n, [f_n])$ converges in $\Met(S)$ to the same point $(X,[h^{-1}\circ g]) = (X,[g])$.
		Then the whole sequence converges to $(X,[g])$.
	\end{proof}

\section{Quantitative control on small grafting}		\label{sec:degrafting}

	We showed continuity of grafting, meaning in particular also that the grafted surface $\Gr(X,\lambda)$ converges in the sense of Gromov-Hausdorff to $\Gr(X,0) = X$ as $\lambda$ tends to the zero lamination. We show in this section a quantitative
	\footnote{Note that we didn't define a metric on the space of marked metrics $\Met(S)$, but only on the space of unmarked one $\met(S)$.
	Although the proof shows convergence in $\Met(S)$, the quantitative control on the convergence is actually done at the level of $\met(S)$, as the Gromov-Hausdorff distance is defined there.}
	control on the convergence, in particular we show that it is uniform in the $\epsilon$-thick part of Teichmüller space.

	\begin{proposition}		\thlabel{th:degrafting}
		If $X\in \T_\epsilon(S)$ then there is a constant $C_\epsilon$ depending only on $\epsilon$ such that
		$$ d_{GH} (\Gr(X,\lambda), X) \leq C_\epsilon \, \ell_X(\lambda). $$
	\end{proposition}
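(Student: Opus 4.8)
The plan is to reduce the general statement to the case of weighted multicurves by continuity, and there to exhibit an explicit $\epsilon$-isometry from $\Gr(X,\mu)$ to $X$ with additive error controlled by $\ell_X(\mu)$, so that \thref{th:GH-criterion} gives the Gromov-Hausdorff bound. First I would take $\mu = \sum_i a_i\gamma_i$ a weighted multicurve with $\ell_X(\mu) = \sum_i a_i \ell_X(\gamma_i)$ and use the collapsing map $\kappa: \Gr(X,\mu) \to X$ of the Thurston parametrization, which collapses each Euclidean cylinder $\gamma_i \times [0,a_i]$ onto the geodesic $\gamma_i$ and is an isometry on the complementary hyperbolic pieces. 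By \thref{th:kappa-lip}, $\kappa$ is $1$-Lipschitz, so $d_X(\kappa(p),\kappa(q)) \le d_{\Gr}(p,q)$ for all $p,q$; and $\kappa$ is clearly onto, with every point of $X$ in the image, so the second $\epsilon$-isometry condition holds trivially. The work is the reverse inequality $d_{\Gr}(p,q) \le d_X(\kappa(p),\kappa(q)) + \epsilon$: given a minimizing geodesic from $\kappa(p)$ to $\kappa(q)$ in $X$, I would lift it to a path in $\Gr(X,\mu)$ by choosing, each time the $X$-geodesic crosses a curve $\gamma_i$, the shortest way across the inserted cylinder, namely a straight segment of length $\le a_i$ orthogonal to the core (or however the crossing geometry forces, but in any case bounded by $a_i$). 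Summing, the lifted path has length at most $d_X(\kappa(p),\kappa(q)) + \sum_i (\text{number of crossings of } \gamma_i)\cdot a_i$.

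The key quantitative point, then, is to bound the total crossing contribution by a constant times $\ell_X(\mu)$. A minimizing geodesic segment in the hyperbolic surface $X$ of length $L$ crosses the fixed geodesic $\gamma_i$ at most $O(L/\ell_X(\gamma_i)) + O(1)$ times — this is where the $\epsilon$-thickness of $X$ enters: on $\T_\epsilon(S)$ there is a collar of definite width around each short-ish geodesic and a lower bound on the systole, which bounds the number of times a geodesic of bounded length can cross any simple closed geodesic in terms of $\ell_X(\gamma_i)^{-1}$ and $\epsilon$. Combining, $\sum_i (\#\text{crossings of }\gamma_i)\cdot a_i \lesssim_\epsilon \sum_i (L/\ell_X(\gamma_i)+1)\, a_i$. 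One must be a little careful: the factor $a_i/\ell_X(\gamma_i)$ is not what we want — we want $a_i\ell_X(\gamma_i)$ to reconstruct $\ell_X(\mu)$. The resolution is that we only need $d_{GH} \le C_\epsilon \ell_X(\lambda)$, and we may assume $\ell_X(\lambda)$ is small (for $\ell_X(\lambda)$ bounded below the statement is vacuous after enlarging $C_\epsilon$, since diameters on $\T_\epsilon$ with bounded grafting are bounded); when $\ell_X(\lambda)$ is small each individual $a_i$ is small, and in particular the diameter of $\Gr(X,\mu)$ is bounded on $\T_\epsilon$, so $L$ in the estimate above is bounded by a constant depending only on $\epsilon$. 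Then $\sum_i(L/\ell_X(\gamma_i)+1)a_i \le (L+\ell_X(\gamma_i))\cdot\sum_i a_i/\ell_X(\gamma_i)$ — hmm, this still has the wrong-sided ratio, so the cleaner route is: bound crossings of $\gamma_i$ by a geodesic of bounded length $L$ simply by $L\cdot W_i^{-1}$ where $W_i$ is the collar width, and on $\T_\epsilon$ the collar width satisfies $W_i \ge c(\epsilon)\min(1,\ell_X(\gamma_i))$, so crossings $\le C(\epsilon) \ell_X(\gamma_i)^{-1}$; then the contribution is $\le C(\epsilon)\sum_i a_i \ell_X(\gamma_i)^{-1}$. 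To convert this to $\ell_X(\mu) = \sum_i a_i \ell_X(\gamma_i)$, note $\ell_X(\gamma_i) \ge 2\,\mathrm{arcsinh}(1)$-type lower bound fails in general, but on $\T_\epsilon$ we do have $\ell_X(\gamma_i) \ge \epsilon$ only if $\gamma_i$ is the systole-realizing curve; in general short curves exist. So the honest fix is to split: for the finitely many curves $\gamma_i$ with $\ell_X(\gamma_i)\ge 1$ the ratio $\ell_X(\gamma_i)^{-1} \le 1 \le \ell_X(\gamma_i)$, fine; for curves with $\ell_X(\gamma_i) < 1$, a minimizing $X$-geodesic of bounded length crosses $\gamma_i$ at most $O(1)$ times (it cannot repeatedly re-enter a deep collar without ceasing to be minimizing), so the contribution is $O(1)\cdot a_i \le O(1)\cdot a_i\ell_X(\gamma_i)^{-1}$, and we instead bound $a_i \le a_i\ell_X(\gamma_i)\cdot\ell_X(\gamma_i)^{-1}$…

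\emph{I expect this ratio bookkeeping to be the main obstacle}, and the clean statement is that a minimizing geodesic in $X\in\T_\epsilon(S)$ of length $\le L$ crosses any simple closed geodesic $\gamma$ at most $N(\epsilon,L)$ times with $N$ independent of $\ell_X(\gamma)$ — this follows because after two consecutive crossings of $\gamma$ a minimizing geodesic must leave the collar of $\gamma$ and travel across the thick part, which has definite size on $\T_\epsilon$. Granting that, the crossing contribution is $\le N(\epsilon,L)\sum_i a_i \le N(\epsilon,L)\,\ell_X(\mu)/\min_i\ell_X(\gamma_i)$ — no; simplest is $\sum_i a_i \le \ell_X(\mu)/\sys(X) \le \ell_X(\mu)/\epsilon$ when $X$ is $\epsilon$-thick (so all $\ell_X(\gamma_i)\ge\epsilon$), and then the total additive error is $\le N(\epsilon,L)\epsilon^{-1}\ell_X(\mu) =: C_\epsilon \ell_X(\lambda)$, with $L$ bounded on $\T_\epsilon$ for small grafting, exactly as in the footnoted easier-setting warm-up. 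Finally I would pass from multicurves to general $\lambda\in\ML(S)$: both sides are continuous in $\lambda$ (the left by \thref{th:grafting-continuity} together with \thref{th:GH-criterion}, the right by continuity of $\ell_X$), and weighted multicurves are dense in $\ML(S)$, so the inequality persists in the limit. This mirrors, in an easier setting, the strategy behind \thref{mainth:deflation-control}, as the section preamble advertises.
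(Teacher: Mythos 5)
Your overall skeleton coincides with the paper's: use Thurston's collapsing map $\kappa$, which is $1$-Lipschitz and surjective; lift a minimizing $X$-geodesic $\alpha$ to an arc $\beta$ in $\Gr(X,\mu)$ whose extra length is exactly the total cylinder-crossing contribution $\sum_i a_i\cdot\#(\alpha\cap\gamma_i)=i(\alpha,\mu)$; bound that by $C_\epsilon\,\ell_X(\mu)$; invoke \thref{th:GH-criterion}; and pass to general $\lambda$ by density of multicurves and continuity. But the one genuinely quantitative step — bounding $i(\alpha,\mu)$ — is exactly where your argument fails. Your ``clean statement,'' that a minimizing geodesic of length $\le L$ in an $\epsilon$-thick surface crosses a simple closed geodesic $\gamma$ at most $N(\epsilon,L)$ times \emph{independently of $\ell_X(\gamma)$}, is false: take $\gamma$ very long (e.g.\ images of a fixed curve under high powers of a pseudo-Anosov), so that it nearly fills $X$; then a minimizing arc of length $1$ crosses it a number of times growing linearly in $\ell_X(\gamma)$. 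The heuristic ``after two consecutive crossings the arc must traverse the thick part'' is wrong because consecutive crossing points can be at distance comparable to the width of the complementary regions of $\gamma$, which tends to $0$ as $\ell_X(\gamma)\to\infty$. The collar-width route is no better: the collar width of a geodesic of length $\ell$ decays like $e^{-\ell/2}$, so $L/w(\ell)$ is exponentially bad in $\ell$, not the linear bound you need.

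The fix is that the correct bound is \emph{linear in} $\ell_X(\gamma_i)$, namely $\#(\alpha\cap\gamma_i)\le 2\epsilon^{-1}\ell_X(\gamma_i)$, and linearity is precisely what makes the bookkeeping work: $\sum_i a_i\cdot\#(\alpha\cap\gamma_i)\le 2\epsilon^{-1}\sum_i a_i\ell_X(\gamma_i)=2\epsilon^{-1}\ell_X(\mu)$. This is the paper's \thref{th:intersection-bound} ($i(\alpha,\lambda)\le C_\epsilon\ell_X(\lambda)$ for any minimizing transverse geodesic arc), proved not by collars but by a measure argument: attach to each crossing point $\alpha(t)$ a segment $U_t$ of length $\epsilon/2$ along the leaf of $\lambda$ through it; minimality of $\alpha$ plus $\epsilon$-thickness force these segments to be pairwise disjoint; their total mass for $d\lambda\times d\ell$ is $i(\alpha,\lambda)\cdot\epsilon/2$, which is at most $\ell_X(\lambda)$. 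Note that this holds with no hypothesis on $\Len(\alpha)$, so your reduction to ``$\ell_X(\lambda)$ small, hence $L$ bounded'' (which you only sketch) becomes unnecessary. You should also treat the degenerate case where $\alpha$ runs along a closed leaf of $\lambda$ (both endpoints of $\beta$ inside the same cylinder), which the paper handles separately with the trivial bound $a_i\le\epsilon^{-1}\ell_X(\lambda)$ on cylinder heights.
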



	\begin{proof}
		Let $\kappa: \Gr(X,\lambda) \to X$ be the collapsing map introduced by Thurston. We will show that $\kappa$ distorts the metric by at most an additive constant proportional to $\ell_X(\lambda)$. Then \thref{th:GH-criterion} will apply and conclude the proof.
		We know by \thref{th:kappa-lip} that $\kappa$ is 1-Lipschitz, so we are left to show that the map $\kappa$ does not shrink too much the distance between any two points.

		Let $x,y$ be points in $\Gr(X,\lambda)$. Let's consider their images $\kappa(x), \kappa(y)$ in $X$ and a geodesic arc $\alpha$ joining them, realizing their distance on $X$. We remind that with a slight abuse of notation we denote with $\alpha$ both the parametrization and the image of the arc as subset of the surface.
		Our goal is to pick an arc $\beta$ joining $x,y$, such that $\kappa(\beta) = \alpha$ and 
		\begin{equation}		\label{eq:beta-bound}
			\Len(\beta) \leq \Len(\alpha) + C_\epsilon \ell_X(\lambda)
		\end{equation}
		for a suitable constant $C_\epsilon$. Indeed, this will suffice to conclude that 
		$$ d(\kappa(x),\kappa(y)) = \Len(\alpha) \geq \Len(\beta) - C_\epsilon \ell_X(\lambda) \geq d(x,y) - C_\epsilon \ell_X(\lambda) .$$

		We have to distinguish here two cases: either $\alpha$ is contained in a leaf of $\lambda$, or it is transverse to $\lambda$, possibly with endpoints on $\lambda$. 

		\emph{First case.}
		If $\alpha$ is on a leaf of $\lambda$, then either the leaf has no atomic measure, in which case $\kappa$ restricts to an isometry between $\kappa^{-1}(\alpha)$ and $\alpha$, and we can just choose $\beta = \kappa^{-1}(\alpha)$; or $\alpha$ lies on a closed leaf $\gamma$ of $\lambda$ that is the image under $\kappa$ of a flat cylinder in $\Gr(X,\lambda)$. We choose in this case $\beta$ in $\kappa^{-1}(\alpha)$ joining $x,y$ such that it is concatenation of a geodesic $\beta_1$ parallel to $\gamma$ and $\beta_2$ orthogonal to $\beta_1$. It is easy to observe that $\Len(\beta_1) = \Len(\alpha)$ while $\Len(\beta_2)$ is less than the height $h$ of the cylinder.
		By additivity of the length function we have $h \cdot \ell_X(\gamma) \leq \ell_X(\lambda)$, and by assumption on the thickness of $X$, $\ell_X(\gamma) \geq \epsilon$, so 
		$$\Len(\beta_2) \leq h \leq \epsilon^{-1} \ell_X(\lambda).$$
		This implies \ref{eq:beta-bound} with $C_\epsilon = \epsilon^{-1}$.

		\emph{Second case.}
		We will now show that if $\alpha$ is transverse to $\lambda$, first that then $\kappa^{-1}(\alpha)$ is an arc, so we can choose $\beta = \kappa^{-1}(\alpha)$, and secondly that 
		\begin{equation}	\label{eq:len-extension}
			\Len(\beta) = \Len(\alpha) + i(\alpha,\lambda).
		\end{equation}
		This, together with \thref{th:intersection-bound} will imply \ref{eq:beta-bound} as we wanted.
		Note that actually if an endpoint of $\alpha$, say $\kappa(x)$, is on a closed leaf of $\lambda$, then $\kappa^{-1}(\alpha)$ might contain $x$ in its interior instead of having it as an endpoint. But even in this case $d(x,y) \leq \Len(\beta)$, so the argument still holds.

		The preimage $\kappa^{-1}(\alpha)$ of the arc $\alpha$ is still an arc in $\Gr(X,\lambda)$. Indeed, $\kappa$ is a local homeomorphism everywhere in the domain except on flat cylinders, each coming from the grafting along an isolated leaf $\gamma$ of $\lambda$ with positive mass, which is hence a closed geodesic.
		An intersection point $\alpha(t_0)$ of $\alpha$ with such a $\gamma$ has as preimage a straight segment in the cylinder, orthogonal to the boundary of it, that extends and joins the arcs $\kappa^{-1}(\alpha({t<t_0}))$ and $\kappa^{-1}(\alpha({t>t_0}))$. Note that this happens only on closed geodesics, which are finitely many and compact, so the total number of this kind of intersections is also finite. 

		We now want to show that \ref{eq:len-extension} holds true.
		We know that $\kappa$ is a local isometry on the hyperbolic pieces of $\Gr(x,\lambda)$, so the arc $\kappa^{-1}(\alpha)$ is basically $\alpha$ with some additional pieces inserted along the grafting locus. It is clear to see that in the case when $\lambda$ is a weighted multicurve $\lambda = \sum_i a_i\gamma_i$, the inserted grafted parts have lengths  exactly the sum of the heights of the cylinders crossed by $\alpha$ (with multiplicity), that is $\sum_i a_i i(\alpha,\gamma_i) = i\left(\alpha,\sum a_i \gamma_i\right) = i(\alpha, \lambda)$. So equation \ref{eq:len-extension} holds for weighted multicurves. The case for $\lambda$ general measured lamination follows from the continuity of grafting thanks to \thref{th:grafting-continuity}, since weighted multicurves are dense in $\ML(S)$ and one can also argue (with an Arzelà-Ascoli argument) that once fixed two points $x,y$ the arc $\alpha$ minimizing their length changes continuously upon the choice of the hyperbolic metric and of the measured lamination.

	\end{proof}
	
	We separated the statement of the following lemma from the previous proof, in that it is independent of grafting, and of independent interest.
	
	\begin{lemma}		\thlabel{th:intersection-bound}
		Let $X\in \T_\epsilon(S)$ be an $\epsilon$-thick hyperbolic surface and $\lambda\in\ML(S)$ a measured lamination. 
		If $\alpha$ is a geodesic arc in $X$ transverse to $\lambda$, and it is distance minimizing between its endpoints, then 
		$$ i(\alpha,\lambda) \leq C_\epsilon \cdot \ell_X(\lambda) $$
		where the constant $C_\epsilon$ depends only on $\epsilon$.
	\end{lemma}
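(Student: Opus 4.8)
The plan is to bound the total intersection number $i(\alpha,\lambda)$ by covering the arc $\alpha$ with a bounded number of embedded balls of a definite radius and controlling the lamination mass inside each ball. First I would use $\epsilon$-thickness to fix a radius $r = r(\epsilon) > 0$ (say $r = \epsilon/4$, well below the Margulis constant) so that every ball $B(p,r)$ in $X$ is embedded and isometric to a ball in $\H^2$; moreover, since $X$ is $\epsilon$-thick, there is an upper bound $D(\epsilon)$ on the diameter of $X$ and hence on $\Len(\alpha)$ whenever $\alpha$ is distance-minimizing, so $\alpha$ can be covered by at most $N = N(\epsilon) = \lceil D(\epsilon)/r \rceil + 1$ balls of radius $r$ centred at points of $\alpha$.

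The key local estimate is the following: in an embedded ball $B = B(p,r) \subset X$ isometric to a hyperbolic ball, the portion of $\lambda$ inside $B$ is a union of geodesic arcs each crossing $B$, and each such arc has length at least some $c = c(r) > 0$ (a chord of a ball of radius $r$ that actually meets, say, the concentric ball of radius $r/2$ has definite length; arcs meeting only the annulus contribute nothing to the intersection with $\alpha$ if we centre the covering balls on $\alpha$ and take $\alpha$'s sub-balls of radius $r/2$). Since the leaves of $\lambda$ are disjoint and each carries its transverse measure, the number of leaf-arcs of $\lambda$ crossing $B(p, r/2)$, weighted by the transverse measure, is controlled by $\ell_X(\lambda|_B)$ — more precisely, integrating the co-area/Fubini-type identity for the transverse measure against arc length shows that the mass of $\lambda$ seen in $B$ times $c$ is at most the length-function contribution of $\lambda$ restricted to a slightly larger ball. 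Summing the estimate $i(\alpha \cap B(p_j, r/2), \lambda) \leq c^{-1} \ell_X(\lambda|_{B(p_j,r)})$ over the $N$ covering balls, and using that each point of $X$ lies in a bounded number (depending only on $r$, hence on $\epsilon$) of the balls $B(p_j, r)$ by a standard packing argument in the $\epsilon$-thick part, yields $i(\alpha,\lambda) \leq C_\epsilon \, \ell_X(\lambda)$ with $C_\epsilon$ depending only on $\epsilon$ (through $r$, $N$, $c$, and the multiplicity bound).

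I expect the main obstacle to be making precise the local inequality relating the weighted count of leaf-arcs crossing a ball to the restricted length $\ell_X(\lambda|_B)$, since $\ell_X$ is defined globally and a priori only through its properties (length of closed geodesics, linearity, additivity) rather than as a literal integral. One clean way around this is to first prove the statement for weighted multicurves, where $\ell_X(\lambda) = \sum_i a_i \ell_X(\gamma_i)$ and $\ell_X(\gamma_i) = \sum_j \Len(\gamma_i \cap B_j)/(\text{multiplicity})$ literally, obtain a uniform constant $C_\epsilon$ there, and then pass to general $\lambda \in \ML(S)$ by density of weighted multicurves together with continuity of both sides ($i(\alpha, \cdot)$ and $\ell_X(\cdot)$ are continuous on $\ML(S)$, and $\alpha$ itself can be chosen to vary continuously by an Arzelà–Ascoli argument as in the previous proof). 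A secondary technical point is handling leaf-arcs that graze the boundary of a covering ball — this is dealt with by the standard device of working with concentric balls of radii $r/2 \subset r$ so that any arc contributing to $i(\alpha,\lambda)$ inside the inner ball has definite length inside the outer one.
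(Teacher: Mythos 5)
Your argument is correct in outline and would go through, but it takes a different route from the paper, and it is worth seeing why the paper's version is both shorter and sharper. Both proofs are at bottom the same Fubini comparison: each unit of transverse measure that $\alpha$ crosses must account for a definite amount of the product measure $d\lambda\times d\ell$ whose total mass is $\ell_X(\lambda)$. The paper implements this globally in one step: at every point $\alpha(t)\in\alpha\cap\lambda$ it places the leaf segment $U_t$ of length $\epsilon/2$ centred at $\alpha(t)$, and the crucial observation is that these segments are \emph{pairwise disjoint} --- if $U_t$ met $U_{t'}$ then $\alpha(t),\alpha(t')$ would be joined by a leaf segment shorter than the injectivity radius, hence length minimizing, hence (since $\alpha$ is minimizing) contained in $\alpha$, contradicting transversality. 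This single use of the minimizing hypothesis replaces your entire covering apparatus and yields $i(\alpha,\lambda)\cdot\epsilon/2\leq\ell_X(\lambda)$, i.e.\ $C_\epsilon=2\epsilon^{-1}$, genuinely independent of the genus. Your version instead uses minimality only to bound $\Len(\alpha)$ by the diameter of $X$ and to control the covering multiplicity, so your constant passes through the diameter of the $\epsilon$-thick part and therefore depends on the genus as well; harmless for the application, but strictly weaker than the statement as phrased. Finally, the obstacle you single out --- making sense of $\ell_X(\lambda|_B)$ --- is not really there: the paper recalls that $\ell_X(\lambda)$ \emph{is} the total mass of the locally defined product measure $d\lambda\times d\ell$, which localizes to any Borel set for free, so your multicurve-plus-density detour is unnecessary. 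That detour is also the most fragile part of your proposal (one would have to check that the fixed arc $\alpha$ stays transverse to the approximating multicurves $\mu_n$ and that $\lambda\mapsto\lambda(\alpha)$ is continuous for this fixed transversal), so I would drop it and run your local estimate directly against the product measure, or better, adopt the paper's disjointness trick and dispense with the cover altogether.
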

	\begin{proof}
		Let us consider for every point $\alpha(t)$ in $\alpha \cap \lambda$, the geodesic segment $U_t$, which is an open neighbourhood of radius $\epsilon/4$ of $\alpha(t)$ along the leaf of $\lambda$ containing $\alpha(t)$. Note that it is actually just a segment, i.e. it does not form a closed curve since it would have length $\epsilon/2$, but we know by assumption that $X$ is $\epsilon$-thick.
		Moreover, all such segments $U_t$ are disjoint. 
		Suppose by contradiction that $U_t$ intersects $U_{t'}$, with wlog $t<t'$. Then they lie on the same leaf of $\lambda$ and so $\alpha(t)$ and $\alpha(t')$ will lie on the same leaf of $\lambda$ at distance strictly less than $\epsilon/2$. Since the injectivity radius in an $\epsilon$-thick surface is $\epsilon/2$, the segment on $\lambda$ joining $\alpha(t)$ and $\alpha(t')$ is length minimizing. But being $\alpha$ length minimizing, this implies that that segment lies in $\alpha$, which contradicts the hypothesis $\alpha$ transverse to $\lambda$.

		Consider now $U$ the subset of $\lambda$ formed by all the (disjoint) segments $U_t$ we considered
		$$ U := \bigcup_{t \in \alpha^{-1}(\lambda)} U_t .$$
		We remind that the length $\ell_X(\lambda)$ for a measured lamination $\lambda$ can be seen as the total mass of the measure supported on $\lambda$ given locally by the product $d\lambda \times d\ell$ where $d\lambda$ is the transverse measure associated to $\lambda$ and $d\ell$ is the Lebesgue measure along the leaves induced the hyperbolic metric $X$.
		Now since every segment $U_t$ has by construction length $\epsilon$, one can compute

		\begin{align*}
			\int_U d\lambda \times d\ell 
			&= \int_U d\lambda(t) \times d\ell_{U_t} \\
			&= \int_{\alpha} d\lambda(t) \cdot \epsilon/2\\
			&= i(\alpha,\lambda) \cdot \epsilon/2
		\end{align*}
		And since trivially $U \subset \lambda$, we also have
		\begin{equation*}
			i(\alpha,\lambda) \cdot \epsilon/2 = \int_U d\lambda \times d\ell \leq \int_\lambda d\lambda \times d\ell = \ell_X(\lambda)
		\end{equation*}
		That will complete the proof with $C_\epsilon = 2\epsilon^{-1}$.

	\end{proof}

	We chose to give a proof of the previous lemma that does not make use of grafting, since its statement does not mention grafting as well. Although, the proof above has a very geometric interpretation involving grafting. 
	
	\begin{remark}
		From the proof above, consider the preimage $U'$ of $U$ through the collapsing map $\kappa:\Gr(X,\lambda) \to X$. Then $U'$ is a subset of the grafted region. In particular, it is the subset of the grafted region, made of points that lies at distance $\epsilon/4$ from the arc $\kappa^{-1}\alpha$.	We remind that the transverse measure $\lambda(\alpha)$ gives exactly the total grafted height along all the leaves crossed by $\alpha$. So one can easily compute the area of $U$ as the product $\lambda(\alpha) \cdot \epsilon/2$. And by containment this must be less than the total grafted area, which is $\ell_X(\lambda)$ (see \thref{th:grafted-area}).
		Leading to the exact same computation as in the proof above.
	\end{remark}
	
	We invite the reader to remember this interpretation as it will be the main idea to prove the complementary result \thref{th:inflated-part-bound} for the deflation of grafted surfaces.

\section{Orthogeodesic foliation} \label{sec:orthogeodesic}

In this section we proceed introducing the orthogeodesic foliation: a measured foliation of the entire surface, transverse to a given measured lamination and whose construction depends on the hyperbolic metric of the surface. It was already used in \cite{CDR10}, but has been studied more deeply by Calderon and Farre in \cite{CF21} and \cite{CF24}. We will report some results about it, and then in the last subsection we perform our own construction of a map, called deflation, that we will use in the next section to measure how close are grafted surfaces to half-translation ones.

	\subsection{Construction of the orthogeodesic foliation}

	\begin{definition}
		Given $X\in\T(S)$ a hyperbolic surface and $\lambda$ a measured lamination on it, the \emph{orthogeodesic foliation} $\O_\lambda(X)$ is the piecewise $C^1$ singular foliation where the leaves are the fibres of the closest point projection $p: X \to \lambda$.
		Moreover, $\O_\lambda(X)$ is endowed with a transverse measure induced by the hyperbolic length along the leaves of $\lambda$.
	\end{definition}

	Let us realize first the same construction on the universal cover.
	Let us consider the lift $\tilde \lambda$ of $\lambda$ in $\H^2$. It is a collection of disjoint geodesics lines. We observe that the nearest point projection $p$ is not well-defined everywhere, since there are points equidistant to two or more leaves of $\tilde \lambda$. It is easy to observe that all such points form a geodesic spine, i.e. a graph where edges are also allowed to have only one endpoint.
	More precisely one can easily see that such a spine $\tilde \Sp$ has as vertices the points equidistant to at least three leaves of $\tilde \lambda$ and its edges are geodesic segments between the vertices or infinite geodesic rays emanating from the vertices.
	The whole construction is invariant under the covering automorphisms, so passing to the quotient it gives a spine $\Sp \subset X$.
	
	\begin{example}
		When the support of $\lambda$ is a multicurve, $\Sp$ is a (finite) graph. Infinite edges are possible only when there is at least one region in the complement of $\lambda$ with infinite diameter.
	\end{example}
	
	\begin{example}
		When $\lambda$ is maximal, so its complementary regions are ideal triangles, $\Sp$ is a disjoint union of tripods: for every ideal triangle we have a vertex of $\Sp$ in the centre and one geodesic ray for each ideal vertex.
	\end{example}
	
	In the complement of $\tilde\Sp$ in $\H^2$ the closest point projection to $\lambda$ is well-defined and smooth, then its fibres determine a smooth foliation of $\H^2 \setminus \tilde \Sp$. Such foliation, depending geometrically only on $\tilde\lambda$, projects down to a foliation of $X \setminus \Sp$.
	There are a few verifications needed to check that this foliation matches nicely to give a foliation on the entire surface $X$ and that the transverse measure is well-defined. The details are discussed in \cite{CF21}. We summarize the result in the following proposition.

	\begin{proposition}[{\cite[Section 5.2]{CF21}}]
		The foliation described on $X \setminus \Sp$ extends (after a small homotopy of $\H^2$ supported on a small neighbourhood of $\Sp$) to a well-defined smooth measured foliation $\O_\lambda(X) \in \MF(\lambda)$ on $X$.
		Moreover, $\O_{\lambda}(X)$ meets $\lambda$ orthogonally and the transverse measure it induces along the leaves of $\lambda$ coincides with the hyperbolic length.
	\end{proposition}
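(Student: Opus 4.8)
The plan is to construct the foliation upstairs in $\H^2$, where it is genuinely a foliation by perpendicular geodesic arcs, and then to transport it down to $X$ by a homotopy supported near $\Sp$ that removes the corners appearing where arcs landing on two different leaves get concatenated; the transverse measure will then simply be the push-forward of the arclength-along-$\tilde\lambda$ measure. First I would record the local picture on $\H^2 \setminus \tilde\Sp$: there the closest point projection $p$ to $\tilde\lambda$ is a smooth submersion, and the fibre through a point $x$ is an arc of the geodesic through $x$ orthogonal to the leaf $p(x)$ of $\tilde\lambda$. On the set $R_\ell$ of points whose nearest leaf is $\ell$ this is precisely the Fermi-coordinate foliation of a collar of $\ell$, with arclength along $\ell$ as the transverse coordinate. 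Because two distinct leaves of a geodesic lamination are disjoint, no point of $\tilde\lambda$ is equidistant from a second leaf, so $\tilde\Sp \cap \tilde\lambda = \emptyset$, and each fibre meets $\tilde\lambda$ orthogonally at its endpoint on $\tilde\lambda$. The whole construction depends only on $\tilde\lambda$ and is $\pi_1(S)$-equivariant, hence descends to a smooth measured foliation of $X \setminus \Sp$ meeting $\lambda$ orthogonally.

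Next I would analyse and repair the failure to extend across $\Sp$. Along an edge $e$ of $\tilde\Sp$ — the locus equidistant from two leaves $\ell_1,\ell_2$, which is itself a geodesic and is fixed pointwise by the reflection of $\H^2$ that swaps $\ell_1$ and $\ell_2$ — the two perpendicular arcs through a point $y\in e$ concatenate into a leaf that is continuous but broken at $y$; this reflection symmetry already forces the one-sided transverse coordinates inherited from $\ell_1$ and from $\ell_2$ to agree along $e$, so the defect is one of regularity only, not of the transverse measure. Across a vertex $v$, which by definition of $\Sp$ is equidistant from $k\ge 3$ leaves, the $k$ sectors of perpendicular arcs fit together into a neighbourhood modelled on a broken $k$-prong. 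I would then choose, equivariantly, a homotopy of $\H^2$ supported in a small neighbourhood $N$ of $\Sp$ and fixing $\lambda$, that straightens the broken leaves to leaves of class $C^1$ (smooth away from the prongs) and the broken $k$-prongs to standard $k$-prongs, keeping the foliation orthogonal to $\lambda$ along $\lambda$ (possible as $\Sp\cap\lambda=\emptyset$); the push-forward of the perpendicular foliation under this homotopy is the foliation $\O_\lambda(X)$ on all of $X$. Only finitely many local models occur up to isometry, and each complementary region of $\lambda$ is of finite type, so $\Sp$ meets it in a finite graph and only finitely many vertices occur there; hence the homotopy can be assembled consistently. Since every spine vertex has degree $\ge 3$, the singularities produced are exactly the standard $k$-prong ones allowed for a measured foliation — no spurious $1$- or $2$-prong points arise.

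Finally I would assemble the conclusion. The arclength-along-$\tilde\lambda$ measure of the perpendicular foliation descends to $X\setminus\Sp$, is consistent across the edges of $\Sp$ by the reflection argument (the prong vertices carrying no transverse measure), and its push-forward under the homotopy is automatically a holonomy-invariant transverse measure for $\O_\lambda(X)$; hence $\O_\lambda(X)$ is a genuine measured foliation. It is orthogonal to $\lambda$, hence transverse to it, so $\O_\lambda(X)\in\MF(\lambda)$; and the transverse measure it induces along a sub-arc of a leaf of $\lambda$ is, by the very definition of the transverse coordinate, the hyperbolic length of that arc. One should also check that the class of $\O_\lambda(X)$ in $\MF(S)$ modulo isotopy and Whitehead moves is independent of the homotopy, so that $\O_\lambda(X)$ is canonically defined. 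I expect the genuine work to lie in the middle step carried out with full care — producing the equivariant smoothing near $\Sp$ compatibly with $\lambda$ and with only standard prongs; this is precisely the verification that \cite{CF21} work out, the remainder being local Fermi-coordinate bookkeeping.
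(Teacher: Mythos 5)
Note first that the paper does not prove this proposition itself: it is quoted from \cite[Section 5.2]{CF21}, and the surrounding text only sketches the construction. So the comparison is with the cited source rather than with an in-paper argument. Your outline of that construction has the right shape and matches what \cite{CF21} does: Fermi coordinates off each leaf away from $\Sp$, the observation that a spine edge lies on the fixed geodesic of a reflection exchanging the two nearest boundary leaves (which is exactly what makes the arclength transverse measure holonomy-invariant across the edge), finiteness of the spine of each complementary crowned region, $k$-prong singularities with $k\geq 3$ at spine vertices, and an equivariant smoothing supported near $\Sp$ (note only that $\tilde\Sp$ accumulates on $\tilde\lambda$ in spikes and thin interstices, so the smoothing must fix $\lambda$ pointwise rather than merely be supported "away" from it; and your justification that $\tilde\Sp\cap\tilde\lambda=\emptyset$ should be phrased via uniqueness of the nearest point, since leaves of a lamination need not be closed and may accumulate on one another, so the distance from a point of one leaf to a second leaf can be zero).

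The genuine gap is the final inference: ``it is orthogonal to $\lambda$, hence transverse to it, so $\O_\lambda(X)\in\MF(\lambda)$.'' In this paper $\MF(\lambda)$ does not mean ``meets $\lambda$ transversely''; it is the space of measured foliation classes that can be realized \emph{together with} (the foliation equivalent to) $\lambda$ as a pair of transverse foliations with the same singular points, i.e.\ so that $q(\O_\lambda(X),\lambda)$ exists. That is a global condition on the pair (essentially that the two jointly bind the surface, in the sense of Gardiner--Masur), and pointwise orthogonal intersection along $\lambda$ does not deliver it: one must still exclude, for instance, closed leaves or cycles of singular leaves of the smoothed foliation disjoint from $\lambda$, or more generally an essential curve on which both transverse measures vanish, any of which would obstruct a common realization. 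The paper flags precisely this in the remark immediately following the proposition (``this does not imply a priori that the pair \dots can be realized transversely'') and attributes the verification to \cite[Lemma 5.8]{CF21}. Your proof therefore needs an additional argument at this point --- e.g.\ showing that every essential closed curve has positive total intersection number with $\lambda$ and $\O_\lambda(X)$ combined, and then invoking the realization criterion --- rather than deducing membership in $\MF(\lambda)$ from orthogonality alone.
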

	
	\begin{remark}
		Notice that by the construction the orthogeodesic foliation $\O_\lambda(X)$ intersects $\lambda$ orthogonally, but this does not imply a priori that the pair of foliations $(\O_\lambda(X), \lambda)$ can be realized transversely, meaning that $O_\lambda(X) \in \MF(\lambda)$. However, this is checked in \cite[Lemma 5.8]{CF21}.
	\end{remark}

	\subsection{Main properties of the orthogeodesic foliation}

		The orthogeodesic foliation determines completely the hyperbolic metric of the surface, indeed the following holds.
		\begin{theorem}[{\cite[Theorem D]{CF21}}]		\thlabel{th:orthogeodesic}
			The map $O_\lambda: \T(S) \to \MF(\lambda)$ is a $\Mod(S)$-equivariant homeomorphism.
		\end{theorem}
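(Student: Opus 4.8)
The plan is to establish three properties of $O_\lambda$ — continuity, injectivity, and properness — and then finish with a soft topological argument. Both $\T(S)$ and $\MF(\lambda)$ are connected topological manifolds of dimension $6g-6$ (for the target via the Hubbard--Masur identification $\MF(\lambda)\cong\T(S)$ recalled above), and $O_\lambda$ takes values in $\MF(\lambda)$ by the remark after its construction. A continuous injection between $n$-manifolds is open by invariance of domain; a proper map to a locally compact Hausdorff space is closed; and a nonempty clopen subset of the connected space $\MF(\lambda)$ is everything. Hence continuity $+$ injectivity $+$ properness will force $O_\lambda$ to be a surjective open continuous bijection, i.e. a homeomorphism. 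Mapping-class-group equivariance is built into the construction: the spine $\Sp$, the closest-point projection $p\colon X\to\lambda$, and the arclength it records along $\lambda$ are defined by the hyperbolic metric alone, so a marking-compatible isometry carries the orthogeodesic foliation of one surface to that of the other.

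For continuity I would track how the construction depends on $X$. Away from the spine, the closest-point projection to $\tilde\lambda$ in $\H^2$ is smooth and depends smoothly, locally uniformly, on the metric, so the foliation of $X\setminus\Sp$ and its transverse measure vary continuously there; near $\Sp$ the combinatorial type of the spine may jump, but since the repairing homotopy in the Proposition quoted above is supported in an arbitrarily thin neighbourhood of $\Sp$, this does not affect limits. Concretely one checks that $i(\O_\lambda(X_n),\gamma)\to i(\O_\lambda(X),\gamma)$ whenever $X_n\to X$, with $\gamma$ ranging over a finite set of simple closed curves, which is exactly convergence in $\MF(\lambda)$.

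Injectivity is the substantive point: one must show that the pair $(\lambda,\O_\lambda(X))$ reconstructs $X$. Every complementary region $R$ of $\lambda$ in $X$ is foliated by the orthogeodesic arcs, which are genuine geodesic arcs meeting $\partial R\subset\lambda$ orthogonally, and the transverse measure of $\O_\lambda(X)$ restricted to $\lambda$ is hyperbolic arclength; this rigid data pins down the hyperbolic structure on each $R$ and the shearing with which the regions are reglued along $\lambda$. I would make this precise by fixing a train track $\tau$ carrying $\lambda$ snugly, equipping the space of hyperbolic structures adapted to $\lambda$ with shear-type coordinates extending Thurston's cataclysm coordinates from the maximal case to the presence of general (e.g. crown-type) complementary regions, putting matching coordinates on $\MF(\lambda)$, and verifying that $O_\lambda$ is a coordinatewise bijection. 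For $\lambda$ a weighted multicurve this is an explicit cut-and-reglue computation, and the general case follows from density of weighted multicurves in $\ML(S)$ together with the continuity already proved.

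Finally, for properness: if $X_n$ leaves every compact subset of $\T(S)$, then by compactness of the $\epsilon$-thick part of moduli space some essential simple closed curve $\gamma$ has $\ell_{X_n}(\gamma)\to 0$. If $\gamma$ lies in the support of $\lambda$, the transverse measure $\O_\lambda(X_n)$ assigns to $\gamma$ equals $\ell_{X_n}(\gamma)\to 0$, so any subsequential limit in $\MF(S)$ fails to be transverse to $\lambda$ and therefore lies outside $\MF(\lambda)$; if $\gamma$ lies in a complementary region, the collar lemma makes the collar around $\gamma$ expand without bound and forces $i(\O_\lambda(X_n),\delta)\to\infty$ for a curve $\delta$ crossing $\gamma$. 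In either case $\O_\lambda(X_n)$ leaves every compact subset of $\MF(\lambda)$, so $O_\lambda$ is proper. Assembling the three properties gives the theorem. I expect the main obstacle to be the injectivity step — specifically, setting up clean shear-type coordinates on the space of hyperbolic structures adapted to a \emph{non-maximal} lamination so that the cut-and-reglue reconstruction, and simultaneously the degeneration analysis for properness, become transparent.
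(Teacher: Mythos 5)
First, a point of reference: the paper does not prove this statement. It is imported verbatim from \cite{CF21} (Theorem D), and the only indication of its proof in the present text is the one-sentence summary in the subsection on crowned surfaces: one shows that the orthogeodesic foliation determines the metric on each component of $X\setminus\lambda$, and then that it also records the shear with which these components are glued. Your overall scaffolding (continuity $+$ injectivity $+$ properness, then invariance of domain on the $(6g-6)$-manifold $\MF(\lambda)\cong\T(S)$) is consistent with how such statements are proved, and your identification of injectivity as the substantive point matches the paper's description of \cite{CF21}.

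There are, however, two genuine gaps. The serious one is in the injectivity step: you claim that the coordinatewise-bijection property ``for $\lambda$ a weighted multicurve\dots is an explicit cut-and-reglue computation, and the general case follows from density of weighted multicurves in $\ML(S)$ together with the continuity already proved.'' This does not work. The continuity you establish is continuity of $X\mapsto\O_\lambda(X)$ for \emph{fixed} $\lambda$; injectivity of $O_{\mu_n}$ for a dense set of parameters $\mu_n\to\lambda$ gives no information about injectivity of $O_\lambda$, since injectivity is not a property that passes to limits of maps. Worse, the paper itself stresses that $(X,\lambda)\mapsto\O(X,\lambda)$ is \emph{not} continuous in $\lambda$ (see \thref{th:O-continuity}, which requires Hausdorff convergence of supports), so the limiting argument is doubly unavailable. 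The reconstruction of $X$ from $(\lambda,\O_\lambda(X))$ must be carried out for the given, possibly non-maximal, $\lambda$ directly; this is exactly the content of the shear-shape coordinates and the metric-ribbon-graph/arc-system parametrization of the crowned complementary subsurfaces in \cite[Section 6]{CF21}, i.e.\ the part you flag as ``the main obstacle'' is not a technicality but the whole theorem. A second, fixable, error is in properness: a sequence leaving every compact subset of $\T(S)$ need not develop a short curve (the orbit of a point under a pseudo-Anosov stays $\epsilon$-thick and diverges); you must also treat the case where the systole stays bounded below and some length $\ell_{X_n}(\gamma)$ blows up, e.g.\ via the comparison between $i(\O_\lambda(X),\gamma)+i(\lambda,\gamma)$ and $\ell_X(\gamma)$ established in \cite{CF21}.
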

		And in general by combining all the maps $\O_\lambda$ for all $\lambda \in \ML(S)$, one gets the following mapping
		$$ \O: \T(S) \times \ML(S) \to \QT(S) ; \quad \O(X,\lambda) = q(\O_\lambda(X), \lambda) .$$
		
		\begin{remark}
			Since $\QT(S)$ can be seen as the subset of  $\MF(S) \times \MF(S)$ made of pairs of transverse foliations, thanks to the previous theorem and the natural correspondence between measured laminations and measured foliations, it follows immediately that $\O$ is a bijection.
			Moreover, due to its construction of geometric nature, $\O$ is mapping class group equivariant, meaning that for any $\phi\in \Mod(S)$, one has $\O(\phi_*X, \phi_* \lambda) = q( \phi_*\O_\lambda(X),  \phi_*\lambda) = \phi_*q( \O_\lambda(X),  \lambda) $.
		\end{remark}

		We point out that the mapping is not a morphism of bundles, as the metric on the half-translation surface $q(\O_\lambda(X), \lambda)$ will not be conformal to $X$. The mapping $\O$ is not even continuous.
		Although not globally continuous, the same authors on a more recent work, showed that $\O$ has many large domains on which it is continuous, with continuos inverse.
		
		\begin{theorem}[{\cite[Theorem A]{CF24}}]	\thlabel{th:O-continuity}	
			Suppose that $\lambda_n \to \lambda$ in the measure topology, and the supports of $\lambda_n$ also converge to the one of $\lambda$ in the Hausdorff topology. Then $(X_n, \lambda_n) \to (X, \lambda)$ in $\T(S)\times\ML(S)$ if and only if $\O(X_n, \lambda_n) \to \O(X, \lambda)$ in $\QT(S)$.
		\end{theorem}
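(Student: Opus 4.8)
The plan is to detect convergence on both sides through geometric intersection numbers and thereby reduce the statement to the continuity of a single family of quantities. Recall the embedding $\QT(S) \hookrightarrow \MF(S) \times \MF(S)$ sending a half-translation surface to its pair of horizontal and vertical foliations, and that convergence in $\MF(S)$ is detected by the functions $i(\,\cdot\,,\gamma)$ as $\gamma$ ranges over simple closed curves. The two foliations of $\O(X,\lambda) = q(\O_\lambda(X),\lambda)$ are $\lambda$ and $\O_\lambda(X)$, and $\lambda_n\to\lambda$ is already part of the hypothesis, so the content of the theorem is the equivalence of $X_n \to X$ in $\T(S)$ with $i(\O_{\lambda_n}(X_n),\gamma) \to i(\O_\lambda(X),\gamma)$ for all simple closed curves $\gamma$. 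Since the limiting pair $(\O_\lambda(X),\lambda)$ is the transverse pair defining $\O(X,\lambda)\in\QT(S)$, such convergence of foliations is the same as convergence in $\QT(S)$, so the conformal structures take care of themselves.

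For the forward direction I would argue geometrically in $\H^2$. Assuming $X_n \to X$ in $\T(S)$ and choosing compatible markings, the hyperbolic metrics converge smoothly on compact sets; the hypothesis that the supports of $\lambda_n$ converge to that of $\lambda$ in the Hausdorff topology then upgrades $\lambda_n\to\lambda$ to Hausdorff convergence of the lifted laminations $\tilde\lambda_n \to \tilde\lambda$ on compact subsets of $\H^2$, which is exactly what prevents spurious leaves from appearing in the limit. Consequently the nearest-point projections to $\tilde\lambda_n$ converge locally uniformly away from the spine, the spines $\tilde\Sp_n$ converge to $\tilde\Sp$ with eventually locally constant combinatorial type, and hence the orthogeodesic foliations converge leaf by leaf, with their transverse measures (hyperbolic length along $\lambda_n$, which converges) converging as well; together with $\lambda_n\to\lambda$ this yields $\O(X_n,\lambda_n)\to\O(X,\lambda)$ in $\QT(S)$. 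The main obstacle here is the behaviour near the spine: one must show that the small homotopies used to promote $\O_\lambda(X)$ to an honest foliation can be chosen to vary continuously, and that no transverse mass concentrates in a shrinking neighbourhood of $\Sp$ in the limit — a uniform quantitative estimate that requires genuine work.

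For the converse I would suppose $\O(X_n,\lambda_n)\to\O(X,\lambda)$ in $\QT(S)$ with $\lambda_n\to\lambda$ strongly, and assume towards a contradiction that $X_n \not\to X$. Since the bundle projection $\QT(S)\to\T(S)$ is continuous, the conformal structures underlying $\O(X_n,\lambda_n)$ converge in $\T(S)$, hence stay in a compact set; combining this with the fact that $\lambda_n\to\lambda$ is a bounded sequence and with a properness estimate for $\O$ — if some essential curve pinches in $X_n$ then either a complementary region or flat cylinder of $\O(X_n,\lambda_n)$ collapses, forcing the flat area or injectivity radius to degenerate, or, if the curve crosses $\lambda$, the intersection number $i(\,\cdot\,,\O_{\lambda_n}(X_n))$ blows up, as one sees from the collar lemma — one concludes that $(X_n)$ sub-converges to some $X'\in\T(S)$. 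Applying the forward direction along that subsequence and uniqueness of limits in $\QT(S)$ gives $\O_\lambda(X')=\O_\lambda(X)$, hence $X'=X$ by the injectivity part of \thref{th:orthogeodesic}; as this applies to every subsequence, $X_n\to X$. The second main obstacle is exactly this properness estimate, in particular making precise how a short curve in $X_n$ interacts with the support of $\lambda$; together with the spine analysis of the forward direction, this is where the bulk of the argument lies.
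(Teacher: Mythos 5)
This statement is not proved in the paper at all: it is imported verbatim as Theorem A of \cite{CF24}, so there is no internal argument to compare against, and what you are attempting is a from-scratch reproof of a deep external theorem. Judged on its own terms, your sketch has genuine gaps exactly where you flag "genuine work". The most serious one is in the forward direction: the claim that the spines $\tilde\Sp_n$ converge to $\tilde\Sp$ "with eventually locally constant combinatorial type" is unjustified and in general false. If $\lambda$ is minimal (say filling) and $\lambda_n$ are weighted multicurves converging to it in measure and with Hausdorff-convergent supports, the complementary regions of $\lambda_n$ are subsurfaces of a completely different topological type from the ideal polygons complementary to $\lambda$, and the spines are graphs of different combinatorics for every $n$. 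Hausdorff convergence of supports does not stabilize the spine; the entire difficulty of the Calderon--Farre theorem is to control the transverse measure of the orthogeodesic foliation (in particular, that no mass concentrates near the degenerating spine) \emph{without} such stabilization, which your outline assumes away.

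The converse direction has a second gap of the same nature. From continuity of the projection $\QT(S)\to\T(S)$ you get convergence of the conformal structures underlying $\O(X_n,\lambda_n)$, but this says nothing directly about $X_n$: as the paper itself remarks, $\O$ is not a bundle morphism and the flat metric of $\O(X,\lambda)$ is not conformal to $X$. So the compactness of $(X_n)$ rests entirely on the properness estimate you only gesture at ("if some essential curve pinches then a cylinder collapses or an intersection number blows up"); making that precise, in particular analysing a short curve of $X_n$ according to whether it is carried by, disjoint from, or transverse to $\lambda_n$, and translating each case into a quantitative degeneration of $\O(X_n,\lambda_n)$, is a substantial argument that is missing. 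The reduction in your first paragraph (detecting $\QT(S)$-convergence through the pair of foliations, using that $q\mapsto(\re q,\im q)$ is a homeomorphism onto the set of transverse pairs) is a reasonable framing, but as it stands the proposal records the skeleton of a possible proof while leaving the two steps that constitute the actual content of \cite[Theorem A]{CF24} unproved.
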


	\subsection{Crowned surfaces}		\label{sec:crowned-surfs}

		A hyperbolic surface with crowned boundary is a complete, finite-area hyperbolic surface with totally geodesic boundary, with punctures on the boundary such that each boundary component is either a closed geodesic, if it does not have punctures, or a hyperbolic crown. This means that the segments between the punctures are infinite geodesics, and the two segments adjacent to a puncture are asymptotic and the puncture correspond to their common point at infinity.

		Hyperbolic surfaces with crowned boundary, or shortly, crowned surfaces arise as the connected components of the complement of a geodesic lamination on a hyperbolic surface.
		\thref{th:orthogeodesic} tells us that after having fixed $\lambda$, the vertical foliation of $\O(X,\lambda)$ determines the hyperbolic structure $X$, so in particular determines the metric on every component of $S\setminus \lambda$. A better description of the hyperbolic structure of each of those crowned surface is showed in \cite[Section 6]{CF21}.
		Indeed, the proof of \thref{th:orthogeodesic} consists in showing first that the orthogeodesic foliation determines the metric on each component of $X\setminus \lambda$, and then that it encodes also the shear between those, which means, roughly speaking, how the different components are put together.
		
		If we look at a component $A$ of $X\setminus \lambda$, the information of the hyperbolic metric kept by the orthogeodesic foliation  $A$ can be condensed to just the embedding of the spine $\Sp$ in $A$ and the transverse measure induced by the measured foliation $\O_\lambda(X)$ on each of the edges of $\Sp$.
		These data are enough to reconstruct the hyperbolic metric on $A$. See for example \cite[Section 6]{CF21}, 
		\cite{mondello09}, \cite{AHC22}.

		\begin{remark}
			Moreover, we also notice that $A$ deformation retracts to $\Sp$, that is, A is a ribbon graph modelled on $\Sp(S)$.
			Hence, there is a correspondence between crowned surfaces with a hyperbolic metric and their metric ribbon graphs $\Sp(A)$.
		\end{remark}

		This is stated more precisely in \cite{AHC22}, but only for compact surfaces with geodesic boundary (non-crowned). An equivalent discussion in the general case of crowned surfaces is carried in \cite[Section 6]{CF21}, but there instead of metric ribbon graph, a dual datum, given by arc systems, is used. The correspondence between the two points of view with metric ribbon graphs and weighted arc systems is discussed in \cite{mondello09}.

	\subsection{Hausdorff approximation of measured laminations}

	The Hausdorff topology on the space of geodesic laminations is given by fixing a hyperbolic structure $X$ for $S$ and taking then the induced Hausdorff distance between geodesic laminations as compact subsets of $X$.
	We observe that if we have a sequence of measured laminations $\lambda_n$ converging as measures to another measured lamination $\lambda$, this does not necessarily imply the Hausdorff convergence of their support. The Hausdorff limit, for $n$ going to infinity, of the supports of $\lambda_n$ will contain the one of $\lambda$, but the containment can be strict, as there can be leaves of $\lambda_n$ whose mass converges to 0.

	Nevertheless, even requiring convergence of the supports, any measured lamination can still be approximated by weighted multicurves.
	In \cite[Section 4.2]{notesonnotes} it is shown that every measured lamination is uniquely decomposed as a finite union of irreducible components and each irreducible component is approximable in both the measure and in the Hausdorff sense by a single weighted closed geodesics. This implies the following.
	
	\begin{lemma}		\thlabel{th:hausdorff-approx}
		For every $\lambda\in\ML(S)$ there exists a sequence of weighted multicurves $(\mu_n)_n$ that converges to $\lambda$ in the measure topology and whose supports converge to the support of $\lambda$ with respect to the Hausdorff topology.
	\end{lemma}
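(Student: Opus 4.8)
The plan is to reduce the statement to the per-component approximation result quoted immediately above and then glue the approximants together. First I would invoke the decomposition from \cite[Section 4.2]{notesonnotes}: write $\lambda = \lambda_1 + \dots + \lambda_k$ for its (finite and unique) decomposition into irreducible components, where the supports $\supp(\lambda_1), \dots, \supp(\lambda_k)$ are pairwise disjoint compact sublaminations of the fixed hyperbolic surface $X$. For each $i$, the cited result furnishes a sequence of weighted simple closed geodesics $(c^i_n)_n$ such that $c^i_n \to \lambda_i$ in the measure topology and $\supp(c^i_n) \to \supp(\lambda_i)$ in the Hausdorff topology.

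Next I would show that for $n$ large the geodesics $c^1_n, \dots, c^k_n$ are pairwise disjoint, so that $\mu_n := c^1_n + \dots + c^k_n$ is a genuine weighted multicurve. Indeed, since the sets $\supp(\lambda_i)$ are compact and pairwise disjoint there is $\delta > 0$ with $d_X(\supp(\lambda_i), \supp(\lambda_j)) \ge 2\delta$ for $i \ne j$; by the Hausdorff convergence of supports, for $n$ large $\supp(c^i_n)$ lies in the $\delta$-neighbourhood of $\supp(\lambda_i)$, hence $c^i_n$ and $c^j_n$ are disjoint for $i\ne j$. Being distinct disjoint simple closed geodesics on a hyperbolic surface, they are in particular pairwise non-homotopic (geodesic representatives in a free homotopy class are unique), so $\mu_n$ is a weighted multicurve with positive weights on each component.

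It then remains to verify the two convergences for $\mu_n$. For the measure topology, the topology on $\ML(S)$ is detected by the intersection functions $i(\,\cdot\,, \alpha)$ against a fixed generating family of test curves or arcs (e.g. the branches of a carrying train track, or a filling collection of curves); since intersection number is additive in the lamination, $i(\mu_n, \alpha) = \sum_i i(c^i_n, \alpha) \to \sum_i i(\lambda_i, \alpha) = i(\lambda, \alpha)$, whence $\mu_n \to \lambda$. For the Hausdorff topology, one uses the elementary fact that finite unions are continuous for Hausdorff distance: $d_H\big(\bigcup_i \supp(c^i_n),\, \bigcup_i \supp(\lambda_i)\big) \le \max_i d_H(\supp(c^i_n), \supp(\lambda_i)) \to 0$, so $\supp(\mu_n) = \bigcup_i \supp(c^i_n) \to \bigcup_i \supp(\lambda_i) = \supp(\lambda)$.

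The content of the lemma thus lies entirely in the cited per-component statement; the present argument is a formal gluing, and the only step requiring any care — and hence the main, mild, obstacle — is the simultaneous disjointness of the component-wise approximants. This is precisely why Hausdorff convergence of the supports, and not merely weak convergence of the measures, is needed in the per-component approximation: it is what lets one conclude that the $c^i_n$ eventually sit in disjoint neighbourhoods and can therefore be summed into an honest multicurve. Everything else (additivity of intersection number, the union estimate for Hausdorff distance) is routine bookkeeping.
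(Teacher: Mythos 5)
Your argument is correct and follows essentially the same route as the paper: the paper simply cites the decomposition of \cite[Section 4.2]{notesonnotes} into irreducible components, each approximable by a single weighted closed geodesic in both the measure and Hausdorff senses, and states that the lemma follows. Your write-up just makes explicit the gluing step the paper leaves implicit (eventual disjointness of the component-wise approximants via Hausdorff convergence, additivity of intersection numbers, and the union estimate for Hausdorff distance), all of which is sound.
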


	A more precise characterization of the combined measure and support convergence of measured lamination is given in \cite[Lemma 15.1]{CF24}.

	\subsection{Deflation}

	The main construction we will use to prove our main result is what we call deflation map: a map between the grafted surface $\Gr(X,\lambda)$ and the half-translation surface $\O(X,\lambda)$ that allows us to relate their geometries. In \cite{CF21} they describe already a version of this map and show a property of it.
	
	\begin{proposition}[{\cite[Proposition 5.10]{CF21}}]
		Given a marked hyperbolic structure $(X,[f]) \in \T(S)$ and $\lambda \in \ML(S)$, let $(Z,[g]) = \pi(q(\O_{\lambda}(X), \lambda)) \in \T(S)$ be the marked complex structure on which $q(\mathcal{O}_{\lambda}(X), \lambda)$ is holomorphic. There is a map
		$$ D: X \to Z $$
		that is a homotopy equivalence restricting to an isometry between $\Sp$ with its metric induced by integrating the edges against $\O_{\lambda}(X)$ and the graph of horizontal saddle connections of $q(\O_{\lambda}(X), \lambda)$ with the induced path metric. Moreover, $D \circ f \simeq g$ and $D_* \O_{\lambda}(X) = \re(q)$ and $D_* \lambda$ is equivalent to $\im(q)$ as measured foliations.
	\end{proposition}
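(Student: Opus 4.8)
The plan is to build the map $D$ by hand as the change from the ``foliated coordinates'' of $X$ given by the transverse pair $(\O_\lambda(X),\lambda)$ to the flat coordinates of $q=q(\O_\lambda(X),\lambda)$ on $Z$. I would first do this for a weighted multicurve, where the picture is transparent via the metric ribbon graphs of crowned surfaces, and then pass to arbitrary $\lambda$ by a Hausdorff approximation and a compactness argument, checking the four asserted properties at each stage.

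Suppose first that $\lambda=\sum_i a_i\gamma_i$ is a weighted multicurve. Each component $A$ of $X\setminus\lambda$ is then a compact hyperbolic surface with geodesic boundary, the restriction of $\O_\lambda(X)$ to $A$ is its orthogeodesic foliation, and integrating the transverse measure of $\O_\lambda(X)$ along the edges of the spine $\Sp(A)$ makes $\Sp(A)$ a metric ribbon graph. By the correspondence recalled in Section~\ref{sec:crowned-surfs} (see \cite[Section 6]{CF21}, \cite{mondello09}, \cite{AHC22}), this metric ribbon graph is exactly the graph of horizontal saddle connections of the restriction of $q$ to the subsurface of $Z$ cut off by the horizontal cylinders, and it determines the flat structure there up to isometry. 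I would then define $D$ on $A$ by sending each leaf of $\O_\lambda(X)$ to the vertical leaf of $q$ with the same endpoint on $\Sp$ and the same $\lambda$-transverse length, extending the tautological identification of $\Sp(A)$ with the horizontal saddle graph, and across each $\gamma_i$ I would stretch a collar onto the horizontal cylinder of $q$ of circumference $\ell_X(\gamma_i)$ and height $a_i$ (so that, topologically, $D$ reinserts these cylinders along the $\gamma_i$). The resulting $D: X\to Z$ is a homeomorphism; by construction it carries $\O_\lambda(X)$ to $\re(q)$ and the foliation class of $\lambda$ to $\im(q)$, it restricts on $\Sp$ to the asserted isometry, and $D\circ f\simeq g$ holds by the very definition of the marking on $Z=\pi(q)$.

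For a general $\lambda$ I would apply \thref{th:hausdorff-approx} to get weighted multicurves $\mu_n\to\lambda$ converging both in measure and in the Hausdorff topology on supports, so that by \thref{th:O-continuity} the surfaces $\O(X,\mu_n)$ converge to $\O(X,\lambda)$ in $\QT(S)$; hence $Z_n=\pi(\O(X,\mu_n))\to Z$, the foliations $\O_{\mu_n}(X)$ converge to $\O_\lambda(X)$, and, by continuity of the orthogeodesic construction, the spines $\Sp(\mu_n)$ with their $\O$-metrics converge to $\Sp$ with its $\O_\lambda(X)$-metric. After identifying the $Z_n$ with $Z$ by maps tending to the identity, I would extract $D=\lim_n D_n$ by Arzelà--Ascoli, using that the maps $D_n$ from the previous paragraph are uniformly Lipschitz on compact subsets of $X$ disjoint from $\Sp\cup\lambda$ and are uniformly controlled in the complementary collars. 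It then remains to check that the limit keeps all four properties: it is a homotopy equivalence with $D\circ f\simeq g$, because homotopy classes are stable under uniform limits; $D_*\O_\lambda(X)=\re(q)$ and $D_*\lambda$ is equivalent to $\im(q)$, because the identifications between transverse measured foliations are continuous on the nondegenerate loci and \thref{th:orthogeodesic} pins the limit down; and $D|_\Sp$ is the asserted isometry, as the limit of the isometries $D_n|_{\Sp(\mu_n)}$ between converging metric graphs.

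The step I expect to be the real obstacle is this last limit, and specifically its behaviour near $\lambda$ and near the spine. The spine of a non-multicurve lamination can be an intricate graph carrying infinite rays that run into the crown ends of the complementary subsurfaces, so one must show that $\Sp(\mu_n)$ with its $\O$-metric converges finely enough that $\lim_n D_n|_{\Sp(\mu_n)}$ is \emph{exactly} the asserted isometry onto the horizontal saddle graph of $q$, and one must obtain uniform equicontinuity of the $D_n$ precisely in the regions where they degenerate most --- thin collars, neighbourhoods of spine vertices, and crown ends --- all while tracking the small homotopy of $\H^2$ near $\Sp$ built into the definition of $\O_\lambda(X)$. An alternative that avoids the approximation altogether is to define $D$ directly in foliated coordinates $(u,v)=\bigl(\int\O_\lambda(X),\int\lambda\bigr)$ on $X\setminus\Sp$ versus flat coordinates $(x,y)=\bigl(\int|\re\sqrt{q}|,\int|\im\sqrt{q}|\bigr)$ on $Z$; then the difficulty moves to showing that this prescription extends to a homeomorphism across $\Sp$ and along $\lambda$, which is the same essential point.
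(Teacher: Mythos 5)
This statement is not proved in the paper at all: it is quoted verbatim from \cite[Proposition 5.10]{CF21}, and the paper's own contribution is the \emph{adapted} construction of Section \ref{sec:construction}, where the analogous map is built for the grafted surface. That construction is direct: one takes a geometric train-track neighbourhood $N_\delta(\lambda)$, decomposes it into rectangles $R(e)$, maps each $R(e)$ to a Euclidean rectangle $Q(e)$ using the pair of transverse-measure coordinates coming from $\lambda$ and $\O_\lambda(X)$, glues the $Q(e)$ according to the adjacency pattern, and collapses the complementary regions along the orthogeodesic foliation; no approximation by multicurves and no limiting argument is needed. In other words, the ``alternative'' you mention in your last sentence (defining the map directly in foliated coordinates) is essentially the actual proof strategy of \cite{CF21} and of the paper's deflation; the rectangle decomposition subordinate to the train track is exactly the device that tames the extension across $\Sp$ and along $\lambda$ that you identify as the sticking point. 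Your proposed route --- multicurve base case, then \thref{th:hausdorff-approx} plus \thref{th:O-continuity} plus Arzel\`a--Ascoli --- is a genuinely different approach.

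As it stands, though, it has a genuine gap, and it is the one you name yourself: the limit step is not carried out, and it is where all the content lies. First, the base case is not coherent as written: inside a complementary piece $A$ the leaves of $\O_\lambda(X)$ have zero $\lambda$-transverse length, so your prescription ``send each leaf to the vertical leaf with the same endpoint on $\Sp$ and the same $\lambda$-transverse length'' literally collapses $A$ onto the saddle graph, which contradicts both your claim that $D$ is a homeomorphism and the collar-stretching over the cylinders (the correct fix is to send each orthogeodesic fibre from a point of $\gamma_i$ to $\Sp$ onto the vertical segment from the corresponding cylinder point to the saddle graph). Second, and more seriously, the Arzel\`a--Ascoli argument needs uniform equicontinuity of the $D_n$, which is exactly what fails a priori: as $\mu_n\to\lambda$ the closed leaves of $\mu_n$ become long, their collars thin, and the stretch of $D_n$ along fibres over the cylinders is not uniformly bounded, precisely in the regions near $\Sp$ and $\lambda$. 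Moreover the convergence of $\Sp(\mu_n)$ with its $\O$-metric to $\Sp$ with its $\O_\lambda(X)$-metric is itself a nontrivial ingredient of \cite{CF21} (cut loci can jump combinatorially; this is the same phenomenon behind the failure of global continuity of $\O$ and the need for the Hausdorff hypothesis in \thref{th:O-continuity}), and even granting a uniform limit $D$, uniform convergence alone does not give the exact equality $D_*\O_\lambda(X)=\re(q)$ nor that $D|_{\Sp}$ is an isometry onto the horizontal saddle graph --- pushforward of transverse measures and restriction to a limit of graphs require quantitative control you have not supplied. So the proposal is a reasonable plan but not a proof; to complete it you would either have to establish these uniform estimates, or switch to the direct coordinate construction, which is what the cited source and the paper actually do.
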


	They show that $D_* \lambda$ is equivalent as measured foliation to $\im(q)$. We remind that for example if $\lambda$ is multicurve, it has a finite number of leaves, while $\im(q)$ has uncountably many, so $D_*(\lambda)$ will only be equivalent to $\im(q)$ via the laminations - foliation correspondence.
	
	What we will do is to replicate the construction, but replacing $X$ with the grafted surface $\Gr(X,\lambda)$ and $\lambda$ with $\hat \lambda$, a specific lamination on $\Gr(X,\lambda)$ that is equivalent to $\lambda$.
	In this case it is possible to define a different map $\D: \Gr(X,\lambda) \to \O(X,\lambda)$, that we will still call deflation, that actually sends $\hat\lambda$ to $\im(q)$, leaf by leaf.
	More precisely, we will show the following.
	
	\begin{proposition} \thlabel{th:deflation}
		Given $(X,\lambda) \in \T(S) \times \ML_0(S)$, let $q = \O(X,\lambda)$ be the associated half-translation surface.
		There is a homotopy equivalence 
		$$ \D: \Gr(X,\lambda) \to \O(X,\lambda) $$
		compatible with the markings of $\Gr(X,\lambda)$ and of $\O(X,\lambda)$ and	such that $\D_* \O_{\lambda}(X) = \re(q)$ and $\D_* \hat\lambda = \im(q)$.
	\end{proposition}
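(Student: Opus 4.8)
The plan is to correct the obvious composition $D\circ\kappa$, where $\kappa\colon\Gr(X,\lambda)\to X$ is Thurston's collapsing map (\thref{th:kappa-lip}) and $D\colon X\to Z$, $Z=\pi(\O(X,\lambda))$, is the map of \cite[Proposition~5.10]{CF21}. First I set up the relevant foliations on $\Gr(X,\lambda)$. Let $\hat\lambda:=\kappa^{-1}(\lambda)$, a geodesic lamination for the Thurston metric which over each atom $a_i\gamma_i$ of $\lambda$ is the inserted flat cylinder (circumference $\ell_X(\gamma_i)$, height $a_i$), foliated by its core curves with transverse measure the flat height; collapsing these cylinders recovers $\lambda$, so $\hat\lambda$ is equivalent to $\lambda$. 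Let $\hat\O:=\kappa^{*}\O_\lambda(X)$ be $\O_\lambda(X)$ pulled back on the hyperbolic part, extended across each grafted cylinder by the segments orthogonal to the core with the transverse measure inherited along $\gamma_i$; then $(\hat\O,\hat\lambda)$ is a transverse pair with $\kappa_*\hat\O=\O_\lambda(X)$ and $\kappa_*\hat\lambda\sim\lambda$. Since $\kappa$ and $D$ are marking-compatible homotopy equivalences and push-forward of a measured foliation is a homotopy invariant, any marking-compatible continuous map $\D\colon\Gr(X,\lambda)\to\O(X,\lambda)$ is automatically a homotopy equivalence homotopic to $D\circ\kappa$ (aspherical target) with $\D_*\hat\O=D_*\O_\lambda(X)=\re(q)$ and $\D_*\hat\lambda=D_*\lambda\sim\im(q)$ as measured foliations; the whole point of \thref{th:deflation} is to realize the last correspondence \emph{leaf by leaf}.

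I would treat weighted multicurves $\lambda=\mu=\sum_i a_i\gamma_i$ first, where the target is explicit: $\O(X,\mu)$ is a union of closed flat cylinders $C_i$, one per $\gamma_i$, of height $a_i$ and circumference $i(\O_\mu(X),\gamma_i)=\ell_X(\gamma_i)$ — the orthogeodesic transverse measure along a leaf being its hyperbolic length — glued along the graph $G$ of horizontal saddle connections, onto which $D$ carries the spine isometrically. The key coincidence is that $C_i$ and the grafted cylinder of $\Gr(X,\mu)$ dual to $\gamma_i$ have the \emph{same} height $a_i$ and circumference $\ell_X(\gamma_i)$. I then define $\D$ to be, on each grafted cylinder, the flat isometry onto $C_i$ (chosen compatibly with the shear of $D$ along the two boundary circles) and, on the hyperbolic part, a modification of $D\circ\kappa$ that collapses each complementary piece onto its spine in $G$ so as to match the cylinder maps along the $\gamma_i$. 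This patches into a marking-compatible continuous map, hence a homotopy equivalence homotopic to $D\circ\kappa$ with $\D_*\hat\O=\re(q)$ as above; and $\D_*\hat\mu=\im(q)$ holds leaf by leaf because $\hat\mu$ is carried entirely by the grafted cylinders, where the chosen isometry identifies the two height-$a_i$ core foliations leaf-wise and measure-preservingly.

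For a general $\lambda$ I would pass to the limit. By \thref{th:hausdorff-approx} pick weighted multicurves $\mu_n\to\lambda$ converging both in measure and in the Hausdorff topology of supports; fixing the continuous section of Section~\ref{sec:grafting} realizes $\Gr(X,\mu_n)\to\Gr(X,\lambda)$ (\thref{th:thurston-param}, \thref{th:grafting-continuity}) and $\O(X,\mu_n)\to\O(X,\lambda)$ (\thref{th:O-continuity}, whose support hypothesis is exactly what \thref{th:hausdorff-approx} furnishes) as metrics on $S$. The maps $\D_n$ from the multicurve case, transported to self-maps of $S$, are uniformly Lipschitz for a fixed background metric ($\kappa_n$ is $1$-Lipschitz, and the cylinder identifications and $D_n$ have controlled dilatation along the convergent sequences), so Arzelà-Ascoli yields a subsequential uniform limit $\D\colon\Gr(X,\lambda)\to\O(X,\lambda)$. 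It is marking-compatible, hence a homotopy equivalence with $\D_*\hat\O=\re(q)$ and $\D_*\hat\lambda=\im(q)$ as measured foliations by the discussion above; and the leaf-wise statement survives because the leaves of $\hat\mu_n$ and of $\im(q_n)$ Hausdorff-converge to those of $\hat\lambda$ and $\im(q)$ while $\D_n\to\D$ uniformly.

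The hard part is this limit for non-atomic $\lambda$. One must establish genuine equicontinuity of the $\D_n$ — not merely bounds that might degenerate near the many configurations where $\O$ is discontinuous, which is precisely why the support-convergence of \thref{th:hausdorff-approx} feeding \thref{th:O-continuity} is indispensable — and one must make precise the phrase ``$\D_*\hat\lambda=\im(q)$ leaf by leaf'' when $\hat\lambda$ is a measure-zero lamination whose $\kappa$-fibres may be Cantor sets (cf. Section~\ref{sec:cantor}): it should be read as asserting that $\D$ maps each leaf of $\hat\lambda$ into a leaf of $\im(q)$ and induces a measure-preserving map of leaf spaces, which is what the approximation delivers. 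A secondary point is verifying, in the multicurve step, that the cylinder identifications patch with the collapsed hyperbolic pieces into a globally defined homotopy equivalence; this rests on the explicit description of $D$ near a closed leaf, namely that it flattens the orthogeodesic band of a complementary region facing $\gamma_i$ onto one half of $C_i$, so that $D(\gamma_i)$ is the core of $C_i$ and the grafted cylinder exactly fills $C_i$.
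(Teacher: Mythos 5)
Your multicurve step is essentially the paper's: collapse each hyperbolic piece along the orthogeodesic foliation onto its spine and match the grafted cylinders isometrically with the cylinders of $\O(X,\mu)$, which have the same heights $a_i$ and circumferences $\ell_X(\gamma_i)$. Where you diverge is the general case. The paper does \emph{not} pass to a limit: it constructs $\D$ directly for arbitrary $\lambda$ by taking a geometric train track neighbourhood $N_\delta(\lambda)$, decomposing it into rectangles $R(e)$ (one per branch), and sending each $R(e)$ to a Euclidean rectangle $Q(e)$ by the map whose coordinates are the transverse measures of $\O_\lambda(X)$ and of $\hat\lambda$ (a Cantor-function-like map along each tie); the target $\O(X,\lambda)$ is realized simultaneously as the gluing of the $Q(e)$'s, and the complementary regions are collapsed along the orthogeodesic foliation exactly as in the multicurve case. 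This direct construction delivers the leaf-by-leaf statement tautologically, because each leaf of $\hat\lambda$ in $R(e)$ is by definition sent onto a horizontal level of $Q(e)$, and the transverse measure of $\hat\lambda$ becomes the vertical Lebesgue measure.

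The genuine gap in your route is precisely at the point the proposition is designed to address. You justify the leaf-wise identification $\D_*\hat\lambda=\im(q)$ in the limit by saying that ``the leaves of $\hat\mu_n$ and of $\im(q_n)$ Hausdorff-converge to those of $\hat\lambda$ and $\im(q)$.'' That is false as stated: \thref{th:hausdorff-approx} gives Hausdorff convergence of \emph{supports}, but an individual closed leaf of $\mu_n$ (a long closed geodesic wrapping around a minimal component) Hausdorff-converges to the \emph{entire} minimal component of $\lambda$, not to any single leaf. So there is no leaf-to-leaf correspondence between $\hat\mu_n$ and $\hat\lambda$ to pass to the limit, and the claim that the limit map sends each leaf of $\hat\lambda$ into a leaf of $\im(q)$ while pushing the transverse measure to Lebesgue measure requires a genuinely different argument (e.g.\ that the uniform limit preserves local horizontality, plus a weak-$*$ argument for the measures). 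You flag this yourself as ``the hard part,'' but the sketch you give does not close it; and the same concern applies to the equicontinuity of the $\D_n$, since your uniform Lipschitz bound for general $n$ is asserted rather than derived from the multicurve construction. Both difficulties evaporate in the paper's approach because the rectangle decomposition is topologically stable under the approximation (only the sizes of the $Q(e)$ change), so no compactness argument is needed to produce $\D$; the limiting arguments are reserved for the later \emph{metric} estimates on $\D$, not for its construction.
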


	\subsection{Construction of the deflation}
	\label{sec:construction}

		The idea of the construction of $\O(X,\lambda)$ and the deflation map $\D$ is in essence the same as in \cite{CB88}, where given a pair of transverse geodesic measured laminations on a hyperbolic surface, the half-translation surface associated to the pair is constructed.
		Here the starting surface will be $\Gr(X,\lambda)$ a grafted surface and the pair we consider is made of a measured foliation and a geodesic measured lamination, corresponding respectively to $O_\lambda(X)$ and $\lambda$.

		\paragraph*{The orthogeodesic foliation on the grafted surface}		
		We constructed $\O_\lambda(X)$ as a foliation on $X$. This induces naturally an equivalent foliation on $\Gr(X,\lambda)$ just by taking the preimage of $\lambda$ via Thurston's collapsing map $\kappa: \Gr(X,\lambda) \to X$ with the pull-back measure. Indeed, since $O_\lambda(X)$ intersects $\lambda$ orthogonally in $X$, one can see that its preimage under $\kappa$ is still a smooth foliation, equivalent to $O_\lambda(X)$.

		\paragraph*{The measured lamination on the grafted surface}
		The collapsing map $\kappa$ allows us to pull-back $\lambda$ to a measured lamination $\kappa^*\lambda$ on $\Gr(X,\lambda)$.
		We will now define $\hat\lambda$ a geodesic representative for $\kappa^*\lambda$, as, due to the piecewise flat metric, laminations do not have necessarily a unique representative geodesic with respect to the grafted metric.

		For example if $\lambda = a\gamma$ a simple closed geodesic on $X$ with mass $a>0$, then $\Gr(X,\lambda)$ contains a flat cylinder of length $a$ corresponding to the curve $\gamma$. We then choose $\hat\lambda$ to be the lamination foliating the cylinder by parallel copies of $\gamma$ with a uniform transverse measure, that is the one given by the Euclidean metric of the cylinder along the direction orthogonal to the foliation. This naturally extends to any multicurve $\mu=\sum_i a_i \gamma_i$, by putting $\hat\mu = \sum_i \widehat{a_i\gamma_i} $.
		
		In the general case $\lambda$ can always be decomposed as $\lambda = \mu + \lambda'$ with $\mu$ a weighted multicurve and $\lambda'$ a measured lamination with no closed leaves (see the decomposition into irreducible components in \cite[Section 4.2]{notesonnotes}). Then we can just define $\hat\lambda = \hat\mu + \kappa^*\lambda'$, where $\kappa^*\lambda'$ is simply the preimage $k^{-1}(\lambda')$ with the pull-back measure, as $\kappa$ is injective on $\kappa^{-1}(\lambda')$.
		For simplicity of notation, from now on, we will also call $\lambda$ the above defined measured lamination $\hat\lambda$ when it's clear that we are referring to the lamination on the grafted surface.

		\paragraph*{Multicurve case}
		We now have on the grafted surface $\Gr(X,\lambda)$ a lamination $\lambda$ and a singular foliation $\O_\lambda(X)$ transverse to it. We want to construct $\O(X,\lambda) = q(\O_\lambda(X), \lambda)$ the half translation surface with the pair of vertical and horizontal foliation  equivalent to $\O_\lambda(X)$ and $\lambda$.
		Let us consider now the easier case where $\lambda$ is a weighted multicurve on $X$.
		
		In this case $\Gr(X,\lambda)$ is just a gluing of finitely many cylinders and compact hyperbolic surfaces with geodesic boundary. The lamination $\lambda$ and the foliation $\O_\lambda(X)$, restricted to each cylinder, give a pair of transverse foliations, geodesic with respect to the Euclidean metric of the cylinder, the first one parallel to the boundary and the second one orthogonal to it. Each hyperbolic piece is instead only foliated by $\O_\lambda(X)$ and does not contain any leaf of $\lambda$.
		What we will do is completely collapsing each hyperbolic piece along the orthogeodesic foliation. More precisely this means taking the quotient with the following equivalence relation. Two points in the same hyperbolic piece are equivalent if they lie in the same (possibly singular) leaf of $\O_\lambda(X)$ restricted to the hyperbolic piece. 
		
		\begin{proposition}
			The quotient above yields a half-translation surface obtained by gluing the cylinders along maps between their boundaries which are piecewise isometries.
		\end{proposition}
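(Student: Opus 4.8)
The plan is to produce the half-translation structure on the quotient surface $Q:=\Gr(X,\lambda)/\!\sim$ by exhibiting a flat chart around every point, and then to read the announced gluing of the cylinders off this local description. Since $\sim$ only identifies leaves of $\O_\lambda(X)$ lying inside the hyperbolic pieces, each Euclidean cylinder $C_i$ (of circumference $\ell_X(\gamma_i)$ and height $a_i$) embeds in $Q$, and it already carries a half-translation chart in which $\lambda|_{C_i}$ is the horizontal foliation by core circles and $\O_\lambda(X)|_{C_i}$ the vertical foliation by straight generators, equivalently the quadratic differential $dz^2$ in the natural flat coordinate. So the part of $Q$ disjoint from the images of the collapsed pieces is a disjoint union of flat cylinders, and it remains to understand the seams.

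The next step is to analyse the collapse of a single hyperbolic piece $H_j$, which here is a compact hyperbolic surface with totally geodesic boundary. The leaves of $\O_\lambda(X)|_{H_j}$ are the fibres of the nearest-point projection $p\colon H_j\to\partial H_j$, each a perpendicular geodesic arc running from $\partial H_j$ to the spine $\Sp(H_j)$, along which $H_j$ deformation retracts onto $\Sp(H_j)$. By the correspondence between bordered hyperbolic surfaces and metric ribbon graphs recalled in Section~\ref{sec:crowned-surfs} (see \cite[Section 6]{CF21}, \cite{AHC22}, \cite{mondello09}), the quotient $H_j/\!\sim$ is canonically the metric graph $\Sp(H_j)$ whose edge lengths are obtained by integrating the transverse measure of $\O_\lambda(X)$, and the quotient map restricts on each boundary circle $\partial_k H_j$ to a closed, locally isometric loop onto a cyclic edge-path of $\Sp(H_j)$. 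The local isometry is the key point: $\partial_k H_j$ is a leaf of $\lambda$, so the transverse measure of $\O_\lambda(X)$ along it is the hyperbolic arclength, which equals both the circumference $\ell_X(\gamma_{i(k)})$ of the cylinder glued there and the $\Sp(H_j)$-length of the loop; subdividing $\partial_k H_j$ at the preimages of the spine vertices writes the loop as a concatenation of arcs, each mapped isometrically onto an edge or sub-edge of $\Sp(H_j)$. This gives the piecewise isometry claimed in the statement.

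Assembling, $Q$ is obtained from $\bigsqcup_i C_i$ by identifying the boundary circles $\partial_m C_i$ according to these locally isometric loops into the graphs $\Sp(H_j)$; since $H_j\simeq\Sp(H_j)$ and $\Gr(X,\lambda)\simeq S$, the space $Q$ is a closed surface homotopy equivalent to $S$. On the interior of a spine edge, a point is the common image of two flat strips from the adjacent cylinders, glued isometrically along the edge, and choosing coordinates in which the edge is vertical and the horizontal foliations agree gives a flat chart with transition $\pm\id$. At the image of a spine vertex $v$ of valence $d_v\ge 3$, the $d_v$ corners of the adjacent cylinders are glued cyclically, and comparison with the local model of \cite[Section 6]{CF21} shows this is a cone point of angle $d_v\pi$ carrying a compatible half-translation chart. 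Hence $Q$ carries a half-translation structure whose vertical and horizontal foliations are $\O_\lambda(X)$ and $\lambda$, so $Q=q(\O_\lambda(X),\lambda)=\O(X,\lambda)$, in agreement with the general construction of \cite{CB88}.

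I expect the main obstacle to be the local analysis at the spine vertices — verifying that exactly $d_v$ cylinder corners come together, that the total angle is $d_v\pi$, and that the resulting charts genuinely lie in the half-translation pseudogroup — together with making precise the assertion that collapsing $H_j$ along $\O_\lambda(X)|_{H_j}$ recovers its metric spine with the stated boundary behaviour. Both are essentially the content of the ribbon-graph description of bordered hyperbolic surfaces in \cite[Section 6]{CF21}, \cite{AHC22} and \cite{mondello09}, so the work lies in quoting that description precisely and checking it matches across the glued cylinders; the remaining points — that $Q$ is a closed surface, that the induced foliations are the prescribed ones, and that all gluing maps are piecewise isometric — are then routine.
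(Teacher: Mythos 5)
Your proposal is correct and follows essentially the same route as the paper: collapse each hyperbolic piece along $\O_\lambda(X)$ onto its spine, use that the transverse measure of $\O_\lambda(X)$ along $\partial A\subset\lambda$ is hyperbolic arc length so that the induced identifications of the cylinder boundary segments are isometries, and conclude that the cylinders are glued along their boundaries by piecewise isometries, yielding a half-translation surface. The only notable differences are that the paper proves the key step (paired boundary segments have equal length and the pairing is an isometry) directly, via the band decomposition of $A\setminus\Gamma$ and the reflection symmetry across the spine, where you instead quote the ribbon-graph correspondence of \cite[Section 6]{CF21}, \cite{AHC22}, \cite{mondello09} (and, going beyond the paper, spell out the cone angles $d_v\pi$ at the vertices); also a small slip in conventions: the glued seams lie along leaves of $\lambda$, hence are horizontal, not vertical, in your own normalization.
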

		\begin{proof}
			Let us focus on a single hyperbolic piece $A$, and let us call for easiness of notation $\eta$ the orthogeodesic foliation restricted to $A$. The foliation $\eta$ has a finite number of singularities, from which finitely many singular leaves emanate. By the definition of orthogeodesic foliation, each of those singular leaves will be a geodesic segment between a singularity and the boundary of $A$, and orthogonal to it.
			Let us call $\Gamma$ the union of all the singular leaves. The boundary $\de A$ is cut by the endpoints of $\Gamma$ in finitely many segments $a_i$.	
			
			\begin{figure}[ht]
				\includesvg[width=0.5\textwidth]{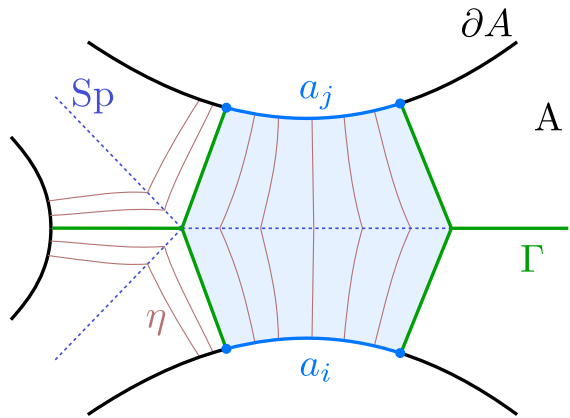}
				\centering
				\vspace{0.2cm}
				\begin{minipage}{0.7\textwidth}
					\caption{A band foliated by $\eta$, encoding the gluing between $a_i$ and $a_j$.}
					\label{fig:band}
				\end{minipage}
			\end{figure}
			
			We now observe that each connected component of $A \setminus \Gamma$ is foliated by a family of regular leaves of $\eta$, that, by definition of orthogeodesic foliation, are arcs with the two endpoints on $\de A$.
			This implies that each of those connected components is, like in Figure \ref{fig:band} a band connecting two segments $a_i, a_j$ on $\de A$. We also observe that it is made of two isometric pieces separated by the spine: the reflection along the spine induces a symmetry of the band, as by definition the segment of spine in the band is the locus of points equidistant from $a_i$ and $a_j$.
			In particular, the two segments $a_i, a_j$ have the same hyperbolic length. Indeed, by definition, the transverse measure of the orthogeodesic foliation $\eta$ induces the hyperbolic length along $\de A$, so the pairing between $a_i$ and $a_j$ given by the leaves of $\eta$ gives an isometry between the two segments.
			
			Let us now consider singular leaves. For every singularity, collapsing the $k$-pronged star of singular leaves through it, is equivalent to removing it and gluing its $k$ endpoints on $\de A$ together. But this is equivalent to considering, in the gluing described above, the segments $a_i$ to be closed, that is with endpoint included.
			
			In conclusion the quotient of $\Gr(X,\lambda)$ obtained by collapsing the leaves of $\eta$ is equivalent to removing the interior of the hyperbolic piece $A$ and then gluing its boundary (which lies on the boundary of the cylinders adjacent to the hyperbolic piece $A$) along isometries between the segments $(a_i)_i$ as we described.
			
			By repeating this construction for each of the finitely many hyperbolic pieces, we are left with just a collection of cylinders with their boundary components glued by piecewise isometries. The resulting surface $q$ is a half-translation surface.
			Indeed, every cylinder can be realized as a quotient of a rectangle in $\R^2$ with vertical sides identified. Then the gluing above is realized by piecewise isometries between their horizontal sides.
		\end{proof}

		The deflation map $\D:\Gr(X,\lambda) \to \O(X,\lambda)$ in this case is just the map collapsing the hyperbolic pieces of the grafted surface along the orthogeodesic foliation, essentially collapsing them onto their spine. 
		By construction of $\O_\lambda(X)$ and $\lambda$ on the grafted surface, $\D$ sends them to the pair of vertical and horizontal foliations on $\O(X,\lambda)$. And its restriction to the spine $\D|_{\Sp}$ is a homeomorphism to the gluing locus between the cylinders, that is to the diagram of horizontal separatrices of $\O(X,\lambda)$.

		\paragraph*{Geometric train track and rectangle decomposition}
		For the general case we will essentially follow along the lines the proof of \cite[Proposition 5.10]{CF21}, adding more details in order to better understand the additional metric properties of our version of the deflation map $\D$. The construction is based on Thurston's geometric train track construction (also explained in \cite[Construction 5.6]{CF21}) and uses a rectangle decomposition introduced in \cite{CDR10}. In this last work, like in this article, the starting surface is the grafted one, but the map produced is different as what they seek to obtain is a quasi-conformal map, while we care instead about other geometric properties.

		Let us consider $\delta>0$ such that the metric $\delta$-neighbourhood $N_\delta(\lambda)$ of $\lambda$ in $\Gr(X,\lambda)$ does not contain any of the finitely many singular points for the orthogeodesic foliation $\O_\lambda(X)$.
		The orthogeodesic singular foliation restricts to a regular foliation $\eta$ on the subset $N_\delta(\lambda)$ of $\Gr(X,\lambda)$. The boundary of $N_\delta(\lambda)$ is not geodesic, but by definition it is a level set of the distance function from a leaf of $\lambda$, and thus orthogonal to the leaves of the orthogeodesic foliation as they realize the paths of minimum distance to $\lambda$.
		Hence, $N_\delta(\lambda)$ is also called \emph{geometric train track} for $\lambda$ and the restriction of the leaves of $\O_\lambda(X)$ to it are called \emph{ties} of the train track.
		The geometric train track $N_\delta(\lambda)$ deformation retracts to a train track $\tau \subset N_\delta(\lambda)$ by collapsing each tie. We also observe that by construction $\tau$ snugly carries $\lambda$, meaning that the transverse measure induced by $\lambda$ of each branch of $\tau$ is strictly positive.

		\begin{figure}[ht]
			\includesvg[width=0.45\textwidth]{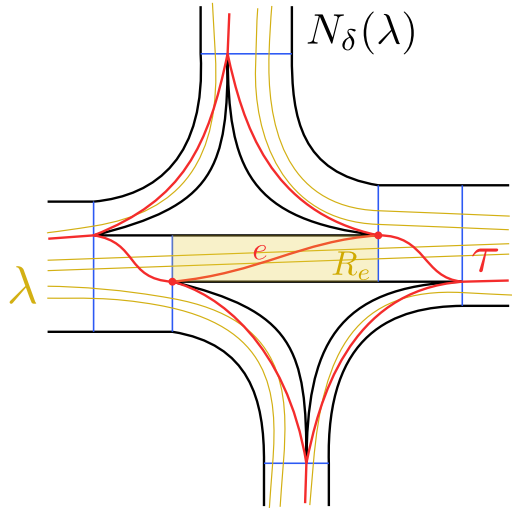}
			\centering
			\vspace{0.2cm}
			\begin{minipage}{0.8\textwidth}
				\caption{Example of geometric train track $N_\delta(\lambda)$ with its decomposition in rectangles $R(e)$'s.}
				\label{fig:geom-train-track}
			\end{minipage}
		\end{figure}
		We remind that $\tau$ has finite combinatorics, meaning a finite amount of switches and branches.
		To each branch $e$ of $\tau$ corresponds a region $R(e)$ of $N_\delta(\lambda)$ which we may think as a rectangle (although only the sides along the ties are geodesics), which is the union of ties crossing $e$. In this way we divided $N_\delta(\lambda)$ into rectangles, which are pairwise either disjoint or adjacent along a tie through a switch of $\tau$.
		
		We partitioned the grafted surface $\Gr(X,\lambda)$ in finitely many rectangles inside $N_\delta(\lambda)$ and finitely many connected complementary regions of $N_\delta(\lambda)$. We will now explicitly construct the half-translation surface $\O(X,\lambda)$ and the deflation map $\D: \Gr(X,\lambda) \to \O(X,\lambda)$.
		
		\paragraph{The deflation map locally}
		Let us first consider one rectangle $R(e)$. As it is a union of ties, we can define its height $h$ as the total transverse measure induced by the orthogeodesic foliation, and its length $l$ as the total mass given by the transverse measure of $\lambda$ to any of the ties in the interior of $R(e)$. The length is well-defined as all such ties are transverse to $\lambda$ and transversely homotopic one to the other.
		
		Let us now consider a Euclidean rectangle $Q(e)$ with same length $l$ and height $h$. We can map $R(e)$ to it, such that the transverse measures with respect to $\lambda$ and $\O_\lambda(X)$ are mapped to the vertical and horizontal coordinates in the rectangle.
		More precisely we define the map 
		$$f_e: R(e) \to Q(e) = [0,l]\times [0,h]$$
		as follows.
		We choose one of the two sides of $R(e)$ along the ties, call it $a$. The orientation of the surface together with the choice of $a$ induces an orientation for the ties. Then we define $f_e$ as in Figure \ref{fig:deflation-between-rectangles}.

		\begin{figure}[ht]
			\includesvg[width=0.8\textwidth]{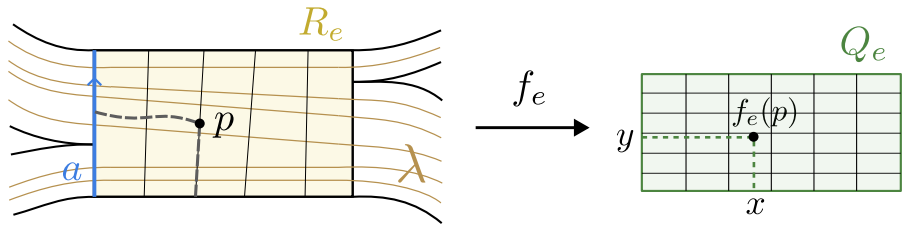}
			\centering
			\vspace{0.2cm}
			\begin{minipage}{0.8\textwidth}
				\caption{
				Any point $p$ in $R(e)$ is mapped to the point $(x,y)$ in $Q(e)$, where $y$ is the transverse measure with respect to the orthogeodesic foliation given to an arc joining $p$ to $a$ and transverse to the ties; and $x$ is the transverse measure given by $\lambda$ to the subsegment of the tie going from the boundary of $R(e)$ to $p$ following the tie in the positive direction.
				}
				\label{fig:deflation-between-rectangles}
			\end{minipage}
		\end{figure}
		
		We observe that the map $f_e$ is surjective as both $\O_\lambda(X)$ and $\lambda$ no not have atomic leaves: the first one by definition, the second one thanks to our choice of the representative $\hat\lambda$ for $\lambda$ in $\Gr(X,\lambda)$.
		We also notice that by invariance of the transverse measure upon transverse homotopy, each tie is mapped to a vertical segment in $Q(e) = [0,l]\times [0,h]$ and similarly each leaf of $\lambda$ is mapped to a horizontal segment. In other words, $\O_\lambda(X)$ and $\lambda$ are mapped to the vertical and horizontal measured foliation of the Euclidean rectangle.
		
		\begin{remark}
			This is possible as a measured geodesic lamination, is either a multicurve or has uncountably many leaves. In our case, since we replaced each atomic leaf with a foliated cylinder, $\lambda$ is such that in every rectangle $R(e)$ it has uncountably many leaves.
		\end{remark}
		
		We also observe that $f_e$ is not in general a homeomorphism as it is not necessarily injective. Indeed, if $\lambda$ is not a foliation in $R(e)$, then the restriction of $f_e$ to a tie is constant on each subsegment disjoint from $\lambda$.
		Indeed, along a tie the map $f_e$ behaves very similarly to a Cantor function (see Appendix \ref{sec:cantor}).
		
		\begin{remark}
			We notice, though, that when $\lambda$ has a closed leaf with positive mass, so $\Gr(X,\lambda)$ has an inserted cylinder, the map $f_e$ restricted to the intersection between $R_e$ and the cylinder is an isometry, as $\lambda$ and $\O_\lambda(X)$ already form a pair of orthogonal foliations on the cylinder. In particular, this shows that this construction extends the one we described above in the special case of $\lambda$ multicurve. 
		\end{remark}

		\paragraph*{Gluing the rectangles}
		Let us observe that the rectangles $R(e)$ are adjacent to each other along ties corresponding to switches of $\tau$. Let us consider the collection of Euclidean rectangles $Q(e)$ for all branches $e$ of $\tau$ and glue them along their vertical sides following the pattern induced by the adjacency of the respective rectangles $R(e)$. This means that for a segment $a$ of the tie separating $R(e)$ from $R(e')$, we glue $Q(e)$ to $Q(e')$ by attaching $f_e(a)$ to $f_{e'}(a)$ so that $f_e, f_{e'}$ patch together to form a continuous map from $R(e)\cup R(e')$ to $Q(e) \cup Q(e')$.
		
		This gluing is induced by piecewise isometries between boundaries of the rectangles $Q(e)$. Indeed, any subsegment of $a$, is mapped by $f_e$ and $f_{e'}$ to a segment of length equal to the transverse measure of $a$ given by $\lambda$.
		Since by construction the maps $(f_e)_e$ patch together nicely, we can call $f: N_\delta(\lambda) = \bigcup_e R_e \to \bigcup_e Q(e)$ the resulting continuous map, where we mean the rectangles $(Q(e))_e$ glued along the vertical sides as described.
		
		In order to get a half-translation surface, we just need to perform the gluing of the horizontal sides of the rectangles $Q(e)$'s. For this we proceed very similarly to the multicurve case above.
		Let us consider one connected component $A$ of the complement of $N_\delta(\lambda)$. The orthogeodesic foliation $\O_\lambda(X)$ restricted to $A$ is a singular foliation with leaves orthogonal to the boundary, as we showed before, so in particular transverse to it. 
		Exactly as in the multicurve case, the singular foliation induces a gluing pattern on the boundary of $A$.
		We now transfer this gluing pattern via the map $f$ to the Euclidean rectangles $(Q(e))_e$. If a segment $a$ on $\de A \subset \de N_\delta(\lambda)$ is paired by $\O_\lambda(X)$ to the segment $b$ also on $\de A$ via the bijection $\phi: a \to b$, then we glue $f(a)$ to $f(b)$ so that $f|_a = f|_b \circ \phi$. This gluing is induced by isometries on the boundary of $\bigcup_e Q(e)$ as $\phi$ preserves the transverse measure with respect to $\O_\lambda(X)$ and $f$, along the boundary of $N_\delta(\lambda)$ sends transverse measure to Euclidean length.
		
		The result of all the gluing operations of the rectangles $(Q(e))_e$ is a half-translation surface, because obtained by gluing rectangles along their boundaries via piecewise isometries, vertical sides with vertical sides and horizontal with horizontal ones, hence the vertical and horizontal foliations on each rectangle extend to two orthogonal singular foliations on the entire surface. 
		
		We can now easily define the deflation map $\D$.
		
		\begin{definition}
			On every rectangle $R(e)$ we define $\D$ to coincide with $f_e$. With this definition the map is continuous on $N_\delta(\lambda)$ as along each tie separating two adjacent rectangles $R(e)$ and $R(e')$, $f_e$ and $f_{e'}$ match by construction.
			On every connected component in the complement of $N_\delta(\lambda)$, we define $\D$ to send each - possibly singular - leaf $l$ to the same point, determined by the image of $\D$ on the endpoints of $l$, which we already defined as they lie on the boundary of $N_\delta(\lambda)$.
		\end{definition}

		It is easy to see that by construction the pair of foliation and lamination $(\O_\lambda(X),\lambda)$ are sent by $\D$ to the vertical and horizontal foliation on the resulting half-translation surface, which is then $\O(X, \lambda)$.

\section{Geometric control of deflation}		\label{sec:deflation}

	In this section we present the proof of the main theorems of the article.

	At the core of the arguments lies \thref{th:deflation-control}, estimating the constant of $\epsilon$-isometry for the deflation map $\D: \Gr(X,\lambda) \to \O(X,\lambda)$.
	For this scope we will choose a different renormalization of the metric. More precisely, we want to rescale both $\Gr(X,\lambda)$ and $\O(X,\lambda)$ with the same constant $k^{-1}$, so that $\O(X,\lambda)$ has unit area. This means that $k^2 = \ell_X(\lambda)$ (see equation \ref{eq:area-equal-length}).
	Thus, the grafted surface $\Gr(X,\lambda)$ rescaled by $k^2=\ell_X(\lambda)$ will have the flat part of unit area. We remind that for this section we will consider only surfaces grafted along a non-zero lamination, corresponding to the domain $\T(S) \times \ML(S)$ of the map $\O$.

	\begin{remark}
		This normalization, for large grafting, meaning when $\ell_X(\lambda)$ is large, is Gromov-Hausdorff close to the unit-area normalization where the rescaling factor was $(2\pi |\chi(S)| + \ell_X(\lambda))^{-1/2}$. Indeed, the Gromov-Hausdorff distance between the two copies of the grafted surface, with the two different renormalizations, is realized by the identity and is half of the difference of their diameters, which goes to zero as $\ell_X(\lambda)$ goes to infinity.
		Therefore, showing convergence to a half-translation surface in this renormalization is equivalent to using the unit-area one.
	\end{remark}

	\begin{remark}
		This normalization hints to the dual point of view we will discuss in Section \ref{sec:inflation}, where grafted surfaces can be viewed as unit area half-translation surfaces where some negatively parts has been inserted, and the deflation map just undoes this operation that we will call inflation, exactly as Thurston's collapsing map undoes grafting. In view of this, we can call the grafted and the negatively curved part of $\Gr(X,\lambda)$ as respectively the flat and the \emph{inflated region}.
	\end{remark}

	Let us then assume from now that all the grafted surfaces $\Gr(X,\lambda)$ we considered are rescaled to have the flat part of unit area, and the half-translation surfaces $\O(X,\lambda)$ also have unit area.

	\begin{theorem}	\thlabel{th:deflation-control}
		Let $\Gr(X,\lambda)$ be a grafted surface.
		Assume $\O(X,\lambda)$ has injectivity radius larger than $\epsilon$ and that $\ell_X(\lambda) > 1$.
		Then there exists $C$ depending only on $\epsilon$ and on the topology of the surface such that the deflation map $\D: \Gr(X,\lambda) \to \O(X,\lambda)$ is a $C \cdot \left(\ell_X(\lambda)\right)^{-1/2}$-isometry compatible with the markings. In particular
		$$ d_{GH}(\Gr(X,\lambda), \O(X,\lambda)) \leq C \cdot \left(\ell_X(\lambda)\right)^{-1/2}. $$ 
	\end{theorem}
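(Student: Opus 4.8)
The plan is to show that the deflation map $\D$, with the area normalization fixed above (so $\O(X,\lambda)$ has unit area and the flat part of $\Gr(X,\lambda)$ has unit area), distorts distances by at most an additive error comparable to $(\ell_X(\lambda))^{-1/2}$, and is moreover coarsely surjective with the same error; then \thref{th:GH-criterion} yields the Gromov--Hausdorff bound. First I would split the distortion estimate into two halves, exactly as in the proof of \thref{th:degrafting}: an upper bound $d_{\O}(\D(x),\D(y)) \le d_{\Gr}(x,y)$ (or with a small additive error), and a lower bound $d_{\O}(\D(x),\D(y)) \ge d_{\Gr}(x,y) - C(\ell_X(\lambda))^{-1/2}$. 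The upper bound should follow from the fact that on each rectangle $R(e)$ the map $f_e$ does not increase lengths (it records transverse measures, which along ties can only be $\le$ the hyperbolic length, and along leaves of $\lambda$ is exactly preserved), while on the complementary regions $\D$ collapses leaves of $\O_\lambda(X)$, which only decreases distances; so $\D$ is essentially $1$-Lipschitz, analogous to \thref{th:kappa-lip}. The lower bound is the substantive direction: given $x,y$ in $\Gr(X,\lambda)$ and a distance-minimizing geodesic $\beta$ between $\D(x)$ and $\D(y)$ in $\O(X,\lambda)$, I want to lift $\beta$ to a path $\hat\beta$ in $\Gr(X,\lambda)$ from $x$ to $y$ whose length exceeds $\Len(\beta)$ by at most $C(\ell_X(\lambda))^{-1/2}$.

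The key quantitative input is that the inflated (negatively curved) region of the area-normalized $\Gr(X,\lambda)$ is \emph{slim} — this is \thref{th:inflated-part-bound} / \thref{th:slim-inflation}, whose idea (the remark at the end of Section \ref{sec:degrafting}) is to bound the area of a collar of radius $r$ around an arc in the inflated part by $r$ times the transverse length, forcing $r \lesssim (\ell_X(\lambda))^{-1/2}$ once the total inflated area is $\le 2\pi|\chi(S)|/\ell_X(\lambda)$. Concretely, after rescaling by $k^{-1}$ with $k^2 = \ell_X(\lambda)$, the inflated region has area $2\pi|\chi(S)|/\ell_X(\lambda)$ and curvature $-\ell_X(\lambda)$; using the injectivity-radius lower bound $\epsilon$ on $\O(X,\lambda)$ (which transfers, via the near-isometry on the flat part, to a lower bound on systole/collars of $\Gr(X,\lambda)$, preventing the inflated pieces from being long and thin in the "wrong" way), one concludes each connected component of the inflated region has diameter $O_\epsilon((\ell_X(\lambda))^{-1/2})$. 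Then for the lifting step: $\hat\beta$ is obtained by lifting $\beta$ through the rectangles $R(e)$ (where $\D$ is close to an isometry, the discrepancy being only the tie-direction "Cantor slack", i.e. the portion of ties disjoint from $\lambda$, which lies inside the inflated region and hence has total size $O_\epsilon((\ell_X(\lambda))^{-1/2})$ per component crossed) and, whenever $\beta$ passes through the image of a collapsed complementary region, inserting a detour across the corresponding inflated component of diameter $O_\epsilon((\ell_X(\lambda))^{-1/2})$. Since $\beta$ is a geodesic in a surface of bounded geometry it crosses a uniformly bounded number (depending only on the topology of $S$ and on $\epsilon$) of such components, so the total inserted length is $\le C(\epsilon,S)(\ell_X(\lambda))^{-1/2}$, giving the lower bound. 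Coarse surjectivity is immediate: $\D$ is onto by construction, so condition (ii) of an $\epsilon$-isometry holds with $\epsilon = 0$. Finally, compatibility with the markings is built into the construction of $\D$ (Proposition~\thref{th:deflation}), so \thref{th:hausdorff-to-lipschitz} will later upgrade this to Lipschitz convergence; here we only record the $d_{GH}$ bound.

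The main obstacle I expect is controlling the number and geometry of the "collapsed" complementary regions that a geodesic $\beta$ in $\O(X,\lambda)$ can meet, and showing the detour cost is genuinely $O((\ell_X(\lambda))^{-1/2})$ rather than merely $o(1)$ — this is where the injectivity-radius hypothesis on $\O(X,\lambda)$ is essential, because without a lower bound on $\epsilon$ a geodesic could run parallel to many short saddle connections and accumulate unboundedly many detours, and also an inflated component could fail to be slim in a useful metric sense (it could be thin but very long). Quantifying "bounded geometry of $\O(X,\lambda)$ $\Rightarrow$ a uniform bound on the combinatorial complexity crossed by a geodesic segment of given length" and combining it cleanly with the area-slimness estimate for the inflated region is the heart of the argument; the rest is the same collapsing-map bookkeeping as in \thref{th:degrafting}, run in the reverse direction.
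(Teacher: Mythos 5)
Your overall architecture coincides with the paper's: show $\D$ is $1$-Lipschitz, pull a distance-minimizing geodesic $\alpha$ of $\O(X,\lambda)$ back to an arc $\beta$ in $\Gr(X,\lambda)$, bound the added length $\Len(\beta)-\Len(\alpha)$ by a combination of slimness of the negatively curved region and the disjoint-collar area estimate, and conclude via \thref{th:GH-criterion} (surjectivity of $\D$ making condition (ii) trivial). You also correctly isolate the key quantitative input (total inflated area $2\pi|\chi(S)|\,\ell_X(\lambda)^{-1}$ versus a collar of definite horizontal width $\epsilon$ around each detour, which is exactly \thref{th:inflated-part-bound} via \thref{th:hyp-trapezium-area}).

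However, your final accounting has a genuine gap. You sum the detours as (number of collapsed components crossed) $\times$ (diameter of each component), claiming both factors are controlled. Neither is. First, \thref{th:slim-inflation} gives only that every point of the inflated region is within $D/k$ of its \emph{boundary}; a component can be $D/k$-slim yet have large diameter, so ``each component has diameter $O_\epsilon(k^{-1})$'' is not established (you flag this worry yourself and then assert the bound anyway). What is actually needed, and what slimness does give, is that each individual \emph{fiber} $\D^{-1}(p)$ --- a leaf segment of the orthogeodesic foliation running from the spine to the boundary --- has length $O(k^{-1})$. Second, when $\lambda$ is not a multicurve the horizontal separatrices of $\O(X,\lambda)$ can be dense, so a geodesic $\alpha$ meets $\D(\Sp)$ in countably infinitely many points and hence re-enters the (finitely many) collapsed components infinitely often; no uniform bound on the number of crossings exists, so a count-times-diameter argument cannot close. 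The paper resolves this by applying the collar-area estimate to \emph{all} the countably many intersection points at once: the fibers over $\alpha\cap\D(\Sp)$ have disjoint collars of horizontal width $\epsilon$ inside the inflated region, so their total length is at most $\pi|\chi(S)|\epsilon^{-1}k^{-2}$; only the finitely many singularity crossings ($\leq 4g-4$) and the two endpoint adjustments are handled by the $O(k^{-1})$ fiber-length bound alone. A further technical point you elide is why $\Len(\beta)=\Len(\alpha)+\Len(\beta_{infl})$ holds exactly for a general lamination (the flat part of $\beta$ is a fat Cantor set); the paper proves this by approximating $\lambda$ with multicurves converging also in the Hausdorff sense (\thref{th:infl-flat-decomp}).
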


	\subsection{Proof of the main result}


		In this section we prove fundamental properties of the deflation map $\D$. First we show that the deflation map, similarly to Thurston's collapsing map, preserves the metric in the grafted region while collapsing the inflated pieces. 
		This is clear in the case of a multicurve, as the combinatoric of the grafted surface is finite and the deflation map simply restricts to a local isometry in the flat part and collapses the inflated part along the orthogeodesic foliation.
		In the general case we have to make a more precise meaning of "preserves the metric", as the grafted region can have empty interior.
		In order to control the metric, we will use the length of geodesic arcs, as we did in the geometric control of grafting.
		
		\paragraph*{Construction of pull-back arcs}

		Given a geodesic arc in $\O(X,\lambda)$, its pull-back will be a corresponding arc $\beta$ in $\Gr(X,\lambda)$ that we will use to compare distances on the two surfaces. The construction of $\beta$, similarly to what we did in Section \ref{sec:degrafting}, is obtained essentially by taking the preimage via $\D$ of $\alpha$ with some extra care where $\alpha$ is parallel to the horizontal foliation.

		\begin{remark}
			Let us first observe that from the construction of $\D$ follows immediately that the preimage of a point $p$ is either
			\begin{itemize}
				\item a single point when $p$ does not lie on a horizontal separatrix;
				\item a segment of a leaf of the orthogeodesic foliation contained in a hyperbolic piece when $p$ lies on a horizontal separatrix;
				\item a union of those, when $p$ is a singularity.
			\end{itemize}
		\end{remark}

		\begin{lemma}		\thlabel{th:pull-back-arc}
			Given $\alpha:(0,1) \to \O(X,\lambda)$ a geodesic arc disjoint from singularities and transverse to the horizontal foliation, then its preimage $\D^{-1}(\alpha)$ is an arc in $\Gr(X,\lambda)$.
		\end{lemma}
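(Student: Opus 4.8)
The plan is to read the structure of $\D^{-1}(\alpha)$ off the description of the fibres of $\D$ recorded in the remark above, and then use the transversality hypothesis to splice these fibres into a single arc. We may assume $\alpha$ is embedded (in the application it is distance--minimising, hence embedded; in any case the assertion is local), and we may dispatch the case when $\hat\lambda$ is a weighted multicurve first, where $\D$ is piecewise a local isometry, a collapse along a flat cylinder, or the collapse of a hyperbolic piece onto its spine, so the claim is immediate. In general I would fix $\alpha(t_0)$ and work in a Euclidean flow box $B\subset\O(X,\lambda)$ for the horizontal foliation, with natural coordinate $w=u+iv$ in which the horizontal leaves are the level sets $\{v=\mathrm{const}\}$; since $\alpha$ is a geodesic transverse to the horizontal foliation, inside $B$ it is the graph of an affine function over the $v$--coordinate, and after reparametrising we may take the $v$--coordinate of $\alpha$ to be strictly increasing in $t$. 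Pulling $B$ back through $\D$, the construction of $\D$ (it agrees with the maps $f_e$ on the rectangles $R(e)$ and collapses each complementary region along its orthogeodesic leaves) shows that $\D$ respects the two foliations on $\D^{-1}(B)$: it is a homeomorphism along the leaves mapping to $\{v=\mathrm{const}\}$ --- the $\hat\lambda=\im(q)$ direction, where injectivity holds because $\O_\lambda(X)$ has no atomic leaves --- and a monotone surjection $c$ along the leaves mapping to $\{u=\mathrm{const}\}$, the $\O_\lambda(X)=\re(q)$ direction, with $c$ constant exactly on the ``gaps'' disjoint from $\supp(\hat\lambda)$ (the Cantor--function behaviour of $f_e$ along ties, cf.\ Appendix \ref{sec:cantor}) and on the traces of the collapsed hyperbolic pieces.

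Let $T\subset(0,1)$ be the set of parameters $t$ for which $\alpha(t)$ lies on a horizontal separatrix; by the remark above these are precisely the $t$ for which the fibre $\D^{-1}(\alpha(t))$ is not a single point, and for such $t$ the fibre $\sigma_t:=\D^{-1}(\alpha(t))$ is a compact segment of a leaf of $\O_\lambda(X)$ lying in the complement of $\supp(\hat\lambda)$ (in the flow box, $\sigma_t=\{\hat u(t)\}\times c^{-1}(v(t))$). Since $\alpha$ crosses each of the finitely many separatrices in an at most countable set, $T$ is at most countable, though possibly dense. For $t\notin T$ put $\beta(t):=\D^{-1}(\alpha(t))$, a single point; inside $B$ one has $\beta(t)=(\hat u(t),c^{-1}(v(t)))$ with $\hat u$ continuous and $c^{-1}(v(t))$ the unique preimage. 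As $c$ is monotone and $v$ strictly increasing, $\beta$ is injective, and at every $t_0\in T$ the one--sided limits $\lim_{t\to t_0^{\pm}}\beta(t)$ exist and equal the two endpoints of $\sigma_{t_0}$: this is exactly where transversality enters, since it forces $v$ to pass strictly through the value $v(t_0)$, so that $c^{-1}(v(t))$ approaches the two ends of the collapsed interval $c^{-1}(v(t_0))$ from the two sides.

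It follows that $\D^{-1}(\alpha)=\beta\bigl((0,1)\setminus T\bigr)\cup\bigcup_{t_0\in T}\sigma_{t_0}$, obtained from the image of $\beta$ by splicing in each collapsed fibre at the corresponding jump. I would then parametrise this set by the natural monotone parameter --- the $\alpha$--parameter together with arclength along the inserted leaf--segments --- and verify that this parametrisation is a homeomorphism onto its image: continuous by the limit computation just made, injective because distinct points of the embedded $\alpha$ have disjoint $\D$--fibres, and proper. Patching over a cover of $\alpha$ by such flow boxes is automatic, since the maps $f_e$ were built so that the $\D$--charts agree on overlaps, and one concludes that $\D^{-1}(\alpha)$ is an arc in $\Gr(X,\lambda)$.

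The main obstacle is the gluing at points of $T$, i.e.\ controlling $\beta$ near a parameter where $\alpha$ crosses a separatrix and showing the one--sided limits are the two endpoints of the collapsed fibre; this uses the transversality hypothesis essentially, together with a careful local analysis of $\D$ near the preimage of a separatrix that combines the Cantor--function behaviour of $f_e$ along ties with the collapse of a hyperbolic piece along its orthogeodesic leaves. A secondary technical point is that $T$ may be a dense countable set, so that $\D^{-1}(\alpha)$ is a ``staircase''--type set with densely many inserted leaf--segments; one must check it is still genuinely an arc, i.e.\ a compact, connected, locally connected one--dimensional set with no branch points, which follows once the $\sigma_{t_0}$ are seen to be pairwise disjoint with summable length --- automatic, since the $\D$--preimage of any compact subarc of $\alpha$ is a compact subset of $\Gr(X,\lambda)$.
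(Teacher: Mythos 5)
Your overall strategy is the same as the paper's (the paper's own proof is a two-sentence version of exactly this: $\D$ is isometric over the cylinder part, and each crossing of $\alpha$ with the image of the spine is inflated to an orthogeodesic segment, spliced in at its two endpoints), and the flow-box/Cantor-function analysis of the first two paragraphs is sound. The genuine gap is in your final paragraph, where you handle the possibility that $T$ is dense. You assert that the inserted fibres $\sigma_{t_0}$ have summable length ``automatically, since the $\D$-preimage of any compact subarc of $\alpha$ is a compact subset of $\Gr(X,\lambda)$''; compactness gives no such bound, as a compact subset of a surface can contain countably many disjoint segments of infinite total length (for instance segments accumulating on a sublamination). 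Finiteness of $\sum_{t\in T}\Len(\sigma_t)$ is in fact the content of the later \thref{th:infl-flat-decomp} and \thref{th:inflated-part-bound}, whose proofs use the pull-back arcs built from the present lemma, so it cannot be invoked here without circularity.

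Moreover, summability is not the point: what must be checked is that your spliced parametrisation is continuous (equivalently, that the set is locally connected, with no extraneous accumulation) at a parameter $t_0$ approached through points of $T$, and your one-sided limit computation only controls the singleton fibres $\beta(t)$ for $t\notin T$; it says nothing about where the whole inserted segments $\sigma_s$, $s\in T$, $s\to t_0$, sit. The missing statement is that $\D^{-1}(\alpha(s))$ converges in the Hausdorff sense to the lower (resp.\ upper) endpoint of $\sigma_{t_0}$ as $s\to t_0^{-}$ (resp.\ $s\to t_0^{+}$). This can be supplied: $\D$ is a continuous map between compact spaces, hence closed, so these fibres can only accumulate inside $\sigma_{t_0}$; and accumulation on an interior point (or on the wrong endpoint) would force, via the local product structure of the orthogeodesic foliation in the complementary region and the collapse onto the spine, points $\alpha(s)$ with $s\neq t_0$ to lie on the horizontal leaf through $\alpha(t_0)$ arbitrarily close to $\alpha(t_0)$, contradicting transversality of $\alpha$ to the horizontal foliation. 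With this replacing the summability claim, the splice is continuous and injective, lengths of nearby insertions shrink to zero, local connectedness follows, and your argument closes; it is then essentially an expanded, rigorous version of the paper's proof.
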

		\begin{proof}
			It follows easily from the definition of $\D$, especially from the interpretation of the deflation viewed as collapsing the inflated pieces to the spine $\Sp$.

			We can observe that if $\lambda$ has closed leaves, then, as we showed, the cylinders corresponding to them are mapped isometrically by $\D$ to $\O(X,\lambda)$.
			Let us call $C$ the union of the interior of their images.
			Thus, the preimages of subsegments of $\alpha$ in $C$ are isometric copies of them in the grafted cylinders.

			\begin{figure}[ht]
				\includesvg[width=0.75\textwidth]{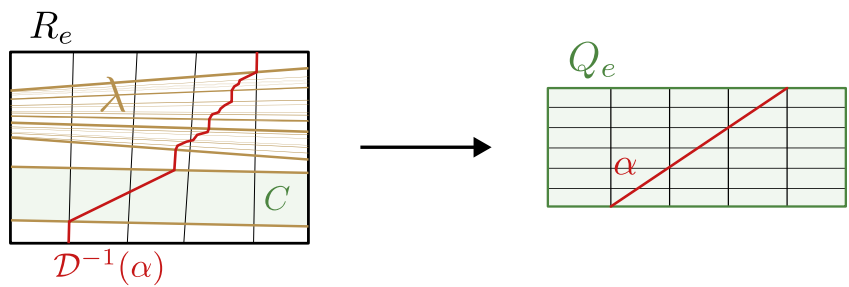}
				\centering
				\vspace{0.2cm}
				\begin{minipage}{0.8\textwidth}
					\caption{
						The preimage via $\D$ of a non-horizontal geodesic arc $\alpha$.
					}
					\label{fig:cantor-map}
				\end{minipage}
			\end{figure}

			Meanwhile, subsegments of $\alpha$ in the complement of $C$ have preimages that are arcs where each intersection point of $\alpha$ with $\D(\Sp)$ is inflated to a segment of the orthogeodesic foliation contained in the negatively curved part.

		\end{proof}

		Let us now define the pull-back of geodesic arcs more precisely.

		\begin{lemma}
			Given $\alpha:[0,1] \to \O(X,\lambda)$ a geodesic arc, there exists an arc $\beta$ in $\Gr(X,\lambda)$ such that  $\alpha = \D \circ \beta$ and such that the intersection $\beta_{infl}$ of $\beta$ with the interior of the inflated part is a union of segments of leaves of the orthogeodesic foliation of $\Gr(X,\lambda)$.
		\end{lemma}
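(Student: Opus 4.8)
The plan is to build $\beta$ by concatenating pull-backs of the straight pieces of $\alpha$. Recall that a geodesic arc in the flat singular surface $\O(X,\lambda)$ is a finite concatenation $\alpha = \alpha_1 * \cdots * \alpha_m$ of Euclidean straight segments, with consecutive ones meeting at a cone point; I will cut $\alpha$ further at the finitely many times it enters or leaves the image of a grafted cylinder, so that each $\alpha_j$ is a straight segment whose interior avoids the cone points of $\O(X,\lambda)$ and which is either everywhere transverse to the horizontal foliation $\im(q) = \D_*\hat\lambda$ or contained in a single leaf of it, and in the latter case either entirely inside the closure of a cylinder image or entirely outside it. It then suffices to produce, for each $j$, an arc $\beta_j$ in $\Gr(X,\lambda)$ with $\D\circ\beta_j = \alpha_j$, with endpoints in the fibres $\D^{-1}(\alpha_j(0))$ and $\D^{-1}(\alpha_j(1))$, and whose intersection with the interior of the inflated part is a union of orthogeodesic leaf segments, and then to splice the $\beta_j$ at the cone points of $\alpha$.

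For a transverse segment $\alpha_j$, \thref{th:pull-back-arc} shows that the $\D$-preimage of its interior is an arc $\beta_j^\circ$; by the description of $\D$, this arc meets the interior of the inflated (negatively curved) part in exactly one segment of a leaf of the orthogeodesic foliation of $\Gr(X,\lambda)$ for each point where $\alpha_j$ crosses the image $\D(\Sp)$ of the spine, while the rest of it lies in the flat part, where $\D$ is a local isometry (an isometry on grafted cylinders). Its closure in the compact surface $\Gr(X,\lambda)$ is obtained by appending to each end a connected subset of the fibre $\D^{-1}(\alpha_j(0))$, respectively $\D^{-1}(\alpha_j(1))$; by the Remark preceding \thref{th:pull-back-arc} such a fibre is a point, an orthogeodesic leaf segment inside a hyperbolic piece, or (over a singularity) a connected union of such, so $\beta_j := \overline{\beta_j^\circ}$ still meets the interior of the inflated part only in orthogeodesic leaf segments. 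For a horizontal segment $\alpha_j$ inside a cylinder image, where $\D$ restricts to an isometry, I lift $\alpha_j$ isometrically into the grafted cylinder, obtaining a $\beta_j$ contained in the flat part and disjoint from the inflated region. A horizontal $\alpha_j$ lying outside all cylinder images either misses the compact graph $\D(\Sp)$ — so that $\D^{-1}(\alpha_j)$ is an arc disjoint from the inflated part — or lies along a horizontal saddle connection, in which case I lift it isometrically onto the corresponding edge of $\Sp$ via the isometry between $\Sp$ and the graph of horizontal saddle connections from \cite[Proposition 5.10]{CF21}; this is the one situation in which $\beta_j$ sits in the inflated part as a spine edge rather than as orthogeodesic leaf segments, and it is recorded as such.

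Finally I splice: at each interior cone point $p = \alpha_j(1) = \alpha_{j+1}(0)$, the terminal endpoint of $\beta_j$ and the initial endpoint of $\beta_{j+1}$ both lie in $\D^{-1}(p)$, a connected union of orthogeodesic leaf segments (and possibly spine edges) contained in the inflated part; I join them by a path inside $\D^{-1}(p)$ and concatenate, obtaining an arc $\beta$ (simple after the usual small adjustments, since $\alpha$ is) with $\D\circ\beta=\alpha$ and $\beta_{\mathrm{infl}}$ of the asserted form. The main technical point is exactly this last step, together with the consistent choice of the endpoints inside the fibres $\D^{-1}(\alpha_j(0))$: one must make these choices so that the concatenation is a genuinely embedded arc and so that no extra intersection with the interior of the inflated region is introduced, and one must handle the degenerate case of a geodesic running along the horizontal diagram $\D(\Sp)$, where $\D^{-1}$ of a piece of $\alpha$ is two–dimensional and the lift has to be selected (as the spine edge) rather than being canonical.
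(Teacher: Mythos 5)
Your overall architecture matches the paper's: split $\alpha$ at the cone points (and the paper does not even need the further cuts at cylinder boundaries), pull back transverse pieces via \thref{th:pull-back-arc}, treat horizontal pieces separately, and splice inside the singular fibres. However, your treatment of a horizontal segment lying on a horizontal separatrix is wrong, and it breaks exactly the property the lemma asserts. You lift such a segment onto the corresponding edge of the spine $\Sp$. The spine lies in the \emph{interior} of the inflated (negatively curved) part, and its edges are transverse to the orthogeodesic foliation, not contained in its leaves; so with your choice $\beta_{infl}$ contains spine edges and is \emph{not} a union of segments of leaves of the orthogeodesic foliation. You acknowledge this ("recorded as such"), but the lemma does not allow for that exception, and the exception would also poison the downstream length identity $\Len(\beta)=\Len(\alpha)+\Len(\beta_{infl})$: the isometry of \cite[Proposition 5.10]{CF21} is between $\Sp$ with the metric obtained by integrating edges against $\O_\lambda(X)$ and the saddle-connection graph, not between $\Sp$ with the ambient path metric of $\Gr(X,\lambda)$ and that graph, so your "isometric lift" onto a spine edge is not length-preserving in the grafted metric.

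The paper's fix is the natural one: when $\alpha_i$ lies on a horizontal separatrix, $\D^{-1}(\alpha_i)$ is a two-dimensional region of a hyperbolic piece foliated by orthogeodesic leaves, and one chooses $\beta_i$ to be one of the two boundary components of that region, which lies \emph{on the lamination} $\hat\lambda$ (hence on the boundary, not the interior, of the inflated part) and is mapped isometrically by $\D$ onto $\alpha_i$. With that choice $\beta_{infl}$ receives contributions only from the transverse crossings of $\D(\Sp)$ and from the star-shaped singular fibres used in the splicing, all of which are orthogeodesic leaf segments, and the length bookkeeping goes through. The rest of your argument (the remark on fibres, the splicing at singular fibres along two of the $m$ leaf segments, the consistency of the two-fold choice using the orientation) is in line with the paper.
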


		\begin{proof}[Proof of \thref{th:pull-back-arc}]
			The geodesic arc $\alpha$ can go through finitely many singularities, in order $v_1, \dots, v_{n-1}$, splitting $\alpha$ in a concatenation of finitely many consecutive geodesic segments $\alpha_1, \dots, \alpha_n$, each not containing a singularity in its interior. 
			Since $\D$ is a homotopy equivalence, there exists in $\D^{-1}(\alpha)$ an arc $\beta$ such that $\D \circ \beta = \alpha$. As it may be not unique, we will explicitly construct it
			piece by piece, that is we will choose first for the interior of each $\alpha_i$ an open arc $\beta_i$ such that $\alpha_i = \D \circ \beta_i$, then for every $i$ join the arcs $\beta_i$ to $\beta_{i+1}$ with an arc in $\D^{-1}(v_i)$.
			For simplicity of notation, from now on by $\alpha_i$ we mean its interior, so with endpoints excluded.
			Each segment $\alpha_i$ can be horizontal, or transverse to the horizontal foliation $\lambda$. We will deal with the two different cases separately, and then deal with the singularities.

			\emph{Horizontal segments.} If $\alpha_i$ is horizontal, either it lies on a non-singular leaf, in which case $\D|_{\alpha_i}$ is invertible and is an isometry, so there is only one possible choice for $\beta_i$; or $\alpha_i$ lies on a horizontal separatrix. In this last case then $\D^{-1}(\alpha_i)$ is a portion of a hyperbolic piece (regularly) foliated by the orthogeodesic foliation.
			In this case we choose $\beta_i$ to be on the boundary of such a region (see Figure \ref{fig:pull-back-arc}), that is on $\lambda$, so its length will be the same as the one of $\alpha_i$. Notice that we have two choices here. Both choices will equally work for our scopes. For coherence, we could also consistently always choose the one on the right (this is well-defined from the orientation of $\alpha$ and the orientation of the surface).
			
			\emph{Non-horizontal segments.} If $\alpha_i$ is not horizontal, which means it is transverse to the horizontal foliation, then we showed that its preimage $\D^{-1}(\alpha_i)$ is already an arc,
			so we simply choose $\beta_i$ to be $\D^{-1}(\alpha)$.
			
			\emph{Vertices.} The preimage $\D^{-1}(v_i)$ of a singular point $v_i$ of order $m$ is, by construction of the orthogeodesic foliation, the union of $m$ segments of the same length, sharing one endpoint.
			So for every singularity in the interior of $\alpha$, we can join the two arcs $\beta_i, \beta_{i+1}$ by following two of the $m$ segments in $\D^{-1}(v_i)$ as in Figure \ref{fig:pull-back-arc}. Which segments to follow depends on the choice for the horizontal segments we discussed above, but this will not have an impact on the length as the $m$ segments have all the same length.

			\begin{figure}[ht]
				\includesvg[width=0.9\textwidth]{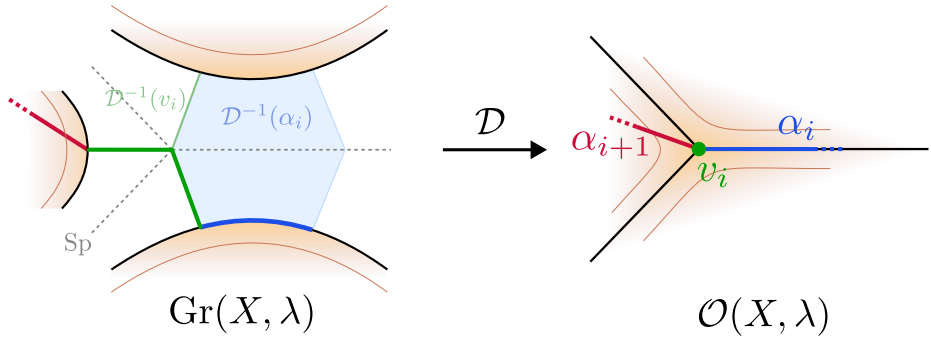}
				\centering
				\vspace{0.2cm}
				\begin{minipage}{0.8\textwidth}
					\caption{Summary of the construction of the pieces of the pull-back arc $\beta$: in green a vertex and the selected path in its preimage, and in blue a horizontal segment and the selected path as pull-back in its preimage, and in red a segment transverse to the horizontal foliation and its pull-back.}
					\label{fig:pull-back-arc}
				\end{minipage}
			\end{figure}

			This concludes the construction of the arc $\beta$ meeting the requirements.
		
		\end{proof}

		\begin{definition}
			With the notation above, we call $\beta$ the pull-back of $\alpha$ and write $\beta = \D^* \alpha$.
		\end{definition}

		The idea of the proof of the main theorem will be, as in the case of small grafting, to show good control of the length of $\beta$ in comparison to $\alpha$ in order to prove that $\D$ is an $\epsilon$-isometry.
		The argument is split in multiple lemmas that we prove here below, and combine at the end of the section to deduce the main theorem. 

		First, we show that, as we heuristically depicted, the increment of length from $\alpha$ to $\beta$ is given exactly by the length of the inflated part $\beta_{infl}$ of $\beta$.
		This is trivial in the case of a multi curve, but less clear for the general case.

		\begin{lemma}	\thlabel{th:infl-flat-decomp}
			Given $\alpha$ a geodesic arc in $\O(X,\lambda)$ and $\beta = \D^*\alpha$ its pull-back in $\Gr(X,\lambda)$,
			then 
			$$ \Len(\beta) = \Len(\alpha) + \Len(\beta_{infl}) .$$
		\end{lemma}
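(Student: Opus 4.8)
The plan is to compare $\beta$ with $\alpha$ by partitioning the parameter interval of $\beta$ according to where $\beta$ enters the interior of the inflated part, and to show that $\D$ collapses each piece of $\beta_{infl}$ to a point while acting as an infinitesimal isometry everywhere else along $\beta$. Since $\beta=\beta_{flat}\cup\beta_{infl}$ is cut out by a partition of the parameter interval, additivity of length gives $\Len(\beta)=\Len(\beta_{flat})+\Len(\beta_{infl})$, so the content of the lemma is the identity $\Len(\beta_{flat})=\Len(\alpha)$. (This statement is invariant under rescaling $\Gr(X,\lambda)$ and $\O(X,\lambda)$ by the same factor, so the area normalization of Section \ref{sec:deflation} plays no role here.)

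First I would record that, by the way $\beta$ was built in \thref{th:pull-back-arc} out of the consecutive pieces $\alpha_i$ and the connecting paths in $\D^{-1}(v_j)$, the composition $\D\circ\beta$ traverses $\alpha$ monotonically, with no backtracking: the fibres $\D^{-1}(\alpha(t))$ are disjoint, so an arc meeting them must meet them in order. Its only pauses occur on the components of $\beta_{infl}$, since by construction $\D$ is constant along each orthogeodesic leaf in the negatively curved region and hence sends each such segment to a single point. Therefore
$$\Len(\alpha)=\int_0^1|\dot{(\D\circ\beta)}(t)|\,dt=\int_{\beta_{flat}}|\dot{(\D\circ\beta)}(t)|\,dt,$$
the integrand vanishing on $\beta_{infl}$.

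The heart of the matter is then to show that $|\dot{(\D\circ\beta)}(t)|=|\dot\beta(t)|$ for almost every $t$ with $\beta(t)$ in the grafted region $\hat\lambda$, i.e.\ that $\D$ is an infinitesimal isometry there. This splits along the two orthogonal directions of the (pulled-back) orthogeodesic foliation and of $\hat\lambda$: along a leaf of $\hat\lambda$ the collapsing map $\kappa$ is an isometry onto a leaf of $\lambda$ and the transverse measure of $\O_\lambda(X)$ along $\lambda$ equals the hyperbolic length, which is exactly the Euclidean length of the corresponding horizontal leaf of $\O(X,\lambda)$; transverse to $\hat\lambda$, the inserted flat pieces carry Euclidean width equal to the $\lambda$-transverse measure, which is precisely the vertical Euclidean displacement that $\D$ produces. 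Since the grafted metric is $C^{1,1}$ and these two foliations are orthogonal, preserving both components of $\dot\beta$ preserves its norm. Because $\hat\lambda$ is a fat Cantor set in the transverse direction, almost every $t$ in $\{t:\beta(t)\in\hat\lambda\}$ is a Lebesgue density point of this set along $\beta$, and at such a $t$ the differential of $\D$ indeed preserves both components of $\dot\beta(t)$; this, together with the multicurve case — where $\Gr(X,\mu)$ has finite combinatorics and $\D$ is literally a piecewise isometry on the cylinders and the leafwise collapse on each hyperbolic piece, so the claim is immediate — yields $\Len(\beta_{flat})=\int_{\beta_{flat}}|\dot{(\D\circ\beta)}|=\Len(\alpha)$.

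I expect the density-point step to be the main obstacle: unlike in the multicurve case, the grafted region has empty interior, so ``$\D$ preserves the metric on the flat part'' must be phrased measure-theoretically, and one has to rule out any loss of length through cancellation — which is precisely why it matters that $\beta$ was constructed so that $\D\circ\beta$ is monotone along $\alpha$ and that on the transverse portions $\beta$ meets $\hat\lambda$ in a set of positive measure rather than collapsing onto it. A reader who prefers to avoid this can instead establish the identity for weighted multicurves and then pass to a general $\lambda$ by approximating it by multicurves that also converge in the Hausdorff topology (\thref{th:hausdorff-approx}), using the continuity of $\O$ from \thref{th:O-continuity} and an Arzel\`a--Ascoli argument on the pull-back arcs, in the spirit of Section \ref{sec:degrafting}; the delicate point in that route is controlling the continuity of the relevant lengths under the limit.
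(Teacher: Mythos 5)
Your primary route has a genuine gap exactly at the step you flag. The assertion that at a Lebesgue density point of $\{t:\beta(t)\in\hat\lambda\}$ ``the differential of $\D$ preserves both components of $\dot\beta(t)$'' is not a routine verification: transversally to $\hat\lambda$ the map $\D$ behaves like a Cantor--Lebesgue function (see the paper's Appendix \ref{sec:cantor}), so it has no classical differential at points of the grafted region, and density of the parameter set alone does not force the metric derivative of $\D\circ\beta$ to be $1$ there. Since $\D$ is $1$-Lipschitz, the inequality $|(\D\circ\beta)'|\le|\dot\beta|$ is free; the entire content is the reverse inequality a.e.\ on $\beta_{flat}$, i.e.\ that no length is lost on the fat Cantor set. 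What this actually requires is a measure-theoretic input you never supply: along each tie the collapsing map $\kappa$ is Lipschitz and sends the fat Cantor part onto the (Lebesgue-null) intersection of the tie with $\lambda$, so by the Luzin N property its derivative vanishes a.e.\ there and hence the transverse component of $\D$ has derivative $1$ a.e.\ on $\hat\lambda$ (this is precisely the ``measure preserving'' statement recorded, in the one-dimensional model, in Appendix \ref{sec:cantor}); on top of that one must justify that the leafwise and tie-wise behaviour can be combined into a statement about the norm of the metric derivative of $\D\circ\beta$, since the two foliations do not give product coordinates near a generic point of the grafted region. As written, the crux is asserted rather than proved, so the argument does not yet establish $\Len(\beta_{flat})=\Len(\alpha)$.

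Your fallback paragraph is, in fact, the paper's proof: one verifies the identity directly when $\lambda$ is a weighted multicurve (finite combinatorics, $\D$ a piecewise isometry on the cylinders), then approximates a general $\lambda$ by multicurves $\mu_n$ converging both in measure and in the Hausdorff sense (\thref{th:hausdorff-approx}), and passes to the limit using continuity of the grafted metrics (\thref{th:grafting-continuity}), continuity of $\O$ on such sequences (\thref{th:O-continuity}), and convergence of the deflation maps and of the pull-back arcs $\beta_n$. The delicate point you identify --- continuity of the relevant lengths --- is exactly where the paper has to work: convergence of $\Len(\beta_{n,infl})$ is obtained from the convergence of the negatively curved regions together with the fact that $\beta_{infl}$ follows the orthogeodesic foliation and is therefore transverse to the boundary of the inflated region. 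So either complete the density-point step with the absolute-continuity/Luzin N argument sketched above (which would give a more self-contained proof and make precise the measure-preservation the paper only records in its one-dimensional appendix), or flesh out your approximation route, which then coincides with the paper's argument.
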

		\begin{proof}
			We first prove it in the case when $\lambda$ is a multicurve. In this case the grafted surface is a union of finitely many rescaled hyperbolic pieces and finitely many cylinders. So $\beta$ is a concatenation of finitely many arcs, each one contained either in the interior of the inflated part or in a cylinder, considered closed, meaning boundary included.
			The segments in the interior of the inflated part constitute $\beta_{infl}$ by definition, while 
			the arcs in the cylinders are mapped isometrically to $\O(X,\lambda)$, so the sum of their length is equal to $\Len(\alpha)$.
			
			The general case where $\lambda$ is any measured lamination will follow by a continuity argument.
			Notice first that in this general case we can still make sense of $\Len(\beta_{infl})$, as $\beta_{infl}$, in the general case, is a countable union of sub arcs of $\beta$.
			We cannot say the same for the complementary $\beta \setminus \beta_{infl}$, as this is a positive-measured Cantor set.
			We could define its length as its Lebesgue measure induced by the length along $\beta$, but then we would need to talk about measure preserving properties of $\D$.
			More simply we can define its length as $\Len(\beta) - \Len(\beta_{infl})$ which in the case when $\lambda$ is a multicurve, equals the length of the image $\D(\beta) = \alpha$.

			Let us then approximate $\lambda$ by a sequence of multicurves $\mu_n$ such that their support also converges in the Hausdorff sense to the support of $\lambda$ (see \thref{th:hausdorff-approx}).
			We know by \thref{th:grafting-continuity} that the metrics of the grafted surfaces $\Gr(X,\mu_n)$ converge to one of $\Gr(X,\lambda)$.

			By \thref{th:O-continuity} we have that $\O(X,\mu_n)$ converges to $\O(X,\lambda)$.
			More precisely for $n$ large enough, since the support of $\mu_n$ will lie close to the support of $\lambda$, the combinatorics of the construction of $\O(X,\mu_n)$ will be the same as the one for $\O(X,\lambda)$ and only the sizes of the Euclidean rectangles $(Q_e)_e$ will be different, but converging to the ones of the rectangles of $\O(X,\lambda)$.
			So for every $n$ large enough, we can identify $\O(X,\mu_n)$ topologically with the surface $\O(X,\lambda)$ but endowed with a slightly different metric.
			In this way we can see $\alpha$ as an arc on all the half-translation surfaces $\O(X,\mu_n)$. Let us call $\D_n: \Gr(X,\mu_n) \to \O(X,\mu_n)$ the deflation maps. We now produce for each of them, $\beta_n = \D_n^*\alpha_n$ the pull-back arc on $\Gr(X,\mu_n)$, which we all see as the same smooth surface with different metric.

			One can easily see that the deflation maps $\D_n$ also converge, and so do the arcs $\beta_n$ (up to making consistent choices in their construction as we mentioned). From which we have convergence for the lengths of $\beta_n$.

			Finally, one can also see that the interior of the region of negative curvature with respect to the metric of $\Gr(X,\mu_n)$ converges to the one with respect to $\Gr(X,\lambda)$. In combination with the fact that $\beta_{infl}$ is always transverse to the boundary of the inflated region (as it follows the orthogeodesic foliation) we have convergence of its length.

			So we can conclude that the equation passes to the limit.
			
		\end{proof}

		\begin{remark}
			The construction is stable under approximation of $\lambda$ with a sequence of weighted multicurves $\mu_n \to \lambda$ with $\supp(\mu_n) \to \supp(\lambda)$.
			As pointed out in \cite[Construction 5.6]{CF21}, it is enough to choose $\epsilon$ small enough so that $N_\epsilon(\lambda)$ is topologically stable under small variations of $\epsilon$. So for $n$ large enough, by the Hausdorff convergence of the supports of $\mu_n$, $N_\epsilon(\mu_n)$ will also have the same topology, and so will give topologically the same rectangle decomposition.

			The map $\D$ depends then continuously on the geometric data  
			Thus by the convergence of the measures of $\mu_n$ and the orthogeodesic ...
			$\D_n$ converges to the map $\D$. 
		\end{remark}

		\begin{lemma} 	\thlabel{th:deflation-lipschitz}
		 The deflation map $\D$ is 1-Lipschitz.
		\end{lemma}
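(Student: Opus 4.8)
The plan is to follow the template of the proof of \thref{th:kappa-lip}: first establish the bound when $\lambda=\mu$ is a weighted multicurve, where $\D$ is assembled from explicitly understood pieces, and then promote it to an arbitrary measured lamination by approximation. Since $\Gr(X,\lambda)$ and $\O(X,\lambda)$ are rescaled by the \emph{same} constant, the Lipschitz constant of $\D$ is unchanged, so it is harmless to argue with the unrescaled surfaces; in particular the negatively curved pieces are then genuinely hyperbolic of curvature $-1$.

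In the multicurve case $\Gr(X,\mu)$ is a finite union of Euclidean cylinders and compact hyperbolic pieces with geodesic boundary, and by the construction of Section~\ref{sec:construction} the map $\D$ is an isometry from each cylinder onto the corresponding flat cylinder of $\O(X,\mu)$ (there $\mu$ and $\O_\mu(X)$ are already orthogonal coordinate foliations), while on a hyperbolic piece $A$ it collapses each leaf of $\O_\mu(X)|_A$ to a point, the image being the spine $\Sp(A)$ with the metric in which an edge has length equal to its total transverse measure with respect to $\O_\mu(X)$. I would show this collapse is $1$-Lipschitz by factoring it as the nearest-point projection $p\colon A\to\partial A\subset\mu$ followed by the map $\partial A\to\Sp(A)$ sending $z$ to the spine point onto which the leaf $p^{-1}(z)$ collapses. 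The differential of $p$ has operator norm at most $1$, because in the universal cover $p$ is the nearest-point projection onto a union of disjoint geodesic lines, which is $1$-Lipschitz since geodesics are convex in $\H^2$; and $\partial A\to\Sp(A)$ is isometric on each boundary arc, since the transverse measure of $\O_\mu(X)$ along $\mu$ is hyperbolic arc length, which is exactly the normalization defining the spine metric. Hence $\D|_A$ is length-non-increasing, and together with the cylinder case this gives $\Len(\D\circ\gamma)\le\Len(\gamma)$ for every rectifiable path $\gamma$, i.e. $\D$ is $1$-Lipschitz on $\Gr(X,\mu)$.

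For a general $\lambda$ I would choose weighted multicurves $\mu_n\to\lambda$ in the measure topology with $\supp(\mu_n)\to\supp(\lambda)$ in the Hausdorff topology (\thref{th:hausdorff-approx}). By \thref{th:grafting-continuity} the metrics of $\Gr(X,\mu_n)$ converge to that of $\Gr(X,\lambda)$, by \thref{th:O-continuity} the surfaces $\O(X,\mu_n)$ converge to $\O(X,\lambda)$, and as noted in the remark following \thref{th:infl-flat-decomp} the rectangle decomposition stabilizes combinatorially for $n$ large, so the deflation maps $\D_n$ converge to $\D$ uniformly on compact subsets away from the singularities. Passing to the limit in $d_{\O(X,\mu_n)}(\D_n x,\D_n y)\le d_{\Gr(X,\mu_n)}(x,y)$ for $x,y$ in the complement of the finite singular set, and then extending by continuity, shows that $\D$ is $1$-Lipschitz.

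The only genuinely delicate point is the $1$-Lipschitz bound for the collapse of a hyperbolic piece onto its spine: here one must carefully identify the metric on the image with the transverse-measure metric of $\O_\lambda(X)$ and exploit that $\O_\lambda(X)$ is, by definition, the foliation by fibres of a nearest-point projection — hence "$1$-Lipschitz" as a measured foliation. The behaviour on the flat cylinders and the limiting argument are routine given the continuity statements already proved, in complete parallel with \thref{th:kappa-lip} and with the proof of \thref{th:infl-flat-decomp}.
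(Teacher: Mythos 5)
Your proposal is correct and follows essentially the same route as the paper: in the multicurve case the paper likewise observes that $\D$ is an isometry on the grafted cylinders and, on each negatively curved piece, factors as the closest-point projection to the lamination (a contraction in negative curvature) followed by an arc-length-preserving map onto the gluing locus, and then passes to general $\lambda$ by multicurve approximation and continuity of $\D$ in the grafting data. Your write-up just makes the two factors and the limiting step more explicit than the paper does.
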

	\begin{proof}
		Let us start with the case when $\lambda$ is a weighted multicurve.
		Then the restriction of $\D$ to the grafted cylinders is a local isometry, while the restriction to a negatively curved part is a contraction for the following reason.
		From the construction, we can see $\D$ locally as the composition of the closest point projection to the lamination, which in negative curvature is a contraction, which is then mapped isometrically into $\O(X,\lambda)$.
		
		In the general case, it follows from the stability for approximation by multicurves, and continuity of the of $\D$ from the grafting data.
	\end{proof}

		Since what we will want to show is that the metric of the grafted surface converges to the one of the half-translation surface, what we aim to prove is an upper bound for $\Len(\beta_{infl})$.
		The first key ingredient is to show that under certain conditions the inflated part is uniformly slim.

		Let us remind that the inflated parts have negative constant curvature $-k^2$. We remind the normalization we are currently using is the one for which the flat part of $\Gr(X,\lambda)$ has area 1, so the multiplicative scaling factor is $k^{-1} = \ell_X(\lambda)^{-1/2}$.

		\begin{proposition}		\thlabel{th:slim-inflation}
			Let $\Gr(X,\lambda)$ be a grafted surface with $\ell_X(\lambda)>1$ and injectivity radius larger than $\epsilon$.
			Then there exists a constant $D$, depending only on $\epsilon$, such that the inflated part in $\Gr(X,\lambda)$ is $D/k$-slim, meaning that any point in the interior of the inflated part, lies at distance at most $D/k$ from its boundary.
		\end{proposition}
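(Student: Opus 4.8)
My plan is a volume comparison, dual to the one in the remark following \thref{th:intersection-bound}: in the present normalization the inflated part is a disjoint union of hyperbolic pieces of constant curvature $-k^2$ with total area $\tfrac{2\pi|\chi(S)|}{k^2}$, where $k^2=\ell_X(\lambda)>1$, so it cannot contain a large embedded ball of that curvature. First I would record that $\Gr(X,\lambda)$ itself has injectivity radius at least $\epsilon$: by \thref{th:deflation} the deflation $\D\colon\Gr(X,\lambda)\to\O(X,\lambda)$ is a homotopy equivalence and by \thref{th:deflation-lipschitz} it is $1$-Lipschitz, so it sends an essential closed curve of $\Gr(X,\lambda)$ to an essential closed curve of $\O(X,\lambda)$ of no larger length, whence $\sys(\Gr(X,\lambda))\geq\sys(\O(X,\lambda))$; since both surfaces are non-positively curved (the half-translation metric has cone angles at least $3\pi$, and the Thurston metric is $C^{1,1}$ with curvature $\leq 0$ almost everywhere), the injectivity radius is half the systole and $\operatorname{inj}(\Gr(X,\lambda))\geq\operatorname{inj}(\O(X,\lambda))>\epsilon$. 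If the hypothesis already refers to the injectivity radius of the grafted surface, this step is unnecessary.

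Next I would fix $p$ in the interior of the inflated part, let $A_p$ be the hyperbolic piece containing it, and set $T=d\bigl(p,\partial(\text{inflated part})\bigr)$ and $\rho=\min\{T,\epsilon\}$. Any path from $p$ leaving $\operatorname{int}(A_p)$ must cross $\partial A_p\subseteq\partial(\text{inflated part})$, so the open ball $B(p,\rho)$ is contained in $\operatorname{int}(A_p)$, where the metric has constant curvature $-k^2$; since $\rho\leq\epsilon\leq\operatorname{inj}(\Gr(X,\lambda))$, this ball is isometric to a ball of radius $\rho$ in the hyperbolic plane of curvature $-k^2$, of area $\tfrac{2\pi}{k^2}\bigl(\cosh(k\rho)-1\bigr)$. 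Comparing with the area of the inflated part gives
\[
\cosh(k\rho)-1\leq|\chi(S)|,\qquad\text{hence}\qquad k\rho\leq c(S):=\operatorname{arccosh}\bigl(1+|\chi(S)|\bigr).
\]

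To conclude I would split into two cases. If $T\leq\epsilon$ then $\rho=T$ and the last inequality is exactly $T\leq c(S)/k$, which is the assertion with $D=c(S)$. If $T>\epsilon$ then $\rho=\epsilon$, so $k\leq c(S)/\epsilon$; together with $k>1$ (this is where $\ell_X(\lambda)>1$ is used) the rescaled grafted surface $\Gr(X,\lambda)$ then has area at most $1+2\pi|\chi(S)|$ and injectivity radius at least $\epsilon$, so its diameter is bounded by some $D_1=D_1(\epsilon,S)$ by the compactness argument in the proof of \thref{th:proper-action}; in particular $T\leq D_1\leq\tfrac{D_1c(S)}{\epsilon}\cdot\tfrac1k$. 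Then $D=\max\{c(S),\,D_1c(S)/\epsilon\}$ works, and scaling back the estimate reads exactly as $D/k$-slimness.

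The delicate point will be the case $T>\epsilon$: it rests on the fact that a piecewise hyperbolic-and-flat surface of bounded area whose injectivity radius is bounded below has uniformly bounded diameter, the same compactness phenomenon that underlies \thref{th:proper-action} and \thref{th:nagano}. Everything else is the elementary estimate for the area of a hyperbolic ball, together with the $1$-Lipschitz property of $\D$ used to transfer the injectivity-radius lower bound from $\O(X,\lambda)$ to $\Gr(X,\lambda)$.
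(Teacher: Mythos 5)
Your argument is correct, but it reaches the slimness by a genuinely different route from the paper's. The paper works upstairs on the hyperbolic surface $X$: it takes the $\epsilon$-thick--thin decomposition of $X$, observes that $\lambda$ must cross every thin tube (otherwise the short core curve, unaffected by grafting and further shortened by the rescaling $k^{-1}<1$, would contradict the injectivity radius bound on $\Gr(X,\lambda)$), deduces that every point of $X$ lies within a uniform distance $D(\epsilon,g)$ of $\lambda$, and then rescales by $k^{-1}$. You instead argue entirely inside the inflated region by an area comparison: the ball $B(p,\rho)$ with $\rho=\min\{T,\epsilon\}$ stays in the constant-curvature $-k^2$ piece, is embedded because $\rho$ is below the injectivity radius, hence has area $\tfrac{2\pi}{k^2}(\cosh(k\rho)-1)$, which is at most the total inflated area $2\pi|\chi(S)|k^{-2}$; this gives $k\rho\le\operatorname{arccosh}(1+|\chi(S)|)$ and settles the case $T\le\epsilon$, while the case $T>\epsilon$ forces $k\le c(S)/\epsilon$ and is finished by a uniform diameter bound for the rescaled grafted surface (area at most $1+2\pi|\chi(S)|$ since $k>1$, injectivity radius at least $\epsilon$, non-positive curvature). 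Both proofs consume the hypotheses in the same places, and your preliminary transfer $\operatorname{inj}(\Gr(X,\lambda))\ge\operatorname{inj}(\O(X,\lambda))$ via the $1$-Lipschitz homotopy equivalence $\D$ goes in the right direction (though the statement as written already refers to the grafted surface, so it is indeed optional). What each approach buys: the paper's constant has a concrete meaning (diameter of the $\epsilon$-thick pieces of $X$) and needs no case split, whereas yours is softer and stays entirely on the grafted side; its one extra ingredient is the diameter bound in the second case, and there your citation of \thref{th:proper-action} is not quite what is needed, since that proposition asserts properness of the action rather than a diameter estimate. The fact you need --- a closed non-positively curved surface with area at most $1+2\pi|\chi(S)|$ and injectivity radius at least $\epsilon$ has diameter at most $D_1(\epsilon,S)$ --- is nonetheless standard: disjoint embedded balls of radius $\epsilon/2$ each have area at least $\pi\epsilon^2/4$ by the curvature sign, so a maximal $\epsilon$-separated set has boundedly many points and the diameter is bounded by chaining the corresponding $\epsilon$-balls. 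With that justification substituted for the citation, your proof is complete.
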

		\begin{proof}
			The hyperbolic surface $X$ might not be $\epsilon$-thick in general, so let us consider its $\epsilon$-thick-thin decomposition.
			If it is trivial, that is $X$ is $\epsilon$-thick, then there is an upper bound $D$ for its diameter, by compactness of the $\epsilon$-thick part of the moduli space $\M(S)$. Such $D$ will trivially satisfy our proposition in this case, as the inflated part is obtained by rescaling $X$ by $k^{-1}$ and cutting it along $\lambda$, resulting in pieces $D/k$-slim.

			In the general case, assuming without loss of generality $\epsilon$ small enough, the thin components are tubes.
			Let us then consider a thin component of $X$, meaning an annulus where every point has injectivity less than $\epsilon$ with respect to the hyperbolic metric of $X$, in particular the shortest closed geodesic $\gamma$ in such annulus has length less than $2\epsilon$.
			Then there must be a leaf $l$ of $\lambda$ crossing $\gamma$, otherwise to $\gamma$ corresponds an even shorter curve in $\Gr(X,\lambda)$, as the grafting does not affect $\gamma$ and the rescaling is by $k^{-1} = \ell_X(\lambda) < 1$ by assumption. The leaf $l$, since it intersects $\gamma$, by simple hyperbolic geometry, traverses the thin tube in its entire length from a boundary component to the other.
			From this follows also that every thick component of $X$ intersects at least one leaf of $\lambda$ since, by definition, every thick component is adjacent to at least a thin component.
			
			It is now easy to show the slimness of the inflated part in $\Gr(X,\lambda)$. 
			In $X$, any point in a thin component, will lie at distance at most $\epsilon$ from the leaf of $\lambda$ traversing it. On the other side, the $\epsilon$-thick components have uniformly upper bounded diameter $D=D(\epsilon, g)$ depending only on $\epsilon$ and on the genus of the surface. We can assume $D>\epsilon$.
			Since we showed that every thick component intersects at least one leaf of $\lambda$, then any point in a thick component is at distance at most $D$ from $\lambda$.
			In $\Gr(X,\lambda)$ the metric is rescaled by a factor $1/k$ with respect to the one coming from the hyperbolic surface $X$, so any point will be distant at most $D/k$ from the boundary of the inflated part. 
		\end{proof}

		The lemma above is nevertheless not sufficient to provide a bound for the length of $\beta$ gained in the inflated parts, as in general $\beta_{infl}$ will be a union of countably many segments. For this reason we need the following lemmas, that are essentially the analogue of \thref{th:intersection-bound}.

		\begin{lemma}		\thlabel{th:inflated-part-bound}
			Consider $\D: \Gr(X,\lambda) \to \O(X,\lambda)$ as above, and assume $\O(X,\lambda)$ has injectivity radius larger than $\epsilon$.
			If $\alpha$ is a non-horizontal geodesic arc in $\O(X,\lambda)$ disjoint from singularities, and it is distance minimizing, then $\beta = \D^*(\alpha)$ is such that 
			$$ \Len(\beta_{infl}) \leq C_{\epsilon} k^{-2} $$
			where the constant $C_\epsilon$ depends only on $\epsilon$.
		\end{lemma}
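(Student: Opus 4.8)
The approach mirrors the proof of \thref{th:intersection-bound} and the grafting interpretation given in the remark after it, with the roles of the flat and the hyperbolic regions exchanged. After the area normalization of this section the inflated (negatively curved) region $\Omega\subset\Gr(X,\lambda)$ has constant curvature $-k^2$ and area $2\pi|\chi(S)|/k^2$, where $k^2=\ell_X(\lambda)$, and the plan is to bound $\Len(\beta_{infl})$ by this area. First I would upgrade the hypothesis: by \thref{th:deflation-lipschitz} the map $\D$ is a $1$-Lipschitz homotopy equivalence, so it does not increase the length of any essential closed curve, whence $\sys\bigl(\Gr(X,\lambda)\bigr)\geq\sys\bigl(\O(X,\lambda)\bigr)>2\epsilon$; as both surfaces are non-positively curved, $\Gr(X,\lambda)$ then has injectivity radius $>\epsilon$ and \thref{th:slim-inflation} applies, so $\Omega$ is $(D/k)$-slim for a constant $D=D(\epsilon,g)$, and in particular every maximal segment of an $\O_\lambda$-leaf contained in $\Omega$ has length of order $1/k$.

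Next I would recall, from the construction of $\D$ and from \thref{th:pull-back-arc}, the precise shape of $\beta_{infl}$: it is a disjoint union of segments of leaves of the orthogeodesic foliation $\O_\lambda$, each lying in the interior of $\Omega$, meeting $\partial\Omega$ transversally at its two endpoints, and corresponding to a transverse crossing of $\alpha$ with the graph of horizontal separatrices $G=\D(\Sp)=\D(\Omega)$. The heart of the argument is then an area estimate, playing the role of the disjointness of the arcs $U_t$ in \thref{th:intersection-bound}. Let $\nu$ denote the transverse measure of $\O_\lambda$, which on $\lambda$ is the (rescaled) hyperbolic length and which inside $\Omega$ is the pullback of that length along the closest-point projection to $\lambda$; thickening each $\O_\lambda$-leaf segment of $\beta_{infl}$ transversally by a definite amount $w=w(\epsilon)$ of $\nu$-width on either side produces a region $U'\subset\Omega$. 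A Fermi-coordinate computation in constant curvature shows that the area form dominates $d\nu\times ds$, where $ds$ is arclength along the leaves, so \emph{provided the thickenings are pairwise disjoint} one gets $\Area(U')\geq 2w\cdot\Len(\beta_{infl})$; since $U'\subset\Omega$ we also have $\Area(U')\leq\Area(\Omega)=2\pi|\chi(S)|/k^2$, and combining the two inequalities gives $\Len(\beta_{infl})\leq \pi|\chi(S)|/\bigl(w(\epsilon)\,k^2\bigr)$, which is the claim with $C_\epsilon=\pi|\chi(S)|/w(\epsilon)$.

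The main obstacle is exactly this disjointness (or, just as good, a bounded overlap multiplicity) with a width $w$ that is bounded below in terms of $\epsilon$ alone. Since $\alpha$ is distance minimizing and disjoint from the cone points it is an embedded straight arc, so its crossings with $G$ occur at pairwise distinct points of the spine; under $\D$ the direction transverse to $\O_\lambda$ is carried to the direction along $G$ and the $\nu$-width to the flat length along $G$, which in the $\epsilon$-thick normalized surface $\O(X,\lambda)$ is bounded by a constant $C_\epsilon$ (there are boundedly many horizontal saddle connections, each of length at most $\mathrm{diam}(\O(X,\lambda))\leq C_\epsilon$). The point I would still have to nail down is that the distance-minimizing property of $\alpha$, together with $\sys(\O(X,\lambda))>2\epsilon$, forces consecutive crossings of $\alpha$ with $G$ to be separated along $G$ — a close return would produce either a short essential loop, contradicting the systole bound, or a digon, contradicting minimality — so that the thickenings overlap with multiplicity controlled by $\epsilon$; this, together with \thref{th:infl-flat-decomp}, would finish the estimate. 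As in Section~\ref{sec:degrafting}, I would first carry this out for $\lambda$ a weighted multicurve, where the combinatorics of $\Gr(X,\lambda)$ is finite and $\D$ is a genuine isometry off $\Omega$, and then pass to a general measured lamination through the approximation $\mu_n\to\lambda$ with $\supp(\mu_n)\to\supp(\lambda)$ of \thref{th:hausdorff-approx}, using \thref{th:grafting-continuity} and \thref{th:O-continuity} exactly as in the proof of \thref{th:infl-flat-decomp}.
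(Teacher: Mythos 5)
Your overall strategy is the paper's: thicken each inflated segment of $\beta$ transversally by a definite width, bound the area of the thickening from below by the product of that width with $\Len(\beta_{infl})$ using an area estimate in curvature $-k^2$ (your Fermi-coordinate computation is the paper's \thref{th:hyp-trapezium-area}), and from above by the total area $2\pi|\chi(S)|k^{-2}$ of the inflated region. The preliminary reductions you add (the systole transfer via \thref{th:deflation-lipschitz} and \thref{th:slim-inflation}, and the multicurve approximation at the end) are not needed for this lemma -- the paper's proof is direct and exact, with no limiting argument -- but they are harmless.

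The genuine gap is precisely the one you flag, the disjointness of the thickenings, and the patch you sketch does not work as stated. A close return of $\alpha$ to the separatrix graph does not produce a short loop: the loop consists of a horizontal segment of length less than $\epsilon$ together with the subarc of $\alpha$ between the two crossing points, and a priori that subarc can be long, so neither the systole bound nor any multiplicity count is the right tool; moreover no bounded-multiplicity version is needed, since the thickenings are honestly disjoint. The paper closes this by transplanting verbatim the argument of \thref{th:intersection-bound} to the flat side: take as thickening of each crossing point $\alpha(t)$ the horizontal segment $U_t$ of length $\epsilon$ (width $\epsilon/2$ on each side -- a constant depending only on $\epsilon$, not on diameters or lengths of saddle connections, so your digression about bounding lengths along $G$ is a red herring) and pull it back by $\D$. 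If $U_t\cap U_{t'}\neq\emptyset$, then $\alpha(t)$ and $\alpha(t')$ are joined by a horizontal segment of length less than $\epsilon$; since the injectivity radius of $\O(X,\lambda)$ exceeds $\epsilon$, this segment is the unique distance-minimizing geodesic between its endpoints, while the minimality of $\alpha$ forces the subarc of $\alpha$ between these points to be distance-minimizing as well, so the two coincide -- contradicting that $\alpha$ is transverse to the horizontal foliation. The missing ingredient in your sketch is exactly this use of the minimality of $\alpha$ \emph{first}, to conclude the connecting subarc of $\alpha$ is itself short and minimizing, after which uniqueness of geodesics below the injectivity radius finishes it. With disjointness in hand, your area comparison gives the claim with $C_\epsilon$ proportional to $|\chi(S)|/\epsilon$, which is what the paper obtains.
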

		\begin{proof}

			As we anticipated, the idea for the argument is the same as in \thref{th:intersection-bound}:
			each segment of $\beta_{infl}$ has a fairly sized neighbourhood in the inflated part, but the latter has total area $2\pi|\chi(S)|/k^{2}$, which is going to 0 with $k$ going to infinity.
			
			Let us consider for every point $\alpha(t)$ in $\alpha \cap \Sp$, the horizontal segment $U_t$ centred at $\alpha(t)$, and which in normal coordinates has endpoints $\alpha(t)+i \epsilon/2$ and $\alpha(t) - i\epsilon/2$.
			Note that it does not intersect itself as
			we assumed that $\O(X,\lambda)$ has injectivity radius at least $\epsilon > \epsilon/2$.
			
			Moreover, all the segments $U_t$ will be disjoint.
			Suppose by contradiction that $U_t$ intersects $U_{t'}$, with $t\neq t'$. Then we can connect $\alpha(t)$ and $\alpha(t')$ with a horizontal segment of length less than $\epsilon$. Again by the assumption on the injectivity radius, such horizontal segment is length minimizing between $\alpha(t)$ and $\alpha(t')$. But being $\alpha$ length minimizing, it implies that that segment lies in $\alpha$, which contradicts the assumption $\alpha$ transverse to the horizontal foliation.

			\begin{figure}[ht]
				\includesvg[width=0.65\textwidth]{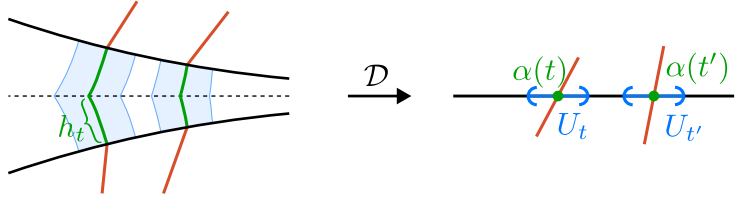}
				\centering
				\vspace{0.2cm}
				\begin{minipage}{0.8\textwidth}
					\caption{We call $h_t$ half the length of $\D^{-1}(\alpha(t))$.}
					\label{fig:inflation-area-gain}
				\end{minipage}
			\end{figure}

			Consider now $U$ the subset of $\lambda$ formed by all the (disjoint) segments $U_t$ we described
			$$ U := \bigcup_{\alpha(t) \in \Sp} U_t .$$
			
			Similarly to what we did for grafting, we observe that $\D^{-1}(U)$, being contained in the inflated part of $\Gr(X,\lambda)$, has area bounded above by the total area of the inflated part, which is $2\pi|\chi(S)|k^{-2}$.
			On the other hand, we also have a lower bound for the area of $U$.
			Indeed, $U$ is the disjoint union of $\D^{-1}(U_t)$ for countably many values of $t$.
			Each region $\D^{-1}(U_t)$ is made up of two isometric pieces, whose area we can lower bound thanks to \thref{th:hyp-trapezium-area}
			$$ \Area(U) = \sum_{t} \Area(\D^{-1}(U_t)) \geq \sum_{t} 2 \epsilon h_t = 2\epsilon \Len(\beta_{infl}) .$$
			Combining the two area bounds we obtain
			$$\Len(\beta_{infl}) \leq \frac{\pi|\chi(S)|}{\epsilon} k^{-2} .$$
		\end{proof}

		Here is the hyperbolic geometry lemma about area estimate we used in the argument above. Essentially it estimates the area of a quadrilateral in $\H^2$ with two adjacent right angles by comparison with an analogue Euclidean counterpart.

		\begin{lemma}		\thlabel{th:hyp-trapezium-area}
			Let $r$ be a line in $\H^2$ parametrized by arc-length and $\eta$ be the foliation whose leaves $(\eta_t)_{t\in\R}$ are such that $\eta_t$ is the geodesic orthogonal to $r$ in $r(t)$. Let $l$ be any other line disjoint from $r$. Denote $r(t_0)$ a point on $r$, and $U$ be the region delimited by $r,l, \eta_{t_0+\delta}, \eta_{t_0-\delta}$ as in Figure \ref{fig:trapeziums}.
			Call $h(t)$ the length of the segment of $\eta_{t}$ between $r$ and $l$, then
			$$\Area(U) \geq 2\delta h(t_0) .$$
		\end{lemma}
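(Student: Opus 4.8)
The plan is to write $\Area(U)$ as an honest integral, reduce the desired inequality to a one-variable convexity statement, and then prove that convexity. First I would work in Fermi coordinates $(t,\rho)$ along $r$, in which the hyperbolic metric is $d\rho^{2}+\cosh^{2}\!\rho\;dt^{2}$ and the area form is $\cosh\rho\,d\rho\,dt$; here $t$ is the arc-length parameter on $r$ and $\rho$ is the signed distance to $r$ measured along the leaf $\eta_{t}$. Since $l$ is disjoint from $r$ it lies entirely in one of the two half-planes bounded by $r$, so after flipping the sign of $\rho$ if needed we may assume $l\subset\{\rho\ge 0\}$. The leaves $\eta_{t}$, $t\in\R$, are pairwise disjoint and foliate $\H^{2}$, hence $l$ meets each of them exactly once; thus $h$ is a well-defined smooth function of $t$ on all of $\R$, and $U=\{(t,\rho):|t-t_{0}|\le\delta,\ 0\le\rho\le h(t)\}$. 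Integrating fibrewise,
\[
\Area(U)=\int_{t_{0}-\delta}^{t_{0}+\delta}\!\!\int_{0}^{h(t)}\cosh\rho\;d\rho\,dt=\int_{t_{0}-\delta}^{t_{0}+\delta}\sinh h(t)\,dt .
\]

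The reduction step is then immediate: if $t\mapsto\sinh h(t)$ is convex on $[t_{0}-\delta,t_{0}+\delta]$, then, since the average of $t$ over that interval is $t_{0}$, Jensen's inequality gives $\tfrac{1}{2\delta}\int_{t_{0}-\delta}^{t_{0}+\delta}\sinh h(t)\,dt\ge \sinh h(t_{0})\ge h(t_{0})$, and multiplying by $2\delta$ yields exactly $\Area(U)\ge 2\delta\,h(t_{0})$. So everything comes down to the convexity of $t\mapsto\sinh h(t)$, which is the real content of the lemma.

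To prove that convexity I would pass to the hyperboloid model $\H^{2}\subset\R^{2,1}$ and use two standard facts: if $n_{r}$ is the unit spacelike normal to the timelike plane carrying $r$, then $\sinh(\text{signed distance to }r)=\langle\,\cdot\,,n_{r}\rangle$ on $\H^{2}$, and a point $z\in\H^{2}$ lies on $\eta_{t}$ iff $\langle z,r'(t)\rangle=0$. Parametrize $l$ by arc length as $\gamma(s)=a\cosh s+b\sinh s$ with $\langle a,a\rangle=-1$, $\langle b,b\rangle=1$, $\langle a,b\rangle=0$, and write $p(t)=l\cap\eta_{t}=\gamma(s(t))$. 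Then $\phi(s):=\sinh h=\langle\gamma(s),n_{r}\rangle$ has the form $A\cosh s+B\sinh s$, so $\phi''=\phi$ (derivatives in $s$). Writing $r(t)=u\cosh t+v\sinh t$ in an orthonormal frame $\{u,v,n_{r}\}$ adapted to $r$, the incidence relation becomes $\tanh t=-\langle\gamma(s),v\rangle/\langle\gamma(s),u\rangle$; differentiating in $s$ and using that $\langle\gamma,\gamma\rangle=-1$ forces $\langle\gamma,u\rangle^{2}-\langle\gamma,v\rangle^{2}=1+\phi^{2}$, one obtains $\dfrac{dt}{ds}=\dfrac{\kappa}{1+\phi(s)^{2}}$ for a constant $\kappa$, which is nonzero because $l$ (being disjoint from $r$) is not equal to any $\eta_{t}$. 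Combining these identities,
\[
\frac{d^{2}\phi}{dt^{2}}=\frac{(1+\phi^{2})\,\phi\,\bigl(1+\phi^{2}+2(\phi')^{2}\bigr)}{\kappa^{2}}\ \ge\ 0,
\]
since $\phi=\sinh h\ge 0$ (the sign of the signed distance is constant along $l$ as $l$ does not cross $r$). This establishes the convexity and finishes the proof.

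The only genuine obstacle is the convexity claim; the first two steps are bookkeeping. In the hyperboloid model the computation is short, but one must check carefully that $h$ is defined on the whole interval and that $s\leftrightarrow t$ is a genuine reparametrization there — both are consequences of $l\cap r=\emptyset$ (equivalently $\kappa\ne 0$). The same convexity can alternatively be verified by a more computational route in the upper half-plane model, where $r$ is the imaginary axis, $\eta_{t}$ is the Euclidean circle of radius $e^{t}$ about the origin, and $\sinh h(t)=|x|/y$ at the point of $l$ with $x^{2}+y^{2}=e^{2t}$; but that route is messier, and the Minkowski-model argument above is the one I would present.
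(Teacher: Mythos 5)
Your proof is correct, but it follows a genuinely different route from the paper's. The paper argues softly: in the same Fermi coordinates it observes that the chart is area-expanding (a dilation from the Euclidean metric), so $\Area(U)$ is at least the Euclidean area under the graph of $h$; it then uses convexity of $h$ itself to bound that region from below by the trapezium under the tangent line at $t_0$, whose area is exactly $2\delta h(t_0)$. You instead compute the area exactly as $\int_{t_0-\delta}^{t_0+\delta}\sinh h(t)\,dt$, prove the sharper statement that $t\mapsto\sinh h(t)$ is convex via an explicit computation in the hyperboloid model, and finish with Hermite--Hadamard/Jensen plus $\sinh x\ge x$. Your version costs more computation but delivers a cleaner logical structure: the convexity of $\sinh h$ is established rather than asserted (the paper's parenthetical ``it is indeed a hyperbolic cosine or an exponential'' is not literally accurate for $h$ in Fermi coordinates), and the exact area formula makes the two inequalities you lose ($\sinh h\ge h$ and Jensen) completely transparent. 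The paper's version is shorter and avoids the Minkowski-model bookkeeping.

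One small inaccuracy to fix: from the fact that the $\eta_t$ foliate $\H^2$ you conclude that $l$ meets every leaf exactly once and hence that $h$ is defined on all of $\R$. That is false in general (e.g.\ for $r$ the imaginary axis in the upper half-plane and $l$ a semicircle over $[1,3]$, only the leaves with $t\in(0,\log 3)$ meet $l$); the foliation property gives at most injectivity of $s\mapsto t(s)$, which you in any case re-derive from $dt/ds=\kappa/(1+\phi^2)\ne 0$. This does not damage the argument, because the lemma's hypothesis that $U$ is the quadrilateral bounded by $r$, $l$, $\eta_{t_0\pm\delta}$ already guarantees that $h$ is defined on $[t_0-\delta,t_0+\delta]$, which is all your integral and convexity steps require; but the claim should be restricted to that interval.
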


		\begin{proof}
			Consider the coordinates $\phi: \R^2 \to \H^2$ that maps the $x$ axis isometrically to $r$, so $\phi(t,0) = r(t)$ and sends each horizontal line $\{x=t\}$ isometrically to $\eta_t$. Assume also that $l \subset \phi(\{y>0\})$.

			\begin{figure}[ht]
				\includesvg[width=0.9\textwidth]{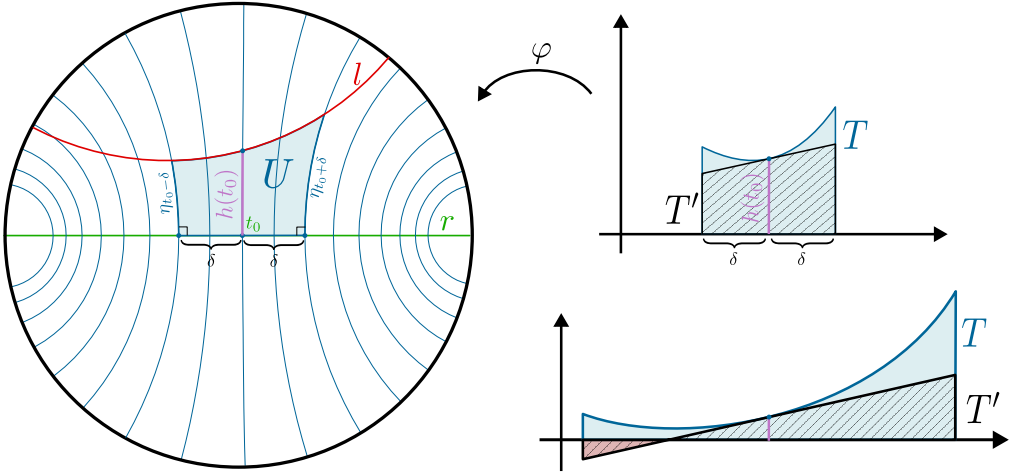}
				\centering
				\vspace{0.2cm}
				\begin{minipage}{0.8\textwidth}
					\caption{Left: the hyperbolic quadrilateral $U$. Top right: the Euclidean chart. Bottom right: the case when $T'$ is not simple.}
					\label{fig:trapeziums}
				\end{minipage}
			\end{figure}
			
			In this setting $l$ is the image under $\phi$ of the graph of the function $h$ and $U$ is the image under $\phi$ of the region $T$ under the graph of $h$ restricted to $(t_0-\delta, t_0+\delta)$.
			Because of the negative curvature of $\H^2$, the map $\phi$ is a dilation from the Euclidean metric on $\R^2$ to the hyperbolic metric on $\H^2$, so $\Area(U) \geq \Area(T)$.
			Always due to negative curvature, we know that the function $h$ is convex (it is indeed a hyperbolic cosine or an exponential), so $\Area(T) \geq \Area(T')$ where $T'$ is the trapezium under the tangent in $t_0$ to the graph of $h$, hence
			$$ \Area(U) \geq \Area(T) \geq \Area(T') = 2\delta h(t_0) .$$
			Note that even in the case when the tangent line delimiting $T'$ intersects the $x$ axis inside the interval $(t_0-\delta, t_0+\delta)$, then one can just consider only the part above the $x$-axis of $T'$, whose area will be even larger than $2\delta h(t_0)$, as that is the signed area of $T'$, where the part below the $x$-axis counts as negative.
		\end{proof}
		
		\begin{remark}
			Note that the previous lemma holds true even if we replace $\H^2$ with a rescaled version of it, meaning with constant curvature $-k^2$. Indeed, we just used that in negative curvature the parametrization $\phi$ is a dilation. Or one can just rescale by $k$ the initial data of the problem, and reach the same conclusion, as the inequality would be homogeneous in the scale factor $k$.
		\end{remark}

		Finally, we can combine all the previous lemmas to compose a proof of \thref{th:deflation-control}.

		\begin{proof}[Proof of \thref{th:deflation-control}]
			Let us consider the deflation map $\D: \Gr(X,\lambda) \to \O(X,\lambda)$. 
			We know that $\D$ is 1-Lipschitz.
			We want to show that $\D$ also does not shrink distances by more than an additive error proportional to $k^{-1}$, and then conclude by \thref{th:GH-criterion}. 

			For any two points $x,y$ in $\Gr(X,\lambda)$ we want to show that 
			\begin{equation}	\label{eq:defl-shrink-control}
				d(\D(x), \D(y)) \geq  d(x,y) - Ck^{-1}. 
			\end{equation}
			
			Let us consider the geodesic arc $\alpha$ joining $\D(x), \D(y)$ and realizing their distance on $\O(X,\lambda)$.
			Let us consider the pull-back arc $\beta = \D^*\alpha$. The endpoints of $\beta$ are not necessarily $x,y$. They lie in $\D^{-1}(\D(x))$ and in $\D^{-1}(\D(y))$, which as we said above, can be either a point, a segment of a leaf, or a star-shaped union of singular leaves. In any case, by \thref{th:slim-inflation}, the length of such segments is at most $Dk^{-1}$, so we can extend $\beta$ by at most  $4Dk^{-1}$ so that it joins $x$ to $y$. This contributes to $\Len(\beta_{infl})$.
			
			Now the length of $\beta$ gives an upper bound for the distance between $x$ and $y$. By \thref{th:infl-flat-decomp} we have indeed
			$$ d(\D(x),\D(y)) = \Len(\alpha) = \Len (\beta) - \Len(\beta_{infl}) \geq  d(x,y)-  \Len(\beta_{infl}) $$
			so all we need is an upper bound for $ \Len(\beta_{infl})$.
			By the construction of $\beta$, the inflated part $\beta_{infl}$ only comes from the preimage of segments of $\alpha$ transverse to the horizontal foliation, and the preimage of singularities crossed by $\alpha$. The inflated part coming from a segment transverse to the horizontal foliation, by \thref{th:inflated-part-bound}, is at most $C_\epsilon k^{-2}$.
			The one from a singularity is, by \thref{th:slim-inflation} at most $2Dk^{-1}$. The number of such segments and singularities is bounded, as $\alpha$ being minimizing will not cross any singularity twice and the total number of singularities is at most $4g-4$.
			Summing all the contributions to $\Len(\beta_{infl})$ we obtain, with the assumption $k>1$,
			\begin{align*}
				\Len(\beta_{infl})
				&\leq 4Dk^{-1} + (4g-4)2Dk^{-1} + (4g-3)C_\epsilon k^{-2} \\
				&\leq (8g-4)Dk^{-1} + (4g-3)C_\epsilon k^{-2} \\
				&\leq \left[(8g-4)D + (4g-3)C_\epsilon \right]k^{-1} \\
				&\leq (4g-2)(2D + C_\epsilon) k^{-1}
			\end{align*}
			This concludes the proof of \ref{eq:defl-shrink-control} with constant $C = (4g-2)(2D + C_\epsilon)$. So $\D$ is a $Ck^{-1}$-isometry and, as anticipated, by \thref{th:GH-criterion} follows the thesis, where we remind that $k^2 = \ell_X(\lambda)$ so $k^{-1} = \ell_X(\lambda)^{-1/2}$.
			
		\end{proof}

		We now have all the tools and the language to show how the main theorem can be deduced from \thref{th:deflation-control}.

		\begin{proof}[Proof of \thref{th:bordification}]
			Let $q \in \P_\C\QT(S) \subset \P\Met(S)$ be a unit area half-translation surface.
			Then by \thref{th:orthogeodesic}, $\O$ is a bijection, then we can find a sequence of grafted surfaces $\O^{-1}(nq)$ that deflate to $q$ and for which $\ell_X(\lambda)$ is going to infinity. Then by \thref{th:deflation-control} such a sequence converges to $q$.
			See Section \ref{sec:inflation-rays} for a more detailed discussion of such sequences of grafted surfaces.
		\end{proof}

\section{Inflation}		\label{sec:inflation}

	We introduced a bordification $\bar{\PT(S)}$ of the space of grafted surfaces $\PT(S)$. Such bordification encompasses all the hyperbolic surfaces, as contains $\T(S)$, the unit-area half-translation surfaces as it contains $\P_\C\QT(S)$, and in between those, grafted surfaces that are partly hyperbolic and partly flat.
	Grafting allows to see this third family of surfaces as deformation of hyperbolic ones.
	In this section we will describe a complementary point of view in which they are seen as deformation of the flat half-translation surfaces. We will call such deformation \emph{inflation}, as it goes in the opposite direction of the deflation map.
	The ingredients for this description are all already discussed in the previous sections; this section introduces only a different perspective on the same phenomenon.
	
	\subsection{Inflation parametrization}
		
	In this section we will define the inflation map, and rewrite the main result in terms of this.
	As we said, the inflation is meant to go in the opposite direction of the deflation, so given a half-translation surface $q$, it produces a surface whose deflation is $q$. 

	Let us start then defining the inflation as the map $\O^{-1}$
	$$\Infl = \O^{-1}: \QT(S) \to \PT(S).$$
	
	Given $q\in \QT(S)$ we think of it as the datum of a marked unit area half-translation surface $q'=\frac{q}{\sqrt{|q|}}$ together with a direction given by the foliation $\re(q)$, and a positive real parameter $k$ given by $k^2 = |q|$. So we have that $q=kq'$ with $q'$ unit area.
	While the grafting datum is encoded by a pair made of the surface and the grafting lamination, here everything can be encoded in a single quadratic differential $q$.
	
	So $\Infl(q) = \Infl(kq')$ is a grafted surface, say $\Gr(X,\lambda)$, then this means that 
	$$ \O_{\lambda}(X) = \re(q) = \re(kq') = k\re(q') \qquad \text{and} \qquad 
	\lambda = \im(q) = \im(kq') = k \im(q') . $$
	
	Moreover, we remind that the transverse measure to the foliation $\O_\lambda(X)$ is given by the $X$-hyperbolic length along leaves of $\lambda$. So we have that the following holds.
	
	\begin{equation} \label{eq:area-equal-length}
		\ell_X(\lambda) = i(\O_\lambda(X), \lambda) = i( \re(q), \im(q) ) = |q| = k^2
	\end{equation}
	where we used that the intersection form valued on vertical and horizontal foliations gives the Euclidean area $|q|$ of the half-translation surface.
	As a consequence, \thref{th:deflation-control} rewrites in terms of inflation as follows.
	\begin{theorem} \thlabel{th:infl-to-zero}
		Let $q$ be a unit quadratic differential, $k>1$ and let us consider $\epsilon$ smaller than the injectivity radius of $q$.
		Then the deflation map $\D: \Infl(kq) \to q$ is a $C/k$-quasi isometry, where the constant $C$ depends only on $\epsilon$ and on the genus of the surface. In particular, we have that 
		$$d_{GH}(\Infl(kq), q) \leq C / k .$$
		We remind that in this context $\Infl(kq)$ is considered rescaled such that the area of the grafted part is 1, and by $q$ we mean the associated unit area half-translation surface.
	\end{theorem}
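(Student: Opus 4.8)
The plan is to recognize \thref{th:infl-to-zero} as simply \thref{th:deflation-control} read through the change of variables $q \mapsto kq$, so the argument is essentially bookkeeping. First I would unwind the definition $\Infl(kq) = \O^{-1}(kq)$: by \thref{th:orthogeodesic} the map $\O$ is a bijection, so there is a unique $(X,\lambda) \in \T(S)\times\ML(S)$ with $\O(X,\lambda) = kq$, that is $\Infl(kq) = \Gr(X,\lambda)$, and by the definition of $\O$ the vertical and horizontal foliations of $kq$ are $\O_\lambda(X) = \re(kq) = k\,\re(q)$ and, up to the lamination--foliation correspondence, $\lambda = \im(kq) = k\,\im(q)$.

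Next I would verify the hypotheses of \thref{th:deflation-control}. The essential point is \eqref{eq:area-equal-length}, giving $\ell_X(\lambda) = i(\O_\lambda(X),\lambda) = i(\re(kq),\im(kq)) = |kq| = k^2$; since $k > 1$ this is $>1$, which is the length hypothesis. For the injectivity-radius hypothesis I would observe that in the normalization of \thref{th:deflation-control} one rescales by $k^{-1} = \ell_X(\lambda)^{-1/2}$, so that $\O(X,\lambda) = kq$ becomes exactly the unit-area surface $q$ appearing in the statement; hence its injectivity radius equals that of $q$, which exceeds $\epsilon$. I would also note that, by \thref{th:grafted-area} and \eqref{eq:area-equal-length}, the flat part of $\Gr(X,\lambda)$ has area $\ell_X(\lambda) = k^2$ before rescaling, so the same rescaling by $k^{-1}$ turns $\Gr(X,\lambda)$ into the surface $\Infl(kq)$ normalized as in the statement. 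Both statements therefore refer to the same two surfaces and the same deflation map $\D$.

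Finally I would invoke \thref{th:deflation-control}: with these normalizations $\D\colon \Gr(X,\lambda) \to \O(X,\lambda)$ is a $C\cdot(\ell_X(\lambda))^{-1/2}$-isometry compatible with the markings for a constant $C = C(\epsilon, g)$, and substituting $\ell_X(\lambda) = k^2$ turns this into the claim that $\D\colon \Infl(kq)\to q$ is a $C/k$-isometry. The inequality $d_{GH}(\Infl(kq),q) \le C/k$ is then either read off directly from the displayed bound in \thref{th:deflation-control} or obtained from \thref{th:GH-criterion} applied to this $\epsilon$-isometry, absorbing the harmless factor $2$ into $C$.

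I do not anticipate a real obstacle here: all the geometry already lives in \thref{th:deflation-control}, and the only care required is to keep straight the two normalizations --- unit-area \emph{flat part} for the grafted surface versus unit \emph{total} area for the half-translation surface --- and to check that the single rescaling factor $k^{-1}$ reconciles them.
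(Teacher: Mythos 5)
Your proposal is correct and matches the paper's treatment exactly: the paper states \thref{th:infl-to-zero} as an immediate rewriting of \thref{th:deflation-control} via the identity $\ell_X(\lambda) = i(\re(kq),\im(kq)) = |kq| = k^2$ from \eqref{eq:area-equal-length}, which is precisely the substitution you carry out. Your added care in checking that the single rescaling by $k^{-1}$ reconciles the two normalizations (unit-area flat part for $\Gr(X,\lambda)$ versus unit total area for $q$) is consistent with the paper's conventions and fills in bookkeeping the paper leaves implicit.
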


	We now remind that $\QT(S)$ is a complex vector bundle over $\T(S)$, where every fibre $Q(X)$ with $X \in \T(S)$ is the vector space of holomorphic quadratic differentials on the Riemann surface $X$. Each fibre has a projective compactification given by $\bar{Q(X)} = Q(X) \cup \P Q(X)$ where $\P_\C Q(X) = (Q(X) \setminus \{0\}) / \C^*$ is the complex projectification of $Q(X)$.
	Applying this compactification fibre-wise we obtain the bordification $\bar{ \QT(S)} =  \QT(S) \cup \P_\C\QT(S)$.

	We now want to extend the inflation $\Infl: \QT(S) \to \PT(S)$ to the bordifications introduced above and obtain a mapping
	$$ \Infl: \bar{ \QT(S)} \to \bar{\PT(S)} .$$
	In order to do that, we notice that by definition both boundaries are described by $\P_\C\QT(S)$. We define then the inflation map $\Infl$ to be the identity on the boundary.
	As pointed out already $\Infl$ is not continuos as the map $\O^{-1}$ is not globally continuous.
	Our main result is equivalent in this setting to the continuity of $\Infl$ at the boundary. 
	From now on we indicate with $\T(S) \subset \bar{\PT(S)}$ the subset corresponding via grafting to $\T(S) \times \{0\} \subset \T(S) \times \ML_0(S)$.

	\begin{corollary}		\thlabel{th:inflation}
		The inflation map $\Infl: \bar{ \QT(S)} \to \bar{\PT(S)}$ is a bijection between the domain and the complement of $\T(S)$ in $\bar{\PT(S)}$, and it is continuous on the boundary $\P_\C\QT(S)$.
	\end{corollary}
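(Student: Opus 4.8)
The plan is to get bijectivity almost formally from \thref{th:orthogeodesic} and \thref{th:thurston-param}, and to reduce continuity along the boundary to \thref{th:deflation-control} combined with the upgrading lemma \thref{th:hausdorff-to-lipschitz}.

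\emph{Bijectivity.} By \thref{th:orthogeodesic} and the remark following it, $\O\colon\T(S)\times\ML(S)\to\QT(S)$ is a bijection, hence so is $\Infl=\O^{-1}\colon\QT(S)\to\T(S)\times\ML(S)$; under Thurston's parametrisation \thref{th:thurston-param} the target $\T(S)\times\ML(S)$ is exactly $\PT(S)\setminus\T(S)$. On the boundary $\Infl$ is the identity of $\P_\C\QT(S)$, a bijection onto the boundary piece of $\bar{\PT(S)}$. The images $\PT(S)\setminus\T(S)$ and $\P_\C\QT(S)$ are disjoint with union $\bar{\PT(S)}\setminus\T(S)$, so $\Infl$ is a bijection from $\bar{\QT(S)}$ onto $\bar{\PT(S)}\setminus\T(S)$.

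\emph{Continuity, reduction.} Fix $q_\infty\in\P_\C\QT(S)$ and a sequence $q_n\to q_\infty$ in $\bar{\QT(S)}$; we must show $\Infl(q_n)\to q_\infty$ in $\bar{\PT(S)}=\P\Met(S)$. Since convergence can be tested along subsequences, we may assume either all $q_n\in\P_\C\QT(S)$ or all $q_n\in\QT(S)$. In the first case $\Infl(q_n)=q_n$, and the topology $\P_\C\QT(S)$ inherits from $\bar{\QT(S)}$ coincides with the one it inherits from $\P\Met(S)$ — both are the quotient topology of $\QT(S)\setminus\{0\}$ by $\C^*$, by the closed-embedding statements of Section~\ref{sec:results} — so the assertion is immediate. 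So assume $q_n\in\QT(S)$. Let $\hat q_n=q_n/\sqrt{|q_n|}$ and let $\hat q_\infty$ be the unit-area representative of $q_\infty$. Since $q_\infty$ lies on the boundary, $|q_n|\to\infty$; and convergence in the fibrewise projective compactification means $\pi(q_n)\to\pi(\hat q_\infty)$ in $\T(S)$ with projective classes converging in the fibres, which together with the area normalisation forces $\hat q_n\to\hat q_\infty$ in $\QT(S)/S^1\subset\Met(S)$. In particular, for $n$ large the injectivity radius of $\hat q_n$ is bounded below by some $\epsilon>0$ and its diameter is bounded above in terms of $\epsilon$ and $g$. Put $(X_n,\lambda_n)=\Infl(q_n)$; by equation~\ref{eq:area-equal-length}, $\ell_{X_n}(\lambda_n)=|q_n|\to\infty$, so for $n$ large \thref{th:deflation-control} applies: with $\Gr(X_n,\lambda_n)$ normalised so that its flat part has unit area, the deflation $\D_n\colon\Gr(X_n,\lambda_n)\to\hat q_n$ is a marking-compatible homotopy equivalence and a $C|q_n|^{-1/2}$-isometry, $C=C(\epsilon,g)$.

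\emph{Assembling the limit, and the main difficulty.} Rescaling from the flat-part-area-$1$ normalisation to the total-area-$1$ normalisation of $\P\Met(S)$ multiplies the metric by $(1+2\pi|\chi(S)|\,|q_n|^{-1})^{-1/2}=1+O(|q_n|^{-1})$; since $\operatorname{diam}\Gr(X_n,\lambda_n)\le\operatorname{diam}\hat q_n+C|q_n|^{-1/2}$ is bounded (compatibly with the $D/k$-slimness of the inflated region, \thref{th:slim-inflation}), this rescaling is an $o(1)$-isometry. Composing it with $\D_n$ and with marking-compatible $\epsilon_n$-isometries $\hat q_n\to\hat q_\infty$ extracted from the convergence $\hat q_n\to\hat q_\infty$ in $\Met(S)$ gives marking-compatible homotopy equivalences $\Gr(X_n,\lambda_n)\to\hat q_\infty$ that are $\epsilon_n'$-isometries with $\epsilon_n'\to0$; moreover, since $\D_n$ is $1$-Lipschitz (\thref{th:deflation-lipschitz}) and a homotopy equivalence, $\sys\Gr(X_n,\lambda_n)\ge\sys\hat q_n\ge2\epsilon$, and as grafted surfaces are non-positively curved their injectivity radii are $\ge\epsilon$, uniformly bounded below. \thref{th:hausdorff-to-lipschitz} then yields $\Gr(X_n,\lambda_n)\to\hat q_\infty$ in $\P\Met(S)$, i.e. $\Infl(q_n)\to q_\infty$; running this along every subsequence gives convergence of the full sequence. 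The delicate point is exactly this last paragraph: one must unwind the definition of the Lipschitz topology to turn $\hat q_n\to\hat q_\infty$ into genuine global $\epsilon_n$-isometries — the neighbourhoods $V(\epsilon,U)$ provide $(1+\epsilon)$-bilipschitz marking-compatible maps off balls about the cone points whose radii shrink to $0$, and since these bad regions and their images shrink the inverse maps are global $\epsilon$-isometries — and then keep track of the normalisation factors and of the uniform injectivity-radius bound needed to invoke \thref{th:hausdorff-to-lipschitz}.
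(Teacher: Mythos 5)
Your proposal is correct and follows essentially the same route as the paper: bijectivity is deduced formally from $\O$ being a bijection onto $\PT(S)\setminus\T(S)$ together with the identity on the boundary, and continuity at the boundary is obtained by combining the quantitative $\epsilon$-isometry estimate for the deflation maps (\thref{th:deflation-control}) with the bilipschitz maps witnessing $q_n\to q$ in $\P\Met(S)$ and then invoking \thref{th:hausdorff-to-lipschitz}. Your extra bookkeeping (the boundary-sequence case, the change of area normalisation, and the uniform injectivity-radius bound via the $1$-Lipschitz property of $\D_n$) only makes explicit steps the paper leaves implicit.
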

	\begin{proof}
		It is easy to see that it is a bijection because we defined the inflation on $\QT(S)$ as $\O^{-1}$, which is a bijection between $\QT(S)$ and $\PT(S) \setminus \T(S)$. Then we extended it as the identity between the boundaries, which is again clearly bijective.
		So also the whole map $\Infl$ is bijective.

		Let us now check the continuity at the boundary, meaning that for every sequence $(q_n)_n $ converging projectively to $[q] \in \P_\C\QT(S)$, where $q$ is the unit quadratic differential representing the class $[q]$, then the surfaces $\Infl(q_n)$ converge in $\P\Met(S)$ to $\Infl([q]) = q$.

		The convergence of $q_n$ to $[q]$ implies that $|q_n|$ goes to infinity and that $q_n$ converges to $q$ also in $\bar{\PT(S)}$.
		In particular the injectivity radius of $q_n$ also converges to the injectivity radius of $q$, and is then uniformly lower bounded by a positive constant. 
		Then \thref{th:infl-to-zero} tells us that the deflation maps $\D_n: \Infl(q_n) \to q_n$ are $\epsilon_n$-isometries with $\epsilon_n$ going to zero as $n$ tends to infinity, because we showed that $|q|$ is going to infinity.

		Moreover, by definition of convergence in $\P\Met(S)$ we also have a sequence of homeomorphisms $\phi_n: q \to q_n$ compatible with markings that are $\epsilon_n$-bilipschitz on progressively larger subsets of $q$.
		One can easily verify then that the compositions $\phi_n^{-1} \circ \D_n: \Gr(X_n,\lambda_n) \to q$ satisfy the hypothesis for \thref{th:hausdorff-to-lipschitz} which then implies convergence in $\P\Met(S)$ as we wanted.

	\end{proof}

	To sum up we have that the space $\bar{\PT(S)}$ has two partial parametrization, given by grafting and by inflation.
	For the first one $\Gr: \T(S) \times \ML_0(S) \to \bar{\PT(S)}$ we have that
	\begin{itemize}
		\item $\Gr$ is a homeomorphism onto the complement in $\bar{\PT(S)}$ of the subspace $\P_\C\QT(S)$ corresponding to half-translation surfaces;
		\item to points of $\T(S)\times\{0\}$ correspond hyperbolic surfaces;
		\item to points of $\T(S)\times\ML(S)$ correspond properly grafted surfaces.
	\end{itemize}
	For the second one $\Infl: \bar{\QT(S)} \to \bar{\PT(S)}$ we have that
	\begin{itemize}
		\item $\Infl$ is a bijection between $\bar{\QT(S)}$ and the complement in $\bar{\PT(S)}$ of the space of hyperbolic surfaces $\T(S)$; it is not globally continuous, but it is continuous at the boundary and on the continuity regions of $\O^{-1}$ (see \thref{th:O-continuity})
		\item to points of $\P_\C\QT(S)$ correspond unit area half-translation surfaces;
		\item to points of $\QT(S)$ correspond properly grafted surfaces. 
	\end{itemize}

	\subsection{Inflation rays}		\label{sec:inflation-rays}

	In this section we want to summarize the geometric intuitive interpretation of inflation and describe inflation rays.

	As in grafting we have as model the simplest case of insertion of a single cylinder, let us try to describe more concretely the easiest example for inflation.
	
	\begin{example}
		Let us consider a unit quadratic differential $q$ where the diagram of horizontal separatrices is a (finite) connected graph $G$. From its embedding in $q$, $G$ also inherits the structure of a metric ribbon graph.
		Then from the previous section and the explicit construction of the deflation map, follows that $\Infl(kq)$ can be obtained by cutting open the half-translation surface $q$ along $G$ and gluing in its place a negatively curved surface $A_k$ with boundary, obtained as follows.

		Let us consider the $kG$ the ribbon graph $G$ with the metric rescaled by $k$. Then to it corresponds a hyperbolic surface $B_k$ such that the spine $\Sp(B_k)$ with the metric induced by the orthogeodesic foliation with respect to the border $\de B_k$ is $kG$. The surface $B_k$ exists and is unique, as discussed in \ref{sec:crowned-surfs}. The surface $A_k$ taken as $B_k$ rescaled by the constant $k^{-1}$.
		
		By \thref{th:deflation-lipschitz} we have that the injectivity radius of $\Infl(kq)$ is larger than the one of $q$, so \thref{th:slim-inflation} tells us that $A_k$ is $C/k$-slim for a constant $C$ depending only on  the injectivity radius of $q$.
		This implies that for $k$ going to 0, $A_k$ will shrink and deformation retract onto its spine, converging then to the metric graph $G$.
		The convergence of such sequences of rescaled hyperbolic surfaces with boundary is also studied in \cite{xu19}.
		
		To sum up, we can imagine inflation as taking the flat half-translation surface $q$ and inflating the graph $G$ of horizontal separatrices to a fat graph endowed with a metric with curvature $-k^2$ and geodesic boundary.
		
	\end{example}
	
	\begin{definition}
		Similarly to grafting rays $t \mapsto \Gr(X,t\lambda)$, we call \emph{inflation rays} the paths defined as $t \mapsto \Infl(t^{-1}q)$ where $q$ is a unit area quadratic differential.
	\end{definition}
	
	Note that we used the inverse of $t$ in the definition in order to have the following.

	\begin{proposition}
		The inflation ray $t \mapsto \Infl(t^{-1}q)$ is continuous and $\Infl(t^{-1}q)$ converges geometrically to $q$ as $t$ tends to 0.
	\end{proposition}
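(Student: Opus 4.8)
The plan is to treat the two assertions separately, reducing each to results already established: \thref{th:O-continuity} together with the continuity of grafting for continuity of the ray on $(0,\infty)$, and \thref{th:infl-to-zero} together with \thref{th:hausdorff-to-lipschitz} for the behaviour as $t\to 0$.

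For continuity on $(0,\infty)$ I would write $q_t := t^{-1}q$, so that $\Infl(t^{-1}q)=\O^{-1}(q_t)=\Gr(X_t,\lambda_t)$ with $\lambda_t=\im(q_t)=t^{-1}\im(q)$ and $\O_{\lambda_t}(X_t)=t^{-1}\re(q)$. The key observation is that along the ray the support $\supp(\lambda_t)=\supp(\im(q))$ is \emph{constant}, so when $t\to t_0>0$ the laminations $\lambda_t$ converge to $\lambda_{t_0}$ in the measure topology (scalar multiplication being continuous on $\ML(S)$) \emph{and} their supports converge in the Hausdorff topology trivially. Since also $q_t\to q_{t_0}$ in $\QT(S)$, \thref{th:O-continuity} applies and gives $(X_t,\lambda_t)\to(X_{t_0},\lambda_{t_0})$ in $\T(S)\times\ML(S)$; then \thref{th:thurston-param} and \thref{th:grafting-continuity} show that the Thurston metric of $\Gr(X_t,\lambda_t)$, and hence its class in $\Met(S)$ and in $\P\Met(S)$, depends continuously on $t$ throughout $(0,\infty)$.

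For the limit at $t=0$, for $t<1$ I would set $k:=t^{-1}>1$, so that by \ref{eq:area-equal-length} we have $\ell_{X_t}(\lambda_t)=|q_t|=k^2$. The point that needs a word of justification — and, I expect, the main (if mild) obstacle — is that the constant in \thref{th:infl-to-zero} can be chosen uniformly along the ray: normalising $q_t=t^{-1}q$ to unit area produces a half-translation surface that is isometric to $q$ itself (rescaling a flat metric to a prescribed area changes nothing up to isometry), so its injectivity radius equals that of $q$, and a single $\epsilon$ below the injectivity radius of $q$ serves for every $t$. With this, \thref{th:infl-to-zero} (equivalently \thref{th:deflation-control}) provides a marking-compatible deflation map $\D_t\colon\Infl(t^{-1}q)\to q$ which is a $(C/k)=(Ct)$-isometry, with $C=C(\epsilon,g)$; in particular $d_{GH}(\Infl(t^{-1}q),q)\le Ct\to 0$. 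Finally I would take an arbitrary sequence $t_n\to 0^+$, note that the $\D_{t_n}$ are marking-compatible $\epsilon_n$-isometries with $\epsilon_n=Ct_n\to 0$, and apply \thref{th:hausdorff-to-lipschitz} to upgrade the Gromov--Hausdorff convergence to convergence in $\Met(S)$, hence in $\P\Met(S)$. Since this holds for every such sequence, $\Infl(t^{-1}q)\to q$ as $t\to 0^+$, which combined with the previous paragraph makes $t\mapsto\Infl(t^{-1}q)$ continuous on all of $[0,\infty)$. All the genuine work lies inside \thref{th:deflation-control}/\thref{th:infl-to-zero} and \thref{th:hausdorff-to-lipschitz}; beyond the uniformity of $C$ just noted, the only thing to check is that the Hausdorff-convergence hypothesis of \thref{th:O-continuity} is automatic along a ray, which it is because the support is constant.
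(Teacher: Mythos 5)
Your argument is correct and follows essentially the same route as the paper: continuity on $(0,\infty)$ via \thref{th:O-continuity} using that the support of $\im(t^{-1}q)$ is constant along the ray, and convergence as $t\to 0$ via \thref{th:infl-to-zero}. You additionally spell out the uniformity of the constant and the upgrade from Gromov--Hausdorff to Lipschitz convergence via \thref{th:hausdorff-to-lipschitz}, details the paper leaves implicit here (and uses explicitly only in the proof of the adjacent corollary), which is a sound filling-in rather than a different approach.
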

	\begin{proof}
		The inflation ray is continuous for $t \in (0,+\infty)$ since $\O^{-1}$ is continuous restricted to $\{ t^{-1}q \}_{t\in (0,+\infty)}$ by \thref{th:O-continuity} as the horizontal foliation of $t^{-1}q$ is only rescaled upon change of $t$, so its support does not change with $t$.
		The geometric convergence for $t$ approaching 0 follows from \thref{th:infl-to-zero}.
	\end{proof}
	
	As we have seen that a grafting ray is mapped through $\O$ to a rescaled Teichmüller line, we now want to see how an inflation ray is described in terms of grafting coordinates that is as $\Infl(t^{-1}q) = \Gr(X(t), \lambda(t))$ for suitable functions $X(t), \lambda(t)$.	
	Let us call $\eta = \re(q)$ and $\lambda = \lambda(1) = \im(q)$, then $\lambda(t) = t^{-1}\lambda$. Then we also remind that the map $\O_\lambda$ depends only on the support of $\lambda$, so $\O_{\lambda(t)} = \O_\lambda $, thus we have $t^{-1}\eta = \O_{\lambda(t)}(X(t)) =\O_{\lambda}(X(t)) $.

	\begin{corollary}
		The inflation ray $t \mapsto \Infl(t^{-1}q)$ coincides for $t>0$ with the path $\Gr(X(t), t^{-1}\lambda)$ where
		$X(t)$ is the generalized stretch ray defined by $X(s) = \O_\lambda^{-1}(s^{-1}\eta)$.
	\end{corollary}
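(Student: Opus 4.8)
The plan is to unwind the definitions of $\Infl$ and $\O$ and then exploit the fact, recalled just above, that the orthogeodesic foliation map depends only on the support of the lamination.

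First I would write $\Infl(t^{-1}q) = \O^{-1}(t^{-1}q)$ and denote the grafting coordinates of this point by $(X(t),\lambda(t)) \in \T(S)\times\ML_0(S)$; since $q$ is a nonzero quadratic differential, so is $t^{-1}q$, and the pair lies in $\T(S)\times\ML(S)$. Using the defining formula $\O(X,\mu)=q(\O_\mu(X),\mu)$, the lamination $\lambda(t)$ is the one corresponding under the Levitt identification $\ML(S)\cong\MF(S)$ to the horizontal foliation $\im(t^{-1}q)=t^{-1}\im(q)=t^{-1}\lambda$, while the vertical foliation $\re(t^{-1}q)=t^{-1}\re(q)=t^{-1}\eta$ equals $\O_{\lambda(t)}(X(t))$. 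So already $\lambda(t)=t^{-1}\lambda$, matching the claimed formula.

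Next I would invoke that $\O_{\lambda(t)}$ depends only on $\supp(\lambda(t))=\supp(\lambda)$ — the transverse measure of the orthogeodesic foliation is the hyperbolic length along the leaves, which is insensitive to rescaling the transverse measure of $\lambda$ — so $\O_{\lambda(t)}=\O_\lambda$. The identity $\O_{\lambda(t)}(X(t))=t^{-1}\eta$ thus becomes $\O_\lambda(X(t))=t^{-1}\eta$. Since $t^{-1}\eta\in\MF(\lambda)$ (rescaling does not affect transversality with $\lambda$) and $\O_\lambda\colon\T(S)\to\MF(\lambda)$ is a homeomorphism by \thref{th:orthogeodesic}, this inverts uniquely to $X(t)=\O_\lambda^{-1}(t^{-1}\eta)$, which is exactly $X(s)$ evaluated at $s=t$. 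Hence $\Infl(t^{-1}q)=\Gr(X(t),t^{-1}\lambda)$, as desired; continuity in $t$ is already furnished by the preceding proposition (or directly by \thref{th:O-continuity}, since the support of $\im(t^{-1}q)$ is constant in $t$).

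The one point that needs genuine care — and the only place I expect friction — is the bookkeeping of the foliation/lamination identification: the vertical and horizontal data of $q$ live in $\MF(S)$ whereas grafting coordinates are indexed by $\ML(S)$, so one must check that scaling commutes with the Levitt correspondence (it does, as both are defined through the transverse measure) and that $\O_\lambda^{-1}$ is being fed genuinely transverse input. Everything else is a direct substitution into the two defining formulas.
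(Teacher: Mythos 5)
Your proposal is correct and follows essentially the same route as the paper: the text preceding the corollary unwinds $\O(X(t),\lambda(t))=t^{-1}q$ to get $\lambda(t)=t^{-1}\lambda$ and $\O_{\lambda(t)}(X(t))=t^{-1}\eta$, then uses that $\O_\lambda$ depends only on the support of the lamination so that $\O_{\lambda(t)}=\O_\lambda$, and inverts via \thref{th:orthogeodesic}. Your extra remarks on the Levitt identification and on continuity via \thref{th:O-continuity} are consistent with the paper's treatment and add only harmless bookkeeping.
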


	\begin{appendices}
		\section{One-dimensional grafting along a Cantor set}
		\label{sec:cantor}
		
		The scope of this appendix section is to give a more explicit description of a one-dimensional model for grafting in the case when the lamination is generic, so without closed leaves, since there is no straight forward cut-and-paste operation to describe the grafted metric.
		This one-dimensional model can be thought as what happens to a smooth arc on a hyperbolic surface $X$, orthogonal to a measured lamination $\lambda$, when the surface is grafted along $\lambda$. The analogies with this are the ones suggested by the choice of notation.

		Let $L\subset (0,1)$ be a Cantor set, meaning closed and with empty interior and Lebesgue measure 0, but of uncountable cardinality, like for example the very well known ternary Cantor set.
		Its complement $L^c$ is open, and in particular can be written as a countable union of disjoint intervals.
		$$ U = \bigcup_{n\in\N} (a_n, b_n) $$
		Let us now consider any finite measure $\lambda$ in $(0,1)$ concentrated on $L$. The function 
		$$ f: (0,1) \to \R; \quad f(x) = \lambda((0,x)) $$
		is non-decreasing, and constant on $(a_n,b_n)$.
		Let us now consider the set obtained from $U$ by translating each interval $(a_n,b_n)$ according to $f$ as follows.
		$$ U' = \bigcup_{n\in\N} (a_n+f(a_n), b_n+f(a_n)) \subset (0,1+|\lambda|) $$
		Notice that the intervals in the union are disjoint because $(a_n,b_n)$ are and $f$ is non-decreasing, so each pair of intervals can only get further away from each other.
		By sigma additivity and translation invariance of the Lebesgue measure, we have that $|U'| = 1$. As a consequence, its complement $\hat L$ in $(0,1+|\lambda|)$ is a closed set with measure $|\lambda|$.
		If the measure $\lambda$ has no atoms, then $\hat L$ also has empty interior, but, as we showed, positive Lebesgue measure. Such sets are also known as Smith-Volterra-Cantor sets, or fat Cantor sets.

		What follows can also be generalized in the case where $\lambda$ is allowed to have atoms. For sake of brevity of the exposition we stick to the generic case with no atoms, so that $f$ is continuous and makes the exposition easier.

		Let us consider the function $(0,1) \to (0,1+|\lambda|)$ given by $x \mapsto x+f(x)$. It is monotonic increasing and bijective. Let us call its inverse $\kappa: (0,1+|\lambda|) \to (0,1)$.
		\begin{proposition}
			 The function $\kappa$ maps isometrically the interval $(a_n+f(a_n), b_n+f(a_n))$ to $(a_n,b_n)$ for every $n\in\N$. This is the analogue of Thurston's collapsing map of \ref{sec:thurston-param}.
		\end{proposition}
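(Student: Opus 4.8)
The plan is to deduce everything from one elementary observation: on each complementary interval $(a_n,b_n)$ the function $f$ is constant, with value equal to $f(a_n)$, so that there $g(x) = x+f(x)$ is nothing but the translation $x \mapsto x + f(a_n)$. To see the constancy, take $a_n < x < x' < b_n$; then $f(x') - f(x) = \lambda\bigl([x,x')\bigr) = 0$ since $[x,x')$ is contained in $(a_n,b_n)$, which is disjoint from $L = \supp\lambda$. That the common value on the gap is $f(a_n)$ uses that $\lambda$ has no atoms: writing $(0,x)$ as the disjoint union of $(0,a_n]$ and $(a_n,x)$ for $x \in (a_n,b_n)$ gives $f(x) = \lambda\bigl((0,a_n]\bigr) + \lambda\bigl((a_n,x)\bigr) = \lambda\bigl((0,a_n)\bigr) + 0 = f(a_n)$ (and when $a_n = 0$ this should be read with $f(a_n)$ replaced by $\lim_{t\to 0^+} f(t) = 0$).

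Granting this, the conclusion is formal. We already know $g$ is a continuous strictly increasing bijection of $(0,1)$ onto $(0,1+|\lambda|)$; being continuous and monotone it sends intervals to intervals preserving order, so $g\bigl((a_n,b_n)\bigr) = \bigl(a_n+f(a_n),\, b_n+f(a_n)\bigr)$, and on this gap $g$ is an isometry because it is a translation. Consequently its inverse $\kappa = g^{-1}$ restricted to $\bigl(a_n+f(a_n),\,b_n+f(a_n)\bigr)$ is the inverse translation $y \mapsto y - f(a_n)$, an isometry onto $(a_n,b_n)$. Doing this for every $n$ is exactly the claimed statement, and it displays $\kappa$ as the analogue of Thurston's collapsing map: an honest isometry on each interval of $U'$ coming from a gap of $L$, while it collapses the fat Cantor set $\hat L$ (which has positive Lebesgue measure $|\lambda|$) onto the null Cantor set $L$.

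There is no serious obstacle here. The only place that genuinely uses a hypothesis is the identification of the constant value of $f$ on $(a_n,b_n)$ with $f(a_n)$, which is where \emph{no atoms} enters; if one wanted to keep atoms, $f$ would have jumps and one would instead track half-open intervals, but that complication was deliberately excluded above. I would present the argument in this order --- constancy of $f$ on the gaps, then read $\kappa$ off from the already-established bijectivity of $g$ --- so that surjectivity of the restriction of $\kappa$ onto $(a_n,b_n)$ comes for free rather than needing a separate verification.
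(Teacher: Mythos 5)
Your argument is correct, and it is the natural one: the paper states this proposition without proof, and the intended justification is exactly what you wrote — $f$ is constant equal to $f(a_n)$ on each gap (using that $\lambda$ is concentrated on $L$ and has no atoms), so $x \mapsto x + f(x)$ restricts to a translation there and $\kappa$ is the inverse translation. No gaps; your ordering of the steps (constancy first, then read off $\kappa$ from bijectivity of $g$) is also the cleanest way to present it.
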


		Let us consider now the composition $\D: f\circ\kappa: (0,1+|\lambda|) \to (0,|\lambda|)$. 
		\begin{proposition}
			The function $\D$ is constant on each connected component of the complement of $\hat L$, meaning on each interval $(a_n+f(a_n), b_n+f(a_n))$, and its restriction to the fat Cantor set $\hat L$ is surjective onto $(0,|\lambda|)$ and Lebesgue measure preserving.
		\end{proposition}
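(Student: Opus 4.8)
The plan is to reduce the statement to the two maps we already control, $\kappa$ and $f$, and to push Lebesgue measure along them. The organising remark is that the strictly increasing continuous bijection $h\colon(0,1)\to(0,1+|\lambda|)$, $h(x)=x+f(x)$, whose inverse is $\kappa$, restricts on each complementary interval $(a_n,b_n)$ to the translation $x\mapsto x+f(a_n)$; hence $h$ maps $U$ bijectively onto $U'$ and, being a bijection of intervals, maps $L=(0,1)\setminus U$ homeomorphically onto $\hat L=(0,1+|\lambda|)\setminus U'$. First I would record this, together with its consequences $\kappa(\hat L)=L$ and $\D|_{\hat L}=f\circ(h|_{L})^{-1}$. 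Constancy of $\D$ on each component of the complement of $\hat L$ then costs nothing: on $(a_n+f(a_n),\,b_n+f(a_n))$ the map $\kappa$ is translation by $-f(a_n)$ onto $(a_n,b_n)$, where $f\equiv f(a_n)$, so $\D\equiv f(a_n)$ there.

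For surjectivity I would use $\D(\hat L)=f(\kappa(\hat L))=f(L)$. Continuity of $f$ with $f(x)\to 0$ as $x\to 0^{+}$ and $f(x)\to|\lambda|$ as $x\to 1^{-}$ gives, by the intermediate value theorem, $(0,|\lambda|)\subseteq f((0,1))$; and because $L$ is closed, the endpoints $a_n$ of the components of $U=L^{c}$ belong to $L$, so each value $f$ takes on $U$---namely $f(a_n)$ on the $n$-th component---is already attained on $L$. Therefore $f(L)=f((0,1))$ covers $(0,|\lambda|)$.

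The real content is that $\D|_{\hat L}$ pushes $\Leb|_{\hat L}$ forward to $\Leb|_{(0,|\lambda|)}$. I would verify this on an arbitrary interval $(c,d)\subset(0,|\lambda|)$ and then invoke uniqueness of Borel measures. Writing $f^{-1}((c,d))=(s_c,s_d)$ with $f(s_c)=c$, $f(s_d)=d$ (picking $s_c,s_d$ at the edges of any flat stretch of $f$ at those levels), one has $\D^{-1}((c,d))=h((s_c,s_d))=(h(s_c),h(s_d))$, an interval of length $(s_d-s_c)+(d-c)$; it meets $U'$ in $h\big(U\cap(s_c,s_d)\big)$, of measure $\Leb(U\cap(s_c,s_d))=s_d-s_c$ since $\Leb(L)=0$, so $\Leb\big(\D^{-1}((c,d))\cap\hat L\big)=d-c$.

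The step I expect to be the main obstacle is exactly this last computation, and it is instructive to see why it is not a formality: $h$ is an isometry on each component of $U$, but $h|_{L}$ is a bijection of the Lebesgue-null set $L$ onto the positive-measure fat Cantor set $\hat L$, so it is nowhere locally measure preserving---it is the one-dimensional shadow of deflation converting the transverse measure $\lambda$ into Euclidean length. Keeping track of where this created mass goes is the delicate bookkeeping, and the reduction to intervals above is the cleanest route I see. Finally, the boundary cases where $\operatorname{supp}\lambda$ does not accumulate at $0$ or at $1$---so that $f$ attains $0$ or $|\lambda|$---only affect whether those endpoints are covered and do not disturb the argument.
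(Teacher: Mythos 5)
Your proposal is correct, and in fact the paper states this appendix proposition without giving a proof, so there is nothing to compare it against; your argument is the natural one and fills the gap soundly. The key points are all in order: $h(x)=x+f(x)$ is an increasing homeomorphism carrying $U$ onto $U'$ componentwise by translations (hence $h(L)=\hat L$ and $\D|_{\hat L}=f\circ (h|_L)^{-1}$), constancy on the components of $U'$ is immediate, surjectivity follows from $f(L)\supseteq(0,|\lambda|)$, and the interval computation $\Leb\bigl(\D^{-1}((c,d))\cap\hat L\bigr)=\bigl[(s_d-s_c)+(d-c)\bigr]-(s_d-s_c)=d-c$, combined with uniqueness of Borel measures agreeing on open intervals, gives that $\D|_{\hat L}$ pushes $\Leb|_{\hat L}$ forward to $\Leb|_{(0,|\lambda|)}$. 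The only fragile spot is the corner case where $L$ does not accumulate at $0$ or $1$ (so some $a_n$ equals $0$ and is not in $L$, and at most two points of $\hat L$ map to $0$ or $|\lambda|$); as you note, this only concerns endpoint values and measure-zero sets and does not affect the statement.
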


		\end{appendices}

\newpage
\printbibliography[heading=bibintoc]

\noindent
Andrea Egidio Monti\\
Max Planck Institut für Mathematik, Vivatsgasse 7, 53111 Bonn, Germany\\
e-mail: amonti@math.uni-bonn.de

\end{document}